

\documentclass[12pt,centertags]{amsart}  

\usepackage{amsmath, amssymb, amsthm}

\usepackage[hidelinks]{hyperref}

\swapnumbers



\usepackage{geometry}  
\geometry{
  body={6.5in, 8.5in},
  left=1in, right=1in,
  top=1in
}





\usepackage{graphicx}
\newcommand{\pindep}{\mathop{\,\rotatebox[origin=c]{90}{$\models$}\,}}

\makeatletter
\def\indsym#1#2{%
  \setbox0=\hbox{$\m@th#1x$}%
  \kern\wd0%
  \hbox to 0pt{\hss$\m@th#1\mid$\hbox to 0pt{$\m@th#1^{#2}$\hss}\hss}%
  \lower.9\ht0\hbox to 0pt{\hss$\m@th#1\smile$\hss}%
  \kern\wd0} \newcommand{\ind}[1][]{\mathop{\mathpalette\indsym{#1}}}
\def\nindsym#1#2{%
  \setbox0=\hbox{$\m@th#1x$}%
  \kern\wd0%
  \hbox to 0pt{\mathchardef\nn="3236\hss$\m@th#1\nn$\kern1.4\wd0\hss}
  \hbox to 0pt{\hss$\m@th#1\mid$\hbox to 0pt{$\m@th#1^{#2}$\hss}\hss}%
  \lower.9\ht0\hbox to 0pt{\hss$\m@th#1\smile$\hss}%
  \kern\wd0}
\newcommand{\nind}[1][]{\mathop{\mathpalette\nindsym{#1}}}
\makeatother

\def\indep{\ind}
\def\nindep{\nind}

\def\E{\mathbb{E}}        \def\P{\mathbb{P}}       \def\U{\mathcal{U}}
\def\B{\mathcal{B}}              \def\M{\mathcal{M}}
\def\NN{\mathcal{N}}    \def\SS{\mathcal{S}}    \def\K{\mathcal K}
    \def\A{\mathcal{A}}       \def\B{\mathcal{B}}
\def\D{\mathcal{D}}      \def\C{\mathcal{C}}       \def\F{\mathcal{F}}
\def\V{\mathcal{V}}       \def\EE{\mathcal{E}}     \def\ZZ{\mathcal{Z}}
\def\tensor{\otimes}                     \def\s{\sigma}

\renewcommand{\S}{\mathcal{S}}

\def\LLpr{L^{pr}}

\def\Sum{\sum}
\def\bra{\langle} 
\def\ket{\rangle}
\def\vphi{\varphi}
 
 \def\Q{\mathbb{Q}}
\def\R{\mathbb{R}} 

\def\N{\mathbb{N}}
\def\emp{\emptyset}

\def\F{\mathcal{F}}

\def\1{\mathbf{1}}
\def\0{\mathbf{0}}

\theoremstyle{plain}
\newtheorem{theo}{Theorem}[section]
\newtheorem{prop}[theo]{Proposition}
\newtheorem{lema}[theo]{Lemma}
\newtheorem*{lema*}{Lemma}
\newtheorem{coro}[theo]{Corollary}

\theoremstyle{definition}
\newtheorem{defi}[theo]{Definition}
\newtheorem{rema}[theo]{Remark}
\newtheorem{fact}[theo]{Fact}
\newtheorem{note}[theo]{Note}
\newtheorem{obse}[theo]{Observation}

\newtheorem{nota}[theo]{Notation}

\newtheorem{exam}[theo]{Example}

\newtheorem{exer}[theo]{Exercise}

\makeatletter

\def\dotminussym#1#2{%
  \setbox0=\hbox{$\m@th#1-$}%
  \kern.5\wd0%
  \hbox to 0pt{\hss\hbox{$\m@th#1-$}\hss}%
  \raise.3\ht0\hbox to 0pt{\hss$\m@th#1\cdot$\hss}%
  \kern.5\wd0}
\newcommand{\dotminus}{\mathbin{\mathpalette\dotminussym{}}}

\makeatother

\def\dotmin{\dotminus}

\DeclareMathOperator{\card}{card}
\DeclareMathOperator{\dcl}{dcl}
\DeclareMathOperator{\acl}{acl}

\DeclareMathOperator{\meq}{meq}

\DeclareMathOperator{\tp}{tp}

\DeclareMathOperator{\dist}{dist}

\DeclareMathOperator{\rharp}{{\upharpoonright}}
\DeclareMathOperator{\lharp}{{\upharpoonleft}}


\title[Model Theory of Probability Spaces]{Model Theory of Probability Spaces}

\author[Alexander Berenstein]{\noindent Alexander Berenstein}
\address{Alexander Berenstein; Universidad de los Andes,  \newline
\phantom{-------------------------------------}Cra 1 N${}^o$ 18A-12,  Edificio H, Bogot\'{a}, Colombia}
\urladdr{http://www.matematicas.uniandes.edu.co/\textasciitilde aberenst}

\author[C.\ Ward Henson]{C. Ward Henson}
\address{C. Ward Henson; University of Illinois, Urbana-Champaign; \newline 
\phantom{-------------------------------------}Urbana, Illinois 61801, USA}
\urladdr{https://faculty.math.illinois.edu/\textasciitilde henson}

\thanks{\vspace{-0.25cm}\newline The authors are grateful to Ita\"{\i} Ben Yaacov for helpful conversations.  
Research for this paper was partially supported by NSF grants and by grants from the 
Simons Foundation (202251 and 422088, to the second author).}

\begin{document}

\begin{abstract}
This expository paper treats the model theory of probability
spaces using the framework of continuous $[0,1]$-valued first order logic.
The metric structures discussed, which we call probability algebras, 
are obtained from probability spaces by identifying two
measurable sets if they differ by a set of measure zero. The class
of probability algebras is axiomatizable in continuous first order logic;
we denote its theory by $Pr$.  We show that the
existentially closed structures in this class are exactly the ones
in which the underlying probability space is atomless.  This subclass is also 
axiomatizable; its theory $APA$ is the model companion of $Pr$.
We show that $APA$ is separably categorical (hence complete), 
has quantifier elimination, is $\omega$-stable, and has built-in canonical bases,
and we give a natural characterization of its independence relation.  For 
general probability algebras, we prove that the set of atoms 
(enlarged by adding $0$) is a definable
set, uniformly in models of $Pr$.  We use this fact as
a basis for giving a complete treatment of the model theory of 
arbitrary probability spaces.  The core of this paper is an extensive
presentation of the main model theoretic properties of $APA$.
We discuss Maharam's
structure theorem for probability algebras, and indicate the close connections
between the ideas behind it and model theory.  
We show how probabilistic entropy provides
a rank connected to model theoretic forking in probability algebras.
In the final section we mention some open problems.
\end{abstract}


\maketitle

\section{Introduction}
\label{intro}

In this paper we use the continuous version of first order logic to
investigate probability spaces $(X,\B,\mu)$.\index{$(X,\B,\mu)$}  
Here $\B$ is a $\sigma$-algebra of subsets of $X$ (requiring
$\emptyset,X \in \B$) and $\mu$ 
is a $\sigma$-additive probability measure on $\B$.
There is a canonical pseudometric $d$ on $\B$, obtained
by taking the distance between sets to be given by  $d(A,B) := \mu(A \triangle B)$.
(Here $\triangle$ denotes the symmetric difference operation on sets.) 
This gives rise to a \emph{prestructure}
\[
(\B,0,1,\cdot^{c},\cap,\cup,\mu,d)
\]
(which we  often write as $(\B,\mu,d)$, regarding $\B$ 
as a boolean algebra but suppressing the constants and the
operations from our notation).
We obtain a  \emph{structure} in the usual way by
turning $d$ into a metric.  (Usually we would also need to take the metric completion, but the 
metric quotient is automatically complete here, as we indicate below.) 
This yields the structure $(\widehat \B, \widehat \mu, \widehat d)$,
where $\widehat \B$ is the quotient
of $\B$ by the equivalence relation $\mu(A \triangle B) = 0$ and $\widehat \mu, \widehat d$
are the canonical measure and metric induced on $\widehat \B$.  
That is, for each $A \in \B$ and 
$[A]_\mu := \{ B \in \B \mid \mu(A \triangle B) = 0 \} \in \widehat \B$,
we have $\widehat \mu([A]_\mu) = \mu(A)$; similarly
$\widehat d([A]_\mu,[B]_\mu) = d(A,B)$
($= \mu(A \triangle B) = \widehat \mu([A \triangle B]_\mu) = \widehat \mu([A]_\mu \triangle [B]_\mu)$).

One sees that $\widehat \B$ is a complete (in the sense of order) boolean algebra, $\widehat \mu$ is a strictly positive
$\sigma$-additive measure on $\widehat \B$, and $\widehat d$ is the metric
defined canonically from $\widehat \mu$.  The metric structures $(\widehat \B,\widehat \mu,\widehat d)$%
\index{$(\widehat \B,\widehat \mu,\widehat d)$}
are the principal objects of study in this paper.

We use standard background from measure
theory and analysis (which we summarize in Section \ref{probability spaces})
and from continuous first order logic.
The model theoretic background for this paper comes from \cite{BBHU}
and \cite{BU}, which present the $[0,1]$-valued continuous version
of first order logic.  In Section \ref{continuous model theory} 
we give references for some additional concepts
and tools from continuous logic that we need here.

The main content of this paper is in Sections \ref{model theory of prob spaces},
\ref{random variables}, 
 \ref{atomless prob  spaces}, and \ref{stability of APA}.
Sections \ref{model theory of prob spaces} and \ref{random variables}
present the model theory of arbitrary probability
spaces in the framework of continuous first order logic.  Our results show that everything model
theoretic about arbitrary probability spaces can be systematically
reduced to the atomless case, which is given a full treatment in Sections \ref{atomless prob  spaces}
and \ref{stability of APA}.

Atomless probability spaces were studied by Ben Yaacov
\cite{BY1} using the framework of compact abstract theories, with an
emphasis on issues around model theoretic stability. In
Sections \ref{atomless prob  spaces} and \ref{stability of APA}
we study atomless probability
spaces in the context of continuous logic and present
analogues of results from \cite{BY1}, as well as additional results
that are specific to the continuous first order setting.  
In Section \ref{atomless prob  spaces} we give axioms for the class of 
atomless probability algebras, and show that the theory
of these structures (denoted by $APA$) is well behaved
from the model theoretic point of view: in particular, it is
complete, has quantifier elimination, is separably categorical, and
is $\omega$-stable. We characterize (up to equivalence) the
induced metric on type spaces of
$APA$.  In Section \ref{stability of APA} we focus on features of
$APA$ that are connected to its stability.
Following the work of Ben Yaacov \cite{BY1},
we give an intrinsic characterization of the independence relation
of $APA$, and show that it has built-in canonical bases.  
We give a direct, elementary proof that $APA$ is strongly finitely based,
a fact originally proved in \cite{BBH} using lovely pairs of $APA$ models.
We also look at $APA$ from the point of view of Shelah's classification program,
and show that $APA$ is non-multidimensional but not unidimensional,  using
natural translations of those concepts into continuous model theory.

Section \ref{maharam} is 
devoted to Maharam's structure theorem for probability algebras
and its connections with model theory. 
In Section \ref{entropy} we show how model theoretic forking in 
probability algebras is related to probabilistic entropy.  
In the last section we identify some open problems that
seem worth investigating further.

\section{Probability spaces}
\label{probability spaces}
In this section we present basic information about probability spaces %
\index{probability space}
$(X,\B,\mu)$ and their measure algebra quotients.

We recall that $A\in \B$ is
\emph{atomless}\index{atomless!element}
if for every $B \in \B$ with $B \subseteq A$ and
$\mu(B)>0$, there are $B_1,B_2\in \B$ such that $B=B_1 \cup B_2$,
$B_1$ and $B_2$ are disjoint and $\mu(B_1)>0$, $\mu(B_2)>0$.
We say that $(X,\B,\mu)$ is \emph{atomless}\index{atomless!probability space} 
if $X$ is atomless in $\B$.

We recall that $A \in B$ is an \emph{atom} \index{atom}
if $\mu(A)>0$, and for every $B
\in \B$ with $B \subseteq A$ one has $\mu(B)=0$ or $\mu(A \setminus
B)=0$. Evidently, if $A_1,A_2 \in \B$ are atoms then either $\mu(A_1
\cap A_2)=0$ or $\mu(A_1 \triangle A_2) = 0$. Furthermore,
there exists a finite or countable family $\mathcal{A} \subseteq \B$
such that each $A \in \mathcal{A}$ is an atom and such that whenever
$A \in \B$ is an atom, there exists $A' \in \mathcal{A}$ such that
$\mu(A \triangle A')=0$.  The \emph{atomic part} of $X$ is the join (union)
of the sets in $\mathcal{A}$, and its complement is the \emph{atomless part} of $X$;
this partition of $X$ is well defined up to a set of measure $0$.
The atomic part is \emph{atomic}, \index{atomic element}
in the sense that whenever $A \in \B$ is
contained in the atomic part of $X$, then $A$ is (up to a set of measure $0$) the
union of the atoms it contains; equivalently, all atomless subsets
of such an $A$ have measure $0$.  Likewise, the atomless part of $X$ is
an atomless member of $\B$.  We regard $0$ as atomic, since it is contained
in the atomic part, and $0$ is atomless by definition.  
Further, if $A$ is atomless and $B$ is atomic, then $\mu(A \cap B) = 0$.

We say $(A_1,\dots,A_n)$ is a \emph{partition in}\index{partition}
 $\B$ if $A_i \cap A_j = 0$
whenever $i \neq j$.  If, in addition, $A_1 \cup \dots \cup A_n = B$, then we say
$(A_1,\dots,A_n)$ is a \emph{partition of $B$ in} $\B$.  Note that we allow
$A_i$ to be $0$ in such a situation.

We say that $A_1,A_2\in \B$ determine the same \emph{event},\index{event} 
and write $A_1 \sim_\mu A_2$ if the symmetric difference of the sets has
$\mu$-measure zero. Clearly $\sim_\mu$ is an equivalence relation. 
We denote the equivalence class of $A \in \B$ by $[A]_\mu$.  The
collection of equivalence classes of $\B$ modulo $\sim_\mu$ is
denoted by $\widehat \B$.
The operations of complement, union and intersection are well
defined for events and they make $\widehat \B$ a boolean algebra.
Moreover, $\mu$ induces on $\widehat \B$ a $\sigma$-additive,
strictly positive probability measure $\widehat \mu$.  
As noted above, we denote the canonical metric on $\widehat\B$ by $\widehat d$
and recall that it is defined by $\widehat d([A]_\mu,[B]_\mu) = d(A,B)$.
It is important in this paper that $(\widehat \B,\widehat d)$ is a complete metric space  
(see the calculation in \cite[Lemma 323F]{Fr-treatise}).

We refer to $(\widehat \B,\widehat \mu,\widehat d)$ as the \emph{probability algebra of}%
\index{probability algebra}  $(X,\B,\mu)$.  

\begin{nota}
\label{widehat C notation}\index{$C^\#$, $\bra S \ket$}
(a) If $C$ is a subset of a boolean algebra, we denote the boolean subalgebra generated by $C$ by $C^\#$.

Let $(X,\B,\mu)$ be a probability space and let $(\widehat \B,\widehat \mu,\widehat d)$ be
its probability algebra.  

(b) If $S$ is a subset of $\B$, we let $\bra S \ket$ denote the
$\sigma$-subalgebra of $\B$ generated by $S$.

(c) If $S$ is a subset of $\widehat \B$, we let $\bra S \ket$ denote the
$\widehat d$-closure of $S^\#$.  Note that $\bra S \ket$ is equal to the $\sigma$-subalgebra
of $\widehat \B$ generated by $S$, by \cite[Lemma 323F]{Fr-treatise}, and it is also equal to
the $\widehat d$-closed boolean subalgebra generated by $S$.  In other words, $\bra S \ket$
is $\widehat d$-closed and has $S^\#$ as a $\widehat d$-dense subset.

(d) Throughout this paper we use upper case letters such as $A$,
$B$ for elements of the $\s$-algebra $\B$ and lower case
letters such as $a$, $b$ for elements of $\widehat \B$. If $S$ is a subset of
 $\B$, we denote by $\widehat S$ the set of events
determined by the elements of $S$; \textit{i.e.},
$\widehat S = \{ [A]_\mu \mid A \in S \} \subseteq \widehat \B$.
\end{nota}

Whenever $\C \subseteq \B$ is a $\s$-subalgebra, $(X,\C,\mu{\rharp}\C)$
is a probability space in its own right, and we have defined 
$\widehat \C$ to be the probability algebra of $\C$, and also (in \ref{widehat C notation}(d))
to be a certain subset of $\widehat \B$.  There is no real ambiguity here; indeed, the inclusion
map $j$ of $\C$ into $\B$ induces a measure-preserving boolean isomorphism $\widehat j$ (which thus also preserves 
the metric) between the two versions
of $\widehat \C$.  (The function $\widehat j$ maps $[A]_{\mu {\rharp} \C}$ in the sense of 
$(X,\C,\mu{\rharp}\C)$ to $[A]_\mu$ in the sense of $(X,\B,\mu)$, for each $A \in \C$.)
With this identification, $(\widehat \C,\widehat \mu,\widehat d)$ is
the probability algebra of the probability space $(X,\C,\mu{\rharp}\C)$, 
and it is (canonically isomorphic to) a substructure of $(\widehat \B,\widehat \mu, \widehat d)$.

In the next result, we record for later use that the converse of the preceding comment is also true.

\begin{lema}
\label{closed subalgebras are probability algebras}
Let $(X,\B,\mu)$ be a probability space and let $(\widehat \B,\widehat \mu,\widehat d)$ be
its probability algebra.  Let $S$ be a subset of $\widehat \B$ and consider $\bra S \ket \subseteq \widehat \B$ as in
\ref{widehat C notation}(c).  Let 
\[
\S := \{ A \in \B \mid [A]_\mu \in \bra S \ket \}  \text{.}
\]
Then $\S$ is a $\s$-subalgebra of $\B$ and $\bra S \ket = \widehat \S$.

In particular, every substructure $(\bra S \ket,\mu{\rharp}\bra S \ket,d{\rharp}\bra S \ket)$ 
of the probability algebra 
$(\widehat \B, \widehat \mu, \widehat d)$  of a probability space $(X,\B,\mu)$ is (isomorphic to) the
probability algebra of a probability space $(X,\S,\mu{\rharp}\S)$, with $\S$ a $\sigma$-subalgebra 
of $\B$.
\end{lema}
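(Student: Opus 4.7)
The plan is to pull the $\sigma$-algebra structure of $\bra S \ket \subseteq \widehat \B$ back through the quotient map $q : A \mapsto [A]_\mu$ from $\B$ onto $\widehat \B$. The key (standard) fact I would invoke is that $q$ is a surjective $\sigma$-algebra homomorphism: it preserves $0$, $1$, complements, and, crucially, countable suprema, so that $[\bigcup_n A_n]_\mu = \bigvee_n [A_n]_\mu$ in $\widehat \B$ for every sequence $(A_n) \subseteq \B$. I would also use from \ref{widehat C notation}(c) that $\bra S \ket$, though defined as the $\widehat d$-closure of $S^\#$, coincides with the $\sigma$-subalgebra of $\widehat \B$ generated by $S$, so in particular $\bra S \ket$ is closed under complements and countable suprema.

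Given this, the first assertion, that $\S = q^{-1}(\bra S \ket)$ is a $\sigma$-subalgebra of $\B$, is a direct verification: $\emptyset, X \in \S$ since $q(\emptyset) = 0$ and $q(X) = 1$ lie in $\bra S \ket$; closure under complements follows from $q(A^c) = q(A)^c$; and if $A_n \in \S$ for $n \in \N$, then $q(\bigcup_n A_n) = \bigvee_n q(A_n) \in \bra S \ket$, so $\bigcup_n A_n \in \S$. The identity $\bra S \ket = \widehat \S$ is then immediate from the definition of $\S$: the inclusion $\widehat \S \subseteq \bra S \ket$ holds because each $[A]_\mu$ with $A \in \S$ lies in $\bra S \ket$ by construction, and the reverse inclusion holds because any $b \in \bra S \ket \subseteq \widehat \B$ equals $[A]_\mu$ for some $A \in \B$, and such $A$ automatically belongs to $\S$.

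The ``in particular'' clause then follows at once: if $(\bra S \ket, \widehat \mu{\rharp}\bra S \ket, \widehat d{\rharp}\bra S \ket)$ is a substructure of $(\widehat \B, \widehat \mu, \widehat d)$ of the indicated form, the $\sigma$-subalgebra $\S$ just constructed satisfies $\widehat \S = \bra S \ket$, so, via the canonical identification $\widehat j$ noted immediately before the lemma, the substructure is (isomorphic to) the probability algebra of $(X, \S, \mu{\rharp}\S)$. There is no real obstacle in this argument; the only point to be careful about is the equivalence of the two descriptions of $\bra S \ket$, which is exactly what \ref{widehat C notation}(c) supplies and what makes the pullback argument go through.
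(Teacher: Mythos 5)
Your proposal is correct. It differs from the paper's proof only in how closure of $\S$ under countable unions is established. The paper argues metrically and from scratch: given an increasing sequence $(A_n)$ in $\S$, the events $[A_n]_\mu$ form a Cauchy (because increasing) sequence in the $\widehat d$-closed set $\bra S \ket$, hence converge to some $[B]_\mu \in \bra S \ket$, and an elementary computation with $\mu(A_n \triangle B) \to 0$ then shows that $B$ agrees with $\bigcup_n A_n$ up to a null set. You instead invoke two packaged facts: that the quotient map $q \colon A \mapsto [A]_\mu$ is a $\sigma$-homomorphism (so $[\bigcup_n A_n]_\mu = \bigvee_n [A_n]_\mu$ in $\widehat \B$), and that $\bra S \ket$, though defined as a metric closure, coincides with the $\sigma$-subalgebra of $\widehat \B$ generated by $S$ and is therefore closed under countable suprema taken in $\widehat \B$. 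The second fact is exactly what Notation \ref{widehat C notation}(c) records (with a citation to Fremlin), so you are entitled to it; the first is standard and follows from countable subadditivity of $\mu$. Your route is cleaner as a pullback-of-structure argument, at the cost of outsourcing the content that the paper proves inline; the paper's version is self-contained and in effect re-proves the relevant instance of the $\sigma$-homomorphism property for increasing sequences. The remaining steps (finite boolean operations, the identity $\bra S \ket = \widehat \S$ via surjectivity of $q$, and the ``in particular'' clause) match the paper's treatment.
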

\begin{proof}
For $A,B \in \S \subseteq \B$ we have $[A \cup B]_\mu =  [A]_\mu \cup [B]_\mu \in \bra S \ket $, so $\S$ is closed
under the union operation of $\B$.  Similar calculations show that $\S$ is closed under $\cap$ and $\cdot^c$.
Also, note that $\S$ contains every element of $\B$ that has $\mu$-measure $0$.

To finish the proof, we need to consider an increasing sequence $(A_n)$ in $\S$ and show that the union of $(A_n)$
in $\B$ is an element of $\S$.  Given such an $(A_n)$, the sequence $([A_n]_\mu) \subseteq \bra S \ket$ must be increasing in $\widehat \B$,
so it converges in the sense of the metric $\widehat d$ to an element $[B]_\mu \in \bra S \ket$, with $B \in \S$.  This means that
$\mu(A_n \triangle B) \rightarrow 0$ in $\B$.  Since $(A_n)$ is increasing, this implies $\mu(A_n \setminus B) = 0$ for all $n$.
It also implies $\mu(B \setminus (\cup A_n)) = 0$, and therefore $B$ differs from $\cup A_n$ by a set of $\mu$-measure $0$ in $\B$.
Hence $\cup A_n \in \S$.
\end{proof}

\bigskip
We need the following familiar special case of the Radon-Nikodym theorem:\index{Radon-Nikodym theorem}

\begin{theo}\cite[Theorem 3.8]{Folland}\label{Radon-Nikodym}
Let $(X,\B,\mu)$ be a probability space, let $\C\subseteq \B$ be a
$\s$-subalgebra, and consider  $A\in \B$. Then there exists $g\in L_1(X,\C,\mu)$ 
such that for every $B\in \C$, one has
$\int_{B}g d\mu=\int_{B} \chi_{A} d\mu$. The function $g$ is determined by 
$a = [A]_\mu$ up to equality $\mu$-almost everywhere; it is called the 
\index{conditional probability} 
\emph{conditional probability of $a$ with respect to $\C$} 
and we  denote it by $\P(a|\C)$ or, equivalently, by $\P(A|\C)$.  
We also refer to $g$ as \emph{representing} $\P(a|\C)$ and $\P(A|\C)$.%
\index{$\P(A  \vert  \C), \E(f  \vert \C)$}

More generally, for $f\in L_1(X,\mathcal{B},\mu)$ there exists 
$\E(f|\C)\in L_1(X,\C,\mu)$
such that for every $B\in \C$, one has
$\int_{B} \E(f|\C) d\mu=\int_{B} f d\mu$. 
The function $\E(f|\C)$ is unique in the sense that
the operation $f \mapsto \E(f|\C)$ preserves the equivalence
relation of equality $\mu$-almost everywhere.  The element
$\E(f|\C)$ is called the \emph{conditional expectation\index{conditional expectation} of
$f$ with respect to $\C$} and we  also denote it by $\E_{\C}(f)$.
\end{theo}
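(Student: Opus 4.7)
The plan is to derive both parts from the classical Radon-Nikodym theorem (which the authors cite from Folland), applied not to $(X,\B,\mu)$ itself but to the restricted probability space $(X,\C,\mu\rharp\C)$. I would proceed in roughly four steps.

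First, I would handle the general $L_1$ statement and then recover the $\P(A|\C)$ statement as the special case $f=\chi_A$. Given $f\in L_1(X,\B,\mu)$, define a finite signed measure $\nu_f$ on $\C$ by $\nu_f(B):=\int_B f\,d\mu$ for $B\in \C$. The key observation is that $\nu_f$ is absolutely continuous with respect to $\mu\rharp\C$ on the $\sigma$-algebra $\C$: if $B\in\C$ and $\mu(B)=0$, then $\int_B f\,d\mu=0$ (split $f=f^+ - f^-$ and use monotone convergence on the positive and negative parts separately). Applying the cited Radon-Nikodym theorem inside $(X,\C,\mu\rharp\C)$ to the signed measure $\nu_f$ (decomposed into its Jordan parts if one wishes) yields a $\C$-measurable function $\E(f|\C)\in L_1(X,\C,\mu\rharp\C)=L_1(X,\C,\mu)$ with $\int_B \E(f|\C)\,d\mu = \nu_f(B) = \int_B f\,d\mu$ for every $B\in\C$. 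Specializing to $f=\chi_A$ gives the function $g=\P(A|\C)$ in the first half of the statement.

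Second, I would verify uniqueness $\mu$-almost everywhere. Suppose $g_1,g_2\in L_1(X,\C,\mu)$ both satisfy the defining integral identity, and let $h:=g_1-g_2$; then $h$ is $\C$-measurable, integrable, and $\int_B h\,d\mu=0$ for every $B\in\C$. For each $\varepsilon>0$ the set $\{h\geq\varepsilon\}$ lies in $\C$, and
\[
0=\int_{\{h\geq \varepsilon\}} h\,d\mu \geq \varepsilon\,\mu(\{h\geq\varepsilon\}),
\]
forcing $\mu(\{h\geq\varepsilon\})=0$; the symmetric argument gives $\mu(\{h\leq-\varepsilon\})=0$. Letting $\varepsilon\to 0$ yields $h=0$ $\mu$-a.e.

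Third, I would deduce that the outputs depend only on the $\mu$-a.e.\ equivalence class of the input. If $A\sim_\mu A'$, then $\chi_A=\chi_{A'}$ $\mu$-a.e., so $\int_B\chi_A\,d\mu=\int_B\chi_{A'}\,d\mu$ for every $B\in\C$; hence any representative of $\P(A|\C)$ also represents $\P(A'|\C)$, and the uniqueness clause forces them to coincide $\mu$-a.e. The same argument applied to $\nu_f$ gives that $f\mapsto\E(f|\C)$ preserves the $\mu$-a.e.\ equivalence relation.

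The main obstacle is essentially invisible here: the content is already packaged in the cited Radon-Nikodym theorem. The only genuine subtlety worth flagging in the writeup is the observation that the ambient measure for the application of Radon-Nikodym is $\mu\rharp\C$ rather than $\mu$, which is exactly what forces the density $g$ to be $\C$-measurable and hence to lie in $L_1(X,\C,\mu)$ rather than merely in $L_1(X,\B,\mu)$.
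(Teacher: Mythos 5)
Your proposal is correct and takes essentially the same approach the paper relies on: the paper gives no proof of its own, simply citing the classical Radon--Nikodym theorem from Folland, and your argument (apply Radon--Nikodym to the signed measure $B \mapsto \int_B f\,d\mu$ on the restricted space $(X,\C,\mu{\rharp}\C)$, then check $\mu$-a.e.\ uniqueness and well-definedness on equivalence classes) is precisely the standard derivation that citation stands for.
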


Note that for any $A \in \B$, the function $\P(A | \C)$ must have
its values in $[0,1]$ $\mu$-ae.  

\index{$\P(a \vert D)$}
\begin{nota}
\label{cond prob notation over a sigma-subalgebra of prob alg}
Let $(X,\B,\mu)$ be a probability space, and consider $a \in \widehat \B$.
Suppose $D$ is a boolean subalgebra of $\widehat \B$ that is closed
(with respect to the metric $\widehat d$), and let 
$\D =  \{ A \in \B \mid [A]_\mu \in D \} $ be the $\s$-subalgebra discussed
in the proof of Lemma \ref{closed subalgebras are probability algebras},
so $\widehat \D = D$.  We  write $\P(a|D)$ to denote 
$\P(\chi_A|\D)$, where $a = [A]_\mu$.  Similarly, for $a \in \widehat \B$ we write
$\chi_a$ to denote one of the characteristic functions $\chi_A$
where $A \in \B$ and $a = [A]_\mu$.  Note that if $A_1,A_2$ are two such sets for $a$,
then $\chi_{A_1}=\chi_{A_2}$ holds $\mu$-almost everywhere, and hence
the same is true of $\P(\chi_{A_1}|\D)$ and $\P(\chi_{A_2}|\D)$.

When we are given a probability algebra $(\B,\mu,d)$ without specifying the
underlying probability space, and $\A$ is a closed subalgebra of $\B$, 
we  refer to $\P(b|\A)$ as being an \emph{$\A$-measurable function}
in order to avoid explicitly introducing the probability space representing $(\B,\mu,d)$
and the $\sigma$-subalgebra representing $\A$ (as described 
in Lemma \ref{closed subalgebras are probability algebras}).  
Similarly we  refer to $\chi_a$ as being $\A$-measurable, when $a \in \A$.

As is customary, when $a,b \in \widehat \B$, we write
$\P(a|D) = \P(b|D)$ to mean that the functions 
$\P(a|D)$ and $\P(b|D)$ are equal $\mu$-almost everywhere, 
and we give a similar interpretation to $\P(a|D) \leq \P(b|D)$.  
Therefore, the associated strict partial ordering $\P(a|D) < \P(b|D)$ 
(meaning that $\P(a|D) \leq \P(b|D)$ is true while $\P(a|D) = \P(b|D)$ is false)
is true if and only if $\P(a|D) \leq \P(b|D)$ holds $\mu$-almost everywhere
and $\P(a|D) < \P(b|D)$ holds on a set of positive $\mu$-measure.
Similar remarks apply to these relations between other measurable real-valued functions 
(such as $\chi_a$).
\end{nota}

When $\EE = \{\emptyset,X\}$ is the trivial subalgebra, and $A \in \B$, then $\P(A|\EE)$ is the 
constant function $ f(x) := \mu(A)$ for all $x \in X$.
Indeed, this $f$ is $\EE$-measurable and $\int_E f d\mu = \mu(A \cap E) = \int_E \chi_A d\mu$ for
all $E \in \EE$, namely for $E = \emptyset$ and $E = X$.  More generally, for finite subalgebras
$\EE \subseteq \B$ we have the following formula for $\P(A|\EE)$, which is useful in many places
below.

\begin{lema}
\label{P(A|C) for finite C}
Let $\EE \subseteq \B$ be a finite subalgebra and $A \in \B$.  Suppose $E_1,\dots,E_n$ are the
atoms in $\EE$.  Then
$$ \P(A|\EE) = \sum_j \frac{\mu(A \cap E_j)}{\mu(E_j)} \chi_{E_j} \text{.}$$
\end{lema}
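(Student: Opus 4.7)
The plan is to verify that the function
\[
g \;:=\; \sum_j \frac{\mu(A \cap E_j)}{\mu(E_j)} \chi_{E_j}
\]
satisfies the two defining properties of $\P(A|\EE)$ from Theorem \ref{Radon-Nikodym}, and then to invoke uniqueness $\mu$-ae to conclude the stated equality. (Atoms $E_j$ with $\mu(E_j)=0$ may be omitted from the sum without changing $g$ $\mu$-ae; alternatively, the corresponding summands contribute nothing to any integral against an $\EE$-measurable function, so the choice of value for the coefficient is immaterial.)

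First I would check $\EE$-measurability of $g$. Since $\EE$ is a finite boolean subalgebra of $\B$ with boolean atoms $E_1,\dots,E_n$, every element of $\EE$ is a disjoint union of some subcollection of the $E_j$'s, and every $\EE$-measurable real-valued function is constant on each $E_j$. The function $g$ is a finite linear combination of the characteristic functions $\chi_{E_j}$, each $\EE$-measurable, hence $g \in L_1(X,\EE,\mu)$.

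Next I would verify the defining integral identity. Fix $B \in \EE$, and write $B$ as the disjoint union of those atoms $E_{j_1},\dots,E_{j_k}$ it contains (up to a $\mu$-null set). Using $\int_{E_i} \chi_{E_j}\,d\mu = \delta_{ij}\mu(E_j)$ together with additivity of the integral over the partition of $B$, one computes
\[
\int_B g\,d\mu \;=\; \sum_{i=1}^k \frac{\mu(A \cap E_{j_i})}{\mu(E_{j_i})} \,\mu(E_{j_i}) \;=\; \sum_{i=1}^k \mu(A \cap E_{j_i}) \;=\; \mu(A \cap B) \;=\; \int_B \chi_A\,d\mu,
\]
where the last-but-one equality uses the disjointness of the atoms and the fact that $A\cap B$ equals $\bigcup_i(A\cap E_{j_i})$ up to a $\mu$-null set. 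This is exactly the Radon–Nikodym identity that characterizes $\P(A|\EE)$.

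The main obstacle is essentially just bookkeeping: keeping track of the (harmless) atoms of measure zero and reducing the integration over an arbitrary $B\in\EE$ to integration over individual atoms. Once those points are handled, the uniqueness clause of Theorem \ref{Radon-Nikodym} forces $\P(A|\EE) = g$ $\mu$-almost everywhere, which is the claimed identity.
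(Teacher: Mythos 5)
Your proof is correct and follows essentially the same route as the paper's: verify the Radon--Nikodym defining identity $\int_B g\,d\mu=\int_B\chi_A\,d\mu$ by reducing, via additivity, to integration over the individual atoms $E_j$, and then invoke uniqueness. Your extra care about atoms of measure zero is a harmless refinement the paper leaves implicit.
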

\begin{proof}
By additivity of the integral, it suffices to prove that the integral of the displayed function over each atom
$E_j$ is equal to $\int_{E_j} \chi_A \ d\mu$, which equals $\mu(A \cap E_j)$.  Since the sets $E_j$ are
pairwise disjoint, this is clear.
\end{proof}

\begin{fact}\label{droponnorm}
Let $(X,\B,\mu)$ be a probability space and let $\C\subseteq \B$ be a
$\s$-subalgebra. The conditional expectation operator $\E_{\C}$ restricted
to $L_2(X,\B,\mu)$ is the Hilbert space orthogonal projection of $L_2(X,\B,\mu)$ onto 
the subspace $L_2(X,\C,\mu)$. (See \cite[Proposition 4.2]{Br}.)

Let $\D\subseteq \C\subseteq \B$ be $\s$-subalgebras and $A\in \B$. 
Then $\P(A|\D)=\E_{\D}(\chi_A)=\E_{\D}(\E_{\C}(\chi_A))$ is 
the orthogonal projection of $\P(A|\C) = \E_{\C}(\chi_A)$ into $L_2(X,\D,\mu)$, so
\[
\| \P(A|\C) \|_2^2 - \| \P(A|\D) \|_2^2  = \| \P(A|\C) - \P(A|\D) \|_2^2 \text{.}
\]
Further, since we are working over a probability space, we have $\|f\|_2 = \|f\|_2 \|1\|_2 \geq |\bra f,1 \ket| =  \|f\|_1$ for
all $L_2$ functions $f$, by the Cauchy-Schwartz inequality, and therefore
\[
\| \P(A|\C) - \P(A|\D) \|_2  \geq \| \P(A|\C) - \P(A|\D) \|_1 \text{.}
\]
These give useful quantitative conditions for $\P(A|\C) \neq \P(A|\D)$.  They are used
in proving Remark \ref{superstable}, Fact \ref{properties0}(5) and Corollary \ref{quantitative entropy-forking}.

If $\EE \subseteq \D \subseteq \C \subseteq \B$ and $A \in \B$, the preceding discussion yields 
$$\| \P(A|\C) - \P(A|\D) \|_1 \leq
\| \P(A|\C) - \P(A|\D) \|_2 \leq  \| \P(A|\C) - \P(A|\EE) \|_2 \text{,}$$ 
which can be useful in working with approximations to $\P(A|\C)$, as we illustrate next.
\end{fact}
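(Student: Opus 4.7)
The plan is to combine three standard Hilbert space facts about $L_2$ of a probability space: the identification of conditional expectation with orthogonal projection, the Pythagorean theorem applied to nested projections, and the Cauchy--Schwarz inequality.

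For the opening sentence I would simply cite \cite[Proposition 4.2]{Br}, which identifies $\E_{\C}$ restricted to $L_2(X,\B,\mu)$ with the orthogonal projection onto the closed subspace $L_2(X,\C,\mu)$. The second paragraph follows from this. First, whenever $\D \subseteq \C$, we have $L_2(X,\D,\mu) \subseteq L_2(X,\C,\mu)$ as closed subspaces of $L_2(X,\B,\mu)$, so projecting first onto the larger subspace and then onto the smaller one agrees with projecting directly onto the smaller; applied to $\chi_A$ this yields $\E_{\D}(\chi_A)=\E_{\D}(\E_{\C}(\chi_A))$, i.e.\ $\P(A|\D)$ is the orthogonal projection of $\P(A|\C)$ onto $L_2(X,\D,\mu)$. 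Consequently $\P(A|\C) - \P(A|\D)$ lies in the orthogonal complement of $L_2(X,\D,\mu)$ and is in particular orthogonal to $\P(A|\D)$, so Pythagoras gives
\[
\| \P(A|\C) \|_2^2 = \| \P(A|\D) \|_2^2 + \| \P(A|\C) - \P(A|\D) \|_2^2,
\]
which rearranges to the stated identity.

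For the $L_2 \geq L_1$ step I would apply Cauchy--Schwarz to the pair $|f|$, $1$: since $\mu$ is a probability measure,
\[
\|f\|_1 = \int |f| \cdot 1 \, d\mu \leq \Bigl(\int |f|^2 \, d\mu\Bigr)^{1/2} \Bigl(\int 1^2 \, d\mu\Bigr)^{1/2} = \|f\|_2,
\]
and taking $f = \P(A|\C) - \P(A|\D)$ gives the displayed bound. The three-term chain for $\EE \subseteq \D \subseteq \C \subseteq \B$ is then a direct consequence of nested projections: since $\EE \subseteq \D$ forces $\P(A|\EE) \in L_2(X,\EE,\mu) \subseteq L_2(X,\D,\mu)$, and $\P(A|\D)$ is by definition the point of $L_2(X,\D,\mu)$ nearest to $\P(A|\C)$ in the $\|\cdot\|_2$ norm, we obtain $\| \P(A|\C) - \P(A|\D) \|_2 \leq \| \P(A|\C) - \P(A|\EE) \|_2$; combining this with the preceding $L_1 \leq L_2$ bound yields the full chain.

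I do not anticipate any genuine obstacle here, since each step is a direct application of a standard Hilbert space principle. The only mild subtlety worth flagging is that the tower identity $\E_{\D}\circ\E_{\C} = \E_{\D}$ (for $\D \subseteq \C$) is precisely what places $\P(A|\D)$ in the role of the orthogonal projection of $\P(A|\C)$ onto $L_2(X,\D,\mu)$, which in turn is what makes Pythagoras apply in the form stated.
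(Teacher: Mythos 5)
Your proposal is correct and follows essentially the same route as the paper's own inline justification: citation of \cite[Proposition 4.2]{Br} for the projection identification, the tower property of nested projections plus Pythagoras for the norm identity, and Cauchy--Schwarz against the constant function $1$ for the $L_1 \leq L_2$ comparison. Your application of Cauchy--Schwarz to $|f|$ rather than $f$ is in fact slightly more careful than the paper's phrasing $|\bra f,1\ket| = \|f\|_1$, and your best-approximation justification of the final chain is equivalent to the paper's appeal to the Pythagorean identity with $\EE \subseteq \D$.
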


\begin{lema}
\label{approximating P(a|C)}
Let $(X,\B,\mu)$ be a probability space, $\C \subseteq \B$ a $\sigma$-subalgebra, and
$A \in \B$.  For each $k \geq 1$ there exists $(E_1,\dots,E_k) \in \C^k$, a partition of $X$,
such that for any $\sigma$-subalgebra $\D$ with $\{ E_1,\dots,E_k \} \subseteq \D \subseteq \C$ one has
$\| \P(A|\C) - \P(A|\D) \|_1 \leq 1/k$.
\end{lema}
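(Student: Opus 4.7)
The plan is to approximate $\P(A|\C)$ by a $\C$-measurable simple function built from level sets of $\P(A|\C)$, and to take the partition $(E_1,\dots,E_k)$ to consist of these level sets. The bound $\|\P(A|\C)-\P(A|\D)\|_1\leq 1/k$ will then follow for every $\D$ containing them, using the orthogonal projection characterization of conditional expectation from Fact \ref{droponnorm}.

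More precisely, write $g := \P(A|\C)$. Since $0\leq \chi_A\leq 1$, we have $0\leq g\leq 1$ $\mu$-almost everywhere, so we may assume $g$ takes its values in $[0,1]$ everywhere (after adjusting on a null set). Partition $[0,1]$ into the $k$ intervals $I_j := [(j-1)/k,j/k)$ for $1\leq j<k$ and $I_k := [(k-1)/k,1]$, and define
\[
E_j := g^{-1}(I_j), \qquad j=1,\dots,k.
\]
Each $E_j$ lies in $\C$ because $g$ is $\C$-measurable, and $(E_1,\dots,E_k)$ is a partition of $X$. Pick any $c_j\in I_j$ (say $c_j = (j-\tfrac12)/k$) and set $f := \sum_{j=1}^k c_j\chi_{E_j}$. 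By construction, $|g(x)-f(x)|\leq 1/k$ for every $x\in X$, hence $\|g-f\|_2\leq 1/k$.

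Now suppose $\D$ is any $\sigma$-subalgebra with $\{E_1,\dots,E_k\}\subseteq \D\subseteq \C$. Then $f$ is $\D$-measurable, and by Fact \ref{droponnorm} the conditional expectation $\P(A|\D)$ is the $L_2$-orthogonal projection of $g=\P(A|\C)$ onto $L_2(X,\D,\mu)$. Orthogonal projections minimize the $L_2$-distance, so
\[
\|\P(A|\C)-\P(A|\D)\|_2 \;\leq\; \|g-f\|_2 \;\leq\; \tfrac1k.
\]
Combining this with the Cauchy--Schwarz inequality $\|\cdot\|_1\leq\|\cdot\|_2$ on a probability space (also recorded in Fact \ref{droponnorm}), we obtain $\|\P(A|\C)-\P(A|\D)\|_1\leq 1/k$, which is the required bound.

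There is no substantial obstacle: the only thing to watch is that the level sets $E_j$ really do lie in $\C$ (which is just $\C$-measurability of $g$) and that the approximating simple function $f$ is $\D$-measurable (which is why it is essential to include these particular level sets in $\D$, rather than an arbitrary partition of $X$). Once both points are in place, the argument is a one-line application of the $L_2$-projection property of conditional expectation combined with the $L_1\leq L_2$ inequality on a probability space.
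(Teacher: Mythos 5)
Your proof is correct and follows essentially the same route as the paper's: partition $[0,1]$ into $k$ subintervals, take $E_j$ to be the corresponding level sets of $\P(A|\C)$, and conclude via the $L_2$-projection characterization of conditional expectation and the $\|\cdot\|_1\leq\|\cdot\|_2$ inequality recorded in Fact \ref{droponnorm}. The only (harmless) difference is that you compare $\P(A|\D)$ directly with an arbitrary step function $\sum_j c_j\chi_{E_j}$ via the minimizing property of orthogonal projection, whereas the paper computes $\P(A|\{E_1,\dots,E_k\}^\#)$ explicitly using Lemma \ref{P(A|C) for finite C} and verifies that its coefficients lie in the intervals $I_j$ before passing to general $\D$.
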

\begin{proof}
Let $f = \P(\chi_A|\C)$, so $f$ is a $\C$-measurable $[0,1]$-valued function.  Let $I_1,\dots,I_k$ be the
intervals $I_j = [\frac{j-1}{k},\frac{j}{k})$ for $j = 1,\dots,k-1$ and $I_k = [\frac{k-1}{k},1]$.  So the
intervals are pairwise disjoint and their union is $[0,1]$.  For each $j$ let $E_j = \{ x \in X \mid f(x) \in I_j \}$.  
Then $(E_1,\dots,E_k)$ is a partition of $X$ in $\C$.  For any sequence $(r_1,\dots,r_k)$ such that
$r_j \in I_j$ for all $j$, we have
$ | f - \sum_j r_j \chi_{E_j} | \leq 1/k$
pointwise on $X$, and therefore
$\|  f - \sum_j r_j \chi_{E_j} \|_2 \leq 1/k \text{.}$

Now set $r_j = \frac{\mu(A \cap E_j)}{\mu(E_j)}$.  Since $f(x)$ is in $I_j$ for $x \in E_j$, we have 
$ \frac{j-1}{k}\mu(E_j) \leq \int_{E_j} f d\mu \leq \frac{j}{k}\mu(E_j)$   for all $j$.  Noting that 
$\int_{E_j} f d\mu = \int_{E_j} \chi_A d\mu = \mu(A \cap E_j)$
we see that  $r_j \in I_j$ for all $j=1,\dots,k$.  It follows using Lemma \ref{P(A|C) for finite C} that
$$ \| f - \P(\chi_A | \{E_1,\dots,E_k \}^\#) \|_2 = \| f - \sum_j \frac{\mu(A \cap E_j)}{\mu(E_j)} \chi_{E_j} \|_2 \leq 1/k \text{.} $$
Further, if $\D$ is any $\sigma$-subalgebra of $\B$ that contains $\{E_1,\dots,E_k \}$, then
$ \| f - \P(\chi_A | \D) \|_1 \leq 1/k \text{,}$ by the last statement in Fact \ref{droponnorm}, so 
$(E_1,\dots,E_k)$ satisfies the stated conditions.
\end{proof}

Probabilistic independence is very important in this paper.  For $A,B \in \B$, we say $A$ and $B$ are
\emph{(probabilistically) independent}\index{independence!probabilistic}
if $\mu(A \cap B) = \mu(A)\mu(B)$, and write $A \pindep B$.
Further, if $S,T$ are subsets of $\B$, we say $S$ and $T$ are \emph{(probabilistically) independent}
 and write $S \pindep T$ if $A \pindep B$ holds for every $A \in \bra S \ket$ and $B \in \bra T \ket$.%
 \index{$\pindep$}
 Not surprisingly to model theorists, we need a more general version of independence that is relative
 to a set of parameters:

\begin{defi}
\label{conditional independence}
If $\EE \subseteq \B$ is a $\sigma$-subalgebra of $\B$, we say $A$ and $B$ are
\emph{(conditionally) independent over $\EE$},\index{independence!conditional}
 and write $A \pindep_\EE B$ if
$$ \P(A \cap B | \EE) =  \P(A | \EE)  \cdot \P( B | \EE) \text{.} $$

More generally, if $S,T,W$ are subsets of $\B$, we say $S$ and $T$ are \emph{(conditionally) independent over $W$}
 and write $S \pindep_W T$ if $A \pindep_{\bra W \ket} B$ holds for every $A \in \bra S \ket$ and $B \in \bra T \ket$.
\end{defi}

Note that $\pindep_{\bra W \ket}$ reduces to $\pindep$ when $W = \emptyset$, since $\bra \emptyset \ket = \{\emptyset,X \}$.
Also, $S \pindep_W T$ if and only if $A \pindep_{\bra W \ket} B$ holds for every $A \in S^\#$ and $B \in T^\#$.

For us the following characterization of conditional independence is fundamental.  As usual in model theory,
if $Y,Z$ are sets of parameters, we denote $Y \cup Z$ by $YZ$.

\begin{lema}
\label{characterization of independence}
If $S,T,W$ are subsets of $\B$, then the following statements are equivalent:
\begin{enumerate}
\item[(i)] $S \pindep_W T$.
\item[(ii)] $ \P(A | \bra WT \ket) = \P(A | \bra W \ket) $ for all $A \in S^\#$.
\item[(iii)] $\P(A|\bra WT \ket)$ is $\bra W \ket$-measurable, for all $A \in S^\#$.
\item[(iv)] $ \| \P(A | \bra WT \ket)\|_2 = \| \P(A | \bra W \ket) \|_2 $ for all $A \in S^\#$.
\end{enumerate}
\end{lema}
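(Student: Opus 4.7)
The plan is to establish the easy equivalences (ii)$\Leftrightarrow$(iii)$\Leftrightarrow$(iv) first using only the Hilbert space interpretation of conditional expectation recorded in Fact \ref{droponnorm}, and then to do the two substantive implications (ii)$\Rightarrow$(i) and (i)$\Rightarrow$(ii) by separate arguments.

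For (ii)$\Rightarrow$(iii): the right-hand side of (ii) is manifestly $\bra W \ket$-measurable. Conversely, assuming (iii), by the tower property $\P(A|\bra W \ket) = \E_{\bra W \ket}(\P(A|\bra WT \ket))$, and an already $\bra W \ket$-measurable function is fixed by the orthogonal projection onto $L_2(\bra W \ket)$, giving (ii). For (ii)$\Leftrightarrow$(iv), apply the Pythagorean identity from Fact \ref{droponnorm} with $\D = \bra W \ket$ and $\C = \bra WT \ket$: equality of the two $L_2$-norms forces $\|\P(A|\bra WT \ket) - \P(A|\bra W \ket)\|_2 = 0$, i.e.\ (ii) in the $L_2$ (equivalently $\mu$-a.e.) sense.

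For (ii)$\Rightarrow$(i), fix $A \in S^\#$ and $B \in \bra T \ket$. Since $\chi_B$ is $\bra WT \ket$-measurable, the tower property and pull-out of a measurable factor give
\[
\P(A \cap B | \bra W \ket) \;=\; \E_{\bra W \ket}\bigl(\chi_B \cdot \P(A|\bra WT \ket)\bigr) \;=\; \E_{\bra W \ket}\bigl(\chi_B \cdot \P(A|\bra W \ket)\bigr) \;=\; \P(A|\bra W \ket)\,\P(B|\bra W \ket),
\]
using (ii) in the middle step and pulling out the $\bra W \ket$-measurable factor $\P(A|\bra W \ket)$ at the end. This yields the product formula for all $A \in S^\#$ and $B \in \bra T \ket$; by the remark just before Lemma \ref{characterization of independence} this already implies $S \pindep_W T$, and in any case a straightforward $L_2$-approximation of $A \in \bra S \ket$ by elements of $S^\#$ (using that $\widehat d$-convergence is $L_2$-convergence of characteristic functions) upgrades the conclusion to all of $\bra S \ket$.

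For (i)$\Rightarrow$(ii), fix $A \in S^\#$; by uniqueness of conditional expectation it suffices to verify
\[
\int_C \chi_A \, d\mu \;=\; \int_C \P(A|\bra W \ket)\, d\mu
\]
for every $C$ in a generating $\pi$-system of $\bra WT \ket$. For $C = U \cap V$ with $U \in \bra W \ket$, $V \in \bra T \ket$, a short calculation using (i) and the fact that $\chi_U$ is $\bra W \ket$-measurable shows that both sides equal $\int \chi_U\,\P(A|\bra W \ket)\,\P(V|\bra W \ket)\, d\mu$. By additivity this extends to finite disjoint unions of such rectangles, hence to the boolean algebra $(W \cup T)^\#$; and since $(W \cup T)^\#$ is $\widehat d$-dense in $\bra WT \ket$, a continuity argument closes out all of $\bra WT \ket$. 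The main technical point to watch is simply the bookkeeping between $S^\#$ and $\bra S \ket$ and the use of $L_2$-continuity of conditional expectation to pass between them — there is no deeper obstacle.
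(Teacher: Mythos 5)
Your proof is correct, and the peripheral equivalences (ii)$\Leftrightarrow$(iii) and (ii)$\Leftrightarrow$(iv) are handled exactly as in the paper: the first via the tower property and the fact that $\bra W \ket$-measurable functions are fixed by $\E_{\bra W \ket}$, the second via the Pythagorean identity of Fact \ref{droponnorm}. Where you diverge is on the central equivalence (i)$\Leftrightarrow$(ii): the paper simply cites \cite[Theorem 8.9]{K} (together with the observation that $S^\#$ is dense in $\bra S \ket$, so (ii) over $S^\#$ is equivalent to (ii) over $\bra S \ket$), whereas you prove it from scratch. Your (ii)$\Rightarrow$(i) via $\P(A\cap B|\bra W\ket)=\E_{\bra W\ket}\bigl(\chi_B\,\P(A|\bra WT\ket)\bigr)$ followed by the pull-out of the $\bra W\ket$-measurable factor is sound, and it even yields the product formula for all $B\in\bra T\ket$ directly, so the reduction to $T^\#$ recorded after Definition \ref{conditional independence} is not strictly needed there. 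Your (i)$\Rightarrow$(ii) via checking the defining integral identity on the $\pi$-system of intersections $U\cap V$ ($U\in W^\#$, $V\in T^\#$), noting that every element of $(W\cup T)^\#$ is a finite disjoint union of such sets, and then passing to $\bra WT\ket$ by $\widehat d$-density (or a monotone class argument) is the standard proof that Kallenberg's theorem encapsulates. What your route buys is self-containedness — the lemma becomes provable from Theorem \ref{Radon-Nikodym} and Fact \ref{droponnorm} alone — at the cost of a page of routine verification that the paper chose to outsource; the mathematical content is the same.
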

\begin{proof}
(i) $\Leftrightarrow$ (ii): Apply \cite[Theorem 8.9]{K}, noting that (ii) is equivalent to the same statement
with $S^\#$ replaced by $\bra S \ket$, since $S^\#$ is dense in $\bra S \ket$. 

(ii) $\Leftrightarrow$ (iii): This is immediate.

(iv) $\Leftrightarrow$ (ii): Let $A \in \B$.  Since $\bra W \ket \subseteq \bra WT \ket$, Fact \ref{droponnorm}
gives us
\[
\| \P(A|\bra WT \ket) \|_2^2 - \| \P(A|\bra W \ket) \|_2^2  = \| \P(A|\bra WT \ket) - \P(A|\bra W \ket) \|_2^2
\]
from which follows
\[
\| \P(A|\bra WT \ket) \|_2 = \| \P(A|\bra W \ket) \|_2
\mbox{  if and only if } \P(A|\bra WT \ket) =  \P(A|\bra W \ket) \text{.}
 \]
 Applying the quantifier ``for all $A \in S^\#$'' yields the desired equivalence.
\end{proof}

The next result shows that several different definitions of $\pindep$ that one finds in the literature
are equivalent.

\begin{coro}
\label{characterization of independence-extra}
If $S,T,W$ are subsets of $\B$, then the following statements are equivalent:
\begin{enumerate}
\item[(i)] $S \pindep_W T$.
\item[(ii)] $S \pindep_W \bra WT \ket$
\item[(iii)] $\bra WS \ket \pindep_W \bra WT \ket$
\end{enumerate}
\end{coro}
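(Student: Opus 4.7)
The plan is to derive both equivalences from Lemma \ref{characterization of independence} (specifically its characterization (ii)) together with the obvious symmetry of the ternary relation $\pindep_W$. I will prove (i) $\Leftrightarrow$ (ii) directly, and then obtain (iii) by a swap-and-reapply argument.

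For (i) $\Leftrightarrow$ (ii), the key observation is that $\bra WT \ket$ is a $\sigma$-subalgebra of $\B$ that already contains $W$, so $\bra W \cup \bra WT \ket \ket = \bra WT \ket$. Applying Lemma \ref{characterization of independence}(ii) to statement (i), the condition $S \pindep_W T$ is equivalent to $\P(A \vert \bra WT \ket) = \P(A \vert \bra W \ket)$ for all $A \in S^\#$. Applying the same lemma to statement (ii) (with the set-argument "$T$" now taken to be $\bra WT \ket$), the condition $S \pindep_W \bra WT \ket$ translates to $\P(A \vert \bra W \cup \bra WT \ket \ket) = \P(A \vert \bra W \ket)$ for all $A \in S^\#$. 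By the preceding observation this is the identical condition, so (i) $\Leftrightarrow$ (ii).

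To get (iii), I use that $\pindep_W$ is symmetric in its two main arguments, which is immediate from the definition since $\P(A \cap B \vert \EE) = \P(A \vert \EE) \P(B \vert \EE)$ is symmetric in $A, B$. Applying (i) $\Leftrightarrow$ (ii) once, flipping sides, applying (i) $\Leftrightarrow$ (ii) again to the flipped statement, and finally flipping back, I get the chain
\[
S \pindep_W T \iff S \pindep_W \bra WT \ket \iff \bra WT \ket \pindep_W S \iff \bra WT \ket \pindep_W \bra WS \ket \iff \bra WS \ket \pindep_W \bra WT \ket,
\]
which is (iii). There is no substantive obstacle; the only things to be careful about are the collapse $\bra W \cup \bra WT \ket \ket = \bra WT \ket$ and keeping the symmetry flips straight as the two "expand the $T$-side" applications of (i) $\Leftrightarrow$ (ii) are composed.
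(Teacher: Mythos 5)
Your proof is correct and follows essentially the same route as the paper: both reduce (i) $\Leftrightarrow$ (ii) to Lemma \ref{characterization of independence}(ii) via the collapse $\bra W \cup \bra WT \ket \ket = \bra WT \ket$, and both obtain (iii) from the symmetry of the independence relation. Your write-up is merely more explicit about the chain of flips used for (ii) $\Leftrightarrow$ (iii).
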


\begin{proof}
(ii) $\Rightarrow$ (i) is clear. Assume now that $S \pindep_W T$ holds and prove (ii). By Lemma \ref{characterization of independence}(ii) we have that $ \P(A | \bra WT \ket) = \P(A | \bra W \ket) $ for all $A \in S^\#$ and thus $S \pindep_W \bra WT \ket$ holds, again using Lemma \ref{characterization of independence} part (ii). The proof of
(ii) $\Leftrightarrow$ (iii) is similar, since the definition of independence is a symmetric condition on the left and right families.
\end{proof}

\begin{rema}
\label{charact of indep over finite W}
When $W \subseteq \B$ is finite, we have the following simple characterization of $\pindep$. Namely,
$S \pindep_W T$ if and only if 
$\mu(A \cap B \cap C)\mu(C) = \mu(A \cap C)\mu(B \cap C)$ for every $A \in S^\#, \mbox{ and } B \in T^\#$ and 
every atom $C \in W^\#$.

This is easily proved by comparing coefficients in the expressions for $\P(A\cap B|W^\#)$ and
$\P(A|W^\#) \cdot \P(B|W^\#)$ given by Lemma \ref{P(A|C) for finite C}.
\end{rema}

\begin{nota}
Suppose $C,D,E$ are subsets of $\widehat \B$, and $S,T,W$ are subsets of $\B$ such that $\bra C \ket = \widehat{ \bra S \ket}$, 
$\bra D \ket = \widehat{ \bra T \ket}$, and $\bra E \ket = \widehat {\bra W \ket}$.  We write $C \pindep_E D$ to mean the same
as $S \pindep_W T$.
\end{nota}

Next we prove a Lemma that will be used in the proof of Theorem \ref{pr:non-div}.

\begin{lema}[Extension]
\label{extension}
Let $(X,\B,\mu)$ be a probability space. Let $\A$ be a
finite subalgebra of $\B$, with atoms $A_1,\dots,A_m$, and let $\C \subseteq \D$ be closed subalgebras
of $\B$.  Then there exists a probability space $(X',\B',\mu')$ and a boolean, measure-preserving 
embedding $B \mapsto B'$ of $\B$ into $\B'$, together with a finite subalgebra $\EE$ of $\B'$ whose 
atoms $E_1,\dots,E_m$ satisfy $\P(E_j|\C')=\P(A'_j|\C')$ for all $i = 1,\dots,m$ and
$\EE \perp \! \!\! \perp_{\C'}\D'$.  (Here for $\ZZ = \C \mbox{ or } \D$ we write $\ZZ'$ for $\{ B' \mid B \in \ZZ \}$.)
\end{lema}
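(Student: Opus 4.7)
The plan is to construct $(X',\B',\mu')$ by tensoring $X$ with an $m$-point set whose conditional distribution over $\C'$ is designed to mimic that of $(A_1,\dots,A_m)$ over $\C$. Concretely, I would set $Y := \{1,\dots,m\}$, $X' := X \times Y$, and take $\B'$ to be the product $\sigma$-algebra of $\B$ and $2^Y$. After choosing $\C$-measurable representatives $h_j$ of $\P(A_j|\C)$ with $h_j \geq 0$ and $\sum_{j=1}^m h_j \equiv 1$ pointwise on $X$ (possible since $(A_1,\dots,A_m)$ partitions $X$, modifying on a null set if needed), I would define $\mu'$ by
\[
\mu'(B \times \{j\}) := \int_B h_j \, d\mu \qquad (B \in \B,\ j \in Y),
\]
extended additively. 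Summing over $j$ shows $\mu'(B \times Y) = \mu(B)$, so $B \mapsto B' := B \times Y$ is a measure-preserving boolean embedding of $\B$ into $\B'$.

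Next I would set $E_j := X \times \{j\}$, so that the atoms of $\EE := \{E_1,\dots,E_m\}^\#$ are precisely the $E_j$. Writing $\pi_X : X' \to X$ for the projection, the function $\tilde h_j := h_j \circ \pi_X$ is $\C'$-measurable, and I would argue the first required identity $\P(E_j|\C') = \P(A'_j|\C')$ by showing both sides equal $\tilde h_j$. Indeed, using the definition of $\mu'$ together with $\sum_{j'} h_{j'} \equiv 1$, for any $C \in \C$ the calculation
\[
\int_{C'} \tilde h_j \, d\mu' \;=\; \int_C h_j(x) \sum_{j'} h_{j'}(x) \, d\mu(x) \;=\; \int_C h_j \, d\mu \;=\; \mu'(E_j \cap C') \;=\; \mu'(A'_j \cap C')
\]
delivers the claim, where the last equality uses that the embedding is measure preserving and $\P(A_j|\C) = h_j$.

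For the conditional independence $\EE \pindep_{\C'} \D'$, I would invoke Lemma \ref{characterization of independence}(ii) with $S = \EE$, $T = \D'$, $W = \C'$: it suffices to check $\P(E_j|\D') = \P(E_j|\C')$ for each atom $E_j$ (and then extend to all of $\EE^\# = \EE$ by linearity). Because $\tilde h_j$ is $\D'$-measurable (as $\C' \subseteq \D'$), the identical calculation with any $D \in \D$ in place of $C$ yields $\int_{D'} \tilde h_j \, d\mu' = \int_D h_j \, d\mu = \mu'(E_j \cap D')$, so $\P(E_j|\D') = \tilde h_j = \P(E_j|\C')$.

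The main obstacle is not conceptual but a matter of careful setup: one must fix representatives of the $h_j$ so that $\sum_{j'} h_{j'} \equiv 1$ holds pointwise and not merely $\mu$-a.e. Once this is arranged, the same identity simultaneously ensures that $\mu'$ is a probability measure and that $\tilde h_j$ plays the role of the conditional expectation both of $E_j$ and of $A'_j$ over $\C'$; everything else reduces to routine integration driven by this single identity.
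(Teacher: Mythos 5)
Your construction is correct, and it is a different realization of the same underlying idea as the paper's proof. The paper also fibers the space, but over $[0,1]$: it forms the genuine product $(X\times[0,1],\B\otimes\F,\mu\otimes\lambda)$ and carves out $E_i$ as the region between the graphs of the partial sums $f_1+\dots+f_{i-1}$ and $f_1+\dots+f_i$ of the conditional probabilities, so that Fubini immediately gives $\mu'(B'\cap E_i)=\int_B f_i\,d\mu$ for every $B\in\B$; the two required identities then follow by specializing $B$ to elements of $\C$ and of $\D$, exactly as in your argument. You instead fiber over the finite set $\{1,\dots,m\}$ and encode the conditional law $(h_1(x),\dots,h_m(x))$ directly into a skew (non-product) measure $\mu'$, with $E_j$ a literal slice $X\times\{j\}$. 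What the paper's version buys is that the ambient extension is a canonical product measure independent of $\A$, $\C$, $\D$ (only the sets $E_i$ depend on the data), and $\sigma$-additivity and measure-preservation of the embedding are automatic; what yours buys is a smaller, more combinatorial fiber and slightly more transparent identities, at the cost of having to verify by hand that $\mu'$ is a well-defined probability measure and that $B\mapsto B\times Y$ is measure-preserving --- which your normalization $\sum_j h_j\equiv 1$, $h_j\ge 0$ handles (and in fact a.e.\ validity of these would already suffice for everything except nonnegativity of $\mu'$, which needs $h_j\ge 0$ pointwise). The reduction of $\EE\pindep_{\C'}\D'$ to the single-atom identity $\P(E_j|\D')=\P(E_j|\C')$ via additivity and Lemma \ref{characterization of independence}(ii) is also exactly how the paper concludes.
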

\begin{proof}
Let $([0,1],\F,\lambda)$ be the Lebesgue measure space  on $[0,1]$ and take $(X \times [0,1], \B', \mu')$ to be
the product measure space, with $\B' = \B \otimes \F$ and $\mu' = \mu \otimes \lambda$. 

For each $B \in \B$, let $B' := B \times [0,1]$. The correspondence $B \mapsto B'$ is obviously a boolean, 
measure-preserving embedding of $\B$ into $\B'$. Also, for any $\B$-measurable function $f \colon X \to [0,1]$, we let $f'$ denote the $\B'$-measurable function 
$(x,y) \mapsto f(x)$.  We see easily that the embedding preserves integration; namely for any $B \in \B$ and $\B$-measurable
$f \colon X \to [0,1]$ we have by Fubini's Theorem
$\int_{B'} f' \,d(\mu\otimes \lambda) = \int_B \big( \int_{[0,1]} f'\,d\lambda\big) \,d\mu =  \int_B f \, d\mu$.  We also
note that $\P(B' | \C') = \P(B | \C)'$ for any $B \in \B$ and $\sigma$-subalgebra $\C$ of $\B$.  (Using Lemma \ref{approximating P(a|C)} it suffices to prove this when $\C$ is finite; this is done by applying Lemma \ref{P(A|C) for finite C}.  Note that if 
$C_1,\dots,C_k$ are the atoms of $\C$, then $C'_1,\dots,C'_k$ are the atoms of $\C'$, and for each $j=1,\dots,k$
we have $(\mu\otimes\lambda)(B' \cap C'_j) = \mu(B \cap C_j), (\mu\otimes\lambda)(C'_j) = \mu(C_j)$, 
and $\chi_{C'_j} = (\chi_{C_j})'$.)

For each $i=1,\dots,m$, let $f_i :={\P}(A_i|{\C})$ and note that since $A_1,\dots, A_m$ is a partition of $1$, 
we have $f_1(x)+\dots+f_m(x)=1$ on $X$ $\mu$-a.e.. Now let 
$E_1=\{(x,y)\in X\times [0,1]: 0\leq y\leq  f_1(x)\}$ and for $1<i\leq m$ 
let $E_i=\{(x,y)\in X\times [0,1]: f_1(x)+\dots+ f_{i-1}(x)< y \leq f_1(x)+\dots+ f_{i}(x))\}$.
The sets $\{E_i\}_{i\leq m}$ are $\B'$-measurable and pairwise disjoint, and 
$X' = X \times [0,1] =\bigcup_{i\leq m} E_i$ (except possibly for a set of $\mu'$-measure zero).

Note that for any $B \in \B$ and $i=1,\dots,m$, we have
\begin{align*}
(\mu \otimes \lambda)(B' \cap E_i)  = \int_{B'} \chi_{E_i}\, d(\mu \otimes \lambda) 
 = \int_{B} \Big( \int_{[0,1]} \chi_{E_i}\, d\lambda \Big)\, d\mu = \int_B f_i \, d\mu
\end{align*}
using the definitions and Fubini's Theorem.  If $C\in {\C}$, this gives
$$\int_{C'} \chi_{E_i}\, d(\mu\otimes \lambda)=\int_{C} f_i \,d \mu=\int_{C'} {\P}(A_i|{\C})'\, d(\mu\otimes \lambda)=\int_{C'} {\P}(A_i'|{\C}')\, d(\mu\otimes \lambda)$$
and therefore  ${\P}(E_i|{\C}')={\P}(A_i'|{\C}')$ for all $i=1,\dots,m$. 
For $D\in {\D}$ we get 
$$(\mu\otimes \lambda )(D'\cap E_i)=\int_{D'} f'_i \, d (\mu\otimes \lambda)=\int_{D'} {\P}(A_i|{\C})'\, d(\mu\otimes \lambda)=\int_{D'} {\P}(A_i'|{\C}')\, d(\mu\otimes \lambda) \text{,}$$ 
so ${\P}(E_i|{\D}')= {\P}(E_i|{\C}')$ for all $i=1,\dots,m$.  Therefore $\EE \pindep_{{\C}'}{\D}'$.
\end{proof}

\section{Some continuous model theory}
\label{continuous model theory}

In this paper we use the setting of continuous logic to discuss the
model theory of probability algebras. The fundamental ideas of continuous
logic are presented in \cite{BBHU,BU}. 
We assume familiarity with the material in these sources,
and often use it without specific reference.

In addition, we need some background concerning \emph{metric imaginaries}\index{metric imaginary}
in continuous logic, and their role in some topics within
stability theory, especially when dealing with canonical parameters for definable predicates
and definability of types.  Here we give pointers to published sources for this background,
and a very brief summary of the topics we use.

There is some treatment of imaginary sorts (i.e., of \emph{interpretations})\index{interpretation}
in our key references \cite{BBHU,BU}.  In \cite[Section 11]{BBHU} 
only finitary imaginaries are presented; these are quotients 
of finite products of sorts modulo a definable pseudometric.  

However, in our Section 5 and later in the paper, in connection with certain concepts in
stability theory, we need more general, \emph{infinitary} imaginaries.  These are quotients
of the product of a countably infinite family of sorts modulo a definable pseudometric; they are connected
to the existing structure by their projection maps onto the sorts from which they come.  A central
example of these imaginaries is given by \emph{canonical parameters}\index{canonical parameters}
 for a definable predicate relative
to the (possibly infinite) sequence of parameters used in defining it.  
These are treated in detail in \cite[Section 5] {BU}.

Given a continuous theory $T$, the many sorted theory obtained by adding to $T$ all possible 
metric imaginary sorts is called the \emph{meq expansion of $T$},\index{meq expansion}
 and it is denoted $T^{\meq}$.
Likewise, given $\M \models T$, the corresponding expansion of $\M$ to a model of $T^{\meq}$
is denoted $\M^{\meq}$. Presentations of the full construction of $T^{\meq}$ and some
of its properties are in \cite[Section 3.3]{MT of C*-algebras}, and in \cite[Section 1]{BY6}.
\index{$T^{\meq}$, $\M^{\meq}$}

In Section 8 we also use concepts and tools from stability theory in the setting of 
continuous model theory.  Many of these, including canonical parameters for 
formulas, definability of types, and canonical bases for stationary types, are 
developed in \cite[Sections 7 and 8]{BU}.  Beyond these, 
we use concepts such as \emph{types being parallel}, \emph{the parallelism 
class of a stationary type}, \emph{Morley sequences}, \emph{orthogonal types}, 
and \emph{non-multidimensional theories}. While these concepts 
lack a thorough exposition in the continuous model theory literature, it is not 
difficult to formulate and understand them based on how they are treated in 
the main references for stability theory in classical model theory, especially 
given the tools provided in \cite{BU}.  An example needed here of such a 
fact is that the canonical base of a stationary type is contained in the meq 
definable closure of a Morley sequence of that type.  For this material in the 
classical discrete setting, we follow closely the presentation in \cite{Bue}. 

\section{The model theory of probability spaces}
\label{model theory of prob spaces}

We deal here with structures of the form
$$\M=(\widehat \B,0,1,\cdot^{c},\cap,\cup,\widehat \mu,\widehat d)$$
where $(\widehat \B,\widehat \mu)$ is the probability algebra
of a probability space $(X,\B,\mu)$, $0$ is the event
corresponding to $\emp$ and $1$ is the event corresponding to $X$;
$\cdot^{c}$ is the complement operation and $\cap$,$\cup$ are the
intersection and union operations on $\widehat \B$; and $\widehat d$ is the
canonical metric on $\widehat \B$ (defined for $a,b\in \widehat \B$
by $\widehat d(a,b)= \widehat \mu(a \triangle b)$).  The predicates,
namely $\widehat\mu$ and $\widehat d$, take their values in the interval $[0,1]$.
The modulus of uniform continuity for 
the unary operation $\cdot^c$ and the unary predicate $\widehat\mu$ is
given by $\Delta(\epsilon)=\epsilon$; for the binary operations
$\cap$ and $\cup$ the modulus is given by $\Delta(\epsilon)=\epsilon/2$.

For the rest of this paper we take $\LLpr$ to be the continuous signature
indicated in the previous paragraph.\index{$\LLpr$}

Note that every probability algebra of a probability space is indeed an $\LLpr$-structure. 
(This requires, in particular, that it is complete as a metric space, which we noted in 
Section \ref{probability spaces}.)

\begin{nota}
For any $\LLpr$-prestructure $\M$ and $a\in M$, we write $a^{-1}$ for
$a^{c}$ and $a^{+1}$ for $a$.
\end{nota}

The following $\LLpr$-conditions are easily seen to be true in every 
probability algebra of a probability space. 

\begin{enumerate}
\item Boolean algebra axioms: \\
Each of the usual axioms for a boolean algebra is the $\forall$-closure of an
equation between terms (see  \cite[p.38]{Ho}) and thus it can be
expressed in continuous logic as a condition. For example, the axiom
$\forall x \forall y(x\cup y=y\cup x)$ is equivalent to $\sup_x
\sup_y \big( d(x \cup y, y \cup x)\big)=0$.

\item Measure axioms:\\
$\mu(0)=0$ and $\mu(1)=1$ \\ $\sup_x \sup _y \big( \mu(x\cap
y)\dotmin \mu(x)\big)=0$\\ $\sup_x \sup _y \big(\mu(x)\dotmin
\mu(x\cup y)\big)=0$\\ $\sup_{x}\sup_{y} |(\mu(x)\dotmin \mu(x\cap
y))- (\mu(x\cup y)\dotmin \mu(y))|=0$\\ The last three axioms
express that $\mu(x\cup y)+\mu(x\cap y)=\mu(y)+\mu(x)$ for all $x$,
$y$.

\item Connections between $d$ and $\mu$:\\
$\sup_x \sup_y |d(x,y)-\mu(x\Delta y)|=0$ where $x \Delta y$ denotes
the boolean term giving the symmetric difference: $x \Delta y = (x
\cap y^c) \cup (x^c \cap y)$.
\end{enumerate}

We denote the set of $\LLpr$-conditions above by $Pr$.\index{$Pr$}

\begin{nota}
\label{notation for a general model of Pr}
For the rest of this paper we  use the notation $\M = (\B,\mu,d)$
for a general model of $Pr$.  We take care that this notation is not confused
with our usual notation $(X,\B,\mu)$ for a probability space
and $(\widehat\B,\widehat\mu, \widehat d)$ for its associated probability algebra.\index{probability algebra!abstract}
\end{nota}

\begin{theo}
\label{first description of models of Pr}
The models $\M = (\B,\mu,d)$ of $Pr$ are exactly the \emph{(abstract) probability algebras}.
That is, $\B$ is a $\sigma$-order complete Boolean algebra, and $\mu$ is a strictly positive,
$\sigma$-additive probability measure on $\B$; further, $d$ is defined on $\B$ by
$d(a,b) := \mu(a \triangle b)$.
\end{theo}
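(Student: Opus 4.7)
The plan is to prove both directions of the equivalence. In continuous logic, any structure for $\LLpr$ is automatically a complete metric space with uniformly continuous operations and predicates, and $d$ is a genuine metric (not merely a pseudometric). The Boolean algebra axioms in $Pr$ give that $\B$ is a Boolean algebra; the measure axioms give that $\mu$ is a finitely additive probability measure on $\B$ with $\mu(0)=0$ and $\mu(1)=1$; and the third group of axioms gives the identity $d(a,b) = \mu(a \triangle b)$. What remains for the forward direction is to establish: (i) strict positivity of $\mu$, (ii) $\sigma$-order completeness of $\B$, and (iii) $\sigma$-additivity of $\mu$.

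For (i), note that $d(a,0) = \mu(a \triangle 0) = \mu(a)$, so $\mu(a) = 0$ forces $d(a,0) = 0$, hence $a = 0$ since $d$ is a metric. For (ii) and (iii), the key observation is that any increasing sequence $(a_n)$ in $\B$ is Cauchy in $(\B,d)$: finite additivity (which follows from the measure axiom $\mu(x \cup y) + \mu(x \cap y) = \mu(x) + \mu(y)$) together with $a_n \subseteq a_m$ for $n \leq m$ yields $d(a_n, a_m) = \mu(a_m) - \mu(a_n)$, and the real sequence $\mu(a_n)$ is bounded and monotone, hence Cauchy. By metric completeness, the sequence $(a_n)$ has a limit $a \in \B$.

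The next step is to verify that this limit $a$ is actually $\sup_n a_n$ in the Boolean algebra order. To see $a_n \leq a$, use that for $n \leq m$ we have $a_n \cap a_m = a_n$; letting $m \to \infty$ and using uniform continuity of $\cap$ and of $d$ gives $a_n \cap a = a_n$, i.e., $a_n \leq a$. To see $a$ is the least upper bound, if $b$ satisfies $a_n \leq b$ for all $n$, then $a_n \cap b = a_n$; letting $n \to \infty$ gives $a \cap b = a$, hence $a \leq b$. Since every countable set in a Boolean algebra is dominated by some increasing sequence of partial unions, this gives $\sigma$-order completeness. Continuity of $\mu$ then delivers $\mu(a) = \lim \mu(a_n)$, and applying this to the partial unions $a_n = b_1 \cup \dots \cup b_n$ of a countable disjoint family $(b_k)$ yields $\sigma$-additivity.

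For the converse direction, suppose $(\B,\mu,d)$ is an abstract probability algebra as in the statement. All the axioms in $Pr$ hold by direct verification, and strict positivity of $\mu$ makes $d$ a genuine metric. The uniform continuity moduli claimed in the signature $\LLpr$ follow from standard estimates such as $d(a \cap b, a' \cap b') \leq d(a,a') + d(b,b')$ and $|\mu(a) - \mu(b)| \leq d(a,b)$. The only nontrivial point is that $(\B,d)$ is a complete metric space, for which we cite \cite[Lemma 323F]{Fr-treatise} as already invoked in Section \ref{probability spaces}. The heart of the argument is really the forward direction, and in particular the bridge from metric completeness plus finite additivity to $\sigma$-order completeness of $\B$ and $\sigma$-additivity of $\mu$; the crucial identity enabling this bridge is $d(a_n,a_m) = \mu(a_m) - \mu(a_n)$ for nested pairs, which turns monotonicity of real numbers into a Cauchy condition in $\B$.
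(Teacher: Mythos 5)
Your proof is correct and follows essentially the same route as the paper's: both directions are handled identically, with the forward direction resting on the observation that increasing sequences are $d$-Cauchy (via $d(a_n,a_m)=\mu(a_m)-\mu(a_n)$) and hence convergent by metric completeness, and the converse citing \cite[Lemma 323F]{Fr-treatise} for completeness. You fill in a detail the paper leaves implicit --- the verification, using continuity of $\cap$ and $d$, that the metric limit of an increasing sequence is its order supremum --- but the underlying argument is the same.
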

\begin{proof}
If $\M = (\B,\mu,d)$ is indeed a probability algebra as described in the statement, then
it is clear that it satisfies all conditions in $Pr$.  Moreover, the metric space $(\B,d)$ is
complete, as shown by the calculation in \cite[Lemma 323F]{Fr-treatise}.  

Conversely, suppose $\mathcal{M}$ is a model of $Pr$.  It is clear
from the axioms that $\mathcal{M}$ consists of a boolean algebra
$\B$ with a finitely additive probability measure $\mu$ such that
$\B$ is a complete metric space under the metric 
$d(a,b) = \mu(a \triangle b)$.  Moreover, $\mu$ must be continuous on $\B$
with respect to $d$; indeed, $\mu$ is $1$-Lipschitz with respect to $d$,
as is dictated by the signature $\LLpr$. 

Any increasing sequence in $\B$ is necessarily a Cauchy sequence
with respect to $d$, so it converges.  This and the continuity of
$\mu$ ensure that $\B$ is $\sigma$-order complete as a boolean
algebra and $\mu$ is $\sigma$-additive on $\B$. 
\end{proof}

It follows from Theorem \ref{first description of models of Pr}
that the models of $Pr$ are (up to isomorphism) 
exactly the probability algebras of probability spaces.
This is proved in \cite[Theorem 321J]{Fr-treatise}; a key ingredient in the proof of that
result is the Loomis-Sikorski representation theorem for 
$\sigma$-order complete boolean algebras; see \cite[Theorem 314M]{Fr-treatise}.  
In Theorem \ref{second description of models of Pr} we 
give a proof of this fact about the models of $Pr$ using tools from model theory.  

\begin{exam}
\label{models of Pr from finitely additive probability spaces}
Suppose $\A$ is a boolean algebra and $\mu$ is a finitely additive probability measure on $\A$.
We may define a distance $d$ on $\A$ in the familiar way, by setting $d(a,b)$ equal to
$\mu(a \triangle b)$, where $\triangle$ denotes the symmetric difference in $\A$.  Then
$(\A,\mu,d)$ is an $\LLpr$-prestructure, and it satisfies all of the axioms of $Pr$.
Therefore we may obtain a model $(\widehat \A, \widehat \mu, \widehat d)$ of $Pr$ 
by first taking the quotient of $(\A,\mu,d)$
by the ideal of elements of $\mu$-measure $0$, and then taking the metric completion of
the resulting quotient (as discussed in the middle of \cite[pages 329-331]{BBHU}).

For those readers who are familiar with Abraham Robinson's nonstandard analysis (NSA), we
note how this construction relates to the Loeb measure construction \cite{Loeb}, which has been one of the
most important tools for applications of NSA.  For that construction, we begin with $\A$
being an internal boolean algebra of subsets of an internal set $X$, and 
$\mu$ being  obtained from an internal finitely additive
${}^*[0,1]$-valued measure $\nu$ on $\A$, by taking $\mu(a)$ to be the standard part of $\nu(a)$
for each $a \in \A$.  Let  $(\widehat \A, \widehat \mu, \widehat d)$ be constructed as above
from $(\A,\mu,d)$ as in the preceding paragraph.  
In that setting, the quotient algebra of $\A$ by the ideal of $\mu$-null sets
is already complete with respect to the quotient metric obtained from $d$, owing to the 
assumption of $\omega_1$-saturation that is part of the basic NSA framework.  Moreover, 
the saturation assumption also implies that $\mu$ has a natural and unique extension to
a $\sigma$-additive probability measure on the $\sigma$-algebra of subsets of $X$ that is
generated by $\A$.  The resulting probability space has $(\widehat \A, \widehat \mu, \widehat d)$ 
as its probability algebra.  See  \cite[Section II.2]{Lindstrom} and \cite[Section 2.1]{Ross} for elementary discussions
of the Loeb construction and its basic properties.

The metric ultraproduct of a family of probability algebras of probability spaces is 
an example of the Loeb construction.  In that case, the internal measure space is the
discrete ultraproduct of the family of probability spaces.

This approach gives an alternative way of proving that every
model of $Pr$ is the probability algebra of some probability space, as we show next.
\end{exam}

\begin{theo}
\label{second description of models of Pr}
Let $\M$ be a $\LLpr$-structure.  The following are equivalent:\\
(1) $\M$ is a model of $Pr$. \\
(2) $\M$ is isomorphic to the probability algebra of a probability space.
\end{theo}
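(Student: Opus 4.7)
The implication $(2) \Rightarrow (1)$ is immediate: as observed just before Theorem \ref{first description of models of Pr}, every probability algebra of a probability space satisfies each of the $\LLpr$-conditions making up $Pr$.

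For $(1) \Rightarrow (2)$, the plan is to combine a passage to a saturated ultrapower with the Loeb-construction viewpoint of Example \ref{models of Pr from finitely additive probability spaces}, and then descend to $\M$ via Lemma \ref{closed subalgebras are probability algebras}. Let $\M = (\B,\mu,d) \models Pr$ be arbitrary. Form the metric ultrapower $\M^* := \M^{\mathcal{U}}$ by a non-principal ultrafilter $\mathcal{U}$ on $\mathbb{N}$. By standard facts about continuous ultrapowers, $\M^* \models Pr$ and is $\omega_1$-saturated, the diagonal embedding $\M \hookrightarrow \M^*$ is elementary, and its image is a $d$-closed $\LLpr$-substructure of $\M^*$; the latter holds because the isometric image of the complete metric space $(\B,d)$ is itself complete, hence closed in $\M^*$.

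Next, realize $\M^*$ as the probability algebra of an actual probability space using the Loeb-type setup of the second paragraph of Example \ref{models of Pr from finitely additive probability spaces}. The ultrapower sits inside a nonstandard-analysis universe in which $\B^*$ appears as an internal boolean algebra of internal subsets of an internal set $X$, and $\mu^*$ is the standard part of an internal ${}^*[0,1]$-valued finitely additive measure $\nu$ on $\B^*$. Applying the Loeb construction to $(X,\B^*,\nu)$ produces an actual probability space $(X,\Sigma,\widehat{\nu})$ whose probability algebra is canonically isomorphic to $\M^*$. Combining this with the first step, the image of $\M$ in $\M^*$ is a $d$-closed $\LLpr$-substructure of the probability algebra of $(X,\Sigma,\widehat{\nu})$, so Lemma \ref{closed subalgebras are probability algebras} identifies it with the probability algebra of $(X,\S,\widehat{\nu}{\rharp}\S)$ for a suitable $\sigma$-subalgebra $\S \subseteq \Sigma$, completing the proof.

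The main technical obstacle is verifying that the Loeb construction applied to the internal description of $\M^*$ recovers $\M^*$ itself, rather than some strictly larger probability algebra. The key input is the $\omega_1$-saturation of $\M^*$ automatically provided by the ultrapower: it ensures that the Loeb $\sigma$-algebra generated by $\B^*$, once quotiented by its null ideal and metrically completed, returns exactly $\M^*$ and not a proper extension. Once this identification is in place, the remaining steps are routine applications of the ingredients cited above.
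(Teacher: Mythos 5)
Your direction $(2)\Rightarrow(1)$ matches the paper. For $(1)\Rightarrow(2)$ your overall architecture (ultraproduct, Loeb construction, then descend via Lemma \ref{closed subalgebras are probability algebras}) is the same as the paper's, but there is a genuine gap at the central step. You assert that the ultrapower $\M^{\mathcal U}$ ``sits inside a nonstandard-analysis universe in which $\B^*$ appears as an internal boolean algebra of internal subsets of an internal set $X$.'' That is not automatic: a model of $Pr$ is an \emph{abstract} measured boolean algebra, not an algebra of subsets of any set, and the ultrapower of an abstract boolean algebra is again an abstract boolean algebra whose elements are equivalence classes of sequences, not subsets of anything. The Loeb construction of Example \ref{models of Pr from finitely additive probability spaces} takes as input an internal finitely additive \emph{measure space} $(X,\A,\nu)$; to feed $\M^{\mathcal U}$ into it you must first represent $\B$ (or $\B^*$) concretely as an algebra of sets carrying the measure. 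But producing such a set-representation is essentially the content of the theorem you are proving, so as written the argument is circular. (It can be repaired — e.g.\ by Stone duality, realize $\B$ as the clopen algebra of its Stone space with the induced finitely additive measure, take the classical ultrapower of that measure space, and apply the Loeb construction; one then checks the resulting probability algebra is the metric ultrapower of $\M$. But that extra representation step is exactly what your proposal omits, and it is not a routine detail.)

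The paper sidesteps this issue by not ultrapowering $\M$ itself: it indexes over the finite subsets $\tau$ of $M$ and uses the finite subalgebras $\tau^\#$, each of which is \emph{canonically} the probability algebra of a finite probability space (its set of atoms, with the measure read off from $\mu$) — no representation theorem needed. The discrete ultraproduct of these finite probability spaces is then an honest internal measure space, the Loeb construction applies, and $\M$ embeds into the resulting metric ultraproduct by sending $a$ to the class of its traces on the $\tau^\#$ with $a\in\tau^\#$; Lemma \ref{closed subalgebras are probability algebras} finishes as in your last step. If you want to keep your ultrapower-of-$\M$ route, you must add the Stone-representation step (or some equivalent device) before invoking the Loeb construction.
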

\begin{proof}
(2) $\Rightarrow$ (1): See the first paragraph of the proof of Theorem \ref{first description of models of Pr}.

(1) $\Rightarrow$ (2): Let $\M$ be a model of $Pr$.  Let $I$ be the set of all finite subsets of $M$.  For each $\tau \in I$, 
let $\M_\tau$ be the subalgebra $\tau^\#$ of $\M$, which is finite.  Each $\M_\tau$ is the probability algebra of a finite 
probability space $(X_\tau,\A_\tau,\mu_\tau)$.  Here $X_\tau$ is the set of atoms in $\M_\tau$, $\A_\tau$ is the boolean
algebra of all subsets of $X_\tau$, and $\mu_\tau(\{ a \}) = \mu(a)$ for each element $a$ of $X_\tau$.

There exists an ultrafilter $U$ on $I$ such that for each $a \in M$ the set $\{ \tau \in I \mid a \in \tau \}$
is an element of $U$.  As discussed in the preceding example, the $U$-ultraproduct of the family 
$(\M_\tau \mid \tau \in I)$ is the probability algebra of a probability space, by the Loeb measure construction.
Moreover, $\M$ is isomorphic to a substructure of this ultraproduct; 
the embedding maps $a \in M$ to the equivalence class of the family
$(a_\tau \mid \tau \in I)$ where we define $a_\tau$ as follows: (i) if $a \not\in \tau^\#$ we take $a_\tau = 0$; 
(ii) if $a \in \tau^\#$, we take $a_\tau$ to be the subset of $X_\tau$ consisting of all atoms of $\M_\tau$ that are
contained in $a$ (so $a$ is the join of $a_\tau$ in $\M$).

Therefore we have embedded $\M$ into the probability algebra of a probability space.  The proof is
completed by applying Lemma \ref{closed subalgebras are probability algebras}. 
\end{proof}
In the rest of this section we aim to discuss elementary equivalence
of probability algebras and to characterize (axiomatize) the
complete extensions of $Pr$.  This depends on studying the
definability in continuous logic of the set of atoms (and some related sets) in models of
$Pr$.  (See \cite[Section 9]{BBHU} for a discussion of definable predicates and definable sets.)

In the rest of this section $\M$ denotes a model of $Pr$, with
underlying boolean algebra $ \B$, measure $\mu$ and metric
$d$.  We let $A^\M_1$ denote the set of atoms of $ \B$
together with $0$. Note that for each $r>0$ there are only finitely many $a \in A^\M_1$ 
such that $\mu(a) \geq r$. Therefore $A^\M_1$ is finite or
countable; the join (union) of $A^\M_1$ is therefore in $\B$ and provides a measurable splitting of $1$ in $ \B$
between its atomic and atomless parts, either of which may be $0$.
Also, $A^\M_1$ is a closed set with respect to the metric $d$.

We consider the following formulas in the signature of $Pr$:
\begin{align*}
\chi(x) &:= \inf_y | \mu(x \cap y) - \mu(x \cap y^c)|  \\
\psi(x) &:= \mu(x) \dotminus \chi(x) \\
\varphi_1(x) &:= \inf_z (d(x,z) \dotplus \psi(z)) \\
\theta(x) &:= \sup_y \inf_z  | \mu(x \cap y \cap z) - \mu(x \cap y \cap z^c)|
\end{align*}

To understand the meanings of these formulas in models of $Pr$,
the next result is needed.  The elementary argument needed for the proof
is given in \cite[Section 41, Theorem A]{HaMT}.
\begin{lema}
\label{approximate splitting}
Suppose $\M = (\B,\mu,d) \models Pr$.  If $b \in \B$ is atomless, then for every $\delta > 0$ there
is a partition of $1$ in $\B$, say $u = (u_1,\dots,u_n)$, such that $\mu(b \cap u_i) \leq \delta$
for all $i = 1,\dots,n$.
\end{lema}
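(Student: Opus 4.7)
The plan is first to reduce the statement to partitioning $b$ itself into finitely many pieces of $\mu$-measure at most $\delta$. Once such a finite partition $b = b_1 \cup \cdots \cup b_n$ is produced, setting $u_i := b_i$ for $i < n$ and $u_n := b_n \cup b^c$ yields a partition of $1$ in $\B$ with $\mu(b \cap u_i) = \mu(b_i) \leq \delta$ for all $i$.

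As a preliminary I will prove a \emph{small subset} property: whenever $c \in \B$ satisfies $c \subseteq b$ and $\mu(c) > 0$, and $\varepsilon > 0$ is given, there exists $c' \in \B$ with $c' \subseteq c$ and $0 < \mu(c') \leq \varepsilon$. This is obtained by iterating the definition of atomlessness. At each stage split the current element into two disjoint pieces of positive measure and pass to the smaller one, whose measure is at most half of the previous measure. Every positive-measure subelement of $b$ is itself atomless (since its positive-measure subelements split by virtue of $b$ being atomless), so the splitting can be continued, and after finitely many iterations the measure drops below $\varepsilon$.

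The main step is an exhaustion argument. Using Zorn's lemma (or a direct greedy construction, justified by the finiteness of $\mu$), pick a maximal disjoint family $\mathcal{C} \subseteq \B$ of elements $c \subseteq b$ with $0 < \mu(c) \leq \delta$. Since $\sum_{c \in \mathcal{C}} \mu(c) \leq \mu(b) \leq 1$, the family is at most countable; enumerate it as $\{c_1, c_2, \ldots\}$ and put $r := b \setminus \bigcup_i c_i$ (which lies in $\B$ by the $\sigma$-completeness furnished by Theorem \ref{first description of models of Pr}). If $\mu(r) > 0$, the small subset property applied to $r$ with $\varepsilon = \delta$ produces a $c' \subseteq r$ with $0 < \mu(c') \leq \delta$, contradicting the maximality of $\mathcal{C}$; hence $\mu(r) = 0$. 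By $\sigma$-additivity, $\sum_i \mu(c_i) = \mu(b)$, so we can choose $N$ with $\sum_{i > N} \mu(c_i) < \delta$, set $b_i := c_i$ for $i \leq N$ and $b_{N+1} := r \cup \bigcup_{i > N} c_i$, and conclude via the reduction in the first paragraph.

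The main obstacle is that the exhaustion naturally produces only a countable decomposition rather than a finite one; this is why the maximal family is combined with a finite-measure truncation at stage $N$, which is precisely the point where the $\sigma$-additivity of $\mu$ built into $Pr$ does the essential work.
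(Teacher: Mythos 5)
Your proof is correct, and it takes a genuinely different route from the one in the paper. The paper first reduces to $b=1$, passes to a separable elementary substructure via downward L\"{o}wenheim--Skolem, takes an increasing chain of finite subalgebras with dense union, and then cites the argument of Theorem A in \cite[Section 41]{HaMT} for the conclusion; the measure-theoretic core is thus outsourced to Halmos. You instead give a self-contained exhaustion argument inside the abstract probability algebra: the halving iteration establishes that every positive-measure subelement of $b$ contains a subelement of measure in $(0,\varepsilon]$, a maximal disjoint family of $\delta$-small pieces must then exhaust $b$ up to measure zero, and the $\sigma$-additivity and $\sigma$-order completeness guaranteed by Theorem \ref{first description of models of Pr} let you truncate the resulting countable decomposition to a finite one by lumping the small tail into a single piece. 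All the steps check out, including the reduction in your first paragraph (attaching $b^c$ to the last piece gives $\mu(b\cap u_n)=\mu(b_n)\le\delta$) and the edge case $\mu(b)=0$, where the family is empty and $r=b$ suffices. In effect you have unpacked the classical argument that the paper cites; what your version buys is independence from the L\"{o}wenheim--Skolem reduction and from the external reference, at the cost of invoking Zorn's lemma (or a greedy selection) where the paper's dense-chain formulation needs neither.
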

\begin{proof}
It is sufficient to prove the result assuming $b=1$ in $\B$.  
By the downward L\"{o}wenheim-Skolem Theorem, $\M$ has a separable elementary substructure $\M'$, which is
necessarily also atomless, and it obviously suffices to prove the Lemma for $\M'$.  Suppose $\M'$ is based on the algebra 
$\B'$, which is a closed subalgebra of $\B$, and the predicates of $\M'$ are the restrictions of $\mu$ and $d$ to $\B'$. 
By separability of $\M'$, we may take $(\A_n \mid n \geq 1)$ to be an increasing family of finite boolean 
subalgebras of $\B'$ such that $\bigcup (\A_n \mid n \geq 1)$ is a dense subset of $\B'$.  For each $n \geq 1$, let
$\pi_n$ be the partition of $1$ in $\A_n$ that consists of the atoms of $\A_n$.  The argument for Theorem A in
\cite[Section 41]{HaMT} shows that these partitions satisfy the conclusion of the Lemma.
\end{proof}

\begin{prop}\label{distance to atoms}
Let $\M = (\B,\mu,d) \models Pr$ and $b \in \B$.
\begin{enumerate}
\item[(a)] If $b$ is atomless, then $\chi^\M(b) = 0$.
\item[(b)] $b$ is an atom or $0$ if and only if $\chi^\M(b) = \mu(b)$.
\item[(c)] If $b$ is not atomless and $a$ is an atom of largest
measure contained in $b$, then $\chi^\M(b) \leq \mu(a)$.
\item[(d)] $\dist(b,A^\M_1) = \varphi_1^\M(b)$.
\item[(e)] $b$ is atomless in $\B$ if and only if $\theta^\M(b) = 0$.
\end{enumerate}
\end{prop}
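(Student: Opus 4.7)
The plan is to establish parts (a), (b), and the key technical inequality (c) in that order, and then derive (d) and (e) as essentially formal consequences. The main obstacle will be (c); once it is in hand, (d) reduces to a triangle-inequality calculation and (e) to unpacking definitions. It will be convenient to rewrite the quantity inside the infimum defining $\chi^\M(b)$ as $|\mu(b \cap y) - \mu(b \cap y^c)| = |2\mu(b \cap y) - \mu(b)|$, so that $\chi^\M(b)$ measures how evenly the subsets $b \cap y$ can bisect $b$.

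For (a), given atomless $b$ and $\delta > 0$, I will apply Lemma \ref{approximate splitting} to produce a partition $u_1, \ldots, u_n$ of $1$ with $\mu(b \cap u_i) \leq \delta$ for every $i$. The cumulative intersections $y_k := b \cap (u_1 \cup \cdots \cup u_k)$ interpolate the measures $0 = \mu(y_0) \leq \mu(y_1) \leq \cdots \leq \mu(y_n) = \mu(b)$ in steps of size at most $\delta$, so some $y_k$ bisects $b$ up to error $\delta$, giving $\chi^\M(b) \leq \delta$ and hence $\chi^\M(b) = 0$. For (b), if $b = 0$ or $b$ is an atom then $\mu(b \cap y) \in \{0, \mu(b)\}$ for every $y$, so the infimum equals $\mu(b)$; conversely, if $b \neq 0$ is not an atom I can pick $e \subseteq b$ with $0 < \mu(e) < \mu(b)$ and take $y = e$ to witness $\chi^\M(b) \leq |2\mu(e) - \mu(b)| < \mu(b)$.

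The main obstacle is (c), where the discrete structure of the atoms of $b$ must be handled with care. Let $a = a_1, a_2, \ldots$ enumerate the atoms of $b$ in non-increasing order of measure and let $b_\ell$ denote its atomless part. I will analyse the partial sums $t_k := \mu(a_1) + \cdots + \mu(a_k)$, which start at $0$, grow in jumps of size at most $\mu(a_1) = \mu(a)$, and converge to $\mu(b_a)$, the measure of the atomic part of $b$. If some $t_k$ lies in $[\mu(b)/2 - \mu(a)/2,\, \mu(b)/2 + \mu(a)/2]$, taking $y := a_1 \cup \cdots \cup a_k$ gives $|2\mu(y) - \mu(b)| \leq \mu(a)$, hence $\chi^\M(b) \leq \mu(a)$; by the jump bound such a $k$ exists as soon as $\mu(b_a) \geq \mu(b)/2 - \mu(a)/2$. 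In the remaining case $\mu(b_\ell) > \mu(b)/2 + \mu(a)/2 > \mu(b)/2$, and the halving argument from (a) applied inside the atomless set $b_\ell$ produces $y \subseteq b_\ell$ with $\mu(y)$ arbitrarily close to $\mu(b)/2$, yielding $\chi^\M(b) = 0 \leq \mu(a)$.

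For (d), the inequality $\varphi_1^\M(b) \leq \dist(b, A^\M_1)$ is immediate from (b), since every $z \in A^\M_1$ has $\psi^\M(z) = 0$ and therefore contributes $d(b, z)$ to the infimum defining $\varphi_1^\M(b)$. For the reverse inequality, given arbitrary $z \in \B$ I will exhibit $a^\ast \in A^\M_1$ with $d(b, a^\ast) \leq d(b, z) + \psi^\M(z)$: if $z$ is atomless I take $a^\ast = 0$ and combine the triangle inequality $d(b, 0) \leq d(b, z) + \mu(z)$ with $\chi^\M(z) = 0$ from (a); otherwise I let $a^\ast$ be a largest atom of $z$ and observe that $d(z, a^\ast) = \mu(z) - \mu(a^\ast) \leq \mu(z) - \chi^\M(z) = \psi^\M(z)$, using (c). Finally, for (e) I rewrite $\theta^\M(b) = \sup_y \chi^\M(b \cap y)$: atomlessness of $b$ passes to every $b \cap y$, so (a) forces $\theta^\M(b) = 0$; conversely, if $b$ has an atom $a$, taking $y = a$ gives $b \cap y = a$, and (b) yields $\theta^\M(b) \geq \chi^\M(a) = \mu(a) > 0$.
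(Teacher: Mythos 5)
Your proposal is correct and follows essentially the same route as the paper: (a) via the approximate-splitting lemma, (b) by direct computation, (d) by observing that $A_1^\M$ is the zeroset of $\psi$ and that $\dist(z,A_1^\M)\le\psi^\M(z)$ for every $z$ (your witness $a^\ast$ is exactly this bound), and (e) by rewriting $\theta^\M(b)=\sup_y\chi^\M(b\cap y)$. The only divergence is in (c), where the paper splits the atoms of $b$ into the unions of the odd- and even-indexed ones and bounds $\chi^\M(b)$ by the resulting alternating sum, whereas you run a partial-sum argument on the atom measures with a fallback to the atomless part when the atomic mass is too small; both are elementary and equally valid.
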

\begin{proof}
(a)  Fix $\delta > 0$ and use Lemma \ref{approximate splitting} to obtain a partition
of $1$ in $\B$, say $u = (u_1,\dots,u_n)$ such that $\mu(b \cap u_i) \leq \delta$ for
all $i = 1,\dots,n$.  Let $a_i = b \cap u_i$ for all $i$, so $b = a_1 \cup \dots \cup a_n$.  There exists $i$ such that 
$\mu(a_1 \cup \dots \cup a_i) \leq \frac12 \mu(b) \leq \mu(a_1 \cup \dots \cup a_{i+1})$.
Then $y = a_1 \cup \dots \cup a_i$ witnesses $\chi^\M(b) \leq \delta$.

(b)  If $b$ is an atom and $a$ is arbitrary, then
one of the events $b \cap a, b \cap a^c$ equals $b$ and the other is
$0$. In that case $ | \mu(b \cap a) - \mu(b \cap a^c)| = \mu(b)$ for all $a$,
so indeed $\chi^\M(b) = \mu(b)$.

 If $b$ is not an atom, there exists $a \in \B$ such that $\mu(b) > \mu(b \cap a) > 0$
and $\mu(b) > \mu(b \cap a^c) > 0$, from which it follows that 
$| \mu(b \cap a) - \mu(b \cap a^c)| < \mu(b)$.

(c)  Suppose $b$ is not atomless and let $a_1,a_2,\dots$ be
a listing of all the (finitely or countably many) distinct atoms of
$ \B$ contained in $b$, arranged so that $\mu(a_1)
\geq \mu(a_2) \geq \dots$. Take $u \subseteq b$ to be the union of
all $a_j$ such that $j$ is odd and $v \subseteq b$ to be the union
of all $a_j$ such that $j$ is even.  Then $u,v$ are disjoint and   
$b \setminus (u \cup v)$ is atomless.  One checks easily that
$\chi^\M(b) \leq \mu(u) - \mu(v) \leq \mu(a_1)$.

(d)  The key idea is this:  if $b$ is atomless, then
$\dist(b,A^\M_1) = \mu(b)$; if $b$ is not atomless and $a$ is an 
atom of largest measure contained in $b$, then
$\dist(b,A^\M_1) = \mu(b) - \mu(a)$.  

Therefore, from (a) and (c) we
conclude that $\dist(b,A^\M_1) \leq \psi^\M(b)$ for all $b$. From
(b) we see that $\psi^\M(b) = 0$ when $b$ is an atom, and therefore
$A^\M_1$ is the zeroset of $\psi^\M$.  This makes it clear that
$\dist(b,A^\M_1) \geq \varphi_1^\M(b)$.  Conversely,
for every $b$ we have
\begin{align*}
\varphi_1^\M(b) \geq \inf_z (d(b,z) \dotplus \dist(z,A_1^\M)) \geq  \dist(b,A_1^\M)
\end{align*}
which completes the proof.

(e)  This follows from (a) and (b).  Note that $\theta(b) = 0$ is equivalent to saying
$\chi(u) = 0$ holds for every $u \leq b$.
\end{proof}

Proposition \ref{distance to atoms}(d) shows that $A_1^\M$ is a
definable set, uniformly in all models $\M$ of $Pr$.  (See \cite[Definition
9.16]{BBHU}.)  It is useful to introduce for each $n>1$ the further set
$$A_n^\M \ = \  \{x_1 \cup \dots \cup x_n \mid x_1,\dots,x_n \in A_1^\M \} \text{.} $$
Note that $A_1^\M \subseteq A_2^\M \subseteq \dots \subseteq A_n^\M
\subseteq \dots$. Using \cite[Theorem 9.17]{BBHU} and the
definability of the set $A_1^\M$, we may conclude that $A_n^\M$ is a
definable set in all models $\M$ of $Pr$, for all $n \geq 1$.
Indeed, as we show next, the distance to $A_n^\M$ is given
explicitly by the following formula in the signature of $Pr$ (where
we define the formulas for $n>1$ by induction on $n$):
$$ \varphi_n(x) \ = \ \inf_w (\varphi_{n-1}(x \cap w) \dotplus \varphi_1(x \cap w^c)) $$

\begin{prop}\label{distance to unions of n atoms}
Let $\M = (\B,\mu,d) \models Pr$ and $a \in \B$.  Then for each $n \geq 1$
$$ \dist(a,A_n^\M) = \varphi_n^\M(a) \text{.} $$
\end{prop}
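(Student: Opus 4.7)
The plan is to prove the identity by induction on $n$. The base case $n=1$ is exactly Proposition \ref{distance to atoms}(d), which has just been proved, so the entire argument reduces to showing that $\varphi_n^\M(a) = \dist(a, A_n^\M)$ under the inductive assumption that $\varphi_{n-1}^\M(x) = \dist(x, A_{n-1}^\M)$ for every $x \in \B$. I will handle the two inequalities separately.

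For the inequality $\varphi_n^\M(a) \leq \dist(a, A_n^\M)$, I will take an arbitrary $z \in A_n^\M$ and produce a single choice of $w$ that makes the infimand small. Writing $z = z_1 \cup z_2$ with $z_1 \in A_{n-1}^\M$ and $z_2 \in A_1^\M$ disjoint (which is always possible, peeling off one atom of the decomposition), the natural choice is $w = z_2^c$. Then $a \cap w = a \setminus z_2$ and $a \cap w^c = a \cap z_2$, and a direct computation with symmetric differences shows
\[
d(a \cap w, z_1) + d(a \cap w^c, z_2) = \mu(a \setminus z) + \mu(z_1 \setminus a) + \mu(z_2 \setminus a) = d(a, z),
\]
using disjointness of $z_1$ and $z_2$. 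Combining this with $\varphi_{n-1}^\M(a \cap w) \leq d(a \cap w, z_1)$ (by the inductive hypothesis applied to $z_1 \in A_{n-1}^\M$) and the analogous bound for $\varphi_1^\M(a \cap w^c)$ yields $\varphi_n^\M(a) \leq d(a, z)$; taking the infimum over $z$ gives the desired inequality.

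For the inequality $\dist(a, A_n^\M) \leq \varphi_n^\M(a)$, I will fix $w \in \B$ and $\epsilon > 0$, then use the inductive hypothesis to pick $u \in A_{n-1}^\M$ and $v \in A_1^\M$ with $d(a \cap w, u) \leq \varphi_{n-1}^\M(a \cap w) + \epsilon$ and $d(a \cap w^c, v) \leq \varphi_1^\M(a \cap w^c) + \epsilon$. Since a union of $n-1$ atoms-or-$0$ together with one atom-or-$0$ is again a union of $n$ atoms-or-$0$, we have $u \cup v \in A_n^\M$. The key quantitative tool is the elementary inequality
\[
d(x \cup y, x' \cup y') \leq d(x, x') + d(y, y'),
\]
which follows from $(x \cup y) \triangle (x' \cup y) \subseteq x \triangle x'$ together with the triangle inequality. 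Applying this with $x = a \cap w$, $y = a \cap w^c$, $x' = u$, $y' = v$ gives $d(a, u \cup v) \leq d(a \cap w, u) + d(a \cap w^c, v)$, and letting $\epsilon \to 0$ and taking the infimum over $w$ yields $\dist(a, A_n^\M) \leq \varphi_n^\M(a)$.

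The argument has no real obstacle: both inequalities reduce to elementary manipulations with symmetric differences, and the inductive hypothesis is used in exactly the expected way. The one point requiring mild care is the choice of $w$ in the first inequality; the trick of taking $w = z_2^c$ so that the two pieces of $a$ align precisely with $z_1$ and $z_2$ is what makes the first inequality tight and avoids any loss that would spoil the inductive step.
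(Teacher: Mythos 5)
Your induction is correct, and the inductive step is carried out by a genuinely different route than the paper's. The paper first pins down the target value explicitly: listing the distinct atoms $a_1,a_2,\dots$ contained in $a$ in decreasing order of measure, it records that $\dist(a,A_n^\M)=\mu(a)-(\mu(a_1)+\cdots+\mu(a_n))$, then shows that $w=a_1\cup\cdots\cup a_{n-1}$ attains this value in the infimand, and that for an arbitrary $w$ the infimand equals $\mu(a)$ minus the sum of the measures of $n$ distinct atoms contained in $a$ — a quantity minimized precisely when those atoms are the $n$ largest. Your argument avoids this enumeration and rearrangement entirely: the upper bound comes from the choice $w=z_2^c$ aligned with a disjoint splitting $z=z_1\cup z_2$ of an arbitrary $z\in A_n^\M$ (legitimate, since distinct atoms of a probability algebra are disjoint, so the union defining $z$ can be disjointified within $A_{n-1}^\M\times A_1^\M$), and the lower bound from the subadditivity $d(x\cup y,x'\cup y')\le d(x,x')+d(y,y')$ applied to approximate witnesses supplied by the inductive hypothesis. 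Both directions check out. What the paper's heavier bookkeeping buys is the explicit formula for $\dist(a,A_n^\M)$ in terms of atom measures, which is exactly what makes the subsequent corollary $at_n^\M(a)=\varphi_{n-1}^\M(a)\dotminus\varphi_n^\M(a)$ immediate; with your proof that formula would still need to be recorded separately. One pedantic point: since $\dotplus$ is truncated addition, your second inequality implicitly uses $\dist(a,A_n^\M)\le 1$ in the case where the untruncated sum exceeds $1$, which is harmless but worth a word.
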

\begin{proof}
 Let $a \in \B$ and let
$a_1,a_2,\dots$ be a listing of all distinct atoms
contained in $a$, arranged so that $\mu(a_1) \geq \mu(a_2) \geq
\dots$, and extended to an infinite sequence by taking $a_k = 0$
for larger $k$, if necessary.  We note that $\dist(a,A_n^\M) = d(a,w)$ where $w = a_1 \cup \dots \cup a_n$, and for
this $w$ we have $\dist(a,w) = \mu(a) - \mu(a_1 \cup \dots \cup a_n) = \mu(a) - (\mu(a_1) + \dots + \mu(a_n))$.

To prove the Lemma, we argue by induction on $n \geq 1$.  The $n=1$ case is Proposition
\ref{distance to atoms}(d). Taking $n>1$, it remains to prove the
induction step from $n-1$ to $n$. 

First note that if we take $w = a_1 \cup \dots \cup
a_{n-1}$ we have $\varphi_{n-1}^\M(a \cap w) = 0$ (by
the induction hypothesis) and $\varphi_1^\M(a \cap w^c) = \mu(a \cap w^c) - \mu(a_n)$ 
(by Proposition \ref{distance to atoms}(d)).
Therefore, for this $w$ we have
\begin{align*}
\varphi_{n-1}^\M(a \cap w) + &\varphi_1^\M(a \cap w^c) = \mu(a \cap w^c) - \mu(a_n) \\
& = (\mu(a) - \mu(a_1 \cup \dots \cup a_{n-1})) - \mu(a_n)  \\
&= \mu(a) - (\mu(a_1) + \dots + \mu(a_{n-1}) + \mu(a_n)) \\
&= \dist(a,A_n^\M) \text{.}
\end{align*}

To finish the argument, it suffices to prove that for any other $w$ we have 
$$\big{(}\varphi_{n-1}^\M(a \cap w) \dotplus \varphi_1^\M(a \cap w^c)\big{)}\geq \mu(a) - (\mu(a_1) + \dots + \mu(a_n))   \text{.}$$ 
So fix $w \in M$ and let $a^1_{1}, a^1_{2},\dots$ be a listing of all distinct atoms
contained in $a\cap w$, arranged so that $\mu(a^1_1) \geq \mu(a^1_2) \geq
\dots$, and extended to an infinite sequence by taking $a^1_k = 0$ if necessary. 
Also, let $a^2_1$ be one of the the largest atoms contained in $a\cap w^c$ (which can be $0$).
Note that the nonzero elements among $a^2_1,a^1_{1},\dots,a^1_{n-1}$ are distinct atoms, and all are $\leq a$.
By the induction hypothesis, $\varphi_{n-1}^\M(a \cap w)=\mu(a\cap w)-\sum_{i=1}^{n-1}\mu(a^1_i)$
and by Proposition \ref{distance to atoms}(d)  $\varphi_{1}^\M(a \cap w^c)=\mu(a\cap w^c)-\mu(a^2_1)$.
Thus 
\begin{align*}
 \varphi_{n-1}^\M(a \cap w) \dotplus \varphi_1^\M(a \cap w^c) &= \big(\mu(a\cap w)-\sum_{i=1}^{n-1}\mu(a^1_i)\big)
 + \big(\mu(a\cap w^c)-\mu(a^2_1)\big) \\&=\mu(a)-\Big(\sum_{i=1}^{n-1}\mu(a^1_i)+\mu(a^2_1)\Big)
 \end{align*}
The smallest possible value of this last expression occurs when $a_2^1,a^1_{1},\dots,a^1_{n-1}$ 
have the largest possible measures, which happens when the sequence
$\mu(a_2^1),\mu(a^1_{1}),\dots,\mu(a^1_{n-1})$ is a permutation of $\mu(a_1),\dots,\mu(a_n)$.
\end{proof}

For a similar treatment of atoms in the setting of random variable structures
see \cite[Lemma 2.16]{BY4}.

\begin{rema}
\label{distance to unions of n atoms decreases}
Let $\M = (\B,\mu,d)  \models Pr$ and $a \in \B$.  Propositions \ref{distance to atoms}(d) 
and \ref{distance to unions of n atoms} make it clear that
$$\mu(a) \geq \varphi_1^\M(a) \geq \varphi_2^\M(a) \geq \dots \geq \varphi_n^\M(a) \geq \dots $$
\end{rema}

\begin{nota}Let $\M = (\B,\mu,d)  \models Pr$ and $a \in \B$; let $a_1,a_2,\dots$ be a listing
of all distinct atoms contained in $a$, arranged so that
$\mu(a_1) \geq \mu(a_2) \geq \dots$, and extended to an infinite
sequence by taking $a_k = 0$ for larger $k$, if necessary. For each
$n \geq 1$, we refer to $\mu(a_n)$ as \emph{the $n^{th}$ largest measure of 
an atom contained in $a$}, and we denote this
number as $at_n^\M(a)$.  
\end{nota}
Note that the nonzero elements of $(a_n \mid n \in \N)$
are distinct, whereas the measure values $(\mu(a_n) \mid n \in \N)$ may contain
repetitions.

\begin{coro}
For each $n \geq 1$, the predicate $at_n$ is definable in all models
of $Pr$. Indeed, if $\M = (\B,\mu,d)  \models Pr$ and $a \in \B$, then
$$ at_1^\M(a) = \mu(a) \dotminus \varphi_1^\M(a)$$
and for each $n > 1$
$$ at_n^\M(a) = \varphi_{n-1}^\M(a) \dotminus \varphi_n^\M(a) \text{.} $$
\end{coro}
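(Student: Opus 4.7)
The plan is to reduce the statement directly to Proposition \ref{distance to unions of n atoms} by a simple telescoping calculation, together with the observation that the truncated subtraction $\dotminus$ coincides with ordinary subtraction on the relevant inputs.

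First I would recall the formula
\[
\varphi_n^\M(a) \;=\; \mu(a) \;-\; \bigl(\mu(a_1) + \dots + \mu(a_n)\bigr)
\]
which was established in the proof of Proposition \ref{distance to unions of n atoms}, where $a_1,a_2,\dots$ is the listing (with zeros appended) of the atoms below $a$ in decreasing order of measure. For the base case $n=1$, this gives
\[
\mu(a) - \varphi_1^\M(a) \;=\; \mu(a_1) \;=\; at_1^\M(a),
\]
and since $\mu(a) \geq \varphi_1^\M(a)$ by Remark \ref{distance to unions of n atoms decreases}, the ordinary difference agrees with $\mu(a) \dotminus \varphi_1^\M(a)$. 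For $n > 1$ the same formula yields the telescoping identity
\[
\varphi_{n-1}^\M(a) - \varphi_n^\M(a) \;=\; \mu(a_n) \;=\; at_n^\M(a),
\]
and again the monotonicity $\varphi_{n-1}^\M(a) \geq \varphi_n^\M(a)$ recorded in Remark \ref{distance to unions of n atoms decreases} ensures this equals $\varphi_{n-1}^\M(a) \dotminus \varphi_n^\M(a)$.

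Finally, for the definability conclusion, I would invoke the fact that the predicates $\mu$ and $\varphi_n$ (for every $n \geq 1$) are definable predicates in all models of $Pr$ — indeed $\mu$ is a basic predicate of $\LLpr$, and each $\varphi_n$ is given by an explicit formula as noted before Proposition \ref{distance to unions of n atoms}. Since $\dotminus$ is a continuous connective of $[0,1]$-valued logic, the displayed expressions for $at_1$ and $at_n$ exhibit these predicates as definable predicates uniformly across models of $Pr$.

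The only step that requires any care at all is the verification that the two truncated subtractions behave as genuine subtractions, but this is precisely the content of the monotonicity chain $\mu(a) \geq \varphi_1^\M(a) \geq \varphi_2^\M(a) \geq \dots$ from Remark \ref{distance to unions of n atoms decreases}; no new obstacle arises, and the corollary follows essentially immediately from the preceding propositions.
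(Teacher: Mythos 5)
Your proposal is correct and is essentially the paper's argument: the paper simply says the corollary is immediate from Proposition \ref{distance to unions of n atoms}, and your telescoping of the explicit formula $\varphi_n^\M(a) = \mu(a) - (\mu(a_1)+\dots+\mu(a_n))$, together with the monotonicity from Remark \ref{distance to unions of n atoms decreases} to justify replacing $\dotminus$ by ordinary subtraction, is exactly the intended verification.
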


\begin{proof}
This is immediate from Proposition \ref{distance to unions of n atoms}.
\end{proof}

We now consider an extension by definitions of $Pr$ obtained by adding
unary predicate symbols $(P_n \mid n \geq 1)$ to the signature and by
adding as axioms the conditions
$$ \sup_x \big| P_1(x) - (\mu(x) \dotminus \varphi_1(x))\big| = 0$$
and for $n > 1$
$$  \sup_x \big| P_n(x) - (\varphi_{n-1}(x) \dotminus \varphi_n(x)) \big| = 0\text{.} $$
This extension of $Pr$ is denoted by $Pr^*$.  
\index{$Pr^*$}
Note that each model
$\M$ of $Pr$ has a unique expansion, which we denote by
$\M^*$, that is a model of $Pr^*$. This expansion is given by
interpreting each $P_n$ so that, for each $a \in M$, one takes
$P_n^{\M^*}(a)$ to be the $n^{th}$ largest measure of an atom
contained in $a$.

\begin{nota}
\label{tuples generate partitions}
Suppose $\M = (\B,\mu,d) \models Pr$.
Fix $n \geq 1$ and consider any tuple $a = (a_1,\dots,a_n) \in M^n = \B^n$.  Let
$e = (e_1,\dots,e_{2^n})$ be the partition of $1$ in the boolean algebra $\B$ generated
by $a_1,\dots,a_n$.  By this we mean that the elements of $e$ are all possible intersections
of the form $a_1^{k_1}\cap\dots\cap a_n^{k_n}$, where each $k_i$ comes
from $\{ -1,+1 \}$, and we list these intersections in order according to
lexicographic order on the tuples of superscripts $k_1,\dots,k_n$.  We refer to 
$e$ as \emph{the partition of $1$ in $\B$ associated to $a$}.\index{partition!associated to a tuple}  
Further, note that when $(a_1,\dots,a_n)$ and
$(e_1,\dots,e_{2^n})$ are as above, then each $a_i$ is the union of the
coordinates $e_j$ of $e$ that are intersections
$a_1^{k_1}\cap\dots\cap a_n^{k_n}$ in which $k_i = +1$.  That is,
the correspondence between coordinates of $(a_1,\dots,a_n)$
and coordinates of $(e_1,\dots,e_{2^n})$ is given, in both directions,
by simple boolean terms that depend only on $n$.  (In particular, these tuples are uniformly
interdefinable in models of $Pr$.)

For simplicity of notation, we write $a^s$ for $a_1^{k_1}\cap\dots\cap a_n^{k_n}$
when $s = (k_1,\dots,k_n)$ is an arbitrary element of $\{-1,+1\}^n$ and $a = (a_1,\dots,a_n) \in M^n$.
Likewise we write $x^s$ for the boolean term $x_1^{k_1}\cap\dots\cap x_n^{k_n}$
when $s = (k_1,\dots,k_n)$ and $x$ stands for the tuple $(x_1,\dots,x_n)$ of variables.
\index{$a^s$, $x^s$}
As indicated above, the identity
\[\
x_i = \bigcup_s (x^s \mid s = (k_1,\dots,k_n) \mbox { and } k_i = +1)
\]
is true in all models of $Pr$.  Frequently when we use this notation, as here, we omit the
standard specifications that $x = (x_1,\dots,x_n)$ and $s = (k_1,\dots,k_n) \in \{ -1,+1 \}^n$.
In particular, we view $k_i$ as a function of $s$ when $s \in \{ -1,+1 \}^n$.
When it is needed, we list the elements of $\{ -1,+1 \}^n$ in lexicographical order.  
\end{nota}

\begin{rema}
\label{special formulas over Pr}
For future use we note that for every $\LLpr$-formula $\vphi(x)$ there exists an $\LLpr$-formula
$\psi(y_s \mid s \in \{ -1,+1 \}^n)$, such that $\vphi(x)$ is equivalent to $\psi(x^s \mid s \in \{ -1,+1 \}^n)$ in all models of $Pr$.  
Indeed, it suffices to take $\psi(y_s \mid s \in \{ -1,+1 \}^n)$ to be the result of substituting the 
boolean term $\bigcup ( y_s \mid s = (k_1,\dots,k_n) \in \{ -1,+1 \}^n \mbox{ and } k_i = +1 )$ for
the variable $x_i$ in $\vphi(x)$, for $i=1,\dots,n$.

Further, for every $\LLpr$-formula $\vphi(x)$ there exists an $\LLpr$-formula $\psi(x)$ such that 
$\psi(x)$ is $Pr$-equivalent to $\vphi(x)$ and every atomic formula occurring in $\psi(x)$ is of the form 
$\mu(x^s)$ for some $s \in \{ -1,+1 \}^n$.  Moreover, $\psi(x)$ can be chosen so that 
it is obtained from such atomic formulas using the restricted 
connectives $0,1,t \mapsto t/2$, and $(t,u) \mapsto t \dotminus u$.

(Proof: A general atomic formula $\alpha(x)$ in $\LLpr$ can be taken to be one of the form $\mu(t(x))$ 
where $t(x)$ is a boolean term in $x$.  If $\alpha(x) = d(t_1(x),t_2(x))$, then $\alpha(x)$ can be replaced  
by $\mu(t_1(x) \triangle t_2(x))$.  For each such $t(x)$ there is a subset $S \subseteq \{ -1,+1 \}$ such 
that the equation $t(x) = \cup (x^s \mid s \in S)$ is true in all models of $Pr$.  (If $S$ is empty, then 
$t(x)=0$ is true in all models of $Pr$.) Moreover, $\mu(t(x)) =\widehat\sum(\mu(x^s) \mid s \in S)$ is true in all models of 
$Pr$, where by $\widehat\sum$ we mean the connective $(u_1,\dots,u_{2^n}) \mapsto \min(\sum(u_s \mid s \in S),1)$.  
For the ``Moreover'' statement, including 
treatment of the connectives $\widehat\sum$, see \cite[Chapter 6]{BBHU}.)

A consequence of the preceding observation is that when $C$ is a subalgebra of a model $\M$ of $Pr$, then for
any $a \in M^n$ the type $\tp_{\M}(a/C)$ is determined by the values $\psi^\M(a)$ of $\LLpr$-formulas $\psi(x)$ over $C$ 
in which all atomic formulas are of the form $\mu(x^s \cap c)$ for some $c \in C$.  
(As above, $x = (x_1,\dots,x_n)$ and $s \in \{-1,+1\}^n$.)
\end{rema}

\begin{theo}\label{QE for Pr*}
The theory $Pr^*$ admits quantifier elimination.
\end{theo}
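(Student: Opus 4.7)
The plan is to use the standard saturation criterion for quantifier elimination in continuous first-order logic: $Pr^*$ admits QE provided that, for every $\M,\N \models Pr^*$ with $\N$ sufficiently saturated, every $(\LLpr \cup \{P_n \mid n \geq 1\})$-embedding $\sigma : A \to \N$ of a substructure $A \subseteq \M$ extends to an embedding $\M \to \N$. By a Zorn's Lemma argument it is enough to extend $\sigma$ by one element $a \in M$; equivalently, we must find $b \in N$ realizing in $\N$ the transported quantifier-free type of $a$ over $A$.

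By the normal-form observation at the end of Remark \ref{special formulas over Pr} (adapted to include the new unary predicates $P_n$), this partial type is captured by the conditions
\[
\mu(y^k \cap \sigma(c)) = \mu(a^k \cap c), \qquad P_n(y^k \cap \sigma(c)) = P_n(a^k \cap c)
\]
for all $c \in A$, $k \in \{-1,+1\}$, and $n \geq 1$. By saturation of $\N$ it suffices to establish finite satisfiability in $\N$. A finite fragment mentions only finitely many parameters $c_1,\dots,c_r \in A$; let $A_0 := \{c_1,\dots,c_r\}^{\#}$ be the finite subalgebra generated, with atoms $e_1,\dots,e_m$, and set $e_j' := \sigma(e_j)$. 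Because $\sigma$ preserves $\mu$ and each $P_n$, the element $e_j' \in \B_N$ has exactly the same measure, the same sequence of atom-measures (in $\B_N$), and the same atomless-part measure as $e_j \in \B_M$.

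This allows a direct construction of $b$. For each $j$, build $b \cap e_j'$ in two stages: (i) fix a measure-preserving bijection between the atoms of $\B_N$ below $e_j'$ and the atoms of $\B_M$ below $e_j$, and place into $b \cap e_j'$ precisely those atoms of $\B_N$ matched to atoms of $\B_M$ contained in $a \cap e_j$; (ii) inside the atomless part of $e_j'$ in $\N$, select a sub-element whose measure equals $\mu(a \cap e_j) - \sum_n P_n(a \cap e_j)$, the atomless-part measure of $a \cap e_j$. Setting $b := \bigcup_{j=1}^m (b \cap e_j')$, a direct verification yields $\mu(b \cap e_j') = \mu(a \cap e_j)$ and $P_n(b \cap e_j') = P_n(a \cap e_j)$ for every $j$ and $n$; the symmetric identities for $b^c \cap e_j'$ follow because $b^c \cap e_j' = e_j' \setminus (b \cap e_j')$ is constructed in tandem from the leftover atoms and the complementary atomless sub-element.

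The principal technical input, and the main point requiring care, is the intermediate-value property invoked in step (ii): in any model of $Pr$, an atomless element $u$ contains sub-elements of every measure $r \in [0, \mu(u)]$. This follows from Lemma \ref{approximate splitting} together with the metric ($\sigma$-order) completeness of $\B$ in Theorem \ref{first description of models of Pr}: arbitrarily fine approximating partitions of $u$ produce a $d$-Cauchy sequence of sub-elements of $u$ whose measures converge to $r$, and the $d$-limit is the required sub-element. With this tool in hand the witness $b$ exists in $\N$, finite satisfiability holds, and QE for $Pr^*$ follows.
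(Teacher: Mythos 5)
Your proof is correct and follows essentially the same route as the paper's: reduce to the atoms of a finite subalgebra, use the agreement of the $P_n$-values to match the atomic parts via a measure-preserving bijection of atoms, and split the atomless part exactly using the consequence of Lemma \ref{approximate splitting} (the paper gets exact splitting from $\omega$-saturation rather than from metric completeness, but this is the same idea). No gaps.
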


\begin{proof} 
We use \cite[Theorem 4.16]{BU}, so we need to show
that $Pr^*$ has the \emph{back-and-forth property} given in
\cite[Definition 4.15]{BU}.  Therefore, consider two $\omega$-saturated models
$\M^*,\NN^*$ of $Pr^*$ and tuples $(a_1,\dots,a_n)$ in $\M^*;
(b_1,\dots,b_n)$ in $\NN^*$ such that the quantifier-free type of
$(a_1,\dots,a_n)$ in $\M^*$ is the same as the quantifier-free type of
$(b_1,\dots,b_n)$ in $\NN^*$. Given any $u$ in $\M^*$ we need to find $v$ in $\NN^*$
such that $(a_1,\dots,a_n,u)$ and $(b_1,\dots,b_n,v)$ have the same
quantifier-free type in the language of $Pr^*$.  It suffices to do
this for the case in which $(a_1,\dots,a_n)$ and $(b_1,\dots,b_n)$ are
partitions of $1$, by the discussion in \ref{tuples generate partitions}.

Using Lemma \ref{approximate splitting} and the fact that $\M$ is $\omega$-saturated,
for each atomless $c \in M$ and $0<r<1$ there exists $a \leq c$ in $M$ such that
$\mu(a) = r  \mu(c)$, and hence $\mu(c \cap a^c) = (\mu(c)-r) \mu(c)$.  Indeed, $x=a$
can be taken to satisfy all of the conditions $|\mu(x) - r \mu(c)| \leq \delta$ for $\delta > 0$,
which we just showed were finitely satisfiable in $\M$.  This is used
in the next paragraph.

In the assumed situation we know that for each $j = 1,\dots,n$ we have
$\mu(a_j) = \mu(b_j)$ and, for all $k \geq 1$, we also have
$P_k^{\M^*}(a_j) = P_k^{\NN^*}(b_j)$.  For each $j$, let $a_j^0$ be the atomic
part of $a_j$ (i.e., the union of the atoms of $\M$ that are $\leq a_j$), so
$a_j^1 := a_j \cap (a_j^0)^c$ is the atomless part of $a_j$.  Define $b_j^0,b_j^1$ from
$b_j$ similarly.  Our assumptions yield that $\mu(a_j^0) = \mu(b_j^0)$ (and indeed,
that the atoms below $a_j$ and $b_j$ are in a bijective, measure-preserving
correspondence).  Hence also $\mu(a_j^1) = \mu(b_j^1)$.

Take any $u$ in $\M^*$ and fix $j = 1,\dots,n$.  Define $v_j^0 \leq b_j^0$ to be
the union of the atoms below $b_j$ that correspond to atoms below $a_j \cap u$.
Further, choose $v_j^1 \leq b_j^1$ so that $\mu(v_j^1) = \mu(a_j \cap u) - \mu(v_j^0)$,
and let $v_j = v_j^0 \cup v_j^1$.  We obtain
\begin{align*}
\mu(v_j) &= \mu(a_j \cap u), \mbox{  and}\\
P_k^{\NN^*}(v_j) &= P_k^{\M^*}(a_j \cap u) \mbox{ for all } k \geq 1 \text{.}
\end{align*}
Then let $v = v_1 \cup \dots \cup v_n$, and note that $v_j = b_j \cap v$ for all $j$. 
It follows that the quantifier-free type of $(a_1,\dots,a_n,u)$ in $\M^*$ is the same as
the quantifier-free type of $(b_1,\dots,b_n,v)$ in $\NN^*$, as desired.
\end{proof}

Theorem \ref{QE for Pr*} allows us to characterize (and axiomatize)
the complete extensions of $Pr$.  
\begin{defi}
\label{invariants for atoms}
For any $\M = (\B,\mu,d) \models Pr$, let
$\Phi^\M$ denote the sequence $(at_n^\M(1) \mid n \geq 1)$, which
lists the sizes of the atoms of $\B$ in decreasing order (and then has
a tail of $0$s if there are only finitely many atoms in $\B$).  
\end{defi}
\index{$\Phi^\M$}

Note that the range of the operator $\Phi$ consists of all the sequences $(t_n
\mid n \geq 1)$ such that $1 \geq t_1 \geq t_2 \geq \dots \geq 0$
and $\sum_{n=1}^\infty t_n \leq 1$.

\begin{coro}\label{axioms for complete extensions of Pr}
For models $\M,\NN$ of $Pr$, we have that $\M \equiv \NN$ if and
only if $\Phi^\M = \Phi^\NN$.  Therefore, any complete extension
$T$ of $Pr$ (in the same signature) can be axiomatized by adding to
$Pr$ the conditions $\varphi_1(1) = 1-t_1$ and $\varphi_{n-1}(1)
\dotminus \varphi_n(1) = t_n$ (for $n>1$), where $(t_n \mid n \geq
1)$ is the common value of $\Phi^\M$ for $\M \models T$.
\end{coro}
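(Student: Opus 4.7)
The plan is to reduce the question to the quantifier-free theory of the definitional expansion $\M^*$, using Theorem \ref{QE for Pr*}, and then observe that the quantifier-free theory (in the expanded signature) is controlled entirely by $\Phi^\M$ because there are essentially no closed boolean terms.

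First I would note that $Pr^*$ is an extension of $Pr$ by definitions, so for any $\M, \NN \models Pr$ we have $\M \equiv \NN$ in the signature $\LLpr$ if and only if $\M^* \equiv \NN^*$ in the signature of $Pr^*$. By Theorem \ref{QE for Pr*}, $\M^* \equiv \NN^*$ is equivalent to agreement of their quantifier-free theories. A closed $\LLpr^*$-atomic formula must be of the form $\mu(t)$, $d(t_1,t_2)$, or $P_n(t)$, where $t, t_1, t_2$ are closed boolean terms, hence each is equal (in every model of $Pr$) to either $0$ or $1$. Thus the only closed atomic values not already fixed by $Pr$ itself are $P_n(0)$ and $P_n(1)$ for $n \geq 1$. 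Since the defining axioms of $Pr^*$ give $P_n^{\M^*}(0) = at_n^\M(0) = 0$ and $P_n^{\M^*}(1) = at_n^\M(1) = \Phi^\M(n)$, the quantifier-free theory of $\M^*$ is determined by, and determines, the sequence $\Phi^\M$. This establishes $\M \equiv \NN \iff \Phi^\M = \Phi^\NN$.

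For the axiomatization statement, I would translate the condition $\Phi^\M(n) = t_n$ into the language of $Pr$ using the defining equations of $P_n$. Namely, the axioms defining $P_n$ in $Pr^*$ say $P_1(x) = \mu(x) \dotminus \varphi_1(x)$ and $P_n(x) = \varphi_{n-1}(x) \dotminus \varphi_n(x)$ for $n>1$. Evaluating at $x = 1$ and using $\mu(1) = 1$, the condition $P_n(1) = t_n$ is equivalent in every model of $Pr$ to $\varphi_1(1) = 1 - t_1$ (for $n=1$) and $\varphi_{n-1}(1) \dotminus \varphi_n(1) = t_n$ (for $n > 1$). Given a complete extension $T$ of $Pr$, pick $\M \models T$ and let $(t_n) = \Phi^\M$; adding the above conditions to $Pr$ forces $\Phi^\NN = (t_n)$ in any model $\NN$ and hence, by the first part, forces $\NN \equiv \M$, i.e.\ $\NN \models T$.

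I expect no real obstacle here, since the heavy lifting is already done by Theorem \ref{QE for Pr*} and by the explicit definability of $at_n$ via $\varphi_n$ established in the preceding propositions. The only point that requires a bit of care is verifying that the closed $\LLpr^*$-terms really do reduce to $0$ or $1$ in every model of $Pr$ (which is immediate, since the only constants are $0,1$ and the boolean operations on $\{0,1\}$ stay within $\{0,1\}$), and that no other atomic values can vary between models. Once this is observed, both parts of the corollary follow without further computation.
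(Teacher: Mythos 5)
Your proposal is correct and follows essentially the same route as the paper: reduce elementary equivalence to agreement of quantifier-free sentences via Theorem \ref{QE for Pr*}, note that closed boolean terms evaluate to $0$ or $1$ so the only closed atomic values not fixed by $Pr$ itself are the $P_n(1) = \Phi^\M(n)$, and then translate $P_n(1)=t_n$ back into $\LLpr$ via the defining equations. Your write-up is somewhat more explicit than the paper's (which compresses this to "the quantifier-free type of $1$ determines everything"), but the underlying argument is identical.
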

\begin{proof}
Let $\M,\NN$ be models of $Pr$ such that $\Phi^\M = \Phi^\NN$.
From the definition of the operator $\Phi$ we see that $1$ has the
same quantifier-free type in $\M^*$ as in $\NN^*$. Theorem \ref{QE
for Pr*} yields that $\M^* \equiv \NN^*$, from which it follows that
$\M \equiv \NN$.  The converse and the rest of the Corollary follow
using the definition of $\Phi$.
\end{proof}

\begin{coro}
\label{omega categoricity and homogeneity of all models of Pr}
Every  completion $T$ of $Pr$ (in $\LLpr$) is separably categorical, and the
unique separable model of $T$ is strongly $\omega$-homogeneous.
\end{coro}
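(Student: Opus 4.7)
The plan is to deduce both statements from Theorem \ref{QE for Pr*} together with the axiomatization of complete extensions of $Pr$ given in Corollary \ref{axioms for complete extensions of Pr}. First, I would observe that since $Pr^*$ is an extension by definitions of $Pr$, a completion $T$ of $Pr$ corresponds uniquely to a completion $T^*$ of $Pr^*$, and by Theorem \ref{QE for Pr*} the theory $T^*$ has quantifier elimination. Consequently the complete type (in $T^*$) of a tuple is determined by its quantifier-free type, which by Remark \ref{special formulas over Pr} reduces to the finitely many values $\mu(a^s)$ and $P_k(a^s)$ for $s \in \{-1,+1\}^n$ and $k \geq 1$.

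For separable categoricity, I would take two separable models $\M, \NN$ of $T$, expand them to models $\M^*, \NN^*$ of $T^*$, fix countable dense subsets, and build an increasing chain of finite partial isomorphisms by a standard back-and-forth exhausting both dense sets in the usual fashion. The inductive step mirrors the argument in the proof of Theorem \ref{QE for Pr*}: given tuples $\bar a \in M^n$ and $\bar b \in N^n$ with matching qftp in $T^*$ (which we may assume are partitions of $1$, by \ref{tuples generate partitions}) and a new element $u \in M$, one splits each $a_j$ as $a_j^0 \cup a_j^1$ with $a_j^0$ atomic and $a_j^1$ atomless, and similarly for $b_j$. The atomic parts are handled via the measure-preserving bijection between the atoms of $\M$ and $\NN$ beneath $a_j$ and $b_j$ (which exists because $\Phi^\M = \Phi^\NN$), producing $v_j^0 \leq b_j^0$ corresponding to $u \cap a_j^0$. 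For the atomless parts one chooses $v_j^1 \leq b_j^1$ with $\mu(v_j^1) = \mu(u \cap a_j^1)$, and sets $v := \bigcup_j (v_j^0 \cup v_j^1)$. The closure of the union of the resulting chain of partial isomorphisms is an isometric isomorphism $\M \to \NN$.

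The main obstacle is to carry out this inductive step without $\omega$-saturation (which is available in the proof of Theorem \ref{QE for Pr*} but not in a general separable model). The key point needed is an \emph{exact intermediate value property} for subsets of an atomless element: in any model of $Pr$, for atomless $c$ and every $0 \leq r \leq \mu(c)$ there exists $v \leq c$ with $\mu(v) = r$. I would prove this directly from Lemma \ref{approximate splitting}: for each $n$ there is a partition of $c$ into pieces of measure at most $1/n$, and by taking suitable unions of such pieces one constructs an increasing sequence $v_1 \leq v_2 \leq \dots$ beneath $c$ with $\mu(v_n) \to r$; since any increasing sequence is Cauchy in $(\B, d)$ and $\B$ is metrically complete, the limit $v$ lies in $\B$, satisfies $v \leq c$, and has $\mu(v) = r$.

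For strong $\omega$-homogeneity of the unique separable model $\M$ of $T$, I would apply the same back-and-forth inside $\M$ itself, starting from a partial isomorphism $\bar a \mapsto \bar b$ between finite tuples with $\tp_\M(\bar a) = \tp_\M(\bar b)$. By quantifier elimination in $T^*$ (and the fact that $T^*$ is a definitional expansion of $T$) these tuples have equal qftp in $\M^*$, so the initial map is a legitimate partial isomorphism; continuing the back-and-forth exactly as above, exhausting a countable dense subset of $\M$ on both sides, produces the desired automorphism of $\M$ sending $\bar a$ to $\bar b$.
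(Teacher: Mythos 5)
Your proposal is correct and follows essentially the same route as the paper: both deduce the result from quantifier elimination for $Pr^*$ via a back-and-forth on dense subsets of separable models, with the crucial extra ingredient being that the exact splitting property (for atomless $c$ and $0\le r\le\mu(c)$ there is $v\le c$ with $\mu(v)=r$) holds in \emph{all} models of $Pr$, not just $\omega$-saturated ones. The only difference is that the paper cites Halmos \cite[Section 41]{HaMT} for this strengthening of Lemma \ref{approximate splitting}, whereas you derive it directly from Lemma \ref{approximate splitting} together with metric completeness of $\B$ via an increasing Cauchy sequence, which is a sound and somewhat more self-contained way to supply the same fact.
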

\begin{proof}
Let $T$ be a completion of $Pr$ and let $\M$ be a separable model of $T$.
A strengthening of Lemma \ref{approximate splitting} that is proved in
\cite[Section 41]{HaMT} says that the key property used in the proof of 
Theorem \ref{QE for Pr*} is actually true in all models, without assuming
they are $\omega$-saturated.  That is, for each atomless $c \in M$ and $0<r<1$ 
there exists $a \leq c$ in $M$ such that
$\mu(a) = r  \mu(c)$, and hence $\mu(c \cap a^c) = (\mu(c)-r) \mu(c)$.

So the proof of Theorem \ref{QE for Pr*} 
not only shows that $Pr^*$ admits quantifier elimination, but shows further that
for each $\M^* \models Pr^*$ and every $(a_1,\dots,a_n)$ in $\M^*$, every 
$1$-type over $(a_1,\dots,a_n)$ for the theory of $\M^*$ is realized in $\M^*$.
In other words, every model of $Pr^*$ is $\omega$-saturated.  It follows trivially
from the definition that the same is true of every structure $(\M,a_1,\dots,a_n)$
where $\M$ is a model of $Pr$ and $a_1,\dots,a_n \in M$.

It is routine to show that if $\M_1,\M_2$ are $\omega$-saturated separable 
metric structures for the same language, and $\M_1,\M_2$ are elementarily
equivalent, then $\M_1$ and $\M_2$ are isomorphic.  One uses the usual inductive
back-and-forth argument to produce an elementary bijection $f \colon S_1 \to S_2$,
where $S_i$ is a dense subset of $M_i$ for both values of $i$.  Then $f$ extends
to a map on $M_1$ that is an isomorphism from $\M_1$ onto $\M_2$.  The
proof of the corollary is completed by applying this construction to models of
the form $(\M,a_1,\dots,a_n)$ discussed above.
\end{proof}

\begin{rema}
\label{separable categoricity of APA}
Corollary \ref{omega categoricity and homogeneity of all models of Pr} yields
the following well known fact due to Carath\'{e}odory: if $(X,\B,\mu)$ and
$(Y,\C,\nu)$ are atomless, countably generated measure spaces with
$\mu(X) = \nu(Y) < \infty$, then the measured algebras of
$(X,\B,\mu)$ and $(Y,\C,\nu)$ are isomorphic.  Proof: Without loss of generality,
we may take $\mu(X) = \nu(Y) = 1$.  In that case, the measured algebras of
these two probability spaces are atomless, separable models of $Pr$.  Using
Theorem \ref{QE for Pr*} we see that these probability algebras are elementarily
equivalent (since in an atomless probability algebra the predicates interpreting
$P_n$ are identically $0$) and hence by
Corollary \ref{omega categoricity and homogeneity of all models of Pr} we 
get the desired result.  For proofs in analysis see \cite[Section 41]{HaMT} and
\cite[Theorem 4, p.\ 399]{Roy}.  
\end{rema}

\begin{coro}\label{categoricity for models of Pr}
Let $T$ be any complete $\LLpr$-theory that extends $Pr$ and let
$(t_n \mid n \geq 1)$ be the common value of $\Phi^\M$ for $\M
\models T$.
\newline
(1) If $\sum_{n=1}^\infty t_n = 1$, then $T$ has a unique model,
which consists of an atomic probability algebra having atoms $(a_n
\mid n \geq 1 \mbox{ and } t_n>0)$ with $\mu(a_n) = t_n$ for all
$n$.
\newline
(2) If $\sum_{n=1}^\infty t_n < 1$, then the models of $T$ are
exactly the probability algebras with atoms as described in (1)
together with an atomless part of measure $1 - \sum_{n=1}^\infty
t_n$.  
\end{coro}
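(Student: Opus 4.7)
The plan is to use Corollary \ref{axioms for complete extensions of Pr} to reduce everything to analyzing the internal structure of an arbitrary model $\M = (\B,\mu,d) \models T$. By that corollary, $\Phi^\M = (t_n)$, so I would begin by listing the atoms of $\B$ in decreasing order of measure as a sequence $(a_n \mid t_n > 0)$ with $\mu(a_n) = t_n$. Next I would form the join $A := \bigcup_n a_n$, which exists because $\B$ is $\sigma$-order complete by Theorem \ref{first description of models of Pr}. By $\sigma$-additivity of $\mu$, this join satisfies $\mu(A) = \sum_n t_n$, and by construction no atom of $\B$ lies below $A^c$, so $A^c$ is atomless.

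For part (1), with $\sum_n t_n = 1$, I would observe that $\mu(A^c) = 0$ and then use strict positivity of $\mu$ (again from Theorem \ref{first description of models of Pr}) to conclude $A^c = 0$ and $A = 1$. Then for any $b \in \B$ we have $b = \bigcup_n (b \cap a_n)$ with each $b \cap a_n$ either $0$ or $a_n$, so the assignment $b \mapsto \{ n \mid a_n \leq b \}$ is a measure-preserving boolean isomorphism from $\M$ onto the power-set probability algebra on $\{ n \mid t_n > 0 \}$ carrying mass $t_n$ at $\{ n \}$. Since this description depends only on $(t_n)$, any two models of $T$ are isomorphic, which yields the stated uniqueness.

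For part (2), with $\sum_n t_n < 1$, the element $A^c$ is a nonzero atomless element of measure $1 - \sum_n t_n$, and I would decompose each $b \in \B$ as $(b \cap A) \cup (b \cap A^c)$, where the first summand is a join of a subset of the $a_n$'s and the second lies below the atomless element $A^c$. This gives the claimed structural description of every model of $T$. For the converse direction, I would note that any probability algebra combining atoms of measures $(t_n)$ with an atomless part of measure $1 - \sum_n t_n$ has $\Phi$-sequence equal to $(t_n)$, and is therefore a model of $T$ by Corollary \ref{axioms for complete extensions of Pr}. No substantial obstacle arises here: the argument is essentially structural, relying only on Corollary \ref{axioms for complete extensions of Pr} together with the $\sigma$-completeness and strict positivity provided by Theorem \ref{first description of models of Pr}.
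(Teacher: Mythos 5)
Your proposal is correct, but it takes a genuinely different route from the paper for the uniqueness/classification step. The paper derives the corollary by combining Corollary \ref{axioms for complete extensions of Pr} with Corollary \ref{omega categoricity and homogeneity of all models of Pr} (separable categoricity of each completion of $Pr$): in case (1) every model is necessarily separable, so separable categoricity gives uniqueness outright, and in case (2) the same two corollaries identify the models up to the choice of atomless part. You instead use only Corollary \ref{axioms for complete extensions of Pr} together with the structural facts from Theorem \ref{first description of models of Pr} ($\sigma$-order completeness, $\sigma$-additivity, strict positivity), forming the join $A$ of the atoms, showing $A^c$ is atomless, and in case (1) exhibiting an explicit measure-preserving isomorphism $b \mapsto \{ n \mid a_n \leq b \}$ onto the power-set probability algebra on $\{ n \mid t_n > 0 \}$. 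What your route buys is a more elementary and self-contained argument: it makes the isomorphism concrete, it proves uniqueness in case (1) for models of arbitrary density without first having to observe that such models are automatically separable, and it isolates exactly which boolean-algebraic facts are doing the work. What the paper's route buys is brevity, since the categoricity and homogeneity machinery has already been established and the corollary then really is immediate. One small point worth making explicit in your write-up is the distributivity $b \cap \bigcup_n a_n = \bigcup_n (b \cap a_n)$ over countable joins, which follows from continuity of $\mu$ with respect to $d$ and strict positivity; with that noted, the argument is complete.
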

\begin{proof}
These statements are immediate 
from Corollaries \ref{axioms for complete extensions of Pr} and 
\ref{omega categoricity and homogeneity of all models of Pr}.  
\end{proof}

\begin{rema}
Let $\M$ be any model of $Pr$.  From the previous results it follows 
that $\acl_\M(\emptyset)$ is the $\sigma$-subalgebra
generated by the atoms in $\M$.
\end{rema}

\begin{rema}
\label{definability of atomics and atomless}
If $\M \models Pr$, let $a_0 \in M$ be the join of the atoms of $\M$,
and let  $a_1 = a_0^c$.  
Further, let $
\A_i = \{ a \in M \mid a \leq a_i \} $
for each $i=0,1$.  Thus $\A_0$ is the set of atomic elements of $\M$ and $\A_1$
is the set of atomless elements, and $a_1$ is the largest atomless element.
The partition $\{a_0,a_1\}$ of $1$ is important because it splits every element $a$
of $M$ into its atomic part $a \cap a_0$ and its atomless part $a \cap a_1$.
We note the following facts concerning the definability of these elements and sets: 
\newline
(i) Relative to all models $\M$ of $Pr$: $\A_0$ is not a zeroset; $\A_1$ is
a zeroset but is not a definable set; $a_0,a_1$ are not definable elements.
\newline
(ii)  Fix a complete extension $T$ of $Pr$.  Relative to all models of $T$:
$a_0,a_1$ are definable elements and $\A_0,\A_1$ are
definable sets.  
\begin{proof}  
(i) First we show $\A_0$ is not a zeroset over $Pr$.  Suppose otherwise, so
there exists a definable predicate $R(x)$ 
over $\Pr$ such that for all $\M \models Pr$ and all $a \in M$, we have
$R^\M(a)=0$ if and only if $a$ is an atomic element in $\M$.  
For each $n \geq 1$, let $\M_n$ be the
probability algebra of the probability space having $n$ points, each of which has
measure $1/n$, and let $\M$ be the metric ultraproduct
of $(\M_n \mid n \geq 1)$ with respect to a nonprincipal
ultrafilter.  Then $1$ is atomic in every $\M_n$ while it is not atomic in $\M$; indeed,
$\M$ is atomless, so $0$ is its only atomic element.  This means $R^\M(1) \neq 0$
whereas $R^{\M_n}(1) = 0$ for all $n$.  This violates the Fundamental Theorem of
ultraproducts for the definable predicate $R$.  (See Theorem 5.4 and
the discussion of extensions by definition in Section 9 in
\cite{BBHU}.)  

Proposition \ref{distance to atoms}(e)
shows that $\A_1$ is the zeroset of the formula $\theta(x)$  in all models of $Pr$.

Further, $\A_1$ cannot be a definable set uniformly in all models of $Pr$, since otherwise there
would be a definable predicate $R(x)$ over $Pr$ such that for all $\M \models Pr$
and all $a \in M$
\[
R^\M(a) = \sup \{\mu(a \cap y) \mid y \in \A_1 \} \text{.}
\]
But then the zeroset of $R(x)$ would be $\A_0$ in all models of $Pr$, which we
just proved is not possible.

Since $a_0^c = a_1$, they are either both definable or both undefinable.
We work with $a_0$.  If $a_0$ were definable over $Pr$, then the operation
$x \mapsto a_0 \cap x$ would be definable, so its image, which is $\A_0$,
would be a definable set, but it isn't.

(ii)  Consider a complete extension $T$ of $Pr$ and let
$\U$ be an $\omega_1$-universal domain for $T$.
By Corollary \ref{categoricity for models of
Pr}, we see that $a_0$ and $a_1$ are fixed by every automorphism of $\U$.  Using
\cite[Exercise 10.7 or Theorem 9.32]{BBHU} it follows that $a_0$ and $a_1$ are each 
definable uniformly in all models of $T$.  Finally, note that for each
$i=0,1$, the set $\A_i$ is the image of the definable function
$f_i$ defined by $f_i(x) = x \cap a_i$.  Using \cite[Theorem 9.17]{BBHU}, 
we infer that $\A_i$ is a definable set uniformly in all models of $T$.
\end{proof}
\end{rema}

\begin{exer}
By Remark \ref{distance to unions of n atoms decreases}, 
for any $\M \models Pr$ and any $a \in M$, we have that $(\vphi_n^\M(a)) \mid n\geq 1)$
is a decreasing sequence from $[0,1]$, so it converges, and its limit must be the distance from $a$ to the
set of atomic elements of $\M$, which is the closure of $\cup_n A_n^\M$.  
Show that if we restrict attention to models of a
completion $T$ of $Pr$, this convergence is uniform (in $\M$ as well as $a$), so its limit is a $T$-definable
predicate.  This gives an alternative proof of Remark \ref{definability of atomics and atomless}(ii).
\end{exer}

\section{Random variables}
\label{random variables}

Here we discuss how to represent the space $RV$ of $[0,1]$-valued random variables 
(modulo pointwise equality a.e.), equipped with the $L_1$-distance, as a metric imaginary sort for the theory $Pr$. 
We focus on the measure theoretic aspects of the matter, and avoid many
technical details of the $\meq$ construction, for which we refer to various 
articles for the details (see Section \ref{continuous model theory}).
An interpretation of random variables in atomless probability algebras 
was originally given by Ben Yaacov in \cite{BY1} in the $CAT$ setting, 
and extended to an interpretation of $RV$ in $Pr$ by him in \cite{BY4}.  In \cite{BY1} certain other classes
of random variables are also treated, and the approach used here can easily be extended to apply to them.

When $\M$ is an arbitrary model of $Pr$, we let $(X,\B,\mu)$ denote a probability
space whose probability algebra $(\widehat \B, \widehat \mu, \widehat d)$ is (isomorphic to) $\M$.  
In much of this section we argue in $(X,\B,\mu)$ using measure theory.

We denote by $RV = RV(\B,\mu) := L_1(\B,\mu;[0,1])$ the space of all
$\B$-measurable functions $f \colon X \to [0,1]$.  We equip this space with the $L_1$ (pseudo)metric,
for which the distance between $f$ and $g$ is $\|f-g\|_1 = \int_X |f-g| \, d\mu$.  As done here for probability 
spaces, we consider the quotient metric space obtained by identifying two random variables if they are equal
pointwise $\mu$-a.e., equipped with the distance induced by $\| f-g \|_1$ 
(which we call the \emph{$L_1$-distance}).%
\index{$L_1$-distance}
\index{$RV$, $RV_n$}

The goal of this section is to explain how this quotient can be seen as a metric imaginary sort
for the model $(\widehat \B, \widehat \mu, \widehat d)$ of $Pr$.  One value of doing so is that it allows 
seeing $\P(a|\A)$ as an imaginary in $(\widehat \B, \widehat \mu, \widehat d)^{\meq}$, for any $a \in \widehat \B$
and any closed subalgebra $\A$ of $\widehat \B$.

For $n \geq 1$, consider the subset $RV_n = RV_n(\B,\mu)$ of $L_1(\B,\mu;[0,1])$ consisting of those 
functions that can be written as $\sum_{i=1}^n \frac{i}{n} \chi_{E_i}$ 
where $E = (E_1,\dots,E_n)$ is a partition of $X$ from $\B$.  Below we denote $\sum_{i=1}^n \frac{i}{n} \chi_{E_i}$
by $f_E$.  On $RV_n(\B,\mu)$ we take the $L_1$-distance.
If $f_E \in RV_m$ and $f_F \in RV_n$, we have $f_E,f_F$ both in $L_1(\B,\mu;[0,1])$, so we can compare
them, compute the $L_1$-distance between them, etc., even if $m \neq n$.
\index{$f_E$}

Of course we may identify $RV_n$ with the set  of $E = (E_1,\dots,E_n)$ that are partitions
of $X$ in $\B$.  For two such partitions $E,F$, we define $\rho_n(E,F)$ to be the $L_1$-distance;
\begin{equation*}\tag{A}
\begin{aligned}
 \rho_n(E,F) & := \| f_E - f_F\|_1 = \Big\| \sum_{i=1}^n \frac{i}{n} \chi_{E_i} - \sum_{i=1}^n \frac{i}{n} \chi_{F_i} \Big\|_1 \\
& = \frac1n \sum_{i \neq j} |i-j| \mu(E_i \cap F_j) \text{.}
\end{aligned}
\end{equation*}
\index{$\rho_n(E,F)$}

By analyzing the expressions in $(A)$, we obtain (in the following Lemma) a Lipschitz equivalence between 
 $\rho_n(E,F)$ and ${d}_P(E,F) :=  \frac12\sum_{i=1}^n d(E_i,F_i) =  \frac12 \sum_{i=1}^n \mu(E_i \triangle F_i)$.  
 Note that when $n=2$ we have ${d}_P(E,F) = \frac12 \big(d(E_1,F_1) + d(E_1^c,F_1^c) \big) = d(E_1,F_1)$,
since $d(E_1^c,F_1^c) = d(E_1,F_1)$; this explains the factor $\frac12$.  Also, ${d}_P$ is
equivalent to the usual pseudometric $d(E,F) := \max_i d(E_i,F_i)$, since 
\[
\frac12 d(E,F) \leq {d}_P(E,F) \leq \frac{n}{2} d(E,F)
\]
holds for all $E,F \in RV_n$.
\index{${d}_P(E,F)$}

\begin{lema}
\label{rho_n compared to L_1-distance}
For all $E,F \in RV_n$ we have
\begin{equation*}
\frac{1}{n} {d}_P(E,F) \leq  \rho_n(E,F) = \| f_E - f_F \|_1 \leq {d}_P(E,F) \text{.}
\end{equation*}
\begin{proof}
First note that for each $i$ the family $(E_i \cap F_j \mid j \neq i)$ is a partition of $E_i \setminus F_i$, 
and the same with $E$ and $F$ interchanged.  Therefore
\begin{align*}
\sum_{i=1}^n \mu(F_i \setminus E_i) = \sum_{i \neq j} \mu(E_i \cap F_j) = \sum_{i=1}^n \mu(E_i \setminus F_i) \text{,}
\end{align*}
from which follows
\begin{align*}
  \sum_{i \neq j} \mu(E_i \cap F_j) = \frac12 \sum_{i=1}^n \mu(E_i \triangle F_i) = {d}_P(E,F)  \text{.}
\end{align*}
Then we get the desired inequality using $(A)$ together with
\begin{equation*}
 \frac1n \sum_{i \neq j} \mu(E_i \cap F_j) \leq \frac1n \sum_{i \neq j} |i-j| \mu(E_i \cap F_j) \leq  \sum_{i \neq j} \mu(E_i \cap F_j) \text{.} \qedhere
\end{equation*}
\end{proof}
\end{lema}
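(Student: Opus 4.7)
The plan is to start directly from the explicit expression $(A)$ for $\rho_n(E,F)$ and reduce both inequalities to elementary bounds on the integer coefficients $|i-j|$ for $i\neq j$ in $\{1,\dots,n\}$. The only nontrivial bookkeeping is identifying the double sum $\sum_{i\neq j}\mu(E_i\cap F_j)$ with the symmetric-difference quantity $d_P(E,F)$; once that identity is in hand, the result drops out from $1\leq|i-j|\leq n-1\leq n$.

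First I would record the identity
\[
\sum_{i\neq j}\mu(E_i\cap F_j)\;=\;\tfrac12\sum_{i=1}^{n}\mu(E_i\triangle F_i)\;=\;d_P(E,F).
\]
To prove this I use that $(F_1,\dots,F_n)$ partitions $X$, so $E_i=\bigcup_{j}(E_i\cap F_j)$ and consequently $E_i\setminus F_i=\bigcup_{j\neq i}(E_i\cap F_j)$, a disjoint union; hence $\mu(E_i\setminus F_i)=\sum_{j\neq i}\mu(E_i\cap F_j)$. The symmetric calculation with the roles of $E$ and $F$ reversed gives $\mu(F_i\setminus E_i)=\sum_{j\neq i}\mu(F_i\cap E_j)$. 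Adding these and summing over $i$ yields $\sum_i\mu(E_i\triangle F_i)$ on the left and $2\sum_{i\neq j}\mu(E_i\cap F_j)$ on the right, which is the claimed identity.

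Next, from expression $(A)$,
\[
\rho_n(E,F)=\frac{1}{n}\sum_{i\neq j}|i-j|\,\mu(E_i\cap F_j),
\]
and since $i,j\in\{1,\dots,n\}$ with $i\neq j$ we have $1\leq|i-j|\leq n-1\leq n$. Therefore
\[
\frac{1}{n}\sum_{i\neq j}\mu(E_i\cap F_j)\;\leq\;\rho_n(E,F)\;\leq\;\sum_{i\neq j}\mu(E_i\cap F_j).
\]
Substituting the identity from the previous paragraph converts both outer expressions into $\frac{1}{n}d_P(E,F)$ and $d_P(E,F)$ respectively, which is exactly the inequality to be proved.

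There is essentially no obstacle here; the only point requiring care is that the factor $\tfrac12$ appearing in the definition of $d_P$ has to be tracked carefully so that $\sum_{i\neq j}\mu(E_i\cap F_j)$ comes out equal to $d_P(E,F)$ rather than to $2d_P(E,F)$, and this is exactly why the normalization $\tfrac12$ was built into $d_P$ in the first place.
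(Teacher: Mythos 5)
Your proposal is correct and follows essentially the same route as the paper's proof: establish the identity $\sum_{i\neq j}\mu(E_i\cap F_j)=\frac12\sum_i\mu(E_i\triangle F_i)=d_P(E,F)$ via the disjoint decomposition $E_i\setminus F_i=\bigcup_{j\neq i}(E_i\cap F_j)$, then bound the coefficients $|i-j|$ between $1$ and $n$ in the expression $(A)$. The bookkeeping with the factor $\tfrac12$ is handled correctly.
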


Write $\widehat {RV}_n = \widehat {RV}_n(\B,\mu)$ 
for the image of $RV_n$ under the quotient map from $\B$ onto $\widehat \B$.
If $\M$ is the model $(\widehat \B,\widehat\mu,\widehat d)$, we also denote $\widehat{RV}_n$
by $\widehat{RV}_n^\M$, and note that it satisfies
\[
\widehat{RV}_n^\M = \{ (e_1,\dots,e_n) \in M^{n} \mid (e_1,\dots,e_n) \mbox{ is a partition of $1$ in } \M \} \text{.}
\]
\index{$\widehat \rho_n^\M(e,f)$}
We put on $\widehat{RV}_n^\M$ the pseudometric $\widehat \rho_n^\M$ obtained canonically 
from $\rho_n$ on ${RV}_n$; that is,
for $e,f \in \widehat{RV}_n^\M$, we have
\begin{equation*}
\tag{B}   
\begin{aligned}
\widehat \rho_n^\M(e,f) =  \sum_i \sum_j \Big| \frac{i}{n} - \frac{j}{n} \Big| \mu(e_i \cap f_j) \\
= \frac1n \sum_i \sum_j \big| i-j \big| \mu(e_i \cap f_j) \text{.}
\end{aligned}
\end{equation*}

\begin{lema}
\label{imaginary sort from RVn}
Let $\M  \models Pr$.  Then $\widehat \rho^\M_n$ is a complete metric on $\widehat{RV}_n^\M$.  Also,
$\widehat{RV}^\M_n$ is a definable set, and $\widehat \rho^\M_n$ 
is a definable predicate on $\widehat{RV}^\M_n$, uniformly in all models of $Pr$.  
\end{lema}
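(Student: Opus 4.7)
My plan is to handle the three assertions of the lemma in the order they are stated.

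For completeness of $\widehat\rho^\M_n$ on $\widehat{RV}^\M_n$, I would invoke Lemma \ref{rho_n compared to L_1-distance} to reduce to completeness under the partition metric $d_P(e,f) = \tfrac12 \sum_i d(e_i,f_i)$ on $\B^n$, since the two metrics are Lipschitz-equivalent and hence determine the same Cauchy sequences and the same limits. The ambient product space $(\B^n, d_P)$ is complete because $(\B, d)$ is, by Theorem \ref{first description of models of Pr}. Thus it suffices to check that $\widehat{RV}^\M_n$ is closed in $\B^n$, which is immediate from the continuity of $\mu$ and the boolean operations: being a partition of $1$ is cut out by the equations $\mu(e_i \cap e_j) = 0$ for $i \neq j$ together with $\mu(e_1 \cup \dots \cup e_n) = 1$.

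For uniform definability of the set $\widehat{RV}^\M_n$ over models of $Pr$, I would first write down the quantifier-free $\LLpr$-formula
\[
\psi(x_1,\dots,x_n) \;:=\; \sum_{i \neq j} \mu(x_i \cap x_j) \;+\; \bigl(1 - \mu(x_1 \cup \dots \cup x_n)\bigr) \text{,}
\]
whose zeroset in every model of $Pr$ is exactly $\widehat{RV}^\M_n$ (using strict positivity of $\mu$). To control $\dist(x, \widehat{RV}^\M_n)$ by $\psi^\M(x)$ uniformly, I would use the explicit greedy projection
\[
y_i := x_i \setminus (x_1 \cup \dots \cup x_{i-1}) \quad (i < n), \qquad y_n := (x_1 \cup \dots \cup x_{n-1})^c \text{,}
\]
which lies in $\widehat{RV}^\M_n$ for every $x \in M^n$. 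A telescoping bound (the symmetric difference $x_i \triangle y_i$ is absorbed into the pairwise overlaps $x_i \cap x_j$, together with the deficit $1 - \mu(\cup_j x_j)$ in the last coordinate) yields $d_P(x,y) \leq C_n \cdot \psi^\M(x)$ for an $n$-dependent constant. Combined with Lemma \ref{rho_n compared to L_1-distance} this bounds $\widehat\rho^\M_n\bigl(x, \widehat{RV}^\M_n\bigr)$ by a continuous modulus of $\psi^\M(x)$, so by the standard criterion for a zeroset to be a definable set in continuous logic (\cite[Theorem 9.17]{BBHU}), $\widehat{RV}^\M_n$ is a definable set uniformly in models of $Pr$.

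Finally, definability of the predicate $\widehat\rho^\M_n$ follows at once: formula (B) displays $\widehat\rho_n(x,y) = \tfrac{1}{n}\sum_{i,j}|i-j|\,\mu(x_i \cap y_j)$ as a quantifier-free $\LLpr$-formula in $(x,y)$, hence as a definable predicate on $\B^n \times \B^n$, and one restricts to the (now uniformly definable) subset $\widehat{RV}^\M_n \times \widehat{RV}^\M_n$. The main obstacle in the proof is the Lipschitz-type estimate $d_P(x,y) \leq C_n \psi^\M(x)$ needed to upgrade the zeroset of $\psi$ to an honest definable set; everything else reduces to routine continuous-logic bookkeeping.
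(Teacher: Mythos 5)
Your proposal is correct and follows essentially the same route as the paper: completeness via the Lipschitz equivalence of $\widehat\rho^\M_n$ with $d_P$ together with closedness of $\widehat{RV}^\M_n$ in the complete space $(\widehat\B)^n$, definability of the set via the boolean-term ``greedy'' retraction $x\mapsto y$ (the paper packages the very same construction as the image of a definable function $M^{n-1}\to M^n$, while you package it as a zeroset equipped with a distance modulus), and definability of $\widehat\rho^\M_n$ directly from the quantifier-free formula (B). The only nit is the citation: the zeroset-with-modulus criterion you invoke is \cite[Proposition 9.19]{BBHU} rather than Theorem 9.17, although since your retraction is given by boolean terms its image is exactly $\widehat{RV}^\M_n$, so the image theorem you cite would also apply verbatim.
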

\begin{proof}
Obviously $\widehat \rho^\M_n$ is a pseudometric.  The fact that it is a metric follows from 
Lemma \ref{rho_n compared to L_1-distance} and the definition of ${d}_P$ on $RV^\M_n$, which implies
that ${d}_P(E,F)=0$ iff $\mu(E_i \triangle F_i)$ for all $i = 1,\dots,n$.  

To show that $\widehat{RV}_n^\M$ is uniformly a definable subset of $M^n$, 
it is sufficient to show that it is the image of a definable function on a definable set.  To do this, consider the
function defined on $M^{n-1}$ by $(a_1,\dots,a_{n-1}) \mapsto (e_1,\dots,e_n)$ where 
$e_1=a_1$, $e_j = a_j \cap a_1^c \cap \dots \cap a_{j-1}^c$ for $2 \leq j \leq n-1$, and 
$e_n = a_1^c\cap \dots \cap a_{n-1}^c$.  Obviously this is a definable function, since the coordinates
$e_j$ are given by boolean terms.  Note that $e_i \cap e_j = 0$ whenever $i \neq j$.  Moreover, by
induction on $j < n$ we can show $a_1 \cup \dots \cup a_j = e_1 \cup \dots \cup e_j$.  Therefore
$(e_1,\dots,e_{n-1})$ is a partition of $a_1 \cup \dots \cup a_{n-1}$.  Since 
$e_n = (a_1 \cup \dots \cup a_{n-1})^c$, we have that $(e_1,\dots,e_n)$ is always a partition of $1$
in $\M$.  To see that this map is surjective onto $\widehat{RV}_n^\M$, note that whenever
$(e_1,\dots,e_n)$ is a partition of $1$, then $(e_1,\dots,e_{n-1}) \mapsto (e_1,\dots,e_n)$

Equation (B) shows that $\widehat \rho_n^\M$ is a definable predicate, uniformly on all models $\M$ of $\Pr$.

It remains to show that $\widehat{RV}^\M_n$ is complete.  Because $M = \widehat \B$ is complete with respect to the
metric $d(a,b) := \widehat \mu(a \triangle b)$ and $\widehat{RV}^\M_n \subseteq (\widehat \B)^n$ is closed, we see that
$\widehat{RV}^\M_n$ is complete with respect to the metric ${d}_P(e,f) = \frac12 \sum_{i=1}^n d(e_i,f_i)$.  Therefore
$\widehat{RV}^\M_n$ is complete with respect to $\widehat \rho^\M_n$, since $\widehat \rho^\M_n$ is uniformly equivalent
to ${d}_P$ by Lemma \ref{rho_n compared to L_1-distance}.
\end{proof}

\begin{rema}
Similar reasoning to that in the preceding proof shows that the map $E \mapsto f_E$ for $E \in RV^\M_n$ is
an isometric map from $RV^\M_n$ into $L_1(\B,\mu;[0,1])$ whose range is the collection of
all random variables whose values are in $\{ 1/n, \dots, n/n \}$.  (Here isometric means with respect to
$\rho^\M_n$ and the $L_1$-distance.)  Hence this map induces an isometry from $\widehat{RV}_n^\M$
onto the set of $=$ a.e.\ equivalence classes of those random variables.
\end{rema}

Fix $\M \models Pr$ and, as above, let $(X,\B,\mu)$ be a probability  
space whose probability algebra is (isomorphic to) $\M$.   
We denote by $\widehat{RV}^\M$ the inverse limit of the spaces 
$(\widehat{RV}_{2^n}^\M \mid n \geq 1)$ equipped with a suitable family of maps 
$\widehat \pi \colon \widehat{RV}_{2^{n+1}}^\M \to \widehat{RV}_{2^n}^\M$ that we now define. For
$(e_1,\dots,e_{2^{n+1}}) \in \widehat{RV}_{2^{n+1}}^\M$ we set
\[
\widehat \pi(e_1,\dots,e_{2^{n+1}}) := (e_1 \cup e_2,\dots,e_{2^{n+1}-1} \cup e_{2^{n+1}}) \text{.}
\]
This clearly makes $\widehat \pi \colon \widehat{RV}_{2^{n+1}}^\M \to \widehat{RV}_{2^n}^\M$ a definable map,
uniformly for all models of $Pr$.  Note that an element of $\widehat{RV}^\M$ is given by a sequence
$(e(n))_n = (e(n) \mid n\geq 1)$ where $e(n) = (e_1(n),\dots,e_{2^n}(n)) \in \widehat{RV}_{2^n}^\M$ and
$\widehat \pi(e(n+1)) = e(n)$ for all $n \geq 1$.   In what follows, we refer to such a 
sequence as \emph{coherent}.\index{coherent sequence}

On $\widehat{RV}^\M$ we want to define the inverse limit pseudometric,
denoted by $\widehat \rho^\M$, by
\[
\widehat \rho^\M\big((e(n))_n,(f(n))_n\big) := \lim_n\, \widehat \rho_n(e(n),f(n))
\]
for any elements $(e(n))_n,(f(n))_n$ of $\widehat{RV}^\M$.  
The fact that the limit in this definition of $\widehat \rho^\M$ exists 
(with a rate of convergence that can be taken to be uniform over all 
sequences in $\widehat{RV}^\M$ and all $\M \models Pr$)) 
and that the resulting quotient corresponds to  $L_1(\B,\mu;[0,1])$ 
modulo the $L_1$-distance follows from the results in Lemma \ref{L_1 quotient sort}
below.  For proving such results it is useful to pull the objects involved 
back to the probability space $(X,\B,\mu)$ of which $\M$ is the probability algebra.  

We write $\pi$ for the corresponding maps from $RV_{2^{n+1}}$ to $RV_{2^n}$; namely
\[
\pi(E_1,\dots,E_{2^{n+1}}) := (E_1 \cup E_2,\dots,E_{2^{n+1}-1} \cup E_{2^{n+1}}) \text{.}
\]
\index{$\pi(E_1,\dots,E_{2^{n+1}})$}
Via the quotient map from $\B$ to $M = \widehat \B$, the inverse limit $\widehat{RV}^\M$ described above 
corresponds to the  inverse limit of the spaces $(RV_{2^n}(\B,\mu) \mid n\geq 1)$ equipped with their
$L_1$-pseudometrics and the 
connecting maps $\pi \colon  RV_{2^{n+1}} \to  RV_{2^n}$.  An element of this inverse limit
consists of a sequence $(E(n) \mid n \geq 1)$ such that $E(n) \in RV_{2^n}$ and
$\pi(E(n+1)) = E(n)$, for all $n \geq 1$.  As above, we refer to such a 
sequence as \emph{coherent}.\index{coherent sequence}

\begin{fact}
It is evident that the quotient map from $\B$ to $\widehat \B$ induces a map from coherent sequences
$(E(n) \mid n \geq 1)$ of measurable partitions of $X$ to coherent sequences $(\widehat E(n) \mid n \geq 1)$
of partitions of $1$ in $\M$, where $\widehat E(n) := (\widehat {E_1(n)},\dots,\widehat {E_{2^n}(n)})$ for all $n \geq 1$.
In fact, this map is surjective.  That is, 
suppose $(e(n) \mid n\geq 1)$ is a coherent sequence in $\widehat{RV}^\M$, with
$e(n) = (e_1(n),\dots,e_{2^n}(n))$ for all $n \geq 1$.  It is easy to show, working by induction on $n$,
that there exists a coherent sequence $(E(n) \mid n \geq 1)$ with $E(n) \in RV_{2^n}$ for all $n$ such that
$e_j(n) = \widehat {E_j(n)}$ for all $1 \leq j \leq 2^n$ and all $n \geq 1$.
\end{fact}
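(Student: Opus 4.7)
The plan is to prove the surjectivity by induction on $n$, building the sequence $(E(n))$ level by level in such a way that the coherence relation $\pi(E(n+1)) = E(n)$ holds \emph{exactly} in $\B$, not merely up to $\mu$-null sets. The whole subtlety of the claim lies in this distinction: naive independent lifts of the $e(n)$ would only give coherence a.e., which is not what the inverse-limit construction requires on the point-set side.

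For the base case $n=1$, I pick any $A_1 \in \B$ with $\widehat{A_1} = e_1(1)$ and set $E(1) := (A_1, X \setminus A_1)$. Since the quotient map $\B \to \widehat\B$ is a surjective boolean homomorphism, this gives $\widehat{E(1)} = (e_1(1), 1 - e_1(1)) = e(1)$, using that $e_2(1) = 1 - e_1(1)$ because $e(1)$ is a partition of $1$.

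For the inductive step, assume $E(n) = (E_1(n),\dots,E_{2^n}(n))$ is a measurable partition of $X$ with $\widehat{E_j(n)} = e_j(n)$ for every $j$. Coherence of the given sequence in $\widehat{RV}^\M$ gives $e_{2j-1}(n+1) \cup e_{2j}(n+1) = e_j(n)$ with disjointness, so in particular $e_{2j-1}(n+1) \leq e_j(n)$ in $\widehat\B$. For each $j$, lift $e_{2j-1}(n+1)$ to some $B_j \in \B$ and then define
\[
E_{2j-1}(n+1) := B_j \cap E_j(n), \qquad E_{2j}(n+1) := E_j(n) \setminus B_j .
\]
By construction, $(E_{2j-1}(n+1), E_{2j}(n+1))$ is a genuine (set-theoretic) partition of $E_j(n)$ in $\B$, which immediately gives $\pi(E(n+1)) = E(n)$ on the nose. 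On the quotient side, $\widehat{E_{2j-1}(n+1)} = \widehat{B_j} \cap \widehat{E_j(n)} = e_{2j-1}(n+1) \cap e_j(n) = e_{2j-1}(n+1)$, and similarly $\widehat{E_{2j}(n+1)} = e_j(n) - e_{2j-1}(n+1) = e_{2j}(n+1)$. Collecting these across $j = 1, \dots, 2^n$ shows $\widehat{E(n+1)} = e(n+1)$, closing the induction.

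I do not expect any real obstacle beyond the exact-versus-a.e.\ point already flagged; the device of intersecting the lift $B_j$ with the previously chosen $E_j(n)$ (and complementing within $E_j(n)$) forces honest refinement at every step, and all other verifications are routine boolean-algebra calculations in $\widehat\B$.
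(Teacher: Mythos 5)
Your proof is correct and follows exactly the route the paper intends (the paper merely asserts the induction is "easy to show" without writing it out): induct on $n$, lift one member of each new pair to a set $B_j$, and force exact set-theoretic coherence by taking $B_j \cap E_j(n)$ and $E_j(n)\setminus B_j$. The verifications you give — that this yields a genuine partition of $X$ refining $E(n)$ on the nose, and that the quotient classes come out right because $e_{2j-1}(n+1)\leq e_j(n)$ — are exactly the details the paper leaves to the reader.
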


\begin{lema}
\label{L_1 quotient sort}
\begin{itemize}
\item[(a)] For all $m \geq 1$ and $E \in RV_{2m}$, we have $f_E \leq f_{\pi(E)} \leq f_E + \frac1m$
pointwise; therefore $\| f_E - f_{\pi(E)} \|_1 \leq \frac1m$.
\item[(b)] For every coherent sequence $(E(n) \mid n \geq 1)$, with $E(n) \in RV_{2^n}$ for all $n \geq 1$,
the sequence $(f_{E(n)} \mid n \geq 1)$ is monotone decreasing pointwise and has
$\| f_{E(n+1)} - f_{E(n)} \|_1 \leq 2^{-n}$ for all $n \geq 1$.  Therefore $(f_{E(n)} \mid n \geq 1)$
converges to its pointwise infimum, in $L_1$-distance, in $L_1(\B,\mu;[0,1])$, and does so with a rate of
convergence that can be taken to be uniform over all sequences in ${RV}^\M$ and all probability
spaces $(X,\B,\mu)$.
\item[(c)]  For every $f \in RV = L_1(\B,\mu;[0,1])$ there is a coherent sequence 
$(E(n) \mid n \geq 1)$, with $E(n) \in RV_{2^n}$ for all $n \geq 1$ such that $(f_{E(n)} \mid n \geq 1)$
converges to $f$ in $L_1$-distance.
\end{itemize}
\end{lema}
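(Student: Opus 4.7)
The three parts are essentially computational measure-theoretic verifications; the key planning moves are choosing how to compare $f_E$ with $f_{\pi(E)}$ atom-by-atom in (a), deducing everything in (b) from (a) plus completeness of $L_1$, and producing the right dyadic level sets of $f$ in (c). My plan is to execute them in this order, since (b) and (c) both lean on (a).

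For (a), let $E = (E_1,\dots,E_{2m}) \in RV_{2m}$, so $\pi(E) = (E_1\cup E_2,\dots,E_{2m-1}\cup E_{2m})$ and $f_{\pi(E)} = \sum_{k=1}^m (k/m)\chi_{E_{2k-1}\cup E_{2k}}$. On the set $E_{2k}$, both functions take the value $k/m = 2k/(2m)$, so they agree; on $E_{2k-1}$, $f_E$ takes the value $(2k-1)/(2m)$ while $f_{\pi(E)}$ still equals $k/m$, so the difference is exactly $1/(2m)$. Hence $f_E \leq f_{\pi(E)} \leq f_E + 1/(2m) \leq f_E + 1/m$ pointwise, and integrating gives $\|f_E - f_{\pi(E)}\|_1 \leq 1/(2m) \leq 1/m$.

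For (b), apply (a) with $m = 2^n$ and $E = E(n{+}1)$, using the coherence hypothesis $\pi(E(n{+}1)) = E(n)$. Monotonicity $f_{E(n+1)} \leq f_{E(n)}$ is immediate from the pointwise inequality in (a), and the $L_1$ estimate $\|f_{E(n+1)} - f_{E(n)}\|_1 \leq 2^{-n}$ is uniform in the sequence and in the underlying probability space. Telescoping gives $\|f_{E(n)} - f_{E(N)}\|_1 \leq 2^{1-n}$ for $N \geq n$, so $(f_{E(n)})$ is Cauchy in $L_1(\B,\mu;[0,1])$; by completeness it has an $L_1$-limit $f$, whose uniform rate of convergence is inherited from the same telescoping bound. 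Since the sequence is also pointwise monotone decreasing and bounded below by $0$, this $L_1$-limit must agree $\mu$-a.e.\ with its pointwise infimum.

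For (c), given $f \in L_1(\B,\mu;[0,1])$ I would use the canonical dyadic approximation: set $E_i(n) := f^{-1}([(i-1)/2^n,\, i/2^n))$ for $1 \leq i < 2^n$ and $E_{2^n}(n) := f^{-1}([(2^n-1)/2^n,\, 1])$. Each $E(n)$ is a $\B$-measurable partition of $X$, and coherence $\pi(E(n{+}1)) = E(n)$ follows directly from the fact that the union of two adjacent dyadic intervals at level $n{+}1$ is the corresponding dyadic interval at level $n$, with the half-open/closed endpoint convention chosen so this works at the right endpoint as well. On $E_i(n)$ one has $f_{E(n)} = i/2^n$ and $f \in [(i-1)/2^n, i/2^n]$, so $|f_{E(n)} - f| \leq 1/2^n$ pointwise, whence $\|f_{E(n)} - f\|_1 \leq 2^{-n}$. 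There is no real obstacle in any of this: the only spots requiring attention are the index shift in (a) (the values of $f_E$ start at $1/n$, not at $0$, which is why the inequality in (a) goes one-sided) and the endpoint convention at $1$ in (c) to ensure the $E(n)$ genuinely partition $X$.
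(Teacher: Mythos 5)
Your proof is correct and follows essentially the same route as the paper: a direct pointwise comparison of $f_E$ with $f_{\pi(E)}$ on the pieces $E_{2k-1},E_{2k}$ for (a), deducing (b) from (a), and dyadic level sets of $f$ for (c). The only (harmless) variations are that in (b) you get the $L_1$-limit from completeness via a telescoping Cauchy estimate and then identify it with the pointwise infimum through an a.e.-convergent subsequence, where the paper invokes the Monotone Convergence Theorem directly, and in (c) you verify the bound $\| f_{E(n)}-f\|_1 \leq 2^{-n}$ by hand rather than citing the proof of Lemma \ref{approximating P(a|C)}.
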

\begin{proof}
(a) This follows immediately from the definitions.

(b) From the definition of $f_{E(n)}$ we have $0 \leq f_{E(n)}(x) \leq 1$ for all $x \in X$.
Further, (a) implies that $\chi_X - f_{E(n)}$ is pointwise monotone increasing on $X$
and that it converges in $L_1$.  Since  $\chi_X$ is integrable, the Monotone Convergence 
Theorem implies that $(\chi_X - f_{E(n)} \mid n \geq 1)$ converges in $L_1$ to its 
a.e. pointwise supremum, and therefore $(f_{E(n)} \mid n \geq 1)$ converges in $L_1$ 
to its  a.e.\,pointwise infimum.  Uniformity of the rate of convergence follows from the
uniformity of the estimates in (a).

(c) For each $n \geq 1$, consider the dyadic intervals $I_1,\dots,I_{2^n}$ defined by
$I_1 := [0,2^{-n}]$ and for $1 < j \leq 2^n$ by $I_j :=  ((j-1)2^{-n},j2^{-n}]$. Let
$E(n) \in RV_{2^n}$ be defined by $E(n) := (E_1(n),\dots,E_{2^n}(n))$ where
$E_j(n)  := f^{-1}(I_j)$ for $j = 1,\dots,2^n$.  Note that the sequence $(E(n) | n \geq 1)$
is coherent.  Let $\C \subseteq \B$ be the $\sigma$-subalgebra generated by 
 $\{ E_j(n) \mid n \geq 1 \mbox{ and } 1 \leq j \leq 2^n \}$; clearly $\C$ is the smallest
 $\sigma$-subalgebra of $\B$ such that $f$ is $\C$-measurable.  In particular,
 $\E(f | \C) = f$.  The proof of Lemma \ref{approximating P(a|C)} shows that $(f_{E(n)} \mid n \geq 1)$ 
converges to $f$ relative to the $L_1$-distance.  The rate of convergence is as indicated in (b).
\end{proof}

\begin{coro}
\label{inverse limit random variables}
The inverse system $(\widehat{RV}_{2^n}^\M,\widehat\rho_{2^n}^\M)_{(n \geq 1)}$ equipped with the maps 
$\widehat \pi \colon \widehat{RV}_{2^{n+1}}^\M \to \widehat{RV}_{2^n}^\M$ 
has an inverse limit pseudometric space $(\widehat{RV}^\M,\widehat\rho^\M)$ 
for every model $\M = (\widehat \B, \widehat \mu, \widehat d)$ of $Pr$.
Its metric quotient corresponds to  $L_1(\B,\mu;[0,1])$ modulo the $L_1$-distance.
Moreover, this quotient is a metric imaginary sort for $\M$, uniformly over all models $\M$ of $Pr$.
\end{coro}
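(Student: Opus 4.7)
The plan is to combine the three preceding lemmas into a single package, pulling back from the abstract probability algebra $\M$ to a representing probability space $(X,\B,\mu)$ via the surjection in the \textbf{Fact} preceding Lemma \ref{L_1 quotient sort}. Every coherent sequence $(e(n))_n$ in $\widehat{RV}^\M$ lifts to a coherent sequence $(E(n))_n$ with $E(n) \in RV_{2^n}$, and under the definition of $\rho_{2^n}$ in equation (A) we have
\[
\widehat\rho_{2^n}^\M(e(n),f(n)) = \|f_{E(n)} - f_{F(n)}\|_1
\]
for any liftings $(E(n))_n,(F(n))_n$.  This reduces all three assertions of the corollary to $L_1$-convergence facts about the measurable functions $f_{E(n)}$.

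For the existence of the inverse-limit pseudometric, I would apply Lemma \ref{L_1 quotient sort}(b) to both $(f_{E(n)})_n$ and $(f_{F(n)})_n$: each sequence is Cauchy in $L_1$ with error bound $2^{-n}$, which is independent of the coherent sequence chosen and of $\M$.  By the triangle inequality for $\|\cdot\|_1$, the sequence $\|f_{E(n)} - f_{F(n)}\|_1$ is itself Cauchy with the same uniform rate, so the limit defining $\widehat\rho^\M$ exists and does not depend on the choice of lifts. Independence of the liftings follows because two lifts of the same coherent sequence in $\widehat{RV}^\M$ agree $\mu$-a.e.\ at every level.

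For the correspondence with $L_1(\B,\mu;[0,1])$, I would define a map from $\widehat{RV}^\M$ to $L_1(\B,\mu;[0,1])$ by sending $(e(n))_n$ to the pointwise a.e.\ infimum of $(f_{E(n)})_n$ for any lifting. Lemma \ref{L_1 quotient sort}(b) ensures this is well defined and that $\widehat\rho^\M((e(n))_n,(f(n))_n) = \|f - g\|_1$ in the limit, so the map is isometric on the quotient by $\widehat\rho^\M = 0$. Surjectivity is exactly the content of Lemma \ref{L_1 quotient sort}(c): every $f \in L_1(\B,\mu;[0,1])$ is approximated by the dyadic level-set partitions $E(n) := (f^{-1}(I_j))_j$, yielding a coherent sequence with limit $f$. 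This produces an isometric bijection between the metric quotient of $(\widehat{RV}^\M,\widehat\rho^\M)$ and $L_1(\B,\mu;[0,1])/{\sim_{\text{a.e.}}}$.

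The final clause---that the quotient is a metric imaginary sort uniformly in models of $Pr$---fits the framework for infinitary imaginaries cited in Section \ref{continuous model theory}: the underlying set sits inside $\prod_{n \geq 1} \widehat{RV}_{2^n}^\M$, which is a countable product of sorts each of which is definable (uniformly in $\M$) by Lemma \ref{imaginary sort from RVn}; the coherence condition $\widehat\pi(e(n+1)) = e(n)$ is cut out by the definable function $\widehat\pi$, so it defines a definable subset; and the pseudometric $\widehat\rho^\M$ is the uniform limit of the definable predicates $\widehat\rho_{2^n}^\M$ with rate $2^{-n}$, hence is itself a definable predicate by the standard closure of definable predicates under uniform limits. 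The main obstacle I anticipate is bookkeeping: making sure the convergence estimate $\|f_{E(n+1)} - f_{E(n)}\|_1 \leq 2^{-n}$ from Lemma \ref{L_1 quotient sort}(a),(b) transfers cleanly to a \emph{uniform} modulus for $\widehat\rho^\M$ on the inverse limit side (independently of the choice of lifts and of $\M$), since this uniformity is precisely what is needed to invoke the meq construction.
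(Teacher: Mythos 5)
Your proposal is correct and follows essentially the same route as the paper: the paper's proof simply observes that the first two assertions follow from Lemma \ref{L_1 quotient sort} and cites a general result on inverse limits of imaginary sorts (Lemma 1.5 of the reference \cite{BY6}) for the third, and your argument is precisely an unwinding of those two citations, including the key uniformity of the $2^{-n}$ convergence rate over all lifts and all models. No gaps.
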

\begin{proof}
The first two sentences follow from Lemma \ref{L_1 quotient sort}. For the third sentence
we apply \cite[Lemma 1.5]{BY6}.
\end{proof}

\begin{rema}
\label{not a metric space}
Note that although the spaces $\widehat{RV}_n^\M$ are metric spaces (as shown in the proof
of Lemma \ref{imaginary sort from RVn}), the inverse limit distance
$\widehat \rho^\M$ is not a metric.  That is, there exist distinct coherent sequences $(e(n))_n, (f(n))_n$
such that $ \lim_n\, \widehat \rho^\M_n(e(n),f(n)) = 0$.  Therefore, to obtain the imaginary sort
described in Corollary \ref{inverse limit random variables}, it is necessary to form the
metric quotient.  However, in contrast to the general case of metric imaginary sorts, the metric space 
quotient of $(\widehat{RV}^\M,\widehat \rho^\M)$ is complete no matter which model $\M$ of $Pr$
is being considered.  (In general one needs to take the metric completion of such a quotient for
some models.)  Indeed, by the Riesz-Fischer Theorem, the quotient of $L_1(\B,\mu)$ modulo 
the $L_1$-distance is complete, no matter which probability space $(X,\B,\mu)$ is considered, 
and the image of $L_1(\B,\mu;[0,1])$ in that quotient is a norm-closed subset.  
(See \cite[Theorem 6.6, pp.\ 124--125]{Roy}.)
\end{rema}

\bigskip
We next discuss some definable operations on the metric imaginary sort
just described; they correspond to the operations taken to be basic (or proved to be definable) 
in \cite{BY4} and thus show that our imaginary sort does indeed provide a model of the theory $RV$.
(See \cite[Lemma 2.13]{BY4} for a discussion of the corresponding operations on models of $RV$.)

Consider any continuous function $\theta \colon [0,1]^m \to [0,1]$.  This function induces an
operation on $L_1(\B,\mu; [0,1])$ by composition, which we also denote by $\theta$.
Namely, we define
\[
\theta (f_1,\dots,f_m)(x) := \theta(f_1(x),\dots,f_m(x))
\]
for all $x \in X$, where $f_1,\dots,f_m \colon X \to [0,1]$ are $\B$-measurable .

We show below that $\theta$ induces a definable operation from $\widehat{RV}^m$ to
$\widehat{RV}$.  Let us here restrict its domain to $RV_n(\B,\mu)$.  
We consider the case $m=2$ to reduce the complexity of notation.  For $E,F \in RV_n$ we have
\[
\theta(f_E,f_F) = \sum_{i=1}^n \sum_{j=1}^n \theta(\frac{i}{n},\frac{j}{n}) \chi_{E_i \cap F_j} \text{,}
\]
which induces a definable function of the events of $(E,F)$, uniformly over all probability spaces.

Moreover, the restrictions of $\theta$ to the spaces $\widehat{RV}_n(\B,\mu)$ converge to a definable function 
on their inverse limit.  Again restricting attention to the case $m=2$, suppose $(E(n))_n$ and $(F(n))_n$
are two coherent families representing elements of the inverse limit, and suppose $f,g \in L_1(\B,\mu; [0,1])$
are their limits: $f = \lim_n f_{E(n)}$ and $g = \lim_n f_{F(n)}$.  Then $\theta(f_{E(n)},f_{F(n)})$ converges in
$L_1$-distance to $\theta(f,g)$, and does so at a uniform rate which is determined by the modulus of uniform
continuity of $\theta$ and the exponential rates of convergence of $\lim_n f_{E(n)}$ and $\lim_n f_{F(n)}$,
as given by Lemma \ref{L_1 quotient sort}(b).  We leave details to the reader.

It is a general fact that every automorphism $\tau$ of a metric structure has a unique extension to an
automorphism of its $\meq$-expansion.  We illustrate this in the present context: given $\M \models Pr$
the probability algebra of $(X,\B,\mu)$ as above, consider an automorphism $\tau$ of $\M$.  Since $\widehat{RV}_n^\M$
is a definable subset of $M^{2^n}$, we see that $\tau$ induces a natural bijection of $\widehat{RV}_n^\M$ onto
itself, by the coordinatewise action, namely $e = (e_1,\dots,e_{2^n}) \mapsto \tau(e) = (\tau(e_1),\dots,\tau(e_{2^n}))$.
Furthermore, if $(e(n) \mid n \geq 1)$ is a coherent sequence in the inverse limit of the spaces $\widehat{RV}_n^\M$,
convergent to the element $[f]_\mu$ of the quotient of $\widehat{RV}^\M$ modulo the $L_1$-distance, 
then $(\tau(e(n))  \mid n \geq 1)$ is also coherent; the image of $[f]_\mu$
under the desired extension of $\tau$ is defined to be the (equivalence class of the) limit of $(\tau(e(n)))_n$.

Another way of looking at this extension process concerns the situation where $a \in \widehat \B$ and $C$ is a
closed subalgebra of $\widehat \B$, and $f \in L_1(\B,\mu;[0,1])$ is a $C$-measurable random variable
representing $\P(a | C)$.  When $\tau$ is an automorphism of $\M$, then $\tau(f)$ as defined above represents
$\P(\tau(a) | \tau(C))$, as can be shown by a routine argument based on the details presented in this section.
Indeed, this fact is easy to show when $C$ is finite (use Lemma \ref{P(A|C) for finite C}) and the 
general case follows using the proof approach for Lemma \ref{approximating P(a|C)} by taking limits.

Finally, we use the operations $\theta$ defined above to 
prove a definability relationship between each random variable $f \in RV(\B,\mu)$ and the smallest 
$\sigma$-subalgebra of $\B$ with respect to which $f$ is measurable, which we will denote by $\sigma(f)$.
Note that $\sigma(f)$ is generated as a $\sigma$-subalgebra by the measurable sets of the form
$f^{-1}(r,1]$ for $r \in [0,1]$.  This implies that 
$\dcl^{\meq}(\sigma(f)) = \dcl^{\meq}(\{ f^{-1}(r,1] \mid r \in [0,1] \})$. 

\begin{lema}
\label{meq-dcl of f}
Let $\M \models Pr$ be the probability algebra of the probability space $(X,\B,\mu)$.
For every $f \in RV(\B,\mu)$ we have $\dcl^{\meq}(f) = \dcl^{\meq}(\sigma(f))$.
\end{lema}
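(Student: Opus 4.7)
The plan is to prove the two inclusions separately: for $\dcl^{\meq}(f)\subseteq\dcl^{\meq}(\sigma(f))$ I would use the explicit dyadic approximation to $f$ constructed in the proof of Lemma \ref{L_1 quotient sort}(c), and for the reverse inclusion I would approximate the indicators of the generating level sets of $\sigma(f)$ by continuous functions composed with $f$.

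For the forward inclusion I would fix $n\ge 1$, let $I_1,\dots,I_{2^n}$ be the dyadic intervals of $[0,1]$, and set $E_j(n):=f^{-1}(I_j)$. Each $E_j(n)$ belongs to $\sigma(f)$ by construction, so the partition $E(n)$ lies in $\dcl^{\meq}(\sigma(f))$. Since the map $E\mapsto f_E$ on $\widehat{RV}_{2^n}$ is definable (it applies the continuous linear combination $(t_1,\dots,t_{2^n})\mapsto\sum_j(j/2^n)t_j$ to the characteristic function imaginaries of the coordinates), we get $f_{E(n)}\in\dcl^{\meq}(\sigma(f))$ for every $n$. Lemma \ref{L_1 quotient sort}(c) together with its proof shows that $(f_{E(n)})_n$ converges to $f$ in $L_1$-distance with a uniform rate, so $f\in\dcl^{\meq}(\sigma(f))$ because $\dcl^{\meq}$ is closed under uniformly convergent limits in a metric imaginary sort.

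For the reverse inclusion I would use the equality $\dcl^{\meq}(\sigma(f))=\dcl^{\meq}(\{f^{-1}(r,1]\mid r\in[0,1]\})$ recalled just before the lemma and reduce to showing $f^{-1}(r,1]\in\dcl^{\meq}(f)$ for $r$ ranging over a set of reals that still generates $\sigma(f)$ as a $\sigma$-subalgebra. Only countably many $r$ satisfy $\mu(f^{-1}\{r\})>0$, so the cocountable dense subset $R\subseteq(0,1)$ of the remaining values still generates $\sigma(f)$. For $r\in R$, define continuous $\theta_k\colon[0,1]\to[0,1]$ by $\theta_k(t):=\min(1,\max(0,k(t-r)))$. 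The discussion preceding the lemma shows that each composition $\theta_k\circ f$ is a definable function of $f$ in the imaginary sort $\widehat{RV}$, and $\theta_k\circ f\to\chi_{f^{-1}(r,1]}$ pointwise $\mu$-a.e.\ off the null set $f^{-1}\{r\}$; dominated convergence promotes this to $L_1$-convergence. Hence $\chi_{f^{-1}(r,1]}\in\dcl^{\meq}(f)$. Finally, the map $a\mapsto\chi_a$ is a definable injection from $\widehat\B$ into $\widehat{RV}$ (realized by the coherent sequence $(a^c,\emptyset,\dots,\emptyset,a)_{n\ge 1}$) whose fibres are singletons, as any automorphism fixing $\chi_a$ fixes $a$ by the discussion of how automorphisms extend to $\widehat{RV}$. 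Therefore $f^{-1}(r,1]\in\dcl^{\meq}(\chi_{f^{-1}(r,1]})\subseteq\dcl^{\meq}(f)$.

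The principal technical obstacle is the reverse inclusion: the indicator $\chi_{(r,1]}$ is not continuous on $[0,1]$, so we cannot produce $\chi_{f^{-1}(r,1]}$ by a single continuous operation applied to $f$, and must instead approximate in $L_1$-distance by a sequence of definable operations. This forces us to exclude the countably many atoms of the pushforward measure $f_\ast\mu$ and to verify that the remaining level sets still generate $\sigma(f)$ as a $\sigma$-subalgebra; both points are standard measure theory but should be stated explicitly. A secondary routine point is the explicit identification of $\widehat\B$ with the $\{0,1\}$-valued elements of $\widehat{RV}$ via $a\mapsto\chi_a$, which lets us pass between the indicator in the $\widehat{RV}$ sort and the event in $\widehat\B$.
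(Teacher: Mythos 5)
Your proof is correct and follows essentially the same route as the paper's: deduce $f\in\dcl^{\meq}(\sigma(f))$ from the dyadic approximation in Lemma~\ref{L_1 quotient sort}(c), and recover the level sets $f^{-1}(r,1]$ from $f$ via the piecewise-linear ramps $\theta_k$. The only (harmless) detour is your exclusion of the atoms of $f_\ast\mu$: since $\theta_k(r)=0$ for every $k$, the sequence $\theta_k\circ f$ increases pointwise \emph{everywhere} to $\chi_{f^{-1}(r,1]}$, so monotone (or dominated) convergence gives the $L_1$-limit for all $r\in[0,1)$ directly, with no null-set caveat and no need to pass to a cocountable generating set.
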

\begin{proof}
First we note that the proof of \ref{L_1 quotient sort}(c) shows that $f \in \dcl^{\meq}(\sigma(f))$.

Thus it remains to show $f^{-1}(r,1] \in \dcl^{\meq}(f)$ for every $r \in [0,1]$.  For $r=1$ this is
trivial.  Fix $0 \leq r<1$ and for each 
$n \geq \frac{1}{1-r}$ let $\theta_n \colon [0,1] \to [0,1]$ be the continuous function 
defined by: $\theta_n(t) = 0$ when $0 \leq t \leq r$, $\theta_n(t) = n(t-r)$ when $r < t < r+ \frac1n$,
and $\theta_n(t) = 1$ for $r + \frac1n \leq t \leq 1$.  
For each $t \in [0,1]$ the sequence $(\theta_n(t))_n$ is monotone increasing in $n$,
and its supremum is the function with values $0$ for $t \leq r$ and $1$ for $t > r$.  Therefore the 
sequence $(\theta_n(f))_n$ converges in $L_1$ to the characteristic function of $f^{-1}(r,1]$, 
which shows that $f^{-1}(r,1] \in \dcl^{\meq}(f)$, as desired.
\end{proof}

\section{Atomless probability spaces}
\label{atomless prob  spaces}

By Proposition \ref{distance to atoms}(e), the fact
that a model of $Pr$ is \emph{atomless} is expressed by the
condition $\theta(1) = 0$, which is equivalent in $Pr$ to the
condition
\begin{equation*}
 \tag{A} \sup_{x}\inf_y|\mu(x\cap y)-\mu(x\cap y^c)|=0 \text{.}
\end{equation*}
We denote by $APA$ the set of axioms $Pr$ together with $(A)$.\index{$APA$}

\begin{coro}\label{axiomatization of atomless prob algs}
Let $\M$ be an $\LLpr$-structure.  Then $\M$ is a model of $APA$
if and only if $\M$ is isomorphic to the probability algebra
of an atomless probability space.
\end{coro}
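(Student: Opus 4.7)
The plan is to derive the corollary directly from Theorem \ref{second description of models of Pr} together with Proposition \ref{distance to atoms}(e), treating the two directions separately. The key observation is that axiom $(A)$, namely $\sup_x \inf_y |\mu(x \cap y) - \mu(x \cap y^c)| = 0$, is syntactically just $\sup_x \chi(x) = 0$, and this is in turn the same as $\theta^\M(1) = 0$ (substituting $x = 1$ into $\theta(x) = \sup_y \inf_z |\mu(x \cap y \cap z) - \mu(x \cap y \cap z^c)|$ collapses the leading $\cap 1$, leaving $\sup_y \chi(y)$).

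For the forward direction I would start from $\M \models APA$. Since $APA$ extends $Pr$, Theorem \ref{second description of models of Pr} produces an isomorphism of $\M$ onto the probability algebra $(\widehat \B, \widehat \mu, \widehat d)$ of some probability space $(X,\B,\mu)$. The additional condition $(A)$ transports to $\theta^{\widehat \B}(1) = 0$, so Proposition \ref{distance to atoms}(e) tells me that $1 \in \widehat \B$ is atomless. I would then unpack what this says about $(X,\B,\mu)$: if $A \in \B$ were an atom of the probability space, its class $[A]_\mu$ would be a positive-measure element of $\widehat \B$ that cannot be split into two pieces of positive $\widehat\mu$-measure, contradicting atomlessness of $1$ in $\widehat \B$. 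Hence $(X,\B,\mu)$ itself is atomless.

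For the converse I would start with an isomorphism of $\M$ onto the probability algebra of an atomless $(X,\B,\mu)$. Theorem \ref{second description of models of Pr} immediately gives $\M \models Pr$. The hypothesis that $(X,\B,\mu)$ is atomless translates directly to $X$ being atomless in $\B$, and thus $1 = [X]_\mu$ is atomless in $\widehat \B$. Applying Proposition \ref{distance to atoms}(e) in the reverse direction yields $\theta^\M(1) = 0$, i.e., axiom $(A)$ holds in $\M$, so $\M \models APA$.

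There is no substantive obstacle here; the whole content has already been installed in the earlier definability work. The only thing I would be careful about is the trivial but easy-to-gloss-over translation between ``$(X,\B,\mu)$ is atomless as a probability space'' and ``$1$ is atomless in $\widehat \B$ as a boolean algebra element,'' which goes through the fact that atoms of $\B$ and positive-measure atoms of $\widehat \B$ are in bijective correspondence via $A \mapsto [A]_\mu$.
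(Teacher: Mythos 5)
Your proof is correct and takes essentially the same route as the paper, which dismisses the corollary as ``immediate from the discussion above'' --- that discussion being precisely the identification of axiom $(A)$ with $\theta(1)=0$ via Proposition \ref{distance to atoms}(e), combined with Theorem \ref{second description of models of Pr}. Your extra care in checking that atomlessness of the probability space corresponds to atomlessness of $1$ in the quotient algebra is a reasonable elaboration of what the paper leaves implicit.
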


\begin{proof}
Immediate from the discussion above.
\end{proof}

The main purpose of this section is to give a basic model theoretic
analysis of $APA$.  Many of the results correspond to
things about atomless probability algebras that were proved
by Ben Yaacov in the framework of compact abstract theories
\cite{BY1}.  We bring these results into the setting of continuous
first order logic and give proofs expressed in familiar language of
measure theory and analysis.

In terms of the invariants introduced in \ref{invariants for atoms}, a
model $\M$ of $Pr$ is a model of $APA$ if and only if $\Phi^\M$
consists of the constant sequence with every entry equal to $0$.
Therefore, using results in Section \ref{model theory of prob spaces}
we get the following basic properties of $APA$:

\begin{coro}\label{APA properties}
The theory $APA$ admits quantifier elimination, is separably categorical, 
and is complete, and the unique separable model of $APA$ is
strongly $\omega$-homogeneous.  
Further, $APA$ is the model companion of $Pr$.
\end{coro}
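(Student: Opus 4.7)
The plan is to read the four assertions off from results already proved in this section by identifying $APA$ as the completion of $Pr$ corresponding to the single invariant $\Phi^\M = (0,0,\dots)$. Condition (A) is exactly what forces $\mathbf{1}$ to be atomless, hence what forces every $at_n^\M(\mathbf{1})$ to vanish; so the $\Phi$ invariant of any $\M \models APA$ is the identically zero sequence.

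Once this identification is made, completeness, separable categoricity, and strong $\omega$-homogeneity of the unique separable model fall out directly from Corollaries \ref{axioms for complete extensions of Pr} and \ref{omega categoricity and homogeneity of all models of Pr} applied to this specific $\Phi$-value; no additional work is needed.

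For quantifier elimination, my plan is to transfer Theorem \ref{QE for Pr*} from $Pr^*$ down to $APA$ in the language $\LLpr$. The key observation is that in every model of $APA$ the definitional extensions $P_n$ are identically $0$, because the $n$-th largest measure of an atom contained in any element is $0$. Given an $\LLpr$-formula $\varphi(x)$, I would take the quantifier-free $Pr^*$-formula $\psi(x)$ supplied by Theorem \ref{QE for Pr*} and substitute the constant $0$ for every occurrence of $P_n(t)$ in $\psi(x)$. The result is a quantifier-free $\LLpr$-formula equivalent to $\varphi(x)$ modulo $APA$.

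Finally, for the model-companion statement, model-completeness of $APA$ is immediate from quantifier elimination, and every model of $APA$ is obviously a model of $Pr$. The remaining content is that every model of $Pr$ embeds into a model of $APA$. Here I would reuse the product construction from the proof of Lemma \ref{extension}: given the probability algebra of $(X,\B,\mu)$, the product space $(X\times[0,1], \B\otimes\F, \mu\otimes\lambda)$ with $\F,\lambda$ the Lebesgue $\sigma$-algebra and measure on $[0,1]$ is atomless, and the map $B \mapsto B\times[0,1]$ induces a measure-preserving boolean embedding of $\M$ into its probability algebra, which is a model of $APA$. This embedding step is the only part requiring genuine (if minor) work; everything else is bookkeeping on top of the infrastructure already developed.
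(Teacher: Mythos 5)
Your proposal is correct and follows essentially the same route as the paper: quantifier elimination is obtained by specializing Theorem \ref{QE for Pr*} and discarding the identically-zero predicates $P_n$, the categoricity/homogeneity/completeness claims are read off from Corollaries \ref{axioms for complete extensions of Pr} and \ref{omega categoricity and homogeneity of all models of Pr}, and the model-companion claim is settled by model completeness plus an atomless product extension. The only (immaterial) difference is that the paper uses an arbitrary atomless factor $(Y,\C,\nu)$ in the product construction where you use Lebesgue measure on $[0,1]$.
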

\begin{proof}
Let $APA^*$ denote the theory of all structures $\M^*$, where $\M$
is a model of $APA$ (\emph{i.e.}, $\M$ is an atomless model of
$Pr$).  By Theorem \ref{QE for Pr*} we have that $APA^*$ admits
quantifier elimination.  However, in models of $APA^*$, all of the
extra predicates $P_n$ have the trivial value $0$, and thus can be
eliminated from any formula.  That is, every formula is equivalent
in $APA^*$ to a quantifier-free formula in the language of $Pr$.  It
follows that $APA$ admits quantifier elimination.  Separable categoricity of $APA$ 
and strong $\omega$-homogeneity of the separable model 
follow from Corollary \ref{omega categoricity and homogeneity of all models of Pr}.
Completeness of $APA$ follows from separable categoricity and also from
quantifier elimination (because every model of $APA$ contains the trivial probability
algebra $\{0,1\}$ as a substructure). 

Since $APA$ admits quantifier elimination, it is model complete; also,
it is an extension of $Pr$.  Therefore, to show that $APA$ is the
model companion of $Pr$ it remains only to show that every model of
$Pr$ has an extension that is a model of $APA$.  Let $\M \models Pr$
be the measured algebra of the probability space
$(X,\B,\mu)$ and let $(Y,\C,\nu)$ be any atomless probability space. 
Then the product measure space $(X \times Y, \B
\tensor \C, \mu \tensor \nu)$ is atomless and it can be seen as an
extension of $(X,\B,\mu)$ by the embedding that takes $B \in \B$ to
$B \times Y \in \B \tensor \C$. Therefore the probability algebra of
$(X \times Y, \B \tensor \C, \mu \tensor \nu)$ is a model
of $APA$ into which $\M$ can be embedded.
\end{proof}

\begin{rema}
\label{super-strong homogeneity of separable model of APA}
Let $\M$ be the unique separable model of $APA$, and let $a = (a_1,\dots,a_n), b=(b_1,\dots,b_n) \in M^n$.
From Corollary \ref{APA properties}, if $\tp(a)=\tp(b)$, then there exists an automorphism $\sigma$ of $\M$
such that $\sigma(a_i)=b_i$ for all $i=1,\dots,n$.  However, this result gives no information about the behavior
of $\sigma$ on the rest of $\M$.  In \cite{BHI} we proved a stronger form of homogeneity for $\M$.
In order to state that result, we need to bring in another natural metric on $M^n$, defined by
$$ {d}_P(a,b) := \frac12 \sum_s d(a^s,b^s) \text{.} $$
(The version of this distance that is defined on measurable partitions of $1$ in a probability space was used
in Section 5.  See Lemma \ref{rho_n compared to L_1-distance}.  It also appears below in 
Corollary \ref{corollary for distance {d}_P}.)
Here we are using the notation introduced in Notation \ref{tuples generate partitions} for the partitions of $1$
associated to the tuples $a$ and $b$, and $s$ ranges over $\{ -1,+1 \}^n$.  Note that when $a,b \in M$ are single elements,
${d}_P(a,b) = d(a,b)$, since $\mu(a^c \triangle b^c)=\mu(a \triangle b)$; this is the reason for the $\frac12$ factor in the
definition of ${d}_P$.  Further, when $a$ and $b$ are partitions of $1$, then 
${d}_P(a,b)$ agrees with the definition of ${d}_P(a,b)$ given just before Lemma \ref{rho_n compared to L_1-distance},
since $a^s,b^s$ will both be $0$ unless $s$ contains exactly one occurrence of $+1$; further, if the unique occurrence
of $+1$ is at place $i$, then $a^s=a_i$ and $b^s = b_i$.

The homogeneity result from \cite{BHI} is the following:
\begin{lema*}[5.6 in \cite{BHI}]
Let $a,b\in M^n$ be tuples with $\tp(a)=\tp(b)$. Then there is an automorphism $\sigma$ of $\M$ such 
that $\sigma(a_i)=b_i$ for all $i$ and, for every finite tuple $c\in M^k$, we have
$${d}_P(ac,b\sigma(c))={d}_P(a,b) \text{.}$$
In particular, for every $c \in M$ we have $d(\sigma(c),c) \leq {d}_P(a,b)$.

\end{lema*}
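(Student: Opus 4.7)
The plan is to reduce the claimed equality to a single property of $\sigma$ and then construct $\sigma$ explicitly using Carath\'{e}odory's isomorphism theorem (Remark~\ref{separable categoricity of APA}). First, for any automorphism $\sigma$ of $\M$ with $\sigma(a_i) = b_i$, and any finite tuple $c = (c_1,\dots,c_k) \in M^k$, I would expand each symmetric difference in
\[
{d}_P(ac, b\sigma(c)) = \tfrac12 \sum_{s,t} \mu\bigl((a^s \cap c^t) \triangle (b^s \cap \sigma(c)^t)\bigr)
\]
by splitting according to whether the point is outside or inside $a^s \cap b^s$. Summing the four resulting pieces over $t \in \{-1,+1\}^k$ and using that $(c^t)_t$ and $(\sigma(c)^t)_t$ partition $1$, the pieces outside $a^s \cap b^s$ collapse to $\mu(a^s \triangle b^s)$; the pieces inside collapse to $2\mu(a^s \cap b^s \cap \bigcup_i (c_i \triangle \sigma(c)_i))$, since every point in $\bigcup_i (c_i \triangle \sigma(c)_i)$ lies in exactly two of the sets $c^t \triangle \sigma(c)^t$ (indexed by its distinct $c$- and $\sigma(c)$-atom labels). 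Setting $E := \bigcup_s (a^s \cap b^s) \in M$, this yields
\[
{d}_P(ac, b\sigma(c)) = {d}_P(a,b) + \mu\Bigl(E \cap \bigcup_i (c_i \triangle \sigma(c)_i)\Bigr).
\]
So it suffices to produce $\sigma$ for which $c \triangle \sigma(c) \leq E^c$ for every $c \in M$, i.e., $\sigma$ fixes every element below $E$.

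Next I would construct such a $\sigma$ by splitting $\M$ via $\{E, E^c\}$. On the principal ideal $\M_E := \{x \in M : x \leq E\}$ take $\sigma$ to be the identity. Since $\tp(a)=\tp(b)$ gives $\mu(a^s) = \mu(b^s)$ for every $s \in \{-1,+1\}^n$, we have
\[
\mu(a^s \cap E^c) = \mu(a^s) - \mu(a^s \cap b^s) = \mu(b^s) - \mu(a^s \cap b^s) = \mu(b^s \cap E^c),
\]
so the partitions $(a^s \cap E^c)_s$ and $(b^s \cap E^c)_s$ of $E^c$ match in measure coordinate by coordinate. Because $\M$ is atomless and separable, for each $s$ the relativizations $\M_{a^s \cap E^c}$ and $\M_{b^s \cap E^c}$ are atomless, countably generated probability algebras of equal total measure, hence isomorphic by Carath\'{e}odory's theorem (Remark~\ref{separable categoricity of APA}); choose such an isomorphism $\phi_s$ for each $s$ and glue them to obtain a measure-preserving automorphism $\sigma_{E^c}$ of $\M_{E^c}$ carrying $(a^s \cap E^c)_s$ to $(b^s \cap E^c)_s$. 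Combining with the identity on $\M_E$ yields an automorphism $\sigma$ of $\M$, and one verifies $\sigma(a^s) = (a^s \cap b^s) \cup (b^s \cap E^c) = b^s$, whence $\sigma(a_i) = b_i$ for every $i$.

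Finally, since $\sigma$ is the identity on $\M_E$ we have $\sigma(E) = E$, and $\sigma(x) \cap E = \sigma(x \cap E) = x \cap E$ for every $x \in M$, so $x \triangle \sigma(x) \leq E^c$. Applied to each coordinate $c_i$, the extra term in the displayed identity vanishes and ${d}_P(ac, b\sigma(c)) = {d}_P(a,b)$. The ``in particular'' statement follows from
\[
d(c,\sigma(c)) = \mu(c \triangle \sigma(c)) \leq \mu(E^c) = 1 - \sum_s \mu(a^s \cap b^s) = \tfrac12 \sum_s \mu(a^s \triangle b^s) = {d}_P(a,b).
\]
The main obstacle is the combinatorial bookkeeping in the first paragraph: tracking all four pieces of each symmetric difference $(a^s \cap c^t) \triangle (b^s \cap \sigma(c)^t)$ and recognizing the factor of $2$ that appears because every off-diagonal point is counted in two of the sets $c^t \triangle \sigma(c)^t$. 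Once that clean identity is in hand, the construction of $\sigma$ and the conclusion are routine applications of Carath\'{e}odory's theorem.
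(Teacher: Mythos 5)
Your proof is correct. Note that this paper does not actually prove the statement --- it is quoted verbatim from \cite{BHI} (Lemma 5.6 there) --- so there is no in-paper argument to compare against; judged on its own, your argument is complete. The key identity
\[
{d}_P(ac, b\sigma(c)) = {d}_P(a,b) + \mu\Bigl(E \cap \textstyle\bigcup_i (c_i \triangle \sigma(c)_i)\Bigr), \qquad E = \bigcup_s (a^s \cap b^s),
\]
checks out: for a point outside $a^s\cap b^s$ the four-way case analysis gives exactly one contributing index $t$ on each of the $a$- and $b$-sides, reproducing $\mu(a^s\triangle b^s)$, while a point of $a^s\cap b^s$ contributes $2$ or $0$ according to whether its $c$- and $\sigma(c)$-labels differ. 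The construction of $\sigma$ (identity on the ideal below $E$, and on $E^c$ a gluing of measure isomorphisms $\M_{a^s\cap E^c}\cong\M_{b^s\cap E^c}$, which exist by Carath\'eodory since $\mu(a^s\cap E^c)=\mu(a^s)-\mu(a^s\cap b^s)=\mu(b^s)-\mu(a^s\cap b^s)=\mu(b^s\cap E^c)$ and both relativizations are separable and atomless) is sound, and the identity $\mu(E^c)=\tfrac12\sum_s\mu(a^s\triangle b^s)={d}_P(a,b)$ correctly yields the ``in particular'' clause. One small remark: your identity also shows ${d}_P(ac,bd)\ge {d}_P(a,b)$ for \emph{any} $\sigma$ fixing $a$ to $b$, so only the upper bound carries content; your $\sigma$ achieves it exactly because $c\triangle\sigma(c)\le E^c$ kills the error term.
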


\end{rema}

The following lemma appears in \cite[Section 2.1]{BY1}, in
the framework of compact abstract theories.  To make our
paper more self-contained and because our setting is different, 
and in order to make clear the elementary tools from analysis
from which these facts can be derived, we give complete proofs.

\begin{lema}
\label{types over C - equality}
Let $\M = (\B,\mu,d) \models APA$.  Let $C\subseteq M = \B$
and $a,b \in M^n$.
Recall that $\bra C \ket$ is the $\s$-subalgebra of $\B$ generated by $C$.
The following conditions are equivalent:
\newline
(1) $\tp(a/C)=\tp(b/C)$;
\newline
(2) $\mu(a^s\cap c)=\mu(b^s \cap c)$ for all $s = (k_1,\dots,k_n) \in \{-1,+1\}^n$ and all $c \in \bra C \ket$;
\newline
(3) $\P(a^s | \bra C \ket)=\P(b^s  | \bra C \ket )$ for all $s = (k_1,\dots,k_n) \in \{-1,+1\}^n$.
\end{lema}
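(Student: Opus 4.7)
The plan is to prove the cycle (1) $\Rightarrow$ (2) $\Rightarrow$ (3) $\Rightarrow$ (1), though really it is more natural to establish (1) $\Leftrightarrow$ (2) by combining quantifier elimination with a density/continuity argument, and then (2) $\Leftrightarrow$ (3) directly from the Radon-Nikodym characterization in Theorem \ref{Radon-Nikodym}.

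For (1) $\Rightarrow$ (2), I will first handle the case $c \in C^{\#}$. By Corollary \ref{APA properties}, $APA$ admits quantifier elimination, so $\tp(a/C) = \tp(b/C)$ is determined by the values of quantifier-free $\LLpr$-formulas over $C$. By the normal form observation in Remark \ref{special formulas over Pr}, every such formula is built from atomic expressions of the form $\mu(x^{s} \cap c)$ with $c \in C^{\#}$ (applied to the tuple $a$ or $b$). Hence for every $s \in \{-1,+1\}^{n}$ and every $c \in C^{\#}$ we must have $\mu(a^{s} \cap c) = \mu(b^{s} \cap c)$. To pass from $C^{\#}$ to $\bra C \ket$, recall from Notation \ref{widehat C notation}(c) that $C^{\#}$ is $d$-dense in $\bra C \ket$. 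The operation $y \mapsto \mu(a^{s} \cap y)$ is $1$-Lipschitz in $y$ (since $|\mu(a^{s} \cap y_{1}) - \mu(a^{s} \cap y_{2})| \leq \mu(y_{1} \triangle y_{2}) = d(y_{1},y_{2})$), and similarly for $b^{s}$; so approximating $c \in \bra C \ket$ by a sequence in $C^{\#}$ and taking limits gives (2).

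For (2) $\Rightarrow$ (1) the direction is immediate: (2) in particular gives $\mu(a^{s} \cap c) = \mu(b^{s} \cap c)$ for all $c \in C^{\#}$, and by the second paragraph of Remark \ref{special formulas over Pr} these values determine the quantifier-free type, hence by quantifier elimination (Corollary \ref{APA properties}) the full type.

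Finally, for (2) $\Leftrightarrow$ (3), I fix $s \in \{-1,+1\}^{n}$ and let $\A = \bra C \ket$. By the uniqueness clause of Theorem \ref{Radon-Nikodym} (and the conventions set in Notation \ref{cond prob notation over a sigma-subalgebra of prob alg}), a bounded $\A$-measurable function $g$ equals $\P(a^{s}|\A)$ a.e.\ iff $\int_{c} g\, d\mu = \mu(a^{s} \cap c)$ for all $c \in \A$. Assuming (2), both $\P(a^{s}|\A)$ and $\P(b^{s}|\A)$ are $\A$-measurable and, for every $c \in \A$, each integrates over $c$ to the common value $\mu(a^{s} \cap c) = \mu(b^{s} \cap c)$; by uniqueness they are equal a.e., giving (3). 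Conversely, assuming (3), integrating the equal functions over any $c \in \A$ and using the defining property of conditional probability yields $\mu(a^{s} \cap c) = \mu(b^{s} \cap c)$, giving (2).

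There is no real obstacle here: the only place where one must be slightly careful is in checking that (1) yields agreement for parameters in $\bra C \ket$ and not merely in $C^{\#}$, but this is handled by the density statement in Notation \ref{widehat C notation}(c) together with Lipschitz continuity of $\mu$. Everything else is direct from quantifier elimination and the Radon-Nikodym characterization.
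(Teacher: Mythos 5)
Your proof is correct and follows essentially the same route as the paper: quantifier elimination together with the normal form of Remark \ref{special formulas over Pr} gives (1) $\Leftrightarrow$ (2), and the uniqueness clause of the Radon--Nikodym theorem gives (2) $\Leftrightarrow$ (3). The only cosmetic difference is that you pass from $C^\#$ to $\bra C \ket$ via density and the $1$-Lipschitz continuity of $y \mapsto \mu(a^s \cap y)$, whereas the paper uses $\bra C \ket \subseteq \dcl(C)$; both are fine.
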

\begin{proof}  
(1) $\Leftrightarrow$ (2):  First we deal with the case $C = \emptyset$.  
Only the right to left direction needs to be proved.
By QE for $APA$ (Corollary \ref{APA properties}), showing 
$\tp(a_1,\dots,a_n)=\tp(b_1,\dots,b_n)$ is equivalent to proving
$\mu(t(a_1,\dots,a_n)) = \mu(t(b_1,\dots,b_n))$ holds for every
boolean term $t(x_1,\dots,x_n)$.  By the discussion in \ref{tuples generate partitions},
this holds whenever $\mu(a^s)=\mu(b^s)$ for all $s \in \{-1,+1\}$.

Now consider arbitrary $C$.
From the discussion before Theorem \ref{Radon-Nikodym}, we see 
$C \subseteq \bra C \ket \subseteq \dcl(C)$, 
so each type over $\bra C \ket$ is determined by its restriction to a type over $C$.  
Since $\bra C \ket$ is closed under boolean combinations, the equivalence of (1) and (2) 
follows immediately from the first part of this proof.

(2) $\Leftrightarrow$ (3):  By Corollary \ref{axiomatization of atomless prob algs}, there is an atomless
probability space $(X,\A,\nu)$ whose probability algebra is $(\B,\mu,d)$.  Using
Lemma \ref{closed subalgebras are probability algebras} there exists a $\s$-subalgebra
$\D$ of $\A$ for which $\widehat \D = \bra C \ket$.  According to
\ref{cond prob notation over a sigma-subalgebra of prob alg},
for any $a \in \B = \widehat \A$ we have
$\P(a | \bra C \ket) = \P(a | \D)$.  That is, (3)
is equivalent to the version of (3) in which we have replaced 
$\bra C \ket$ by $\D$.

The equivalence of (2) and this new version of (3) follows from Theorem \ref{Radon-Nikodym};
specifically, from the fact that for any $a = [A]_\mu \in \widehat \A$, the function 
$\P(a|\D)$ is $\D$-measurable and is determined, up to
equality $\mu$-almost everywhere, by the values of $\int_E \chi_A\, d\mu = \mu(A \cap E)$
as $E$ ranges over $\D$.
\end{proof}

\begin{rema}
\label{types of partitions}
The preceding result takes an especially simple form when $a = (a_1,\dots,a_n)$ and
$b = (b_1,\dots,b_n)$ are partitions of $1$ in $\M$.  For example, condition (2) reduces to
$\mu(a_i \cap c) = \mu(b_i \cap c)$ for all $i = 1,\dots,n$ and all $c \in \bra C \ket$.
This means in particular that when $a = (a_1,\dots,a_n)$ is a partition of $1$, the $n$-type
of $a$ over $C$ is determined by the $1$-types $\tp(a_i/C)$ for $i = 1,\dots,n$.
\end{rema}

Lemma \ref{types over C - equality} characterizes \emph{equality} of types.  Next we complete the
description of the type space $S_n(C)$.  First we need a definition:

\begin{defi}
\label{additive functionals}
Let $\M = (\B,\mu,d) \models APA$ and $C\subseteq M = \B$.  An 
\emph{additive functional on $\bra C \ket$}\index{additive functional}
is a finitely additive function  $\lambda \colon \bra C \ket \to [0,1]$.  
\end{defi}

\begin{note}
Let $\M = (\B,\mu,d) \models APA$.  Let $C\subseteq M = \B$ and $a \in M^n$.  In \ref{types over C - equality}(2),
each of the functions $\lambda^s \colon \bra C \ket \to [0,1]$ defined by $c \mapsto \mu(a^s \cap c)$ 
is an additive functional
on $\bra C \ket$ with $\lambda^s(1) = \mu(a^s)$.  Furthermore, for every $c \in \bra C \ket$ we
have $\sum \{ \lambda^s(c) \mid s \in \{-1,+1 \}^n \} = \mu(c)$, since $(a^s \cap c \mid s \in \{-1,+1 \}^n)$
is a partition of $c$.  Our next result shows that any such family of additive functionals 
arises from a type  in $S_n(\bra C \ket)$.
\end{note}

\begin{lema}
\label{types over C - description}
Let $\M = (\B,\mu,d) \models APA$ and $C\subseteq M = \B$.  Assume $\M$ is $\kappa$-saturated,
where $\kappa > \card(\bra C \ket)$.  Let $(\lambda^s \mid s \in \{-1,+1 \}^n)$ be a family of additive functionals
on $\bra C \ket$ such that $\sum \{ \lambda^s(c) \mid s \in \{-1,+1 \}^n \} = \mu(c)$ for all $c \in \bra C \ket$.  
Then there exists $a = (a_1,\dots,a_n) \in M^n$ such that for every $s \in \{-1,+1 \}^n$ and $c \in \bra C \ket$
we have $ \mu(a^s \cap c) = \lambda^s(c)$.  Moreover, $\tp(a/C)$ is determined by these conditions.
\end{lema}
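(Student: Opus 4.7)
The plan is to handle the ``moreover'' clause first: uniqueness of $\tp(a/C)$ among any two realizations of the prescribed conditions is immediate from Lemma \ref{types over C - equality}, part (2), since the conditions determine the value of $\mu(a^s \cap c)$ for every $s \in \{-1,+1\}^n$ and every $c \in \bra C \ket$.

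For existence, I would introduce the partial type $p(x_1,\dots,x_n)$ over $\bra C \ket$ consisting of the conditions $\mu(x^s \cap c) = \lambda^s(c)$ as $s$ ranges over $\{-1,+1\}^n$ and $c$ ranges over $\bra C \ket$; each such condition is expressible as the zero set of an $\LLpr$-formula in $x$ with a parameter from $\bra C \ket$. Because $\M$ is $\kappa$-saturated with $\kappa > \card(\bra C \ket)$, it is enough to show $p$ is finitely satisfiable in $\M$.

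So fix a finite subset of $p$ involving parameters $c_1,\dots,c_m \in \bra C \ket$, and let $D$ be the finite boolean subalgebra of $\bra C \ket$ generated by $\{c_1,\dots,c_m\}$, with atoms $d_1,\dots,d_k$. Each $c_i$ is a disjoint union of some of the $d_j$, so by finite additivity of both $\lambda^s$ and $\mu(\cdot \cap a^s)$, it will suffice to produce $a \in M^n$ such that
\[
\mu(a^s \cap d_j) = \lambda^s(d_j) \quad \text{for all } s \in \{-1,+1\}^n \text{ and all } 1 \leq j \leq k\text{.}
\]
The hypothesis $\sum_s \lambda^s(d_j) = \mu(d_j)$ ensures that the prescribed values on each $d_j$ are admissible. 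Since $\M \models APA$, every element of $\M$ is atomless, so I can apply (iteratively) the strengthening of Lemma \ref{approximate splitting} that is recorded in the proof of Corollary \ref{omega categoricity and homogeneity of all models of Pr} to partition each $d_j$ as a disjoint union of $2^n$ pieces $(E_j^s \mid s \in \{-1,+1\}^n)$ with $\mu(E_j^s) = \lambda^s(d_j)$.

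Having done this, I set $a_i := \bigcup \{ E_j^s \mid 1 \leq j \leq k \text{ and } k_i = +1 \}$, using the convention $s = (k_1,\dots,k_n)$ from \ref{tuples generate partitions}. Since the $E_j^s$ form a partition of $1$ in $\M$, a direct check shows $a^t = \bigcup_j E_j^t$ for every $t \in \{-1,+1\}^n$, and therefore $\mu(a^t \cap d_j) = \mu(E_j^t) = \lambda^t(d_j)$, as required. The only substantive step is noticing that the hypothesis $\sum_s \lambda^s = \mu$ on $\bra C \ket$ is exactly what permits each atom of the finite approximating subalgebra $D$ to be subdivided with the prescribed marginals; once those splittings are available, the interdefinability between $n$-tuples and their associated partitions (\ref{tuples generate partitions}) makes the rest a bookkeeping computation.
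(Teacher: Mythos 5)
Your proposal is correct and follows essentially the same route as the paper: reduce to finite satisfiability via $\kappa$-saturation, pass to the atoms of the finite subalgebra generated by the finitely many parameters, split each atom into $2^n$ pieces with the prescribed measures (using atomlessness and the hypothesis $\sum_s \lambda^s = \mu$), assemble the coordinates $a_i$ from those pieces, and get uniqueness from Lemma \ref{types over C - equality}. Your indexing of the pieces $E_j^s$ and the union defining $a_i$ over all atoms $d_j$ is in fact stated more carefully than in the paper's own write-up.
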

\begin{proof}
Let $x = (x_1,\dots,x_n)$ be a tuple of distinct variables, and let $\Sigma(x)$ be the set of all conditions of the form
$| \mu(x^s \cap c) - \lambda^s(c) | = 0$ as $c$ varies over $\bra C \ket$ and $s$ varies over $\{-1,+1 \}^n$.
We must show that $\Sigma$ is satisfiable in $\M$, and by saturation it suffices to show that $\Sigma(x)$ is
finitely satisfiable.  So let $F$ be any finite subset of $\bra C \ket$ and let $f_1,\dots,f_k$ be the atoms of
$F^\#$. For each $i = 1,\dots,n$, let $(a_{i,s} \mid s \in \{-1,+1 \}^n)$ be a partition of $f_i$ in $\B$ such
that $\mu(a_{i,s}) = \lambda^s(f_i)$ for all $s$.  This is possible because 
$\mu(f_i) = \sum \{ \lambda^s(f_i) \mid s \in \{-1,+1 \}^n \}$.  Note that the family $(a_{i,s})$ is a partition of $1$
in $\B$.  Finally, for each $i$ set $a_i = \cup \{ a_{i,s} \mid s_i = +1 \}$, and set $a = (a_1,\dots,a_n)$.  
An easy calculation shows that for all $i,s$ we have $a^s \cap f_i = a_{i,s}$ and hence 
$\mu(a^s \cap f_i) = \lambda^s(f_i)$.  Additivity implies that $\mu(a^s \cap f) = \lambda^s(f)$ for every
$f \in F$.  This shows that $\Sigma(x)$ is finitely satisfiable in $\M$ and completes the proof (when combined
with Lemma \ref{types over C - equality} to provide uniqueness).
\end{proof}

\begin{rema}
\label{1-types as additive functionals}
The preceding Lemma takes an especially simple form for $1$-types over $C$.  Namely, suppose $\lambda$
is an additive functional on $\bra C \ket$ that satisfies $\lambda(c) \leq \mu(c)$ for all $c \in \bra C \ket$.  Define
$\lambda'$ on $\bra C \ket$ by $\lambda'(c) = \mu(c) - \lambda(c)$.  Then $\lambda'$ is also an additive functional
on $\bra C \ket$, and the pair $\lambda,\lambda'$ satisfies the assumptions in Lemma \ref{types over C - description}.
Therefore, $\lambda$ determines a $1$-type $p_\lambda \in S_1(C)$, and every element of $S_1(C)$ 
can be described in this way.  Specifically, $a$ realizes $p_\lambda$ if and only if $\mu(a \cap c) = \lambda(c)$ for
all $c \in \bra C \ket$ (since this implies $\mu(a^c \cap c) = \lambda'(c)$). 
This observation is especially useful when considering $n$-types of partititions of $1$, 
as discussed in Remark \ref{types of partitions}.  We also use this description of $1$-types in discussing
the model theoretic content of 
Maharam's Lemma in Section \ref{maharam} (Lemma \ref{Maharam's lemma}).

An equivalent approach to $1$-types over $C$ in terms of $\bra C \ket$-measurable, $[0,1]$-valued
functions (i.e., random variables), corresponding to clause (3) in Lemma \ref{types over C - equality}, is the
following: let $f$ be any $\bra C \ket$-measurable, $[0,1]$-valued function.  For $\M \models APA$ and $C \subseteq M$, 
the condition $\P(a|\bra C \ket) = f$ on $a \in M$ is type-definable over $C$; indeed, it precisely determines $\tp(a/C)$.
To see this, consider $\lambda_f$ defined for $c \in \bra C \ket$ by $\lambda_f(c) := \int_c f\,d\mu$.  Then $\lambda_f$
is an additive functional on $\bra C \ket$ that satisfies $\lambda(c) \leq \mu(c)$ for all $c \in \bra C \ket$.  Therefore,
as discussed in the preceding paragraph, $\lambda_f$ exactly determines a $1$-type in $S_1(C)$.  Moreover, the
condition $\mu(a \cap c) = \lambda_f(c)$ for all $c \in \bra C \ket$ is equivalent to $\P(a | \bra C \ket) = f$.
\end{rema}

\begin{lema}\label{dcl} Let $\M = (\B,\mu,d) \models APA$ and $C\subseteq M$.
Then $\dcl(C)=\acl(C)= \bra C \ket$.
\end{lema}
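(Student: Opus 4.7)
The plan is to prove the three inclusions $\bra C \ket \subseteq \dcl(C) \subseteq \acl(C) \subseteq \bra C \ket$; the middle one is automatic. For $\bra C \ket \subseteq \dcl(C)$, note that the boolean operations $\cup,\cap,\cdot^c$ are basic symbols of $\LLpr$ and hence definable, so $C^\# \subseteq \dcl(C)$. Since $\dcl(C)$ is $d$-closed and $\bra C \ket$ is by Notation \ref{widehat C notation}(c) the $d$-closure of $C^\#$, this inclusion follows at once.

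For the nontrivial inclusion $\acl(C) \subseteq \bra C \ket$ I argue contrapositively: given $a \in M$ with $a \notin \bra C \ket$, I will exhibit, in a sufficiently saturated elementary extension of $\M$, an infinite family of realizations of $\tp(a/C)$ whose pairwise $d$-distances all equal a single positive constant; such a family is not totally bounded, so $\tp(a/C)$ has a non-compact set of realizations and hence $a \notin \acl(C)$. Set $f := \P(a|\bra C \ket)$ as in Notation \ref{cond prob notation over a sigma-subalgebra of prob alg}; membership of $a$ in $\bra C \ket$ is equivalent to $f$ being a.e.\ a characteristic function, so the assumption $a \notin \bra C \ket$ forces $\delta := 2\int f(1-f)\,d\mu > 0$. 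The key computation is that for any two realizations $a_i,a_j$ of $\tp(a/C)$ satisfying $a_i \pindep_{\bra C \ket} a_j$, independence gives $\mu(a_i \cap a_j) = \int f^2\,d\mu$, and therefore $d(a_i,a_j) = 2\mu(a) - 2\mu(a_i \cap a_j) = 2\int f(1-f)\,d\mu = \delta$.

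The remaining and principal task is to produce the infinite family $(a_i)_{i \ge 1}$ of pairwise $\bra C \ket$-independent realizations of $\tp(a/C)$; this is where the work lies, and I plan to handle it by iterated application of Lemma \ref{extension}. Starting from $a_1 := a$ and having constructed $a_1,\dots,a_n$ with the required properties, I apply Lemma \ref{extension} to the two-atom subalgebra generated by $a_1$ with $\C := \bra C \ket$ and $\D := \bra C \cup \{a_1,\dots,a_n\} \ket$, obtaining a measure-preserving extension containing an element $a_{n+1}$ satisfying $\P(a_{n+1}|\bra C \ket) = f$ and $a_{n+1} \pindep_{\bra C \ket} \{a_1,\dots,a_n\}$; Lemma \ref{types over C - equality} then forces $\tp(a_{n+1}/C) = \tp(a/C)$. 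A saturation/compactness argument inside the monster realizes the whole infinite family simultaneously; equivalently, one can specify the $n$-type of such a family in one shot using Lemma \ref{types over C - description} with additive functionals $\lambda^s(c) := \int_c \prod_i f^{(k_i)}\,d\mu$, where $f^{(+1)} := f$ and $f^{(-1)} := 1-f$, and then appeal to saturation. The main obstacle is this construction of independent copies, but once it is in place the distance computation above immediately defeats algebraicity.
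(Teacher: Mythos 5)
Your proof is correct, but for the key inclusion $\acl(C)\subseteq\bra C\ket$ it takes a genuinely different route from the paper's. Both arguments pivot on the same dichotomy --- $a\in\bra C\ket$ if and only if $f:=\P(a|\bra C\ket)$ is a.e.\ a characteristic function, so that otherwise $\int f(1-f)\,d\mu>0$ --- but they exploit it differently. The paper produces a \emph{single} new realization of $\tp(a/C)$ in one concrete elementary extension: it passes to the product space $X\times[0,1]$, takes the region under the graph of $f$, namely $A'=\{(x,s)\mid s\le f(x)\}$, checks via Fubini that $[A']$ realizes $\tp(a/C)$, observes that $A'$ is not a cylinder $A\times[0,1]$ precisely because $0<f<1$ on a set of positive measure, and then invokes the characterization of $\acl$ by which an algebraic type acquires no new realizations in elementary extensions (\cite[Exercise 10.8]{BBHU}). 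You instead defeat the compactness characterization of $\acl$ by manufacturing arbitrarily many realizations that are pairwise conditionally independent over $\bra C\ket$, hence pairwise at the fixed distance $\delta=2\int f(1-f)\,d\mu>0$; your distance computation is right, since $d(a_i,a_j)=2\int f\,d\mu-2\int f^2\,d\mu$ under conditional independence. Both of your proposed constructions of the independent family go through: the iterated use of Lemma \ref{extension} followed by quantifier elimination and saturation to pull the extension back into the monster is exactly the mechanism the paper itself uses to prove the Extension property in Theorem \ref{pr:non-div}, and the one-shot description via Lemma \ref{types over C - description} is clean --- your functionals are additive and sum to $\mu(c)$ because $\sum_s\prod_i f^{(k_i)}=1$ pointwise --- and neither of the cited lemmas depends on the present one, so there is no circularity. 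Your argument is somewhat longer but more quantitative, giving a uniform lower bound on the separation of the realizations, whereas the paper's argument produces only one extra realization but does so by an explicit, elementary construction.
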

\begin{proof}
Recall that $\bra C \ket$ is the $\s$-subalgebra of $\B$
generated by $C$.
As noted at the beginning of the previous proof, 
$C \subseteq \bra C \ket \subseteq \dcl({C})$.  Therefore, to complete
the proof it suffices to prove $\acl({C}) \subseteq \bra C \ket$.

Now let $a\in M\setminus \bra C \ket$. To show $a\not \in \acl(C)$,
by \cite[Exercise 10.8]{BBHU} it suffices to prove that for some
$\NN\succeq \M$, there is a realization of $\tp(a/C)$ in $\NN$ that is
not in $\M$. Let $\M$ be the probability algebra of 
the probability space $(X,\A,\nu)$. Consider the standard probability space
$([0,1],\mathcal{L},m)$ of Lebesgue measure and form the product space 
$(X\times [0,1],\A\tensor\mathcal{L},\nu \tensor m)$; let
$\NN$ be the probability algebra of this product space. There
is a canonical embedding $J \colon \A \to \A\tensor {\mathcal{L}}$ 
defined by $J(A) = A \times [0,1]$ for $A \in \A$;
this map gives rise to an embedding $\widehat J$ of $\M$ into $\NN$.  Since
$APA$ admits quantifier elimination, $\widehat J$ is an elementary
embedding. As in the previous proof, there is a $\s$-subalgebra
$\D$ of $\A$ for which $\widehat \D = \bra C \ket$.  
Since $\P(a|\D)$ is $\D$-measurable, the set
$A' = \{(x,s) \in X \times [0,1] \mid s \leq \P(a|\D)(x) \}$ is
$\D\tensor\mathcal{L}$-measurable.  We let 
$a' = [A']_{\nu\tensor m} \in \widehat{\D \tensor \mathcal{L}} \subseteq \widehat{\A \tensor \mathcal{L}} = N$.
We  complete the proof by showing that $a'$ is not in the image of $\M$ under 
$\widehat J$ and that $a'$ is a realization of $\tp(a/C)$ in $\NN$.

For the first of these statements, we note that $0 < \P(a|\D) < 1$ holds on a set of positive
measure. Otherwise $\P(a|\D)=\chi_B$ for some $B \in \D$; this would
imply $a = [B]_\nu \in \bra C \ket$, which would contradict our assumptions.  It follows that $A'$
is not of the form $A \times [0,1]$ where $A \in \A$, and thus $a'$ is not in the image of $\M$ under
$\widehat J$.

Finally, let $\D' = \{ B \times [0,1] \mid B \in \D \} = J(\D)$, so $\D'$ is a $\s$-algebra and 
$\widehat{\D'} = \widehat{J}(\bra C \ket) = \bra \widehat{J}({C}) \ket$.
Fubini's Theorem shows that $\P(a'|\D')=\P(\widehat J (a)|\D')$, which
implies by Lemma \ref{types over C - equality} that $\tp_\NN(a'/\widehat{J}({C}))=\tp_\M(a/C)$.
(Here we mean, of course, that the parameters in $\widehat{J}({C})$ are identified 
with those in $C$ via the bijection $\widehat J$.)
\end{proof}

In several results in the rest of this section it is convenient to work in
a $\kappa$-universal domain\index{universal domain} 
for $APA$, where $\kappa$ is uncountable.
For the rest of the section we denote such a model of $APA$ as $\U$.
Recall that a subset $C$ of $U$ is called \emph{small}\index{small set}
 if $\mbox{card}({C}) < \kappa$.
In this situation, every type in $S_n({C})$ is realized in $\U$.
Furthermore, $\U$ is strongly $\kappa$-homogeneous;\index{strongly homogeneous}
 \textit{i.e.}, every
elementary map between small subsets of $U$ extends to an automorphism
of $\U$.  (In applications, $\U$ and the
size of $\kappa$ may need to be changed in order to insure that specific
parameter sets are small.)  

Recall that the metric $d$ on $\U$ yields an \emph{induced metric}%
\index{induced metric (on spaces of types)} 
on each space of types (see
\cite[Section 8]{BBHU}), as follows: when $C\subseteq U$ is small
and $p,q$ are $n$-types over $C$, the \emph{distance} between $p$
and $q$ is defined by $$d(p,q)=\inf\{\max_{1 \leq i \leq n}
d(a_i,b_i): (a_1,\dots,a_n)\models p, (b_1,\dots,b_n)\models q\}
\text{.}$$
\index{$d(p,q)$}

The next result provides an explicit formula for the induced metric on
types of partitions of $1$ in atomless probability algebras. 
(When the parameter set $C$ is empty, this formula occurs as (6.2) in the
proof of \cite[Lemma 6.3]{Shields}.)  

\begin{theo}
\label{distance}
Let $C\subseteq U$
be small and let $a=(a_1,\dots,a_n)$ and 
$b=(b_1,\dots,b_n)$ be partitions of $1$ in $\U$.  Then
\[
d(\tp(a/C),\tp(b/C))= \max_{1\leq i \leq n}\|\P(a_i|\bra C \ket)-\P(b_i|\bra C \ket)\|_1
\]
where $\| \ \ \|_1$ is the $L_1$-norm.

Moreover, there exists $b'=(b'_1,\dots,b'_n)$, a partition of $1$ in $U$, such that
$\tp(b'/C)=\tp(b/C)$ and for all $i=1,\dots,n$
\[
d(a_i,b'_i) = \| \P(a_i | \bra C \ket) - \P(b_i | \bra C \ket) \|_1 \text{.}
\]
\end{theo}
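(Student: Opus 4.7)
Throughout, write $f_i := \P(a_i \mid \bra C \ket)$ and $g_i := \P(b_i \mid \bra C \ket)$; these are $\bra C \ket$-measurable $[0,1]$-valued functions summing pointwise a.e.\ to $1$. For the \emph{lower bound}: for any realizations $a'' \models \tp(a/C)$ and $b'' \models \tp(b/C)$ in $\U$, Lemma \ref{types over C - equality} gives $\P(a''_i \mid \bra C \ket) = f_i$ and $\P(b''_i \mid \bra C \ket) = g_i$. Jensen's inequality for conditional expectation (the integrated form of $|\E(h \mid \bra C\ket)| \leq \E(|h| \mid \bra C \ket)$) yields $\|\E(h \mid \bra C\ket)\|_1 \leq \|h\|_1$ for any $h \in L_1$; applied to $h = \chi_{a''_i} - \chi_{b''_i}$ this gives $\|f_i - g_i\|_1 \leq \|\chi_{a''_i} - \chi_{b''_i}\|_1 = d(a''_i, b''_i)$, so maximizing over $i$ and infimizing over realizations yields $\max_i\|f_i - g_i\|_1 \leq d(\tp(a/C), \tp(b/C))$.

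For the moreover statement, I first build $\bra C\ket$-measurable functions $\pi_{ij}$ with marginals $\sum_j \pi_{ij} = f_i$ and $\sum_i \pi_{ij} = g_j$ and with maximal diagonal $\pi_{ii} := \min(f_i, g_i)$. Setting $r_i := f_i - \pi_{ii}$, $s_j := g_j - \pi_{jj}$, and $t := \sum_k r_k = \sum_k s_k$, define $\pi_{ij} := r_i s_j / t$ on $\{t > 0\}$ and $\pi_{ij} := 0$ on $\{t = 0\}$ for $i \ne j$; since $r_i s_i = 0$ pointwise, the required marginal identities follow easily. Now represent $\U$ as the probability algebra of some $(X,\B,\mu)$ (Theorem \ref{second description of models of Pr}), with $A_i \in \B$ representing $a_i$ and $\sigma$-subalgebra $\C \subseteq \B$ satisfying $\widehat \C = \bra C \ket$ (Lemma \ref{closed subalgebras are probability algebras}); form the product $(X \times [0,1], \B \otimes \mathcal L, \mu \otimes m)$, whose probability algebra $\U'$ contains $\U$ elementarily via $B \mapsto B \times [0,1]$, by atomlessness of the product together with quantifier elimination for $APA$ (Corollary \ref{APA properties}). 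For each $i$, partition $A_i \times [0,1]$ into sets $E_{ij}$, $j = 1,\dots,n$, whose slice at $x \in A_i$ is a sub-interval of $[0,1]$ of length $\tilde\pi_{ij}(x) := \pi_{ij}(x)/f_i(x)$ (well-defined since $A_i \subseteq \{f_i > 0\}$ up to null), arranged to partition $[0,1]$. Setting $B'_j := \bigcup_i E_{ij}$, a Fubini computation combined with the identity $\E(\chi_{A_i} \pi_{ij}/f_i \mid \C) = \pi_{ij}$ shows $\P(A_i \cap B'_j \mid \C') = \pi_{ij}$ in $\U'$; summing over $i$ yields $\P(B'_j \mid \C') = g_j$, while taking $j = i$ and integrating gives $\mu'(A_i \cap B'_i) = \int \min(f_i,g_i)\,d\mu$ and hence $d(a_i, b'_i) = \int f_i + \int g_i - 2\int \min(f_i,g_i) = \|f_i - g_i\|_1$.

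By Lemma \ref{types over C - equality}, $\tp_{\U'}(b'/C) = \tp(b/C)$, so the type of $b'$ over the small set $aC$ is realized in the elementary extension $\U'$ and hence in $\U$ itself by $\kappa$-saturation; replacing $b'$ by a realization inside $\U$ establishes the moreover statement, and the upper bound $d(\tp(a/C),\tp(b/C)) \leq \max_i d(a_i, b'_i) = \max_i \|f_i - g_i\|_1$ follows at once. The main difficulty is locating a coupling that attains the $L_1$-bound \emph{inside the model itself}: choosing the $\pi_{ij}$ is a finite-dimensional optimal-transport computation, but verifying that the vertical-strip construction in the product extension yields the prescribed conditional probabilities requires a careful Fubini and Radon-Nikodym manipulation with the normalization $\tilde\pi_{ij} = \pi_{ij}/f_i$, together with the observation that $A_i \subseteq \{f_i > 0\}$ modulo null sets.
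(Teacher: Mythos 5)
Your proof is correct, but the upper bound and the ``moreover'' clause are established by a genuinely different route from the paper's. The paper proves the upper bound by a three-stage reduction: first $C=\emptyset$, where the witness $b'$ is built by an explicit mass-redistribution inside the algebra using atomlessness; then finite $C$, by running that argument separately inside each atom of $C$; then general $C$, by approximating with finite subalgebras $C_k$ via Lemma \ref{approximating P(a|C)} and separately showing (by a compactness argument) that $d(\tp(a/C),\tp(b/C))$ is the supremum of the distances over finite subalgebras, then passing to the limit. You instead handle general $C$ in one step: you write down the optimal $\bra C\ket$-measurable coupling $\pi_{ij}$ of the conditional densities (diagonal $\min(f_i,g_i)$, off-diagonal $r_is_j/t$ --- the marginal checks using $r_is_i=0$ pointwise are right), realize it by the vertical-strip construction in the product extension $X\times[0,1]$ exactly as in Lemma \ref{extension}, verify $\P(E_{ij}\mid\C')=\pi_{ij}$ by Fubini and the pull-out property of conditional expectation, and pull the witness back into $\U$ by saturation. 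Your lower bound (via $L_1$-contractivity of conditional expectation) is the same as the paper's in substance. What your approach buys is that it produces the ``moreover'' witness directly and uniformly for arbitrary $C$, with no limiting argument and no need for the sup-over-finite-subalgebras fact about type distances; what it costs is reliance on the representing-measure-space and product-extension machinery plus saturation, where the paper's finite-$C$ core argument stays entirely inside the abstract algebra. Both are complete proofs.
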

\begin{proof}
Replacing $C$ by $C^\#$ (which is still small)
we may assume throughout this proof that $C$ is a boolean
subalgebra of $\U$.  Since $C \subseteq C^\# \subseteq \dcl({C})$, this does not change
the types being considered nor the distance between them.  Furthermore,
it is obvious that $\bra C^\# \ket = \bra C \ket$, so the right side of the equality to
be proved is also not changed by this move.

We begin the proof by noting that
\[
\| \P(u|\bra C \ket) - \P(v|\bra C \ket) \|_1 \leq \mu(u \triangle v)
\]
for any $u,v \in U$.  Indeed, linearity of the conditional expectation yields
\[
\| \P(u|\bra C \ket) - \P(v|\bra C \ket) \|_1 = \| \P(u \!\setminus\! v|\bra C \ket) - \P(v\!\setminus\! u|\bra C \ket) \|_1 \leq \mu(u \triangle v)
\]
where the last step uses the triangle inequality for the $L_1$-norm and the fact that $\| \P(w|C) \|_1 = \mu(w)$ for any $w \in U$.
By Lemma \ref{types over C - equality}, $\| \P(u|\bra C \ket) - \P(v|\bra C \ket) \|_1$ only depends on $\tp(u/C)$ and
$\tp(v/C)$.  Fixing $i \in \{ 1,\dots,n \}$ and letting $u,v$ range over realizations of $\tp(a_i/C),\tp(b_i/C)$
respectively, and taking the infimums, we obtain
\[
\| \P(a_i|\bra C \ket) - \P(b_i|\bra C \ket) \|_1 \leq d(\tp(a_i/C),\tp(b_i/C)) \text{.}
\]
Taking the maximum over $i$ yields
\[
\max_{1\leq i \leq n}\|\P(a_i|\bra C \ket)-\P(b_i|\bra C \ket)\|_1 \leq d(\tp(a/C),\tp(b/C)) \text{.}
\]
Therefore it remains to show
\[
d(\tp(a/C),\tp(b/C)) \leq \max_{1\leq i \leq n}\|\P(a_i|\bra C \ket)-\P(b_i|\bra C \ket)\|_1 \text{,}
\]
given that $a,b \in U^n$ are partitions of $1$.  We do this in the remainder of the proof.

We first prove this inequality when $C = \emptyset$, noting that the right side of the last inequality
is equal to $\max_{1\leq i \leq n} |\mu(a_i)-\mu(b_i)|$ in this situation.

Let $I = \{ i \mid \mu(a_i) \geq \mu(b_i) \}$ and 
$ J = \{ i \mid \mu(a_i) < \mu(b_i) \}$.  Note that $I \neq \emptyset$;
also, we may assume $J \neq \emptyset$, since otherwise $\tp(a)=\tp(b)$ and so the
inequality to be proved is trivial.  Since $\U$ is atomless, we may choose $b'_i \leq a_i$
in $U$ satisfying $\mu(b'_i) = \mu(b_i)$, for each $i \in I$.
For each such $i$, let $u_i = a_i \setminus b'_i$, and set
$u = \cup\{ u_i \mid i \in I \}$.  Note that $\mu(u) = \sum \{ \mu(a_i) - \mu(b_i) \mid i \in I \}$.
Because $(a_1,\dots,a_n)$ and $(b_1,\dots,b_n)$ are partitions of $1$ in $\U$, it follows that
$\mu(u) = \sum \{ \mu(b_j) - \mu(a_j) \mid j \in J\}$.  Hence we may partition $u$ into
$\{u_j \mid j \in J \} \subseteq U$ such that $\mu(u_j) = \mu(b_j) - \mu(a_j)$
for all $j \in J$.  Finally, for $j \in J$ we set $b'_j = a_j \cup u_j$ and note that 
$(b'_1,\dots,b'_n)$ is a measurable partition of $1$ in $\U$ satisfying $\mu(b'_j) = \mu(b_j)$
for all $j \in J$; in other words, $(b'_1,\dots,b'_n)$ realizes the same type as
$(b_1,\dots,b_n)$.  Moreover, for all $j \in J$ we have
\[
\mu(a_j \triangle b'_j) = \mu(u_j) = | \mu(a_j) - \mu(b'_j) | =  | \mu(a_j) - \mu(b_j) |
\]
which justifies the desired inequality.

Now assume that $C$ is a finite boolean subalgebra of $\U$ and
let the atoms of $C$ be $c_1,\dots,c_p$.   
For each $i\leq n$ and $j\leq p$, let $a_{ij}=a_i\cap c_j$ and 
$b_{ij}=b_i\cap c_j$.  We argue as in the previous
paragraph within each $c_j$.  This yields $b'_{ij}$ for
$i\leq n, j\leq p$ with the following properties: (a) $\mu(b'_{ij})=\mu(b_{ij})$
for all $i,j$; (b) for each $j\leq p$, the tuple $(b'_{1j},\dots,b'_{nj})$ is
a partition of $c_j$; and (c) $\mu(a_{ij}\triangle b'_{ij})= |\mu(a_{ij})-\mu(b_{ij})|$
for all $i,j$.  For each $i\leq n$, let 
$b'_i=\cup_{j\leq m}b'_{ij}$. Then $\tp(b'_1,\dots,b'_n/C)=
\tp(b_1,\dots,b_n/C)$  and 
\[
\mu(a_{i}\triangle b'_{i})=
 \sum_{j\leq m}|\mu(a_{ij})-\mu(b_{ij})|=\sum_{j\leq m}|\mu(a_{i}\cap c_j)-
\mu(b_{i}\cap c_j)|=\|\P(a_i|\bra C \ket)-\P(b_i|\bra C \ket)\|_1 \text{.}
\]

Finally, consider a general algebra $C$. For each $k \geq 1$, use Lemma \ref{approximating P(a|C)}
applied to $C$ and $a_1,\dots,a_n,b_1,\dots,b_n$ to obtain a finite subalgebra $C_k \subseteq C$ such that for
all closed subalgebras $D \subseteq C$ that contain $C_k$ we have
$\| \P(u|C) - \P(u|D) \|_1 \leq 1/k$ for all $u=a_i$ and $u=b_i$ with $1 \leq i \leq n$.  
We may assume $C_k \subseteq C_{k+1}$ for all $k \geq 1$.

Further, we may use properties of type spaces to enlarge each $C_k$ to ensure
additionally for $k \geq 1$ that 
\[ 
|d(\tp(a/C),\tp(b/C)) - d(\tp(a/C_k),\tp(b/C_k))| \leq 1/k \text{.}
\]
Indeed, note that if $E \subseteq D \subseteq C$, then 
$d(\tp(a/E),\tp(b/E)) \leq d(\tp(a/D),\tp(b/D)) \leq d(\tp(a/C),\tp(b/C))$.
Moreover, $d(\tp(a/C),\tp(b/C))$ is the supremum of $d(\tp(a/D),\tp(b/D))$
as $D$ varies over finite subsets of $C$.  (Otherwise there would exist $r < d(\tp(a/C),\tp(b/C))$
such that $d(\tp(a/D),\tp(b/D)) \leq r$ for all finite $D \subseteq C$.  Thus the following set
of conditions would be finitely satisfiable in $\U$:
\[
\Sigma := \{ \vphi(x) = 0 \mid \vphi(x) \in \tp(a/C) \} \cup \{ \psi(y) = 0 \mid \psi(x) \in \tp(b/C) \}
\cup \{ \max_i \ d(x_i,y_i) \leq r \} \text{.}
\]
Since $C$ is a small set, we may choose $x=a'$ and $y=b'$ that realize $\Sigma$ in $\U$.  But then
we would have $\tp(a'/C) = \tp(a/C), \tp(b'/C) = \tp(b/C)$, and $d(a',b') \leq r < d(\tp(a/C),\tp(b/C))$,
which is impossible.)

Putting these two arguments together, we have an increasing family $(C_k \mid k \geq 1)$ of
finite subalgebras of $C$ such that for all $k \geq 1$
\[
\| \P(u|C) - \P(u|C_k) \|_1 \leq 1/k
\]
for all $u=a_i$ and $u=b_i$ with $1 \leq i \leq n$, and
\[
|d(\tp(a/C),\tp(b/C)) - d(\tp(a/C_k),\tp(b/C_k))| \leq 1/k \text{.}
\]
From what is proved earlier for $n$-types over finite algebras, for all $k \geq 1$ we have
\[
d(\tp(a/C_k),\tp(b/C_k) = \max_{1 \leq i \leq n}\|\P(a_i|\bra C_k \ket)-\P(b_i|\bra C_k \ket)\|_1 \text{.}
\]
Taking limits as $k \to \infty$ yields
\[
d(\tp(a/C),\tp(b/C))= \max_{1 \leq i \leq n}\|\P(a_i|\bra C \ket)-\P(b_i|\bra C \ket)\|_1 \text{,}
\]
completing the proof.
\end{proof}

\begin{coro}
\label{distance between 1-types over C}
Let $C\subseteq U$ be small and let $a,b$ be elements of $U$.  Then
\[
d(\tp(a/C),\tp(b/C)) = \|\P(a|\bra C \ket)-\P(b|\bra C \ket)\|_1 \text{.}
\]
\end{coro}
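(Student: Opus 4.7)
The plan is to reduce this to the $n=2$ case of Theorem \ref{distance} by considering, for a single element $a \in U$, the partition $(a, a^c)$ of $1$ associated to $a$ in the sense of Notation \ref{tuples generate partitions}. The complement operation is definable in $\LLpr$, so $a^c \in \dcl(a)$, and hence $\tp(a/C)$ and $\tp((a,a^c)/C)$ determine each other: a pair $(u,v)$ realizes $\tp((a,a^c)/C)$ if and only if $v = u^c$ and $u$ realizes $\tp(a/C)$.

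The first step is to show that passing from $1$-types to $2$-types of partitions of $1$ preserves the induced metric. Given realizations $a' \models \tp(a/C)$ and $b' \models \tp(b/C)$, the pairs $(a',a'^{c}), (b',b'^{c})$ are partitions of $1$ realizing $\tp((a,a^c)/C), \tp((b,b^c)/C)$ respectively. Since $d(a'^c,b'^c) = \mu(a'^c \triangle b'^c) = \mu(a' \triangle b') = d(a',b')$, we have
\[
\max(d(a',b'), d(a'^{c},b'^{c})) = d(a',b').
\]
Taking infimums over all such realizations, and noting that every realization of $\tp((a,a^c)/C)$ is necessarily of the form $(a', a'^c)$ with $a' \models \tp(a/C)$, we obtain
\[
d(\tp(a/C), \tp(b/C)) = d(\tp((a,a^c)/C), \tp((b,b^c)/C)).
\]

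Next, I would apply Theorem \ref{distance} with $n = 2$ to the partitions $(a,a^c)$ and $(b,b^c)$, giving
\[
d(\tp((a,a^c)/C), \tp((b,b^c)/C)) = \max \bigl( \|\P(a|\bra C \ket) - \P(b|\bra C \ket)\|_1,\ \|\P(a^c|\bra C \ket) - \P(b^c|\bra C \ket)\|_1 \bigr).
\]
By linearity of the conditional expectation, $\P(a^c|\bra C \ket) = 1 - \P(a|\bra C \ket)$ and likewise for $b$, so the two $L_1$-norms on the right are equal. Combining with the previous paragraph yields the desired formula. There is no real obstacle; the corollary is essentially a direct unpacking of the $n=2$ case of Theorem \ref{distance} together with the observation that a single element and its complement carry the same metric information.
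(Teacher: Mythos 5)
Your proof is correct and follows essentially the same route as the paper, which also applies Theorem \ref{distance} to the partitions $(a,a^c)$ and $(b,b^c)$ and uses the fact that complementation changes neither side of the identity. You have merely spelled out the (correct) intermediate step that the passage from $\tp(a/C)$ to $\tp((a,a^c)/C)$ preserves the induced metric, which the paper leaves implicit.
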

\begin{proof}
Apply the preceding Lemma to $(a,a^c)$ and $(b,b^c)$, and use the fact that each
side of the equation to be proved is unchanged if we replace $a,b$ by $a^c,b^c$
\end{proof}

The definition of the metric ${d}_P$ on $U^n$ that is given in Remark \ref{super-strong homogeneity of separable model of APA} says for $a,b \in U^n$ 
\[
{d}_P(a,b) := \frac12 \sum_s d(a^s,b^s) \text{,}
\]
where $s$ ranges over $\{-1,+1\}$.  Following the established pattern, we can define ${d}_P$ on $S_n(C)$ by
\[
{d}_P(p,q) := \inf \{ {d}_P(a,b) \mid a \models p \mbox{ and } b \models q \} \text{.}
\]
From Theorem \ref{distance} we get immediately

\begin{coro}
\label{corollary for distance {d}_P}
Let $C\subseteq U$
be small and let $a=(a_1,\dots,a_n)$ and 
$b=(b_1,\dots,b_n)$ be in $U^n$.  Then
\[
{d}_P(\tp(a/C),\tp(b/C))= \frac12 \inf \{ \sum_s \|\P(a^s|\bra C \ket)-\P(b^s|\bra C \ket)\|_1 \mid a \models p \mbox{ and } b \models q \} \text{.}
\]
where $\| \cdot \|_1$ is the $L_1$-norm.
\end{coro}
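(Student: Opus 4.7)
The plan is to reduce to Theorem \ref{distance} by replacing the $n$-tuples $a,b$ with their associated partitions of $1$, namely $\bar a = (a^s \mid s \in \{-1,+1\}^n)$ and $\bar b = (b^s \mid s \in \{-1,+1\}^n)$, each of length $2^n$. As explained in Notation \ref{tuples generate partitions}, the correspondences $a \mapsto \bar a$ and $b \mapsto \bar b$ are given by simple boolean terms in both directions, so $\tp(a/C)$ and $\tp(\bar a/C)$ are interdefinable, and likewise for $b$. Moreover, the definition of ${d}_P$ in Remark \ref{super-strong homogeneity of separable model of APA} gives ${d}_P(a',b') = \tfrac12 \sum_s d((a')^s,(b')^s)$ for any $a',b' \in U^n$, so ${d}_P(p,q)$ is the infimum of this sum over realizations $a' \models p$, $b' \models q$.

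For the lower bound, I would use the contraction inequality $\|\P(u|\bra C \ket) - \P(v|\bra C \ket)\|_1 \leq \mu(u \triangle v) = d(u,v)$, which was established in the opening of the proof of Theorem \ref{distance} from the triangle inequality for $L_1$. Combined with the fact that by Lemma \ref{types over C - equality}(3) the quantity $\|\P(a^s|\bra C \ket) - \P(b^s|\bra C \ket)\|_1$ depends only on $p$ and $q$, we get that for every $a' \models p$ and $b' \models q$
\[
\sum_s \|\P(a^s|\bra C \ket) - \P(b^s|\bra C \ket)\|_1 = \sum_s \|\P((a')^s|\bra C \ket) - \P((b')^s|\bra C \ket)\|_1 \leq \sum_s d((a')^s,(b')^s) \text{,}
\]
so dividing by $2$ and taking the infimum over realizations gives the $\leq$ direction.

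For the upper bound, I would apply the second (``moreover'') part of Theorem \ref{distance} to the partitions $\bar a$ and $\bar b$: this yields a partition $\bar b'$ of $1$ (of length $2^n$) with $\tp(\bar b'/C) = \tp(\bar b/C)$ and $d(a^s,(\bar b')_s) = \|\P(a^s|\bra C \ket) - \P(b^s|\bra C \ket)\|_1$ for every $s$. From $\bar b'$ recover the $n$-tuple $b' := (b'_1,\dots,b'_n)$ by the boolean formula $b'_i := \bigcup\{(\bar b')_s \mid s_i = +1\}$; by the interdefinability recalled above, $(b')^s = (\bar b')_s$ for all $s$ and $\tp(b'/C) = \tp(b/C)$. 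Then
\[
{d}_P(a,b') = \tfrac12 \sum_s d(a^s,(b')^s) = \tfrac12 \sum_s \|\P(a^s|\bra C \ket) - \P(b^s|\bra C \ket)\|_1 \text{,}
\]
which gives the $\geq$ direction for ${d}_P(p,q)$, completing the proof.

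The main conceptual point is the bookkeeping between the $n$-tuple view of $a,b$ and the $2^n$-partition view of $\bar a,\bar b$, which is needed to invoke Theorem \ref{distance} (that theorem requires its inputs to be partitions of $1$). There is no serious obstacle, since Theorem \ref{distance} already provides the crucial \emph{simultaneous} attainment of the optimal coordinate-wise distances; this is exactly what permits passing from the max-metric identity in Theorem \ref{distance} to the additive identity for ${d}_P$.
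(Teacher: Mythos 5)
Your proposal is correct and takes essentially the same route as the paper, whose proof is simply the observation that the ``Moreover'' clause of Theorem \ref{distance} provides \emph{simultaneous} attainment of the coordinatewise optimal distances. Your write-up just makes explicit the two ingredients the paper leaves implicit: the contraction inequality $\|\P(u|\bra C\ket)-\P(v|\bra C\ket)\|_1\leq d(u,v)$ for the lower bound, and the passage back and forth between $n$-tuples and their associated $2^n$-partitions (which works because the coordinates of a partition of $1$ are recovered exactly by the boolean terms $x^s$).
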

\begin{proof}
This follows from the ``Moreover'' statement in Theorem \ref{distance}.
\end{proof}

Moving beyond types of partitions of $1$, we now discuss the induced metric on the full type space $S_n(C)$ for $APA$.  
For $r \geq 1$, let $S_r^*(C)$ denote the space of $r$-types for
$APA$ that are realized by partitions $(a_1,\dots,a_r)$ of $1$ in
the $\kappa$-universal domain $\U$ for $APA$, where $C$ is a small
subset of $U$. Theorem \ref{distance} gives an explicit formula for
the induced metric on $S_r^*(C)$.  Since $S_r^*(C)$ is a proper, metrically closed
subset of the full space of $r$-types $S_r(C)$, this does not immediately
characterize the metric on all of $S_r(C)$.  However, by
looking at types for $APA$ in the right way, and taking $r=2^n$, we can use this lemma
to characterize the induced metric on $S_n(C)$ up to equivalence of 
metrics, which is enough for most purposes.

To accomplish this, consider the map $\Pi_n \colon S_n(C) \to S^*_{2^n}(C)$
on types that is induced by mapping the type of an arbitrary $n$-tuple $(a_1,\dots,a_n)$
to the type of its associated partition $(a^s \mid s \in \{ -1,+1 \})$ 
(as discussed in \ref{tuples generate partitions}).
Since $APA$ admits quantifier elimination,
$\Pi_n$ is a bijection from $S_n(C)$ onto
$S^*_{2^n}(C)$.  The discussion in Remark \ref{special formulas over Pr}
shows that $\Pi_n$ is also a homeomorphism for the (logic) topologies.
In what follows, we often drop the subscript $n$ when doing so will not cause
confusion.

\begin{lema}
\label{Lipschitz estimates}
Let $C\subseteq U$ be small and let $p,q \in S_n(C)$.  Then
$$  \big( 2^{-n+1}\big) \cdot {d}_n(p,q) \leq {d}_{2^n}(\Pi_n(p),\Pi_n(q)) \leq n \cdot {d}_n(p,q) $$
where ${d}_n,{d}_{2^n}$ denote the induced metrics on the type spaces $S_n(C),S_{2^n}(C)$ respectively
(usually denoted simply by $d$, but here given a subscript to indicate the type space
on which the metric is defined).
\end{lema}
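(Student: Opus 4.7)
The plan is to prove both inequalities directly from the pointwise relationships between a tuple $a = (a_1,\dots,a_n) \in U^n$ and its associated partition $(a^s \mid s \in \{-1,+1\}^n)$ of $1$, exploiting that each $a_i$ is a finite union of certain $a^s$ and each $a^s$ is an $n$-fold intersection of the $a_i^{\pm 1}$.

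First, I would set up the computation at the level of realizations. Fix realizations $a \models p$ and $b \models q$ in $\U$. For the upper bound, note that for any $s \in \{-1,+1\}^n$,
\[
a^s \triangle b^s = \bigcap_i a_i^{k_i} \triangle \bigcap_i b_i^{k_i} \subseteq \bigcup_{i=1}^n (a_i^{k_i} \triangle b_i^{k_i}) = \bigcup_{i=1}^n (a_i \triangle b_i),
\]
so $\mu(a^s \triangle b^s) \leq \sum_{i=1}^n \mu(a_i \triangle b_i) \leq n \cdot \max_i d(a_i,b_i)$. Taking $\max_s$ on the left and then infimum over $a \models p, b \models q$ on both sides yields $d_{2^n}(\Pi_n(p),\Pi_n(q)) \leq n \cdot d_n(p,q)$.

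For the lower bound, I would use the dual identity $a_i = \bigcup_{s : k_i = +1} a^s$, a union over exactly $2^{n-1}$ indices $s$. This gives
\[
a_i \triangle b_i \subseteq \bigcup_{s : k_i = +1} (a^s \triangle b^s),
\]
so $\mu(a_i \triangle b_i) \leq 2^{n-1} \max_s \mu(a^s \triangle b^s)$, hence $\max_i d(a_i,b_i) \leq 2^{n-1} \max_s d(a^s,b^s)$. Taking infimum over realizations $a \models p, b \models q$ (the same set of pairs that is used to define $d_{2^n}(\Pi_n(p),\Pi_n(q))$, since $\Pi_n$ is just coordinate reorganization) gives $d_n(p,q) \leq 2^{n-1}\, d_{2^n}(\Pi_n(p),\Pi_n(q))$, which is the inequality $2^{-n+1} d_n(p,q) \leq d_{2^n}(\Pi_n(p),\Pi_n(q))$.

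There is no real obstacle here: the key point, which needs only to be observed, is that the infimums defining $d_n(p,q)$ and $d_{2^n}(\Pi_n(p),\Pi_n(q))$ are taken over the \emph{same} pairs of realizations (parametrized by $a \models p$ and $b \models q$), because the partition associated to a tuple is determined by the tuple via boolean terms; thus the pointwise inequalities between $\max_i d(a_i,b_i)$ and $\max_s d(a^s,b^s)$ pass immediately to the induced metrics on type spaces.
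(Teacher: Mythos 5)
Your proof is correct and is exactly the ``easy calculation based on the description of the bijection between $n$-tuples and their associated partitions'' that the paper's proof leaves to the reader: the containments $a^s \triangle b^s \subseteq \bigcup_i (a_i \triangle b_i)$ and $a_i \triangle b_i \subseteq \bigcup_{s:\,k_i=+1}(a^s \triangle b^s)$ give the constants $n$ and $2^{n-1}$, and the infimums match because the two tuples are uniformly interdefinable by boolean terms (as noted in Notation \ref{tuples generate partitions}). Nothing is missing.
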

\begin{proof}
This uses an easy calculation based on the description of the bijection between
$n$-tuples $(a_1,\dots,a_n)$ and partitions $(a^s \mid s \in \{-1,+1\})$ that
is given in \ref{tuples generate partitions}.
\end{proof}
Thus $\Pi_n$ is a bi-Lipschitz homeomorphism from $S_n(C)$ onto
$S_{2^n}^*(C)$ with respect to the two induced metrics. 
Since an explicit formula for the induced metric on
$S_{2^n}^*(C)$ is given by Theorem \ref{distance}, this gives us
considerable information about the induced metric topology on all of $S_n(C)$.

Note that this observation strengthens Lemma \ref{types over C - equality}.

We next prove that the theory $APA$ is stable; indeed, we  simply count
types, and show that $APA$ is $\omega$-stable:

\begin{prop}[Prop.\ 4.4, \cite{BY1}]
\label{omega stable}
The theory $APA$ is $\omega$-stable.
\end{prop}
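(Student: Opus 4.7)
The plan is to prove $\omega$-stability by showing that whenever $C \subseteq U$ is countable, the metric type space $S_1(C)$ is separable. Since $APA$ is complete with a separable language, this is exactly the density-character formulation of $\omega$-stability appropriate to continuous logic.

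First, I would replace $C$ by the (countable) boolean subalgebra $C^\#$ it generates; this does not change $\bra C \ket$ nor any type over $C$, since $C \subseteq C^\# \subseteq \dcl(C)$. The key input is Corollary \ref{distance between 1-types over C}, which identifies the induced metric on $S_1(C)$ with the $L_1$-distance between conditional expectations:
\[
d(\tp(a/C),\tp(b/C)) = \|\P(a|\bra C \ket)-\P(b|\bra C \ket)\|_1 \text{.}
\]
Combined with Remark \ref{1-types as additive functionals}, this shows that the map $\tp(a/C) \mapsto \P(a|\bra C \ket)$ is a well-defined isometric embedding of $S_1(C)$ into the $L_1$-space of $\bra C \ket$-measurable $[0,1]$-valued functions. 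Moreover, every such function arises from a type, by the discussion of additive functionals in Remark \ref{1-types as additive functionals}.

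Thus it suffices to show that the collection of $\bra C \ket$-measurable $[0,1]$-valued functions is $L_1$-separable. For this I would argue that $\bra C \ket$ is separable as a metric space: by \ref{widehat C notation}(c), $\bra C \ket$ is the $\widehat d$-closure of the countable boolean algebra $C^\#$, so it is itself separable under the measure-distance $\widehat d$. From the standard measure-theoretic fact that the $L_p$ spaces over a separable measure algebra are separable (one can approximate any element of $L_1(\bra C \ket;[0,1])$ by finite rational linear combinations of characteristic functions coming from $C^\#$, in analogy with Lemma \ref{approximating P(a|C)}), the subset of $[0,1]$-valued functions is $L_1$-separable. Transferring back through the isometry, $S_1(C)$ is separable, which gives $\omega$-stability.

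The main obstacle is mostly notational rather than mathematical: the argument is already essentially contained in the combination of Corollary \ref{distance between 1-types over C} and Remark \ref{1-types as additive functionals}, so the only thing that genuinely needs to be verified is that separability of $(\bra C \ket, \widehat d)$ implies $L_1$-separability of $L_1(\bra C \ket;[0,1])$. This is routine, but it is the one step where one must leave the purely model-theoretic setting and invoke a standard fact of real analysis (or, alternatively, a direct density argument using step functions with coefficients in $\mathbb{Q} \cap [0,1]$ supported on elements of $C^\#$).
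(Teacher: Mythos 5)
Your proposal is correct and follows essentially the same route as the paper: the paper's proof also identifies $1$-types over a countable $C$ with conditional probabilities, uses Corollary \ref{distance between 1-types over C} to see that the induced metric is the $L_1$-distance, and exhibits a countable dense set of types given by $\C$-measurable simple functions with coefficients in $\Q\cap[0,1]$, with density supplied by Lemma \ref{approximating P(a|C)}. The only difference is presentational (you phrase it as separability of an $L_1$-space via an isometric embedding, while the paper directly names the countable dense family of types), so there is nothing substantive to add.
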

\begin{proof}  
We may take\, $\U$ to be the probability algebra of an atomless
probability space $(X,\A,\nu)$.
Let $C\subseteq U$ be countable.  For each $a \in C$ chose a set $A_a \in \A$
satisfying $a = [A_a]_\nu$ and let $\C$ be the boolean subalgebra of $\A$ generated by 
$\{ A_a \mid a \in C \}$.  Then $\C$ is countable and $\widehat \C = C^\#$.

Let $\SS(C)$ be the set of $\C$-measurable simple functions with
coefficients in $\Q \cap [0,1]$, and let $$\F=\{\tp(a/C)\mid \P(a|\C )\in
\SS(C)\}.$$ Then $\F$ is a countable set of types. By Lemmas \ref{approximating P(a|C)} and 
\ref{distance between 1-types over C}, $\F$ is a metrically dense subset of the space of $1$-types
over $C$.
\end{proof}

\section{Maharam's theorem}
\label{maharam}

Maharam's Theorem is a structure theorem for probability algebras.  It says that a model 
$\M = (\B,\mu,d)$ of $Pr$ is determined up to isomorphism by the information $\Phi^\M$ 
given in Section 4 about the atomic part of $\B$ together with a countable set $\K^\M$ of 
infinite cardinal numbers and a function $\Psi \colon \K^\M \to (0,1]$ whose sum equals the 
$\mu$-measure of the atomless part of $\B$.  Note that $1$ is atomic in $\B$ if and only if 
$\K^\M = \emptyset$.  In general, we know that the atomic part of $\M$ is determined up to 
isomorphism by $\Phi^\M$, as is the measure of the atomless part of $\B$.  
(See Corollary \ref{categoricity for models of Pr}.)  Therefore we may focus our attention on the atomless part of $\M$.
When it is nonzero, it can be considered as a model of $APA$ by rescaling the measure and 
the metric.  That is, to prove Maharam's Theorem, we may focus on models of $APA$.

In this section we give a full discussion of Maharam's Theorem for models of $APA$, 
to make clear the ways in which its proof resonates with ideas from model theory.  

\begin{defi}
Let $\M = (\B,\mu,d) \models APA$ and $0 \neq b \in \B$.  Define $\B\rharp b$ to be the ideal of all $a \leq b$ in $\B$.
\end{defi}
\index{$\B\rharp b$}

\begin{note}
Since we require $b \neq 0$ in the preceding definition, we may regard $\B\rharp b$ as a 
boolean algebra; the interpretations of $\cap$ and $\cup$ as well as of $0$ are inherited from $\B$, 
while $1$ is interpreted as $b$ and the complement operation is taken to be $a \mapsto a^c \cap b$.  
Note that with this understanding of the structure of $\B\rharp b$, the map 
$a \mapsto a \cap b$ is a boolean morphism from $\B$ onto $\B \rharp b$.  We equip $\B\rharp b$ 
with the measure and distance obtained from $\M$ by restriction to $\B \rharp b$; for convenience we 
continue to denote these restrictions by $\mu$ and $d$.  

It is clear that $(\B \rharp b,\mu,d)$ is a \emph{measured algebra}, and that it becomes a model of $APA$ 
if we rescale $\mu$ and $d$ appropriately (namely, multiply by $1/\mu(b)$).  We systematically use this point of view below.
\end{note}

\begin{nota}
Unless otherwise specified, in the rest of this section we take $\M = (\B,\mu,d)$ to be a model of $APA$.
When we refer to the \emph{density} of a subset of $\B$, we mean the metric density.
\end{nota}

A key quantity for the arguments behind Maharam's Theorem is the density of $\B \rharp b$; for
brevity we  also refer to this as the \emph{density of $b$}.  
When $b$ is atomless, this density is an infinite cardinal number.

\begin{defi}
\label{homogeneous}
For $b \in \B$, we say \emph{$\B \rharp b$ is homogeneous} and (alternatively) \emph{$b$ is homogeneous}
if $b \neq 0$ and $\B\rharp a$ has the same density 
as $\B\rharp b$ for every $0 \neq a \leq b$.\index{homogeneous!element}
\end{defi}

We are now in position to define the \emph{Maharam invariants}\index{Maharam invariants}
 $(\K^\M,\Psi^\M)$ for a model $\M = (\B,\mu,d)$ of $APA$.

\begin{defi}
\label{maharam invariants}
Define $\K^\M$ to be the set of all infinite cardinal numbers $\kappa$  for which there exists $b \in \B$ such that $b$ is 
homogeneous and the density of $\B \rharp b$ is $\kappa$.  For each $\kappa \in \K^\M$ define
\[
\Psi^\M(\kappa) := \sup \{ \mu(b) \mid b \mbox{ is homogeneous and the density of } \B\rharp b = \kappa \} \text{.}
\]
\index{$\K^\M$, $\Psi^\M$}
We call $b \in \B$ \emph{maximal homogeneous} if $b$ is homogeneous and $\mu(b) = \Psi^\M(\kappa)$, where
$\kappa = \mbox{ density of } \B\rharp b$.  
We call $\B$ \emph{homogeneous} if $1$ is homogeneous in $\B$.\index{homogeneous!algebra}

We say $\M$ \emph{realizes} its Maharam invariants if there exists a family $(b_\kappa \mid \kappa \in \K^\M)$ of
pairwise disjoint maximal homogeneous elements of $\B$ such that $b_\kappa$ has density $\kappa$ for every
$\kappa \in \K^\M$ and $\sum \{ \mu(b_\kappa) \mid \kappa \in \K^\M \}$ exists and equals $1$.
\end{defi}

We  show below that every model $\M$ of $APA$ realizes its Maharam invariants.  In particular, this means that
$\K^\M$ is nonempty and countable.

\begin{note}
\label{density from Maharam invariants}
If $\M \models APA$ realizes its  Maharam invariants, then the density of $\M$ is the 
supremum of $\K^\M$ (taken in the cardinal numbers).
\end{note}

\begin{exam}
\label{homogeneous example}
Obviously the unique separable model $\M$ is homogeneous of density $\aleph_0$.

Let $(X,\B,\mu)$ be any countably generated, atomless probability space, and let $\kappa$ be any 
uncountable cardinal number.  Let $\A$ be the probability algebra of the product space $X^\kappa$ with the product
probability measure obtained by taking $\mu$ as the measure on each factor.  
Then $\A$ is homogeneous and has density $\kappa$.
\end{exam}
\begin{proof}
Let $\S$ be a countable dense subset of $\B$.  For each $\alpha < \kappa$ let $\pi_\alpha$ be the coordinate
projection from $X^\kappa$ onto $X$.  The $\sigma$-algebra of product-measurable subsets of $X^\kappa$
is generated by the sets of the form $\pi^{-1}_\alpha(Q)$, where $\alpha < \kappa$ and $Q \in \S$.  Therefore
$\A$ has density at most $\kappa$.  Also, if $Q \in \B$ has $\mu(Q) = r \in (0,1)$ and $\alpha,\beta$ are distinct, then
$d(\pi^{-1}_\alpha(Q),\pi^{-1}_\beta(Q)) = 2r(1-r)>0$, so $\A$ has density at least $\kappa$.

If $V$ is any product-measurable subset of $X^\kappa$, then $V$ only depends on countably many ordinals
$\alpha < \kappa$, in the sense that there is a countable set $S$ of such ordinals such that for any $u,v \colon \kappa \to X$,
if $u \in V$ and $u(\alpha) = v(\alpha)$ for all $\alpha \in S$, then also $v \in V$.  
When $V,S$ satisfy this condition, we say \emph{$V$ depends only on the coordinates in $S$}.
(Note that the collection
of product measurable $V \subseteq X^\kappa$ that only depend on countably many $\alpha < \kappa$ is a $\sigma$-algebra, and it contains all sets of the form $\pi^{-1}_\alpha(Q)$, where $\alpha < \kappa$ and $Q \in \S$.)

A variant of the argument in the first paragraph shows that
the restriction of $\A$ to the event determined by any product-measurable set $V$ also has density equal to $\kappa$.  (Just work on the coordinates in $\kappa \setminus S$, where $S$ is countable and $V$ depends only on the coordinates in $S$.) Therefore $\A$ is homogeneous of density $\kappa$.
\end{proof}

\begin{rema}
\label{realizing every maharam invariant}
It is now clear that for every nonempty countable set $\K$ of infinite cardinal numbers 
and every function $\Psi \colon \K \to (0,1]$
whose sum equals $1$, we can construct an atomless probability space 
$(X,\B,\mu)$ whose probability algebra $\M$ realizes its Maharam invariants and such that 
$\K^\M = \K$ and $\Psi^\M = \Psi$.  For each $\kappa \in \K$, let $(X_\kappa,\B_\kappa,\mu_\kappa)$ be a probability space whose probability algebra is homogeneous of density $\kappa$; take the sets $X_\kappa$ to be 
pairwise disjoint.  For each $\kappa \in \K$, let $\mu'_\kappa$ be $\Psi(\kappa)\mu$.  Then take $X$ to be the union of
$(X_\kappa \mid \kappa \in \K)$ and let $\B$ be the $\sigma$-algebra of subsets of $X$ generated by $\cup \{ \B_\kappa \mid \kappa \in \K \}$.
Note that each $Q \in \B$ is equal to $\cup \{ Q \cap X_\kappa \mid \kappa \in \K \}$, and set 
$\mu(Q) := \sum \{ \mu'_\kappa(Q \cap X_\kappa) \mid \kappa \in \K \}$.  Then it is clear that $(X,\B,\mu)$ is an atomless
probability space and that its probability algebra $\M$ satisfies $(\K^\M,\Psi^\M) = (\K,\M)$.
\end{rema}

Next we state a lemma giving properties of homogeneous elements.

\begin{lema}
\label{properties of homogeneous elements}
Let $\M = (\B,\mu,d) \models APA$.
\begin{itemize}
\item[(a)]  If $b_1,b_2$ are homogeneous elements of $\B$, and if $b_1,b_2$ have different
densities, then $b_1 \cap b_2 = 0$.
\item[(b)]  If $b_n$ is a homogeneous element of $\B$ for $n\geq 1$, and the density of $b_n$ is $\kappa$ for all $n$, then
$b = \cup \{ b_n \mid n \geq 1 \}$ is also homogeneous in $\B$ and $b$ has density $\kappa$.
\item[(c)]  If there exists a homogeneous element $b \in \B$ of density $\kappa$, then there exists a
 maximal homogeneous element $b'$ such that $b'$ also has density $\kappa$.
 \item[(d)]  If $b'$ is a maximal homogeneous element of density $\kappa$, then every homogeneous
 element $b$ of density $\kappa$ satisfies $b \leq b'$.
\end{itemize}
\end{lema}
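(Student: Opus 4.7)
My plan is to prove the four parts in the order (a), (b), (d), (c), since (d) is a quick consequence of (b) and the maximum property, and (c) then uses (b) to promote a supremum-attaining sequence into an actual maximum.

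For part (a), the approach is by contradiction: if $c := b_1 \cap b_2 \neq 0$, then $c$ is a nonzero subelement of both $b_1$ and $b_2$. Homogeneity of $b_1$ forces the density of $\B\rharp c$ to equal that of $\B\rharp b_1$, and similarly homogeneity of $b_2$ forces it to equal the density of $\B\rharp b_2$, contradicting the assumption that these densities differ. This is immediate from the definition of homogeneous and should take only a couple of lines.

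For part (b), I would first reduce to the pairwise disjoint case by replacing $b_n$ with $b_n \setminus (b_1 \cup \dots \cup b_{n-1})$; each such piece is either $0$ or, being $\leq b_n$ and nonzero, automatically homogeneous of density $\kappa$ by the homogeneity of $b_n$. Then I would argue density of $b$ is at most $\kappa$ by taking a $\kappa$-dense subset of each $\B\rharp b_n$ and observing that the countable union of their images in $\B\rharp b$, together with all countable unions of these (coded via the boolean operations and the metric closure), form a $\kappa$-dense subset (here we use $\kappa \cdot \aleph_0 = \kappa$). For the lower bound, any nonzero $a \leq b$ must meet some $b_n$ nontrivially, so $\B\rharp(a \cap b_n)$ has density $\kappa$ by homogeneity of $b_n$, and injects into $\B\rharp a$, forcing the density of $\B\rharp a$ to be at least $\kappa$. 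Since this holds for every such $a$ (including $a = b$), this gives both that $b$ has density $\kappa$ and that $b$ is homogeneous.

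For part (d), suppose $b$ is homogeneous of density $\kappa$ but $b \not\leq b'$. Then $b \setminus b'$ is a nonzero subelement of the homogeneous $b$, hence itself homogeneous of density $\kappa$. By disjointness and part (b) applied to the two-element family $\{b', b \setminus b'\}$, the element $b' \cup (b \setminus b')$ is homogeneous of density $\kappa$, but its measure strictly exceeds $\mu(b') = \Psi^\M(\kappa)$, contradicting the definition of $\Psi^\M(\kappa)$ as a supremum over such measures.

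For part (c), I would choose a sequence $(b_n)$ of homogeneous elements of density $\kappa$ with $\mu(b_n) \to \Psi^\M(\kappa)$, disjointify as in (b) to get $(c_n)$, and set $b' := \cup_n c_n = \cup_n b_n$. Part (b) gives that $b'$ is homogeneous of density $\kappa$. Since $b_n \leq b'$ for each $n$, we have $\mu(b') \geq \sup_n \mu(b_n) = \Psi^\M(\kappa)$, while the reverse inequality holds by definition of the supremum, so $\mu(b') = \Psi^\M(\kappa)$ and $b'$ is maximal homogeneous. The only real subtlety is the density upper bound in (b), which is where I expect the main bookkeeping; everything else reduces cleanly to the definitions of homogeneity and of $\Psi^\M$.
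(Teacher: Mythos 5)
The paper offers no proof of this lemma (it is explicitly ``left as exercises for the reader''), so there is nothing to compare against; your argument is correct and is surely the intended one. Each step reduces cleanly to the definitions: (a) is immediate since a nonzero common subelement would have to inherit both densities; (b) uses that a nonzero subelement of a homogeneous element is homogeneous of the same density, plus the two standard metric facts that a subspace has density character at most that of the ambient space and that $\kappa\cdot\aleph_0=\kappa$; (d) and (c) then follow from (b) and the supremum defining $\Psi^\M(\kappa)$. The only imprecision is in the density upper bound of (b): taking ``all countable unions'' of the dense sets $D_n$ would a priori give $\kappa^{\aleph_0}$ elements, but as your parenthetical remark already indicates, finite unions suffice, since every $a\le b$ is the metric limit of $\bigcup_{n\le N}(a\cap c_n)$ and metric density is all that is needed; with that reading the count stays at $\kappa$ and the proof is complete.
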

\begin{proof}
Left as exercises for the reader.
\end{proof}

\begin{prop}
\label{models realize their invariants}
Every model $\M =(\B,\mu,d)$ of $APA$ realizes its system of Maharam invariants.
\end{prop}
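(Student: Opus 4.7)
The plan is to construct, for each $\kappa \in \K^\M$, a maximal homogeneous element $b_\kappa \in \B$ of density $\kappa$ (which is available by Lemma \ref{properties of homogeneous elements}(c)) and then to verify that the resulting family $(b_\kappa \mid \kappa \in \K^\M)$ has all the properties required by Definition \ref{maharam invariants}. Pairwise disjointness will be immediate from Lemma \ref{properties of homogeneous elements}(a), since $b_\kappa$ and $b_{\kappa'}$ are homogeneous of distinct densities whenever $\kappa \neq \kappa'$. Because each $b_\kappa$ is nonzero and $\mu$ is strictly positive, the standard probability-space observation that only finitely many $\kappa$ can satisfy $\mu(b_\kappa) \geq 1/n$ will show that $\K^\M$ is countable and that $\sum_{\kappa} \mu(b_\kappa)$ converges to a value at most $1$.

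The main obstacle will be showing that this sum actually equals $1$, equivalently that $c := 1 \cap \bigl(\bigcup_{\kappa \in \K^\M} b_\kappa\bigr)^c$ equals $0$. To this end I would establish the key claim that \emph{every nonzero element of $\B$ contains a homogeneous element}. Given $0 \neq b$, note that since $\M \models APA$ the ideal $\B \rharp b$ is atomless, and hence has infinite density. Moreover density is monotone, in the sense that if $0 \neq a \leq b$ then the density of $\B \rharp a$ is at most that of $\B \rharp b$: the map $s \mapsto s \cap a$ sends any $d$-dense subset of $\B \rharp b$ to a $d$-dense subset of $\B \rharp a$, using $d(x, s \cap a) \leq d(x,s)$ when $x \leq a$. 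I would then let $\kappa$ be the minimum of the nonempty set of infinite cardinals $\{\mathrm{density}(\B \rharp a) \mid 0 \neq a \leq b\}$, which exists because the cardinals are well-ordered, and pick $b^* \leq b$ attaining this minimum. Every nonzero $a \leq b^*$ then has density at most $\kappa$ by monotonicity and at least $\kappa$ by minimality, so $b^*$ is homogeneous of density $\kappa$.

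To finish I would apply the claim to $c$: if $c \neq 0$, then there is a homogeneous $b \leq c$ of some density $\lambda$, whence $\lambda \in \K^\M$. Lemma \ref{properties of homogeneous elements}(d) then forces $b \leq b_\lambda$, but $b \leq c$ while $b_\lambda \cap c = 0$, giving $b = 0$, a contradiction. Hence $c = 0$, and $\sigma$-additivity of $\mu$ applied to the countable pairwise disjoint family $(b_\kappa)$ yields $\sum_{\kappa \in \K^\M} \mu(b_\kappa) = \mu(1) = 1$, so the family realizes the Maharam invariants of $\M$.
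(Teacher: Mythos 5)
Your proposal is correct and follows essentially the same route as the paper: choose a maximal homogeneous $b_\kappa$ for each $\kappa \in \K^\M$ using Lemma \ref{properties of homogeneous elements}(c), get disjointness from (a) and the value $\Psi^\M(\kappa)$ from (d), and then rule out a nonzero remainder by extracting from it a homogeneous element of least possible density, contradicting maximality. Your explicit verification that density is monotone under $s \mapsto s \cap a$ is a detail the paper leaves implicit, but the argument is the same.
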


\begin{proof}
We refer to the items in Lemma \ref{properties of homogeneous elements} by their letters.
Let $\K^\M$ be defined as in Definition \ref{maharam invariants}.  Note that $\K^\M$ is nonempty, since taking
$0 \neq b \in \B$ such that $b$ has the least possible density implies that $\B\rharp b$ is homogeneous.
For each $\kappa \in \K^\M$, let $b_\kappa$
be a maximal homogeneous element of $\B$ that has density $\kappa$, which exists by (c).  By (a) the elements
$(b_\kappa \mid \kappa \in \K^\M)$ are pairwise disjoint in $\B$ and by (d) we have
$\mu(b_\kappa) = \Psi^\M(\kappa)$ for every $\kappa$.  Note that this implies that $\K^\M$ is countable.

It remains to show that $\sum \{ \mu(b_\kappa) \mid \kappa \in \K^\M \} = 1$.  If not, let $b$ be the complement
in $\B$ of $\cup \{ b_\kappa \mid \kappa \in \K^\M \}$, so $b > 0$.  Let $b'$ be a nonzero element of $\B\rharp b$ of
least possible density, so $\B\rharp b'$ is homogeneous.  If $\kappa$ is the density of $b'$, then $\kappa \in \K^\M$ by
definition, and we have that $b' \cap b_\kappa = 0$.  This contradicts the maximality of $b_\kappa$.
\end{proof}

\begin{note}
\label{path to proving Maharam's Theorem}
It remains to show that a model $\M$ of $APA$ is determined up to isomorphism by its Maharam invariants.
Evidently it suffices to prove the special case that when $\M,\NN$ are homogeneous models and have the same density,
then $\M \cong \NN$.  Indeed, if $\M$ is any model of $APA$ and the family $(b_\kappa \mid \kappa \in \K^\M)$
witnesses that $\M$ realizes its Maharam invariants (as in the proof of 
Proposition \ref{models realize their invariants}),
then the isomorphism type of each $\B\rharp b_\kappa$ (as a measured algebra) would be determined by $\kappa$
and $\mu(b_\kappa) = \Psi^\M(\kappa)$.  The isomorphism type of $\M$ is easily reconstructed from this data,
since $\K$ is countable, the elements $(b_\kappa \mid \kappa \in \K^\M)$ are pairwise disjoint, and
$\sum \{ \mu(b_\kappa) \mid \kappa \in \K^\M \} = 1$.

A similar discussion applies to arbitrary models $\M = (\B,\mu,d)$ of $Pr$.  In this case the necessary decomposition
of $\B$ consists of a family $(b_i \mid i \in I)$ of elements of $\B$ and a family $(\kappa_i \mid i \in I)$ 
of cardinal numbers satisfying the
following conditions: (i) the elements $b_i$ are pairwise disjoint and nonzero; 
(ii) $\sum \{ \mu(b_i) \mid i \in I \} = 1$; (iii) if $\kappa_i$ is finite, it equals $1$ and $b_i$ is an atom in $\B$;
(iv) if $\kappa_i$ is infinite, then $b_i$ is a maximal homogeneous component of the atomless part of $\B$ of
density $\kappa_i$; and (v) if $\kappa_i,\kappa_j$ are infinite with $i \neq j$, they are distinct.  As we  show now,
the additional information needed to determine $\M$ up to isomorphism is the family 
$(\mu(b_i) \mid i \in I)$ of real numbers, which all come from $(0,1]$ and whose sum is $1$.
\end{note}

What remains to be proved is that every homogeneous model of $APA$ is determined  
up to isomorphism by its density.  It is in this proof where
model theoretic ideas come into play, as we explain next.  Indeed, the homogeneous models of $APA$ are the
same as the saturated models (i.e., the models that have density $\kappa$ and are $\kappa$-saturated, for some $\kappa$).  
To make this connection precise requires the introduction of the following notion.

\begin{defi}
\label{atomless over}
Let $(\B,\mu,d)$ be a probability algebra and let $\A$ be a $\sigma$-subalgebra of $\B$. 
A non-zero element $b \in \B$ is called an \emph{atom relative to $\A$}\index{atom!relative}
 if for all $b' \leq b$ 
in $\B$ there is $a \in \A$ such that $b' = a \cap  b$.
We say that $\B$ is \emph{atomless over $\A$}%
\index{atomless over}
 if no nonzero element $b \in \B$ is an atom relative to $\A$. 
\end{defi}

\begin{rema}
\label{characterization of atom relative to A}
Consider the setting of Definition \ref{atomless over} and let $b$ be a nonzero element of $\B$.  Then $b$ is an atom
relative to $\A$ if and only if  $\P(b' \mid \A)$ is equal to a restriction of $\P(b \mid \A)$, for every $b' \leq b$ in $\B$.
Here we are considering each $\P(\cdot \mid \A)$ as a $\mu$-ae equivalence class of
$\A$-measurable $[0,1]$-valued functions, and ``restriction'' means to multiply by the characteristic
function of an $\A$-measurable set.  (See Notation \ref{cond prob notation over a sigma-subalgebra of prob alg}.)
\end{rema}

Note that if $\B$ is a probability algebra and $0 \neq b \in \B$, then $b$ is an atom in $\B$ 
if and only if $b$ is an atom relative to the trivial subalgebra $\{ 0,1 \}$. 

\begin{lema}
\label{homogeneous implies atomless over}
Suppose $\M = (\B,\mu,d)$ is a homogeneous model of $APA$ and its density is $\kappa$, and $\NN = (\A,\mu,d)$ is
a substructure of $\M$ of density $<\kappa$.  Then $\B$ is atomless over $\A$.
\end{lema}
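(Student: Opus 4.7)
The plan is to argue by contradiction using the relationship between ``atom relative to $\A$'' and the density of $\B \rharp b$. Suppose some nonzero $b \in \B$ is an atom relative to $\A$. By Definition \ref{atomless over}, the map
\[
\varphi \colon \A \longrightarrow \B \rharp b, \qquad \varphi(a) := a \cap b,
\]
is surjective. The key observation is that $\varphi$ is $1$-Lipschitz: indeed,
\[
d(\varphi(a_1),\varphi(a_2)) = \mu\bigl((a_1 \cap b) \triangle (a_2 \cap b)\bigr) = \mu\bigl((a_1 \triangle a_2) \cap b\bigr) \leq \mu(a_1 \triangle a_2) = d(a_1,a_2).
\]

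Now I would compare densities. Since $\varphi$ is a $1$-Lipschitz surjection, the density of $\B \rharp b$ (as a metric space, equivalently after rescaling as a probability algebra, since rescaling is bi-Lipschitz and does not change the metric density) is bounded above by the density of $\A$, which by hypothesis is strictly less than $\kappa$. On the other hand, $\M$ is homogeneous of density $\kappa$, and $b \neq 0$, so by Definition \ref{homogeneous} the density of $\B \rharp b$ equals $\kappa$. This is the required contradiction, and it shows that no nonzero $b \in \B$ can be an atom relative to $\A$; that is, $\B$ is atomless over $\A$.

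The main conceptual point, and the only subtlety worth checking carefully, is that the ``density'' appearing in Definition \ref{homogeneous} (the density of the measured algebra $\B \rharp b$) agrees with the metric density used in the density bound obtained from the Lipschitz map $\varphi$; this follows because passing from $(\B \rharp b,\mu,d)$ to its rescaled version $(\B \rharp b, \mu/\mu(b), d/\mu(b))$ is a bi-Lipschitz homeomorphism and hence preserves topological density. All other steps are routine.
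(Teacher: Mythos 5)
Your proof is correct and is essentially the paper's argument: the paper's one-line proof observes that for nonzero $b$ the density of $\B\rharp b$ is $\kappa$ by homogeneity, while the density of $\{a \cap b \mid a \in \A\}$ is at most that of $\A$, which is exactly your Lipschitz-surjection computation made explicit. The extra care you take about rescaling is harmless but unnecessary, since the paper's convention is that density always means the metric density of the subset $\B\rharp b$ itself.
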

\begin{proof}
For each nonzero $b \in \B$, the density of $\B\rharp b$ is $\kappa$, whereas the density of $\{a \cap b \mid a \in \A \}$
is at most the density of $\A$.
\end{proof}

\begin{lema}
\label{add elements to atomless over}
Suppose $\M = (\B,\mu,d)$ is a model of $APA$, and  $\NN = (\A,\mu,d)$ is a substructure of $\M$.
If $\B$ is atomless over $\A$, then $\B$ is atomless over $\bra \A \cup F \ket$ for every finite set $F \subseteq \B$.
\end{lema}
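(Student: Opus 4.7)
The plan is to induct on $|F|$. The base case $F = \emptyset$ is trivial, and for the inductive step it suffices to prove the single-element case: if $\B$ is atomless over a $\sigma$-subalgebra $\A$ and $f \in \B$, then $\B$ is atomless over $\A' := \bra \A \cup \{f\} \ket$. (Apply this with $\bra \A \cup F \ket$ in place of $\A$, using the inductive hypothesis to ensure $\B$ is atomless over the latter.)

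The crux is an explicit description of the $\sigma$-subalgebra $\A'$. I claim that
\[
\A' = \A_f := \{ (a_1 \cap f) \cup (a_2 \cap f^c) \mid a_1, a_2 \in \A \} \text{.}
\]
The inclusion $\A_f \subseteq \A'$ is immediate. For the reverse, one checks that $\A_f$ is itself a $\sigma$-subalgebra containing $\A \cup \{ f \}$: it contains $\A$ (take $a_1 = a_2 = a$) and $f$ (take $a_1 = 1$, $a_2 = 0$); a routine boolean calculation gives $((a_1 \cap f) \cup (a_2 \cap f^c))^c = (a_1^c \cap f) \cup (a_2^c \cap f^c)$; and closure under countable unions follows from
\[
\bigcup_n \bigl((a_{1,n} \cap f) \cup (a_{2,n} \cap f^c)\bigr) = \bigl((\textstyle\bigcup_n a_{1,n}) \cap f\bigr) \cup \bigl((\textstyle\bigcup_n a_{2,n}) \cap f^c\bigr)
\]
together with the fact that $\A$ is closed under countable unions.

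With this description in hand, suppose for contradiction that some nonzero $b \in \B$ is an atom relative to $\A' = \A_f$. Set $c := b \cap f$ and $c' := b \cap f^c$, so $b = c \cup c'$ disjointly. For any $b'' \leq c$ in $\B$ we have $b'' \leq b$, so by the atom property there exist $a_1, a_2 \in \A$ with
\[
b'' = \bigl((a_1 \cap f) \cup (a_2 \cap f^c)\bigr) \cap b = (a_1 \cap c) \cup (a_2 \cap c') \text{.}
\]
Since $a_1 \cap c \leq f$ and $a_2 \cap c' \leq f^c$, intersecting with $f$ and using $b'' \leq c \leq f$ gives $b'' = a_1 \cap c$. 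Thus $c$ is either $0$ or an atom relative to $\A$, and symmetrically for $c'$. Atomlessness of $\B$ over $\A$ forces $c = c' = 0$, contradicting $b \neq 0$.

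The only genuinely interesting step is the description of $\A'$; once $\A_f = \A'$ is established, the atom-splitting argument is purely bookkeeping. No model-theoretic machinery beyond the definitions is needed for this lemma.
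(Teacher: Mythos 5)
Your proof is correct and follows essentially the same route as the paper's: reduce to adjoining a single element by induction, use the explicit description of $\bra \A \cup \{f\} \ket$ as $\{(a_1 \cap f) \cup (a_2 \cap f^c) \mid a_1,a_2 \in \A\}$, and split a putative relative atom $b$ into $b \cap f$ and $b \cap f^c$. The only difference is that you verify this description of $\bra \A \cup \{f\} \ket$ in detail, whereas the paper asserts it in passing.
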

\begin{proof}
Using induction, it suffices to consider the case $F = \{b\}$.  We prove the contrapositive.  Suppose there is a nonzero
$b'$ in $\B$ that is an atom relative to $\bra \A \cup \{b\} \ket$.  We show that $b' \cap b$ is either $0$ or an atom
relative to $\A$, and the same for $b' \cap b^c$.  Since $b' \neq 0$, at least one of them must be an atom relative to $\A$.

Consider $b'' \leq b' \cap b$ (the case of $b' \cap b^c$ is similar).  Note that $b'' \leq b$, so $b'' \cap b^c =0$.
Since $b'$ is an atom relative to $\bra \A \cup \{b\} \ket$, there exists $x \in \bra \A \cup \{b\} \ket$ with
$b'' = x \cap b'$.  There exist $a_1,a_2 \in \A$ such that $x = (a_1 \cap b) \cup (a_2 \cap b^c)$, and therefore
\[
b'' = x \cap b' = (a_1 \cap b \cap b') \cup (a_2 \cap b^c \cap b') = a_1 \cap (b' \cap b) \text{.}
\]
The last equality is because $b''$ and $a_2 \cap b^c \cap b'$ are disjoint, so $a_2 \cap b^c \cap b'= 0$.
It follows that $b' \cap b$ is either $0$ or an atom relative to $\A$.
\end{proof}

\begin{lema}[Maharam's lemma]\index{Maharam's lemma}
\label{Maharam's lemma}
Let $\M = (\B,\mu,d)$ be a model of $APA$ and let $\NN = (\A,\mu,d)$ be a substructure of $\M$.  If $\B$ is atomless
over $\A$, then $\M$ realizes every $n$-type over $\A$.
\end{lema}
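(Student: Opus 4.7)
The plan is to use Lemma~\ref{types over C - equality} together with Remark~\ref{types of partitions} to reduce the problem to a measure-theoretic realization question. An $n$-type $p$ over $\A$ is determined by the $\A$-measurable functions $(f_s)_{s \in \{-1,+1\}^n}$ defined by $f_s := \P(a^s \mid \A)$ for any realization $a$; these satisfy $f_s \geq 0$ and $\sum_s f_s = 1$ almost everywhere. Realizing $p$ in $\M$ thus amounts to finding a partition $(b^s)_s$ of $1$ in $\B$ with $\P(b^s \mid \A) = f_s$ for every $s$. By induction on the number of parts --- producing one $b^{s_i}$ at a time inside $1 \setminus \bigcup_{j<i} b^{s_j}$ and taking the last to be the complement --- this reduces to the following \emph{relative Maharam lemma}: if $\B$ is atomless over $\A$, $b \in \B$, and $g$ is $\A$-measurable with $g \leq \P(b \mid \A)$, then there is $b' \leq b$ with $\P(b' \mid \A) = g$.

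For this relative lemma I would apply Zorn's lemma to the poset $\mathcal{P}$ of pairs $(b',g')$ with $b' \leq b$, $g'$ an $\A$-measurable function, and $\P(b' \mid \A) = g' \leq g$, ordered coordinatewise. Chains admit upper bounds because any chain in $\B$ has countable cofinality in measure (as $\mu$ is $[0,1]$-valued), and taking suprema coordinatewise gives a valid pair by monotone convergence of conditional expectation. Let $(b^*, g^*)$ be a maximal element; we must show $g^* = g$. If not, set $h := g - g^*$ and pick $n$ so that the $\A$-measurable event $E := \{h \geq 1/n\}$ has positive $\mu$-measure. Then $c_0 := (b \setminus b^*) \cap E$ is nonzero because $\P(c_0 \mid \A) = \chi_E \cdot (\P(b \mid \A) - g^*) \geq \chi_E / n$. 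To contradict maximality it then suffices to find $c \leq c_0$ nonzero with $\P(c \mid \A) \leq \chi_E / n$ (this bound is at most $h$ everywhere, since $c \leq c_0 \leq E$ forces $\P(c \mid \A)$ to vanish off $E$); for such a $c$, $(b^* \cup c, g^* + \P(c \mid \A))$ properly extends $(b^*, g^*)$ in $\mathcal{P}$.

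Such a $c$ is supplied by a \emph{halving lemma}: any nonzero $d \in \B$ admits $d' \leq d$ with $\P(d' \mid \A) = \tfrac{1}{2} \P(d \mid \A)$. Iterating halving $k$ times on $c_0$ produces $2^k$ disjoint pieces of equal conditional probability $2^{-k} \P(c_0 \mid \A) \leq 2^{-k}$; any such piece with $k \geq \log_2 n$ serves as the required $c$. The main obstacle is therefore the halving lemma itself. I would prove it using the characterization in Remark~\ref{characterization of atom relative to A} --- that atomlessness of $d$ over $\A$ produces some $d'' \leq d$ whose conditional probability is not of the form $\chi_f \cdot \P(d \mid \A)$ --- combined with Lemma~\ref{add elements to atomless over} (so that atomlessness over $\A$ is preserved as intermediate splittings are adjoined to $\A$), via another Zorn application to the sub-poset of elements of $d$ with conditional probability at most $\tfrac{1}{2} \P(d \mid \A)$; obstructions to maximality are removed by splitting off small pieces whose conditional probability covers the residual deficit, using iterated atomless refinements inside the enlarged subalgebra.
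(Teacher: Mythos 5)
Your overall architecture is sound and genuinely different from the paper's (which reduces to $1$-types coordinate-by-coordinate via Lemma \ref{add elements to atomless over}, recasts a $1$-type as an additive functional $\lambda \leq \mu$ on $\A$ via Remark \ref{1-types as additive functionals}, and then cites Fremlin for the realization statement). Your reduction to the relative statement and the main Zorn argument are correct. But there is a genuine gap exactly where you locate "the main obstacle": the halving lemma is never actually proved, and the sketch you give for it is circular. Its Zorn argument has the same shape as the one it is meant to serve --- at the maximality step you must "split off small pieces whose conditional probability covers the residual deficit," which is precisely the ability the halving lemma was supposed to supply in the first place. Relative atomlessness, as stated, only hands you \emph{some} $b \leq a$ with $\P(b\mid\A)$ not a restriction of $\P(a\mid\A)$; it does not by itself hand you a nonzero piece of prescribed or even bounded conditional probability, and no step in your sketch converts the former into the latter. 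Note also that the exact halving lemma is the special case $g = \tfrac12\P(d\mid\A)$ of the relative lemma you are trying to prove, so deriving the relative lemma from it is backwards unless halving gets an independent proof.

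The gap is fillable, and more cheaply than your plan suggests: your main Zorn argument only needs the \emph{one-sided} statement that every nonzero $a$ admits a nonzero $c \leq a$ with $\P(c\mid\A) \leq \epsilon$, not exact halving. This follows directly from Remark \ref{characterization of atom relative to A}: pick $b \leq a$ witnessing that $a$ is not a relative atom, let $e \in \A$ be the event on which $\P(b\mid\A) \leq \tfrac12\P(a\mid\A)$, and set $a' := (b \cap e) \cup ((a \setminus b) \cap e^{c})$. Then $\P(a'\mid\A) \leq \tfrac12\P(a\mid\A)$ everywhere, and $a' \neq 0$, since $a'=0$ would force $\P(b\mid\A) = \chi_{e^c}\cdot\P(a\mid\A)$, contradicting the choice of $b$. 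Iterating (atomlessness over $\A$ passes to every nonzero subelement) gives $\P(a^{(k)}\mid\A) \leq 2^{-k}$, which is all your maximality step requires; the exact halving lemma then falls out afterwards as a corollary of the relative lemma rather than being a prerequisite for it. This missing sub-lemma is essentially Fremlin's 331A, the ingredient behind the 331B that the paper cites; with it inserted, your argument becomes a correct, self-contained alternative to the paper's proof-by-citation.
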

\begin{proof}
Using Lemma \ref{add elements to atomless over} and the fact that it allows us to realize $n$-types 
over $\A$ ``coordinate by coordinate'', it suffices to prove the result for $1$-types.  
Remark \ref{1-types as additive functionals} implies that proving
$\M$ realizes every $1$-type over $\A$ is equivalent to proving the following statement:

Suppose $\lambda \colon \A \to [0,1]$ is an additive functional over $\A$ such that $\lambda(a) \leq \mu(a)$ holds
for every $a \in \A$.  Then there exists $b \in \B$ such that $\lambda(a) = \mu(a \cap b)$ for every $a \in \A$.

A proof of exactly this statement is given as Lemma 3.2 in Fremlin's chapter \cite{Fr-handbook} 
on measure algebras, and also as
Lemma 331B in volume 3 \cite{Fr-treatise} of his multi-volume treatise on measure theory.
\end{proof}

\begin{coro}
\label{uniqueness for homogeneous models of APA}
Every homogeneous model of $APA$ is determined up to isomorphism by its density.
\end{coro}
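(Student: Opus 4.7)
The plan is a transfinite back-and-forth construction of length $\kappa$, paralleling the classical uniqueness proof for saturated models of equal cardinality. Maharam's Lemma (Lemma~\ref{Maharam's lemma}) plays the role of the saturation hypothesis. Let $\M = (\B, \mu, d)$ and $\NN = (\B', \mu', d')$ be homogeneous models of $APA$ of common density $\kappa$, and fix metrically dense subsets $\{b_\alpha \mid \alpha < \kappa\} \subseteq \B$ and $\{c_\alpha \mid \alpha < \kappa\} \subseteq \B'$.

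By transfinite induction I would construct an increasing chain $(f_\alpha \mid \alpha < \kappa)$ of measure-preserving boolean isomorphisms $f_\alpha \colon \A_\alpha \to \A'_\alpha$ between boolean subalgebras of $\B$ and $\B'$ such that each of $\A_\alpha, \A'_\alpha$ has density $< \kappa$, while $\{b_\beta \mid \beta < \alpha\} \subseteq \A_\alpha$ and $\{c_\beta \mid \beta < \alpha\} \subseteq \A'_\alpha$. I would start with $\A_0 = \{0, 1\}$ and take unions at limit stages. At a successor stage, to bring $b_\alpha$ into the domain, I would consider $p = \tp_\M(b_\alpha / \A_\alpha)$; via Remark~\ref{1-types as additive functionals} this corresponds to an additive functional $\lambda$ on $\bra \A_\alpha \ket$, which I push through $f_\alpha$ to an additive functional $\lambda'$ on $\bra \A'_\alpha \ket$. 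Since $\NN$ is homogeneous of density $\kappa$ and $\bra \A'_\alpha \ket$ has density $< \kappa$, Lemma~\ref{homogeneous implies atomless over} yields that $\B'$ is atomless over $\bra \A'_\alpha \ket$, so Maharam's Lemma produces $b' \in \B'$ realizing the $1$-type determined by $\lambda'$. I then extend $f_\alpha$ by $b_\alpha \mapsto b'$ to the boolean algebra generated by $\A_\alpha \cup \{b_\alpha\}$; this extension is measure-preserving by Lemma~\ref{types over C - equality}. A symmetric ``back'' step brings $c_\alpha$ into the range.

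The union $f = \bigcup_\alpha f_\alpha$ is then a measure-preserving, and hence (by the axiom $d(x,y) = \mu(x \triangle y)$) $d$-isometric, boolean isomorphism between metrically dense subalgebras of $\B$ and $\B'$. Since both $\B$ and $\B'$ are complete in their metrics (Theorem~\ref{first description of models of Pr}), $f$ extends uniquely by uniform continuity to a measure-preserving boolean isomorphism $\M \to \NN$, completing the proof.

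The main obstacle is the bookkeeping needed to guarantee that $\A_\alpha$, and hence $\bra \A_\alpha \ket$, has density strictly below $\kappa$ at every stage, which is what is required for Lemma~\ref{homogeneous implies atomless over} and thus for the application of Maharam's Lemma. For $\kappa = \aleph_0$ this is automatic, since each $\A_\alpha$ is finitely generated and hence finite. For uncountable $\kappa$ it reduces to the cardinal-arithmetic fact that a $\sigma$-subalgebra generated by fewer than $\kappa$ elements has density at most $|\alpha| + \aleph_0 < \kappa$, together with the observation that both the forward and backward extension steps only add one new generator to $\A_\alpha$ or $\A'_\alpha$. Once this technical point is secured, the only genuinely model-theoretic content of the argument is the realization of $1$-types afforded by Maharam's Lemma.
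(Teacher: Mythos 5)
Your proposal is correct and takes essentially the same route as the paper: the paper observes that a homogeneous model of density $\kappa$ is, by Lemma \ref{homogeneous implies atomless over} and Maharam's Lemma, a $\kappa$-saturated model of density $\kappa$, and then invokes the standard back-and-forth argument for such models --- which is precisely the transfinite construction you write out, including the cardinality bookkeeping needed to keep each partial subalgebra of density below $\kappa$.
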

\begin{proof}
Suppose $\M = (\B,\mu,d) \models APA$ has density $\kappa$ and is homogeneous.  Since $\B$ is 
homogeneous, it is atomless over $\bra C \ket$ for every $C \subseteq \B$ with $\card(C) < \kappa$.  
By Lemma \ref{Maharam's lemma}, $\M$ realizes every $n$-type over $C$ for every such $C$.  
That is, $\M$ is a $\kappa$-saturated model of $APA$ and it has density $\kappa$.
Using the standard back-and-forth argument from model theory, any two such models are isomorphic.
\end{proof}

Finally, we have Maharam's Theorem,\index{Maharam's Theorem}
 which characterizes the structure of all probability algebras up to isomorphism.

\begin{theo}
\label{Maharam's Theorem for models of Pr}
Every model $\M$ of $Pr$ is determined up to isomorphism by its invariants $\Phi^\M$ for the atomic part and its
Maharam invariants $(\K^\M,\Psi^\M)$ for the atomless part.
\end{theo}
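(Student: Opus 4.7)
The plan is to reduce the claim to the two uniqueness results already established: Corollary \ref{categoricity for models of Pr} for the atomic part, and Corollary \ref{uniqueness for homogeneous models of APA} for each homogeneous piece of the atomless part. The glue is the decomposition provided by Proposition \ref{models realize their invariants}, together with the fact that the atomic/atomless splitting is canonical (Remark \ref{definability of atomics and atomless}).

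First I would fix two models $\M = (\B,\mu,d)$ and $\M' = (\B',\mu',d')$ of $Pr$ with $\Phi^\M = \Phi^{\M'}$, $\K^\M = \K^{\M'}$ and $\Psi^\M = \Psi^{\M'}$, and split each one as $1 = a_0 \cup a_1$ where $a_0$ is the join of the atoms and $a_1$ is the largest atomless element (as in Remark \ref{definability of atomics and atomless}). The same decomposition applies in $\M'$, yielding $a_0', a_1'$. Since $\Phi^\M = \Phi^{\M'}$ encodes the common measures of the corresponding atoms, in particular $\mu(a_0) = \mu'(a_0')$ and therefore $\mu(a_1) = \mu'(a_1')$; call this common value $t$. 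The ideals $\B \rharp a_0$ and $\B' \rharp a_0'$ are atomic probability algebras with the same sequence of atom measures, hence are isomorphic as measured algebras by (the proof of) Corollary \ref{categoricity for models of Pr}(1) applied after rescaling by $1/\mu(a_0)$ if $\mu(a_0) > 0$ (and trivially if $\mu(a_0) = 0$).

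Next, assuming $t > 0$, I rescale $\B \rharp a_1$ and $\B' \rharp a_1'$ by the factor $1/t$ to obtain models of $APA$; call them $\NN$ and $\NN'$. A direct computation shows that rescaling does not change which elements are homogeneous nor the density of $\B \rharp b$ for any $0 \neq b$, so the Maharam invariants transfer to $\NN$ and $\NN'$, giving $\K^\NN = \K^{\NN'} = \K^\M$ and $\Psi^\NN = \Psi^{\NN'}$ (each equal to $(1/t)\Psi^\M \rharp \K^\M$). Apply Proposition \ref{models realize their invariants} to produce pairwise disjoint maximal homogeneous families $(b_\kappa)_{\kappa \in \K^\NN}$ in $\NN$ and $(b_\kappa')_{\kappa \in \K^{\NN'}}$ in $\NN'$ with joins equal to $1$. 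For each $\kappa$, the restrictions $\NN \rharp b_\kappa$ and $\NN' \rharp b_\kappa'$ have the same measure $\Psi^\NN(\kappa) = \Psi^{\NN'}(\kappa)$ and, after another rescaling by $1/\Psi^\NN(\kappa)$, are homogeneous models of $APA$ of the same density $\kappa$; by Corollary \ref{uniqueness for homogeneous models of APA} there is an isomorphism $\psi_\kappa \colon \NN \rharp b_\kappa \to \NN' \rharp b_\kappa'$ preserving $\mu/\Psi^\NN(\kappa)$, hence preserving $\mu$ itself.

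Finally I assemble the pieces: because $(b_\kappa)$ and $(b_\kappa')$ are countable pairwise disjoint partitions of $1$ in the respective atomless parts whose $\mu$-measures sum to $t$, the map $\psi \colon \NN \to \NN'$ defined on each $\NN \rharp b_\kappa$ by $\psi_\kappa$ extends uniquely by $\sigma$-additivity (using $\sigma$-order completeness and strict positivity of the measures, Theorem \ref{first description of models of Pr}) to a measure-preserving boolean isomorphism; undoing the rescaling gives an isomorphism between $\B \rharp a_1$ and $\B' \rharp a_1'$ as measured algebras. Combining this with the isomorphism between the atomic parts produces an isomorphism $\M \to \M'$. The step I expect to require the most care is showing that the coordinatewise isomorphism $\psi$ on the disjoint family $(\B \rharp b_\kappa)$ extends to a boolean morphism on the join: this requires verifying that countable joins in $\B$ really are reconstructed from those in each $\B \rharp b_\kappa$, which uses $\sigma$-order completeness together with the fact, available from Lemma \ref{closed subalgebras are probability algebras}, that each $\B \rharp b_\kappa$ is itself a probability algebra.
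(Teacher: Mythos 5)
Your proof is correct and follows essentially the same route as the paper: split off the atomic part and identify it via Corollary \ref{categoricity for models of Pr}, decompose the rescaled atomless part into maximal homogeneous components via Proposition \ref{models realize their invariants}, identify each component by its density via Corollary \ref{uniqueness for homogeneous models of APA}, and reassemble along the countable disjoint family exactly as indicated in Note \ref{path to proving Maharam's Theorem}. The only cosmetic point is that the fact that each $\B\rharp b_\kappa$ is a (rescaled) probability algebra comes from the Note following the definition of $\B\rharp b$ in Section \ref{maharam} rather than from Lemma \ref{closed subalgebras are probability algebras}, which concerns closed subalgebras rather than ideals.
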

\begin{proof}
The definition of the Maharam  invariants for general probability algebras
 is in the first paragraph of this section; the definition of $\Phi^\M$ is 
in Section \ref{model theory of prob spaces}.  The proof of the Theorem is given above, with the 
key result being Corollary \ref{uniqueness for homogeneous models of APA}, which handles the 
maximal homogeneous components of the atomless part of $\M$.  Note \ref{path to proving Maharam's Theorem} 
indicates how the structure of $\M$ is determined by what these invariants say about its component parts.
\end{proof}

Note that for each infinite cardinal $\kappa$, we identified the $\kappa$-saturated model of $APA$ of 
density character $\kappa$ as the Maharam homogenous model of density $\kappa$. More information 
on $\kappa$-saturated and $\kappa$-homogeneous models of $APA$ can be found in \cite{Song2}.

The following characterization of the ``atomless over'' property is often useful:

\begin{prop}
Let $\M = (\B,\mu,d)$ be a model of $APA$ and let $\A$ be a $\sigma$-subalgebra of $\B$.  The following are
equivalent.
\begin{enumerate}
\item[(a)] $\B$ is atomless over $\A$.
\item[(b)] For an infinite set of positive integers $n$, there is in $\B$ a partition of $1$, say $u=(u_1,\dots,u_n)$, such
that $\mu(a \cap u_i) = \frac1n  \mu(a)$ for all $a \in \A$ and $i=1,\dots,n$.  (In other words, each $u_i$ satisfies
$u_i \pindep\A$ and has measure $\frac1n$.)
\item[(c)]  There is an atomless $\sigma$-subalgebra $\C$ of $\B$ such that $\A \pindep\C$.
\end{enumerate}
\end{prop}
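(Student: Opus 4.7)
The plan is to establish the cycle (a) $\Rightarrow$ (c) $\Rightarrow$ (b) $\Rightarrow$ (a). The implication (c) $\Rightarrow$ (b) is immediate: since $\C$ is atomless, for every $n$ one may partition $1$ in $\C$ into $n$ pieces $v_1,\dots,v_n$ of measure $1/n$, and the hypothesis $\A \pindep \C$ then gives $\mu(a \cap v_i) = \mu(a)/n$ for all $a \in \A$, yielding (b) for every $n$ and a fortiori for infinitely many.

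For (a) $\Rightarrow$ (c), I would construct $\C$ as the $\sigma$-algebra generated by a sequence $(c_k)_{k \geq 1}$ of Bernoulli$(1/2)$ variables jointly independent of $\A$. Applying Maharam's Lemma (Lemma~\ref{Maharam's lemma}) to the additive functional $a \mapsto \mu(a)/2$ on $\A$ (whose consistency as a $1$-type is clear from a product-algebra construction) produces $c_1 \in \B$ with $\mu(c_1) = 1/2$ and $c_1 \pindep \A$. Inductively, given $c_1,\dots,c_k$, Lemma~\ref{add elements to atomless over} ensures that $\B$ remains atomless over $\bra \A, c_1,\dots,c_k \ket$, and a fresh application of Maharam's Lemma produces $c_{k+1}$ of measure $1/2$ independent of this enlarged subalgebra. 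A routine induction on $n$ then yields $\A \pindep \bra c_1,\dots,c_n \ket$ for every $n$, and continuity of $\mu$ in the metric $d$ upgrades this to $\A \pindep \C$. Finally, $\bra c_1,\dots,c_n \ket$ has $2^n$ atoms each of measure $2^{-n}$, so any $b \in \C$ with $\mu(b) > 0$ must, for $n$ large enough that $2^{-n} < \mu(b)$, meet at least two of these atoms in positive measure, showing that $\C$ is atomless.

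I would prove (b) $\Rightarrow$ (a) by contrapositive. Suppose some nonzero $b \in \B$ is an atom relative to $\A$, fix $n$ from the infinite index set, and let $(u_1,\dots,u_n)$ be the corresponding partition. By the atom property, for each $i$ there is $a_i \in \A$ with $u_i \cap b = a_i \cap b$. Since the $u_i$'s are pairwise disjoint, replacing each $a_i$ by $a_i \setminus (a_1 \cup \cdots \cup a_{i-1})$ leaves each $a_i \cap b$ unchanged while making the $a_i$'s pairwise disjoint in $\A$. Then $a_i \cap b \subseteq u_i \cap a_i$, so $u_i \pindep \A$ gives $\mu(a_i \cap b) \leq \mu(u_i \cap a_i) = \mu(a_i)/n$. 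Summing,
\[
\mu(b) = \sum_{i=1}^n \mu(a_i \cap b) \leq \frac{1}{n} \sum_{i=1}^n \mu(a_i) \leq \frac{1}{n}.
\]
As this bound holds for infinitely many $n$, $\mu(b) = 0$, contradicting $b \neq 0$.

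The main obstacle is the argument for (b) $\Rightarrow$ (a): the critical move is the disjointification of the $a_i$'s (possible because the $u_i$'s are already disjoint), which then allows the independence $u_i \pindep \A$ to squeeze $\mu(a_i \cap b)$ under $\mu(a_i)/n$ and pin $\mu(b)$ below $1/n$ after summation. The other two directions are routine applications of Maharam's Lemma together with Lemma~\ref{add elements to atomless over}, and of the definition of $\pindep$ inside an atomless $\C$.
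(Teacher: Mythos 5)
Your proposal is correct. The implications (c)~$\Rightarrow$~(b) and (a)~$\Rightarrow$~(c) follow essentially the paper's route: the paper also builds $\C$ by an inductive application of Lemma \ref{add elements to atomless over} and Maharam's Lemma (Lemma \ref{Maharam's lemma}), the only cosmetic difference being that it adjoins at stage $n$ an independent uniform $n$-partition rather than your sequence of independent measure-$\frac12$ elements; your verification that the resulting $\C$ is atomless and that independence passes from the finite stages to the generated $\sigma$-algebra is sound. Where you genuinely diverge is (b)~$\Rightarrow$~(a). The paper argues directly: given $0 \neq b$, it picks a single $n$ from the infinite set with $\frac1n \leq \frac12\mu(b)$, observes $\P(u_i \cap b \mid \A) \leq \P(u_i \mid \A) = \frac1n \leq \frac12\mu(b)$, and concludes that for some $i$ one has $0 < \P(u_i \cap b \mid \A) < \P(b \mid \A)$ on a set of positive measure, so $u_i \cap b$ cannot equal $a \cap b$ for any $a \in \A$ (using the characterization of relative atoms via restrictions of $\P(b\mid\A)$ in Remark \ref{characterization of atom relative to A}). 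Your contrapositive argument instead disjointifies the witnesses $a_i$ (valid precisely because the $u_i$ are disjoint, so $a_i \cap b = u_i \cap b$ is unaffected by the replacement) and uses only finite additivity plus $\mu(a_i \cap u_i) = \frac1n\mu(a_i)$ to force $\mu(b) \leq \frac1n$ for arbitrarily large $n$. Your version buys complete freedom from conditional expectations at the cost of needing arbitrarily large $n$ for a fixed $b$, while the paper's needs only one sufficiently large $n$ per $b$ but leans on the conditional-probability description of relative atoms; both are complete proofs.
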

\begin{proof}
(a) $\Rightarrow$ (c): We build inductively a sequence $\{\C_n\}_{n\geq 1}$ of finite subalgebras of $\B$ such that 
for all $n \geq 2$ we have
$\C_n \pindep(\A\cup (\bigcup_{i<n}\C_i))$ and 
$\C_n$ is generated by a partition of $1$, say $(u_1,\dots,u_n)$, such that 
$\mu(u_i) = \frac1n$ for all $i=1,\dots,n$.  We take $\C_1=\{0,1\}$.  
Assume we have built $\{\C_i\}_{i<n}$.  By Lemma \ref{add elements to atomless over} the algebra $\B$ is atomless over 
$(\A\cup (\bigcup_{i<n}\C_i))^{\#}$. The existence of 
$\C_n$ follows from Lemma \ref{Maharam's lemma}, since we can describe the properties of $(u_1,\dots,u_n)$
by formulas over $(\A\cup (\bigcup_{i<n}\C_i))^{\#}$. Now let $\C$ be the $\sigma$-algebra 
generated by $\bigcup_{i\geq 1}\C_i$.

(c) $\Rightarrow$ (b): This is immediate, since for any $n$ there exists a partition of $1$, say $(u_1,\dots,u_n)$, 
in $\C$ with $\mu(u_i)= \frac1n$ for all $i$, and $u_i \pindep\A$ automatically for all $i$.

(b) $\Rightarrow$ (a):  Let $0 \neq b \in \B$.  Let $(u_1,\dots,u_n)$ be a partition of $1$ in $\B$ as in (b)
such that $\frac1n \leq \frac12 \mu(b)$.  Note that
\[
\P(u_i \cap b \mid \A) \leq \P(u_i | \A) \leq \tfrac1n \leq \tfrac12 \mu(b) \text{.}
\]
Therefore, for some $i$ we have that on a set of positive $\mu$-measure 
\[
0 < \P(u_i \cap b \mid \A) < \P(b \mid \A) \text{,}
\]  
which means that $u_i \cap b$ is not of the form $a \cap b$ with $a \in \A$.  Thus $b$ is not an atom relative to $\A$.
(See Remark \ref{characterization of atom relative to A}.)
\end{proof}

\section{Stability of APA}
\label{stability of APA}

In this section we continue our study of the theory $APA$, concentrating on
stability-theoretic properties.  Throughout this section we work in a $\kappa$-universal domain
for $APA$, which is denoted by $\U$, with underlying set $U$.
A subset of $U$ is \emph{small} if its cardinality is $< \kappa$.  Unless otherwise specified,
we take parameter sets always to be small subsets of $U$.  We adjust $\kappa$ as
needed for specific models of $APA$ to be substructures of $\U$.

This section uses background on stability, forking, definitions of types, and canonical bases 
that can be found in \cite{BBH} and  \cite{BU}.

Since $APA$ is stable, by Proposition \ref{omega stable}, we have the relation of 
\emph{model theoretic independence},\index{independence!model theoretic}
 denoted $C \indep_E D$, defined for small sets $C,D,E \subseteq U$ by: 
\index{$\indep$}
\newline
\centerline{\emph{$ C \indep_E D$ if and only if $\tp(a/DE)$ does not fork over $E$ for all finite tuples
$a$ from $\bra C \ket$.}} 
\newline
Our next result is that model theoretic independence is exactly
the same as probabilistic independence, 
from which we also get a quantitative criterion for non-forking in $APA$.  The corresponding result
in the CAT setting was proved in \cite[Theorem 2.10]{BY1}.  Our proof uses the
same general approach, with details based on properties of conditional expectation
and $\pindep$ that are discussed in Section \ref{probability spaces}.

The argument follows a familiar pattern: prove that in models of $APA$ the relation $\pindep$
satisfies invariance, symmetry, finite character, transitivity, extension, and
local character, and also that types of tuples over arbitrary
sets are stationary.  From this one gets that $APA$ is stable and that
$\indep$ is the same as $\pindep$ (see  \cite[Theorem 14.14]{BBHU}).  
Throughout the proof we use the results from 
Lemma \ref{characterization of independence}.

\begin{theo}\label{pr:non-div}
Let $C,D,E \subseteq  U$ be small.  Then
$$ C \indep_E D \mbox{ if and only if } C \pindep_E D \text{.} $$

Consequently, for every $c = (c_1,\dots,c_n) \in U^n$, we have that $\tp(c/DE)$ does not fork over $E$ if and only if
\[
\P(c^s | \bra DE \ket)
=\P(c^s | \bra E \ket)
\]
for all $s = (k_1,\dots,k_m) \in \{-1,+1\}^n$. 
\end{theo}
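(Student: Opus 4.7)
The plan is to verify that $\pindep$ satisfies the list of properties characterizing the forking independence relation in the stable theory $APA$, and then invoke uniqueness to conclude $\indep = \pindep$. Since $APA$ is $\omega$-stable by Proposition \ref{omega stable}, the forking relation $\indep$ is the unique automorphism-invariant ternary relation on small subsets of $U$ satisfying symmetry, monotonicity, finite character, transitivity, extension, local character, and stationarity of types over algebraically closed bases; by Lemma \ref{dcl} these bases are exactly the closed subalgebras $\bra E \ket$. The quantitative criterion in the second conclusion will then follow directly from the equivalence (i)\,$\Leftrightarrow$\,(ii) of Lemma \ref{characterization of independence} applied with $S = \{c_1,\dots,c_n\}$, since $S^\#$ is spanned by the boolean terms $c^s$.

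The routine properties all come from the reformulations of $\pindep$ in Lemma \ref{characterization of independence}. Invariance under automorphisms is obvious from the measure-theoretic definition; symmetry is built into the definition; monotonicity and finite character pass through clause (ii), which quantifies over individual $A \in S^\#$. Transitivity reduces to a telescoping identity on conditional expectations: if $C \pindep_W D$ and $C \pindep_{WD} D'$, then for each $a \in S^\#$ one has $\P(a \mid \bra WDD' \ket) = \P(a \mid \bra WD \ket) = \P(a \mid \bra W \ket)$, so $C \pindep_W DD'$. For local character, given a finite tuple $c$ and any parameter set $E$, applying Lemma \ref{approximating P(a|C)} to each function $\P(c^s \mid \bra E \ket)$ produces a countable $E_0 \subseteq \bra E \ket$ with $\P(c^s \mid \bra E_0 \ket) = \P(c^s \mid \bra E \ket)$ for every $s \in \{-1,+1\}^n$; then Lemma \ref{characterization of independence}(ii) gives $c \pindep_{E_0} E$.

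The main obstacle is the extension property, and this is precisely what Lemma \ref{extension} was designed to supply. Given a finite tuple $c$ whose associated finite subalgebra $c^\#$ has atoms $a_1,\dots,a_m$, together with parameter sets $E \subseteq D$, pick a representing probability space and apply Lemma \ref{extension} with $\C$ and $\D$ representing $\bra E \ket$ and $\bra DE \ket$. This produces a measure-preserving extension of the ambient probability algebra together with a partition $e = (e_1,\dots,e_m)$ satisfying $\P(e_j \mid \bra E \ket) = \P(a_j \mid \bra E \ket)$ for all $j$ and $\{e_1,\dots,e_m\} \pindep_{\bra E \ket} \bra DE \ket$. Since $APA$ has quantifier elimination, this extension is elementary; recoding $e$ back into a tuple $c'$ of the original arity (inverting the correspondence of Notation \ref{tuples generate partitions}) and invoking $\omega$-saturation of $\U$, one realizes such a $c'$ already inside $\U$. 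The matching conditional expectations force $\tp(c'/E) = \tp(c/E)$ via Lemma \ref{types over C - equality}(3), and $c' \pindep_E D$ holds by construction.

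Stationarity over $\bra E \ket$ follows immediately from Lemma \ref{types over C - equality}(3): a type over $E$ is determined by the tuple $(\P(c^s \mid \bra E \ket))_s$, so if $c'$ and $c''$ both realize $\tp(c/E)$ and each satisfies $\pindep_E D$, then $\P((c')^s \mid \bra DE \ket) = \P(c^s \mid \bra E \ket) = \P((c'')^s \mid \bra DE \ket)$ for every $s$, forcing $\tp(c'/DE) = \tp(c''/DE)$. Having verified all axioms, uniqueness of forking independence in the stable theory $APA$ gives $\indep = \pindep$, and the second equivalence is then Lemma \ref{characterization of independence}(i)\,$\Leftrightarrow$\,(ii) restated in terms of the tuples $c^s$.
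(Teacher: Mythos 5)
Your proposal is correct and follows essentially the same route as the paper's proof: verify that $\pindep$ satisfies invariance, symmetry, finite/local character, transitivity, extension, and stationarity, with Lemma \ref{extension} supplying the extension property and Lemmas \ref{characterization of independence} and \ref{types over C - equality} handling the rest, then invoke the uniqueness of the independence relation in a stable theory. The only slips are cosmetic: you need $\kappa$-saturation rather than $\omega$-saturation to realize the extension over a small (possibly uncountable) parameter set, and transitivity should be checked as a biconditional, both directions of which follow from the same tower-property computation.
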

\begin{proof}  
Let $a$ be a finite tuple from $U$, and let $A,C,D,E$ be small subsets of $U$.  We prove each
of the conditions invariance, symmetry, finite character, transitivity, extension, and local character, 
and also that types of tuples over arbitrary sets are stationary.  (As we verify each condition, we make
clear what it means.)

Invariance, Symmetry, and Finite Character:  it is obvious from Definition \ref{conditional independence} that the relation $C \pindep_E D$ is invariant under automorphisms of $\U$ and equivalent to $D \pindep_E C$.  Finite character requires that
$C \pindep_E D$ holds if and only if $c \pindep_E D$ holds for every finite tuple $c$ from $C$. This follows from the
definition using the disjoint additivity of the conditional expectation operators (over $\bra D \ket$ and over $\bra DE \ket$).  
Indeed, if $c_1,\dots,c_k$ are the atoms in $c^\#$, then $c \pindep_E D$ iff for every $j=1,\dots,k$ we have $c_j \pindep_E D$.  

Transitivity:  This condition says that $a \pindep_E CDE$ if and only if $a \pindep_{CE} CDE$ and $a \pindep_E CE$.  
By Lemma \ref{characterization of independence}((i) $\Leftrightarrow$ (ii)), this statement is equivalent to the 
statement $\P(a| \bra CDE \ket) =  \P(a| \bra E \ket)$ if and only if
$\P(a| \bra CDE \ket) = \P(a| \bra CE \ket)$ and $\P(a| \bra CE \ket) = \P(a| \bra E \ket)$, which is true by Fact \ref{droponnorm}.
Applying this for $a$ ranging over $A^\#$ proves transitivity for $A \pindep_E CD$, using finite character.

Extension: We need to show that for all small subsets $A,C,D$ of $U$, there is a copy $E$ of $A$ over
$C$ such that $E \pindep_C D$.  By a ``copy'' we mean that there is a bijection $f$ from $A$ onto $E$ such
that for every $a \in A^n$, every $\LLpr$-formula $\vphi(x;y)$, and every finite tuple $c$ from $C$, 
the values of $\vphi(f(a);c)$ and $\vphi(a;c)$ in  $\U$ are equal.  (In short: $\tp(A/C)=\tp(E/C)$,
with $f$ giving the correspondence between enumerations of $A$ and $E$.  Otherwise said, $f$ is an 
elementary map over $C$, from $A$ onto $E$.)  Thus
the statement that $E$ is a copy of $A$ over $C$ is expressed by a family of $\LLpr$-conditions in
$\card(A)$ many variables and in parameters from $C$. The same is true of the condition
$E \pindep_C D$, except that the parameters come from $C \cup D$.  (Namely, for each $a = (a_1,\dots,a_m) \in A^m$
and the corresponding $e = (f(a_1),\dots,f(a_m))$, for each $u = (u_1,\dots,u_n) \in D$ and for each
$s \in \{-1,+1\}^m$ and $t \in \{-1,+1\}^n$, 
we require $\P(e^s \cap u^t | \bra C \ket) = \P(a^s  | \bra C \ket) \cdot \P(u^t | \bra C \ket)$.
By the second paragraph of Remark \ref{1-types as additive functionals}, this is a type-definable condition
over $CD$, since $\P(a^s  | \bra C \ket) \cdot \P(u^t | \bra C \ket)$ is $\bra C \ket$-measurable.  
Note that taking $u^t=1$, this independence condition already implies 
$\P(e^s | \bra C \ket) = \P(a^s  | \bra C \ket))$, which is equivalent to $\tp(e/C)=\tp(a/C)$ by
Lemma \ref{types over C - equality}.)

Since $\U$ is $\kappa$-saturated and the conditions discussed above involve $< \kappa$
many formulas, it suffices to show that this set of conditions is finitely satisfiable in $\U$.  In particular,
we may assume $A$ is finite.  

The rest of the argument is based on Lemma \ref{extension}.  Let $(X,\B,\mu)$ be a probability space whose
probability algebra $(\widehat {\B},\widehat \mu, \widehat d)$ is a small elementary substructure of $\U$ that contains $ACD$.  Lemma \ref{extension}
yields a probability space $(X',\B',\mu)$ and a map $B \mapsto B'$ that is a measure-preserving boolean 
embedding of $\B$ into $\B'$.  The construction used in proving Lemma \ref{extension} ensures that
the probability algebra of $(X',\B',\mu')$ is small.  Using the fact that $\U$ is $\kappa$-saturated and strongly
$\kappa$-homogeneous, as well as the fact that $APA$ has $QE$, we may realize 
$(\widehat{\B'},\widehat {\mu'}, \widehat {d'})$ as a substructure of $\U$, and
ensure that the induced embedding of $(\widehat {\B},\widehat \mu, \widehat d)$ 
into $(\widehat{\B'},\widehat {\mu'}, \widehat {d'})$ is an inclusion.  Therefore we obtain 
in $\widehat{\B'}$ a copy $E$ of $A$ over $C$ such that $E \pindep_C D$, as desired.

Local Character:  We need to show that there exists a countable set $C' \subseteq C$ such that 
$a \pindep_{C'} C$.   Let $C'$ be any countable set such that $\P(a|\bra C \ket)$ is $\bra C' \ket$-measurable.
Then $\P(a|\bra C' \ket) = \P(a|\bra C \ket)$, which implies $a \pindep_{C'} C$ by 
Lemma \ref{characterization of independence}((i) $\Leftrightarrow$ (iii)).

Stationarity of Types:  We need to show that if $a \pindep_E D$, then $\tp(a/DE)$ is uniquely determined by 
$\tp(a/E)$.  So assume $a,b \in U^n$ satisfy $a \pindep_E D$, $b \pindep_E D$, and $\tp(a/E) = \tp(b/E)$.
So for each $s \in \{ -1,+1 \}^n$ we have $a^s \pindep_E D$ and $b^s \pindep_E D$ by Definition \ref{conditional independence}, 
and $\tp(a^s/E) = \tp(b^s/E)$ by Lemma \ref{types over C - equality}.  
It follows for all $s$ that
\begin{align*}
d(\tp(a^s/DE),\tp(b^s/DE)) = \| \P(a^s & | \bra DE \ket) - \P(b^s | \bra DE \ket) \|_1 \\
= \| \P(a^s | \bra E \ket) - \P(b^s | \bra E \ket) \|_1 & = d(\tp(a^s/E),\tp(b^s/E)) = 0
\end{align*}
by Corollary \ref{distance between 1-types over C} and Lemma \ref{characterization of independence}.  
Therefore $\tp(a/DE)=\tp(b/DE)$ by Lemma \ref{types over C - equality}.
\end{proof}

\begin{rema}
\label{superstable}
Since $APA$ is $\omega$-stable (see Proposition \ref{omega stable}), it follows that $APA$ is also
superstable, by \cite[Remark 14.8]{BBHU}.  In fact, $APA$ has a property
that is analogous, in the continuous logic setting, to the classical
property of being superstable of finite $SU$-rank.  To see this, 
take $\epsilon>0$ and
small sets $D\subseteq C\subseteq U$. Say that $\tp(a/C)$ \emph{$\
\epsilon$-forks over $D$} if $d(\tp(a/C),\tp(a'/C))\geq \epsilon$,
where $\tp(a'/C)$ is the (unique) non-forking extension of
$\tp(a/D)$. Let $SU_{\epsilon}(\tp(a/D))$ be the foundation rank of
$\tp(a/D)$ for this relation of $\epsilon$-forking. Then for any
$\epsilon>0$, $a\in U$ and small $D\subseteq U$, it can be shown
using Fact \ref{droponnorm}, and Theorems \ref{distance} and \ref{pr:non-div}
that $SU_{\epsilon}(\tp(a/D))$ is at most $(1/\epsilon)^2$.
\end{rema}

The next result shows that $APA$ has built-in canonical bases.\index{canonical bases}  
Before getting into the details,
we provide some intuition about the connection between canonical bases and conditional probabilities. 
Consider the case where $x$ is a single variable and $p(x)=\tp(a/C)$, where $C$ is a small closed subalgebra of $\U$.
Let $(a_i)_{i \in \N}$ be a Morley sequence in $p$ and consider $(\chi_{a_i})_i$ as elements of $L^2(U,\mu)$. 
Let $H$ be the Hilbert subspace corresponding to $L^2(C,\mu)$, that is, the collection of  elements 
of $L^2(U,\mu)$ that are $C$-measurable, and let $P_H$ be the orthogonal projection operator from
$L^2(U,\mu)$ onto $L^2(C,\mu)$. Then, for each $i\in \N$, we may write 
$\chi_{a_i}=P_{H}(\chi_{a_i})+v_i$ where $\{v_i\}_i$ are pairwise orthogonal and they all have the same norm. 
Then the sequence of averages $\big(\Sum_{i=1}^n\frac{P_{H}(a_i)+v_i}{n}\big)_n$ converges 
(in $L^2(U,\mu)$) to $P_{H}(a_i)=\P(a|C)$, so $\P(a|C)\in \dcl(\{a_i\}_i)$. A similar computation 
can be carried out using any Morley sequence in a type parallel to $p$, so
$\P(a|C)$ belongs to the definable closure of the parallelism class of $p$ and thus 
$\P(a|C)\in \dcl^{\meq}(Cb(p))$. 

On the other hand, we would like to know the information that $\P(a|C)$ provides
at the level of definability of types for $p$. 
By Lemma \ref{types over C - equality}, to understand $Cb(p)$, it is enough to find the $p$-definitions 
for the formulas $\psi_1(x,y)=\mu(x\cap y)$ and $\psi_2(x,y)=\mu(x^c\cap y)=\mu(y) \dotminus \mu(x\cap y)$. 
Note that for any $c\in C$, we have
$$\psi_1^p(x,c)=\mu(a\cap c)=\int_c \P(a|C)\, d\mu \text{ and }$$
$$\psi_2^p(x,c)=\mu(c)-\int_c \P(a|C)\, d\mu \text{.}$$
Thus from $\P(a|C)$ we recover the $p$-definitions of the formulas 
$\psi_1(x,y)$ and $\psi_2(x,y)$, and so we recover $Cb(p)$.

(Our approach uses the fact that that for $APA$ we
have proved that all types over small algebraically closed
sets are stationary; indeed, stationarity for types over all sets follows from 
Theorem \ref{pr:non-div} and Lemma \ref{types over C - equality}.)

\begin{theo}[Prop.\ 4.5, \cite{BY1}]
\label{probCb} 
Let $C\subseteq U$ be small and let $a = (a_1,\dots,a_n) \in U^n$.
Further, let $D$ be the smallest $\s$-subalgebra of $\U$ such that
$\P(a^s | \bra C \ket )$ is $D$-measurable for all $s = (k_1,\dots,k_n) \in \{-1,+1\}^n$, 
so $D \subseteq \bra C \ket$. 
Then $D$ is a canonical base for $\tp(a/C)$.
\end{theo}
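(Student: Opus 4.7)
The plan is to show that $D$ and $Cb(p)$ (where $p=\tp(a/C)$) have the same $\meq$-definable closure, by proving the two containments separately. The left-to-right direction realizes the conditional expectations $\P(a^s|\bra C\ket)$ as explicit limits of averages along a Morley sequence, while the right-to-left direction reads off the canonical $p$-definitions of the basic formulas and sees that their only parameters are those same conditional expectations. The framework of random variables as metric imaginaries from Section~5, together with Lemma~\ref{meq-dcl of f}, is what lets us move freely between the $\sigma$-subalgebra $D$ and the finite tuple $(\P(a^s|\bra C\ket))_s$ of imaginaries.

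\medskip

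\noindent\textbf{Showing $D\subseteq\dcl^{\meq}(Cb(p))$.} Let $(a^{(i)})_{i\in\N}$ be a Morley sequence over $C$ in $p$. Fix $s\in\{-1,+1\}^n$ and consider the characteristic functions $\chi_{(a^{(i)})^s}\in L^2(\U,\mu)$, decomposed as
\[
\chi_{(a^{(i)})^s} \;=\; \P\bigl((a^{(i)})^s\bigm|\bra C\ket\bigr) + v_i^s.
\]
The residuals $v_i^s$ lie in $L^2(\U,\mu)\ominus L^2(\bra C\ket,\mu)$, have a common norm (since the $a^{(i)}$ all realize $p$), and are pairwise orthogonal, using Fact~\ref{droponnorm} (conditional expectation as $L^2$-projection), Theorem~\ref{pr:non-div} (model-theoretic independence coincides with probabilistic independence), and the characterization of independence in Lemma~\ref{characterization of independence}. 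A standard Hilbert-space averaging argument then gives $\tfrac{1}{N}\sum_{i=1}^N\chi_{(a^{(i)})^s}\to\P(a^s|\bra C\ket)$ in $L^2$, hence in $L^1$, hence in the metric imaginary sort of Corollary~\ref{inverse limit random variables}. Since each $\chi_{(a^{(i)})^s}$ is $\dcl^{\meq}$-definable from the Morley sequence, and the canonical base $Cb(p)$ contains (the $\meq$-definable closure of) the parallelism class of $p$, which in turn is contained in $\dcl^{\meq}$ of any Morley sequence of $p$, we obtain $\P(a^s|\bra C\ket)\in\dcl^{\meq}(Cb(p))$ for every $s$. Lemma~\ref{meq-dcl of f} then tells us $\sigma(\P(a^s|\bra C\ket))\subseteq\dcl^{\meq}(\P(a^s|\bra C\ket))$, and since $D$ is by definition the $\sigma$-subalgebra generated by $\bigcup_s\sigma(\P(a^s|\bra C\ket))$, we conclude $D\subseteq\dcl^{\meq}(Cb(p))$.

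\medskip

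\noindent\textbf{Showing $Cb(p)\subseteq\dcl^{\meq}(D)$.} By Remark~\ref{special formulas over Pr}, every $\LLpr$-formula in the variables $x$ is $Pr$-equivalent to one whose atomic subformulas are of the form $\mu(x^s\cap y)$; consequently $Cb(p)$ is determined by the $p$-definitions of the formulas $\psi_s(x;y):=\mu(x^s\cap y)$ as $s$ ranges over $\{-1,+1\}^n$. For any $c\in\bra C\ket$,
\[
\psi_s^p(c)\;=\;\mu(a^s\cap c)\;=\;\int_c\P(a^s\mid\bra C\ket)\,d\mu,
\]
so the $p$-definition of $\psi_s$ is a definable predicate whose only parameter is the single random variable $\P(a^s|\bra C\ket)$, an element of $\dcl^{\meq}(D)$. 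Since every formula relevant to $p$ reduces to these, all $p$-definitions lie over $D$, and hence $Cb(p)\in\dcl^{\meq}(D)$ by the general fact that canonical bases are interdefinable with the tuple of defining parameters for the $p$-definitions.

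\medskip

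\noindent\textbf{Main obstacle.} The delicate step is the Hilbert-space argument in the first paragraph: one must verify that the residuals $v_i^s$ really are pairwise orthogonal (which follows from $a^{(i)}\pindep_C a^{(j)}$, stationarity of $p$, and the fact that conditional expectation kills the residuals), that their common norm suffices to force Cesàro convergence to the projection onto $L^2(\bra C\ket,\mu)$, and that this $L^2$-limit genuinely corresponds, via the identification of $RV$ with an imaginary sort in Corollary~\ref{inverse limit random variables}, to the element $\P(a^s|\bra C\ket)$ in $\U^{\meq}$. Once those identifications are made carefully, the remainder of the argument is routine manipulation of definable closures.
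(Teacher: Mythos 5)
Your overall strategy --- showing $\dcl^{\meq}(D)=\dcl^{\meq}(Cb(p))$ by combining a Morley-sequence averaging argument with an inspection of the $p$-definitions of the formulas $\mu(x^s\cap y)$ --- is viable, and it is in fact the route sketched in the paper's informal discussion \emph{preceding} the theorem; the paper's actual proof is different (it verifies directly, via the forking calculus of Theorem \ref{pr:non-div} together with Lemmas \ref{types over C - equality} and \ref{meq-dcl of f}, that an automorphism $\tau$ of $\U$ sends $p$ to a parallel type if and only if it fixes $D$ pointwise). Your right-to-left containment $Cb(p)\subseteq\dcl^{\meq}(D)$ is essentially sound: by quantifier elimination and Remark \ref{special formulas over Pr} the global $\varphi$-definitions of $p$ are all built from the predicates $u\mapsto\int_u\P(a^s\mid\bra C\ket)\,d\mu$, whose canonical parameters lie in $\dcl^{\meq}(\P(a^s\mid\bra C\ket))\subseteq\dcl^{\meq}(D)$ by Lemma \ref{meq-dcl of f}.

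The left-to-right direction, however, has a genuine gap at its final step. You correctly show that the Ces\`aro averages of the $\chi_{(a^{(i)})^s}$ along a Morley sequence of $p$ converge to $\P(a^s\mid\bra C\ket)$, so that $\P(a^s\mid\bra C\ket)\in\dcl^{\meq}(\{a^{(i)}\}_i)$. But you then infer $\P(a^s\mid\bra C\ket)\in\dcl^{\meq}(Cb(p))$ from the fact that $Cb(p)$ is \emph{also} contained in $\dcl^{\meq}$ of that Morley sequence; two elements lying in the definable closure of a common (large) set does not place either in the definable closure of the other, and the Morley sequence is typically far larger than $Cb(p)$. To close the gap you must use that the \emph{same} limit is produced by a Morley sequence of any type parallel to $p$: given an automorphism $\tau$ with $\tau(p)$ parallel to $p$, take a Morley sequence of their common non-forking extension over $\bra C\cup\tau(C)\ket$; it is simultaneously a Morley sequence of $p$ over $C$ and of $\tau(p)$ over $\tau(C)$, so its averages converge both to $\P(a^s\mid\bra C\ket)$ and to $\tau(\P(a^s\mid\bra C\ket))$, forcing $\tau$ to fix $\P(a^s\mid\bra C\ket)$ and hence, by Lemma \ref{meq-dcl of f}, all of $D$. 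This invariance under every automorphism preserving the parallelism class is what actually yields $D\subseteq\dcl^{\meq}(Cb(p))$, and it is precisely the content of the first half of the paper's own proof.
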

\begin{proof}
See Notation \ref{cond prob notation over a sigma-subalgebra of prob alg}
for some background that we use here, especially for
what we mean precisely by $D$-measurability of $\P(a^s | \bra C \ket )$
for $\sigma$-subalgebras $D \subseteq \bra C \ket$ of $\U$.

Let $a=(a_1,\dots,a_n) \in U^n$ and $p = \tp(a/C)$, and let $\tau$ be any automorphism of $\U$.
We must prove that $\tau(p) := \tp(\tau(a)/\tau(C))$ is parallel to $p$\index{parallelism of types} 
(that is, that they have a common non-forking extension) 
if and only if $\tau$ fixes $D$ pointwise.  

First assume that $\tau(p)$ is parallel to $p$.  Hence there is a type $q$
over $\bra C\cup \tau({C}) \ket$ such that $q$ extends $p$ and $\tau(p)$, and also that
$q$ does not fork over $\bra C \ket$, and q does not fork over $\bra \tau({C}) \ket = \tau(\bra C \ket)$.
Let $b = (b_1,\dots,b_n)\models q$.

By Lemma \ref{types over C - equality} and Theorem \ref{pr:non-div},  and the stated properties of $q$, we have
\[
\P(\tau(a^s) | \tau(\bra C \ket)) =  \P(b^s| \tau(\bra C \ket)) =\P(b^s| \bra C\cup \tau({C}) \ket) = \P(b^s| \bra C \ket) = \P(a^s| \bra C \ket)
\]
for all $s \in \{-1,+1\}^n$.  As discussed before Lemma \ref{meq-dcl of f},
$\tau(\P(a^s | \bra C \ket)) = \P(\tau(a^s) | \tau(\bra C \ket))$, 
so $\tau(\P(a^s | \bra C \ket)) = \P(a^s | \bra C \ket)$, 
by the preceding calculation.  Therefore, applying Lemma \ref{meq-dcl of f} to the representatives of 
$\P(a^s | \bra C \ket)$ for each $s \in \{-1,+1\}^n$, we conclude 
$\tau(u) = u$ for every $u \in D$, as needed to be shown.

Conversely, assume that $\tau$ fixes $D$ pointwise.  
We know $\P(a^s | \bra C \ket) = \P(a^s | D)$ for all $s \in \{-1,+1\}^n$, 
so by Theorem \ref{pr:non-div} we get $a \indep_D C$.  By Invariance for $\indep$, we also have 
$\tau(a) \indep_D \tau(C)$.  Using Extension for $\indep$ we get $p',q' \in S_n(C \cup \tau(C))$ such that $p'$ is
a non-forking extension of $p = \tp(a/C)$ and $q'$  is a non-forking extension of $\tau(p) = \tp(\tau(a)/\tau(C))$.
By Transitivity for $\indep$, it follows that both $p'$ and $q'$ are non-forking over $D$, so by Stationarity
for $\indep$ and the fact that $\tp(\tau(a)/D) = \tp(a/D)$, we conclude that $p'=q'$.  
It follows that $p$ and $\tau(p)$ are parallel.
\end{proof}

In \cite{BY1}, the perspective on canonical bases is the same as the one we
use here.  The proof of Prop.\ 4.5 in \cite{BY1} shows that if $E$ is
any closed algebra $\subseteq U$ over which the type does not fork, and it is minimal 
with this property, then $E$ coincides with $D$. Another approach 
to canonical bases can be found in \cite{BY3}, where Ben Yaacov shows that 
a better way of dealing with these objects is by introducing a sort for $[0,1]$-valued 
random variables associated to the corresponding probability space. It turns out 
that one can identify the canonical base of $\tp(a/C)$ with $\P(a|\C )$ (a $[0,1]$-valued random variable) 
in a \emph{uniform} way in order to construct \emph{uniform canonical bases} 
(see \cite[Definition 1.1 and Corollary 2.3]{BY3}) a process which requires
imaginaries for $APA$ (see \cite[Corollary 2.5]{BY3}).  

The fact that types have canonical bases in the home sort gives some
information about elimination of imaginaries for $APA$.  
Namely, $APA$ has \emph{weak} elimination of metric imaginaries,\index{weak elimination of metric imaginaries} 
which means that for every element $a$ of an imaginary sort, there exists a subset
$A$ of the home sort such that $\acl^{\meq}(a) = \dcl^{\meq}(A)$.
(See \cite[Defn.\ 1.5]{BY5}.)

\begin{coro}
\label{APA has wEMI}
The theory $APA$ has weak elimination of metric imaginaries.
\end{coro}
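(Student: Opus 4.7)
The plan is to reduce weak elimination of metric imaginaries to the following: for every imaginary $e$, there is a real set $A\subseteq\acl^{\meq}(e)\cap U$ with $e\in\dcl^{\meq}(A)$. Once this is established, one obtains a real $A'$ with $\acl^{\meq}(e)=\dcl^{\meq}(A')$ by applying the reduced statement to each $e'\in\acl^{\meq}(e)$ to get $A_{e'}\subseteq\acl^{\meq}(e')\cap U$ with $e'\in\dcl^{\meq}(A_{e'})$ and taking $A'=\bigcup_{e'\in\acl^{\meq}(e)}A_{e'}$. Then $A'\subseteq\acl^{\meq}(e)\cap U$ yields $\dcl^{\meq}(A')\subseteq\acl^{\meq}(e)$, while $\acl^{\meq}(e)\subseteq\dcl^{\meq}(A')$ follows because every element of $\acl^{\meq}(e)$ lies in some $\dcl^{\meq}(A_{e'})\subseteq\dcl^{\meq}(A')$.

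To prove the reduced statement for a given imaginary $e$, first pick any real tuple $b$ (possibly infinite) and an $\emptyset$-definable function $g$ with $g(b)=e$; such data exist because imaginary sorts are built as quotients of real sorts by $\emptyset$-definable pseudometrics. Consider the type $p=\tp(b/\acl^{\meq}(e))$. Since $APA$ is stable (Proposition \ref{omega stable}) and types in $APA$ are stationary over arbitrary parameter sets (as noted in the discussion preceding Theorem \ref{probCb}), $p$ is stationary, with a canonical base $Cb(p)\subseteq\dcl^{\meq}(\acl^{\meq}(e))=\acl^{\meq}(e)$. Applying a version of Theorem \ref{probCb} adapted to parameter sets that contain imaginaries, $Cb(p)$ is interdefinable in $\U^{\meq}$ with a subset $A$ of the home sort $U$, and necessarily $A\subseteq\acl^{\meq}(e)\cap U$.

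For the conclusion $e\in\dcl^{\meq}(A)$, I would argue as follows. Let $\tau$ be any automorphism of $\U^{\meq}$ fixing $A$ pointwise; then $\tau$ fixes $Cb(p)$, so $\tau(p)=\tp(\tau(b)/\tau(\acl^{\meq}(e)))$ is parallel to $p$. Choose a common non-forking extension $p^*$ of $p$ and $\tau(p)$ over $\acl^{\meq}(e)\cup\tau(\acl^{\meq}(e))$ and pick a realization $b^*\models p^*$. Stationarity of $p$ yields $b^*\equiv_{\acl^{\meq}(e)}b$, so $g(b^*)=g(b)=e$; symmetrically $b^*\equiv_{\tau(\acl^{\meq}(e))}\tau(b)$, and since $g$ is $\emptyset$-definable this gives $g(b^*)=g(\tau(b))=\tau(g(b))=\tau(e)$. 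Hence $\tau(e)=e$, and strong $\kappa$-homogeneity of $\U^{\meq}$ yields $e\in\dcl^{\meq}(A)$.

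The main obstacle is the appeal to the extension of Theorem \ref{probCb} in which the parameter set is allowed to contain imaginaries: the theorem as stated treats canonical bases of types over $C\subseteq U$, but here the parameter set $\acl^{\meq}(e)$ contains $\meq$-elements. The cleanest way to handle this is to use Ben Yaacov's uniform canonical base construction \cite{BY3}, in which $Cb(p)$ is represented by a random variable in the metric imaginary sort $RV$ of Section \ref{random variables}; by Lemma \ref{meq-dcl of f}, such a random variable lies in the $\meq$-definable closure of a real $\sigma$-subalgebra of $\U$, which furnishes the desired real set $A$ interdefinable with $Cb(p)$.
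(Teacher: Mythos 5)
Your overall architecture is sound and is, in effect, a proof of the fact that the paper invokes as a black box: the paper's own proof of this corollary is a one-line appeal to stability, Theorem \ref{probCb}, and \cite[Fact 1.6]{BY5}, and what you have written out is essentially the standard argument behind that cited fact (reduce to producing a real $A\subseteq\acl^{\meq}(e)\cap U$ with $e\in\dcl^{\meq}(A)$, take $A$ to be a real set interdefinable with $Cb(\tp(b/\acl^{\meq}(e)))$, and run the parallelism/automorphism argument). Your reduction step and your argument that any automorphism fixing $A$ pointwise fixes $e$ are both correct. So the route is genuinely different from the paper's --- more self-contained --- but it stands or falls on the one step you yourself flag.

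That step is a genuine gap, and the repair you propose does not close it. Theorem \ref{probCb} and the uniform canonical bases of \cite{BY3} both identify $Cb(\tp(a/C))$ with (something interdefinable with) the conditional probabilities $\P(a^s\mid\bra C\ket)$, and this only makes sense when $C$ consists of \emph{real} parameters, so that $\bra C\ket$ is a $\sigma$-subalgebra of the home sort. Your parameter set $\acl^{\meq}(e)$ contains imaginaries, so there is nothing to condition on; Lemma \ref{meq-dcl of f} only says that a \emph{given} random variable is interdefinable with a real $\sigma$-algebra --- it does not produce a random variable representing $Cb(p)$ in the first place. (Replacing $\acl^{\meq}(e)$ by its real trace $\acl^{\meq}(e)\cap U$ would be circular: that the real trace carries all the information is essentially what weak elimination asserts.) The standard repair uses that the canonical base is an invariant of the parallelism class: choose a small real model $M\preceq\U$ containing a real tuple $b$ with $g(b)=e$, so $e\in\dcl^{\meq}(M)$ and hence $\acl^{\meq}(e)\subseteq\acl^{\meq}(M)=\dcl^{\meq}(M)$; let $r$ be the nonforking extension of $p$ to $M$ (realized by some $b'$, not necessarily $b$). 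Then $Cb(r)=Cb(p)$, and Theorem \ref{probCb} applies verbatim to $r$, whose parameters are real, yielding a real $D\subseteq\bra M\ket$ interdefinable with $Cb(p)$. Since $Cb(p)\subseteq\acl^{\meq}(e)$, this $D$ lies in $\acl^{\meq}(e)\cap U$, and the remainder of your argument then goes through unchanged.
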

\begin{proof}
This follows from the fact that $APA$ is stable and that it has canonical 
bases in the home sort (Theorem \ref{probCb}), together with \cite[Fact 1.6]{BY5}.
\end{proof}

\bigskip
We turn now to another property that is related to canonical bases, namely being 
\emph{strongly finitely based (SFB)}. 
Let $C\subseteq U$ be a small set, 
$a=(a_1,\dots,a_n)\in U^n$, $x=(x_1,\dots,x_n)$, and $p(x)=\tp(a/C)$.
The type $p(x)$ is stationary.  For $\varphi(x;y)$ an $\LLpr$-formula, with 
$y=(y_1,\dots,y_k)$, let ${d}_{x}^p\varphi(y)$ be a $\varphi$-definition for 
\index{${d}_{x}^p\varphi(y)$}
$p(x)$, which exists because $APA$ is stable.  Its main property is that
for every $c\in C^k$, the $p(x)$-value of the formula $\vphi(x;c)$ 
equals the value of ${d}_{x}^p \vphi(c)$ in $\U^{\meq}$.
The $\varphi$-definition for $p(x)$ can be constructed from a Morley sequence 
$(a^i)_{i\in \N}$ in any type parallel to $p(x)$ over $C$, by 
defining it as the average value of $\varphi(x;c)$ along the Morley sequence:
$${d}_{x}^p\varphi(c)=\lim_{k\to \infty} \frac{\sum_{i=1}^k \varphi(a^i;c)}{k}$$
This definition depends only on the parallelism class of $p$ and not on the specific Morley sequence under consideration.  
So ${d}_{x}^p \varphi (y)$ is an $(\LLpr)^{\meq}$-formula in which a parameter from $\dcl^{\meq}(C)$ occurs.

Now consider $p,q \in S_n(C)$, with $\varphi$-definitions ${d}_{x}^p\varphi(y)$, ${d}_{x}^{q}\varphi(y)$ respectively. 
Another way to measure how much $p,q$ differ is by considering the pseudometrics%
\index{${d}_\varphi(p, q)$}
$${d}_\varphi(p, q) :=\sup_{c\in C^k} |{d}_{x}^p\varphi(c)-{d}_{x}^{q}\varphi(c)|$$ as $\varphi(x;y)$ 
ranges over all  $\LLpr$-formulas in variables $(x;y)$, with $x$ fixed and $y=(y_1,\dots,y_k)$ any finite sequence of parameter variables.  These pseudometrics define a uniform structure on $S_n(C)$, which we denote by $\V_{Cb}$.
The topology induced by $\V_{Cb}$ on $S_n(C)$ is denoted by $\tau_{Cb}$.%
\index{$\V_{Cb}$, $\tau_{Cb}$}
\footnote{The reason for including these details is to justify that our topology $\tau_{Cb}$
is the same as the topology introduced in \cite{BBH}.  See Remark \ref{alternate expression for d phi} below for a more
elementary formula for ${d}_\vphi$.}

With the topology just described, we have three natural topologies 
on $S_n(C)$, namely, the logic topology $\tau_{\mathcal{L}}$, the metric topology $\tau_{d}$
and now the \emph{canonical base topology} $\tau_{Cb}$.\index{canonical base topology}

\begin{rema}
\label{alternate expression for d phi}
A more transparent way of evaluating ${d}_\vphi(p,q)$ comes from simply using what the $p$- and $q$-definitions
express.  Suppose $a \models p$ and $b \models q$,  Then
$${d}_\varphi(p, q) :=\sup_{c\in C^k} |\vphi^\U(a;c) - \vphi^\U(b;c)| \text{.}$$
\end{rema}

In the setting of $APA$, since types over the set $C$ are stationary, we have 
$\tau_{d}\subseteq \tau_{Cb}\subseteq \tau_{\mathcal{L}}$ (see \cite[Lemma 1.5]{BBH}). 
When $C$ is finite, adding names for the elements of $C$ preserves $\aleph_0$-categoricity over $APA$, and thus 
the three topologies are identical. On the other hand, when we take a sufficiently large set of parameters, 
for example if $\bra C \ket$ is atomless and thus the universe of a model of $APA$, 
then $\tau_{d} \subsetneq  \tau_{\mathcal{L}}$
(by the continuous Ryll-Nardzewski Theorem).  It is natural to ask for more precise information
for how the topology $\tau_{Cb}$ 
relates in general to the other two topologies.

\begin{defi}[\cite{BBH}]
\label{def: SFB}
A stable theory $T$ is \emph{strongly finitely based (SFB)} if for every  $\M\models T$ and every
$n$, the topologies $\tau_{d}$ and $\tau_{Cb}$ agree on $S_n(M)$.\index{strongly finitely based (SFB)}
\end{defi}

Applications of the $SFB$ property can be found in \cite{BBH}, where the concept was 
introduced as a continuous analogue of a strong version of the notion of being $1$-based.

In that paper, the theory of lovely pairs was used to prove that $APA$ has $SFB$. 
Below we give a direct proof using Theorem \ref{distance} and Lemma \ref{Lipschitz estimates}. 

\begin{defi}
 \label{defn of {Cb}-distance}
 Let $C\subseteq U$ be a small subalgebra. Let $n\geq 1$ and 
$x = (x_1,\dots,x_n)$ a tuple of distinct variables, and $y$ a variable not occurring in $x$.  
Let $\varphi_s(x;y) := \mu(x^s\cap y)$ for each $s \in \{-1,1\}^n$. 
Then we define ${d}_{Cb}$ on $S_n(C)$ by ${d}_{Cb}(p,q) := \max_s {d}_{\varphi_s} (p,q)$.
\end{defi}
\index{${d}_{Cb}$}

Obviously ${d}_{Cb}$ is a pseudometric; Proposition \ref{APA is SFB} below shows that
${d}_{Cb}(p,q) =0$ implies $d(p,q)=0$, so ${d}_{Cb}$ is in fact a metric.

\begin{lema}
\label{containments}
 Let $C\subseteq U$ be a small subalgebra.  Then $\V_{Cb}$ contains
 the uniform structure induced by ${d}_{Cb}$ and is contained in the uniform structure
 induced by $d$. 
 \end{lema}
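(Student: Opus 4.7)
The plan is to prove the two containments separately; neither involves any deep argument, just the right bookkeeping. For the first containment—that the uniform structure induced by ${d}_{Cb}$ is contained in $\V_{Cb}$—I would unfold definitions. Each pseudometric ${d}_{\varphi_s}$ appearing in the definition ${d}_{Cb} = \max_s {d}_{\varphi_s}$ is indexed by the $\LLpr$-formula $\varphi_s(x;y) = \mu(x^s \cap y)$ in the fixed tuple of variables $x = (x_1,\dots,x_n)$ and a single parameter variable $y$, so each ${d}_{\varphi_s}$ is one of the pseudometrics generating $\V_{Cb}$. Since $s$ ranges over the finite set $\{-1,+1\}^n$, any basic entourage $\{(p,q) : {d}_{Cb}(p,q) < \varepsilon\}$ equals the finite intersection $\bigcap_s \{(p,q) : {d}_{\varphi_s}(p,q) < \varepsilon\}$, which is a basic entourage of $\V_{Cb}$. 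This direction is essentially formal.

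For the second containment—that $\V_{Cb}$ lies inside the uniform structure induced by the metric $d$ on $S_n(C)$—the plan is to show that for every $\LLpr$-formula $\varphi(x;y)$, the pseudometric ${d}_\varphi$ is uniformly continuous with respect to $d$. Using the expression from Remark \ref{alternate expression for d phi}, for any $a \models p$ and $b \models q$ we have ${d}_\varphi(p,q) = \sup_{c \in C^k} |\varphi^{\U}(a;c) - \varphi^{\U}(b;c)|$. Let $\Delta_\varphi$ be a modulus of uniform continuity for $\varphi$ in the $x$-variables, which exists by the continuous-logic assumption that all formulas are uniformly continuous on metric structures. Applying this modulus pointwise in $c$ gives $|\varphi^{\U}(a;c) - \varphi^{\U}(b;c)| \leq \Delta_\varphi(\max_i d(a_i,b_i))$, uniformly in $c \in C^k$. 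Taking the sup over $c$ and then letting $a,b$ range over realizations witnessing the infimum $d(p,q)$ yields ${d}_\varphi(p,q) \leq \Delta_\varphi(d(p,q))$. Thus given $\varepsilon > 0$, choosing $\delta$ with $\Delta_\varphi(\delta) \leq \varepsilon$ ensures $d(p,q) < \delta \Rightarrow {d}_\varphi(p,q) < \varepsilon$, so every basic entourage of $\V_{Cb}$ contains a basic $d$-entourage.

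The main—and very mild—point requiring care is the interaction between the supremum over $c$ and the infimum defining $d(p,q)$. This is handled by noting that the uniform continuity estimate is uniform in $c$, so the sup over $c$ poses no difficulty; and because the modulus $\Delta_\varphi$ is continuous at $0$, approaching the infimum in $d(p,q)$ via realizations $a,b$ does not cost anything in the estimate. There is no genuine obstacle: both inclusions are essentially consequences of how the two uniformities are generated together with the standing uniform-continuity property built into the continuous first-order setup.
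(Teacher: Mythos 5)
Your proposal is correct and follows essentially the same route as the paper: the first containment is formal because each $\varphi_s$ is an $\LLpr$-formula, so each ${d}_{\varphi_s}$ is among the generating pseudometrics of $\V_{Cb}$ and the finite max poses no problem; the second containment is the observation that the formulas $\vphi(x;c)$ are uniformly continuous in $x$ with a modulus independent of $c$, so the sup over $c$ (and the inf over realizations) preserves uniform continuity of ${d}_\vphi$ with respect to $d$. The paper compresses your second step into citations of \cite[Theorem 3.5 and Proposition 2.8]{BBHU}, which encapsulate exactly the sup/inf bookkeeping you carry out by hand.
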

\begin{proof}
As $\{\varphi_s\}_{s \in \{-1,+1\}^n}$ are $\LLpr$-formulas, the set
$\{ (p,q) \in S_n(C)^2 \mid {d}_{Cb}(p,q) \leq \epsilon \}$
is in $\V_{Cb}$ for every $\epsilon >0$. 

On the other hand, any $\LLpr$-formula 
$\vphi(x;y_1,\dots,y_k)$ with parameters from $C$ is uniformly continuous with respect
to $d$ (see \cite[Theorem 3.5]{BBHU}).  Therefore, all formulas $\vphi(x;c_1,\dots,c_k)$ have the same
modulus of uniform continuity with respect to $d$.  By \cite[Proposition 2.8]{BBHU} it follows that ${d}_\vphi$ is
uniformly continuous with respect to $d$, with the same modulus.
\end{proof}

\begin{prop}
\label{APA is SFB}
The theory $APA$ is SFB; that is, the topologies $\tau_{Cb}$ and $\tau_{d}$ coincide over any set $C$ of parameters.
Indeed, the metrics ${d}_{Cb}$ and $d$ both induce the uniform structure $\V_{Cb}$ on $S_n(C)$.
\end{prop}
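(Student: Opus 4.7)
The plan is to exploit Lemma \ref{containments}, which already gives one chain of inclusions between the uniformities: the ${d}_{Cb}$-uniformity is contained in $\V_{Cb}$, which in turn is contained in the $d$-uniformity. So to conclude that all three coincide, it is enough to establish the reverse comparison, namely to exhibit an explicit inequality $d(p,q) \leq K_n \cdot {d}_{Cb}(p,q)$ for all $p,q \in S_n(C)$, with a constant $K_n$ depending only on $n$. From such an inequality, every $d$-neighborhood contains a ${d}_{Cb}$-neighborhood, forcing both uniformities to agree with $\V_{Cb}$.

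To convert $d$-distance into something amenable to ${d}_{Cb}$, I would pass through the bijection $\Pi_n \colon S_n(C) \to S^*_{2^n}(C)$. By Lemma \ref{Lipschitz estimates}, $d(p,q) \leq 2^{n-1}\,d\bigl(\Pi_n(p),\Pi_n(q)\bigr)$, and by Theorem \ref{distance} the right-hand side equals $\max_s \bigl\|\P(a^s\mid\bra C\ket)-\P(b^s\mid\bra C\ket)\bigr\|_1$, where $a \models p$ and $b \models q$. So the problem reduces to bounding, for each fixed $s \in \{-1,+1\}^n$, the $L_1$-norm $\|f_s\|_1$ of $f_s := \P(a^s\mid\bra C\ket)-\P(b^s\mid\bra C\ket)$ by a fixed multiple of ${d}_{Cb}(p,q) = \max_s \sup_{c\in C}|\mu(a^s\cap c)-\mu(b^s\cap c)|$.

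For this central step I would carry out a Hahn-style decomposition: since $f_s$ is $\bra C\ket$-measurable, the sets $E^\pm := \{\pm f_s > 0\}$ lie in $\bra C\ket$, and the defining property of conditional expectation together with $\int\chi_{E^\pm}f_s = \mu(a^s\cap E^\pm)-\mu(b^s\cap E^\pm)$ yields
\[
\|f_s\|_1 \;=\;\bigl(\mu(a^s\cap E^+)-\mu(b^s\cap E^+)\bigr)+\bigl(\mu(b^s\cap E^-)-\mu(a^s\cap E^-)\bigr).
\]
Now I use that $C$ is assumed to be a (small) subalgebra and is therefore $d$-dense in $\bra C\ket$: for any $\epsilon'>0$ there exist $c^\pm\in C$ with $d(E^\pm,c^\pm)<\epsilon'$. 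Since $|\mu(u\cap E)-\mu(u\cap c)|\leq \mu(E\triangle c)$, replacing each $E^\pm$ by $c^\pm$ in the displayed formula changes the total by at most $4\epsilon'$, so
\[
\|f_s\|_1 \;\leq\; |\mu(a^s\cap c^+)-\mu(b^s\cap c^+)| + |\mu(a^s\cap c^-)-\mu(b^s\cap c^-)| + 4\epsilon' \;\leq\; 2\,{d}_{Cb}(p,q)+4\epsilon'.
\]
Letting $\epsilon'\to 0$ gives $\|f_s\|_1\leq 2\,{d}_{Cb}(p,q)$. Chaining this with the earlier estimates produces $d(p,q)\leq 2^n\,{d}_{Cb}(p,q)$, which is the required quantitative comparison and simultaneously shows that ${d}_{Cb}$ separates types, so it really is a metric.

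The main obstacle is a conceptual rather than computational one: ${d}_{Cb}$ only tests against a single parameter $c$ drawn from $C$, whereas controlling $\|f_s\|_1$ seems to require comparing integrals against arbitrary $\bra C\ket$-measurable sets; the crucial observation that unlocks the argument is that, because $f_s$ is $\bra C\ket$-measurable, the Hahn sets $E^\pm$ already live in $\bra C\ket$ and can be metrically approximated by elements of $C$ — so one never actually needs the sup over all $\bra C\ket$-measurable test sets, only a single well-chosen $c^+\in C$ and $c^-\in C$.
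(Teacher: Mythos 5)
Your proof is correct, but it takes a genuinely different — and in fact sharper — route than the paper's. The paper also starts from Lemma \ref{containments}, Theorem \ref{distance} and Lemma \ref{Lipschitz estimates}, but then controls $\|\P(a^s|\bra C \ket)-\P(b^s|\bra C \ket)\|_1$ by conditioning on a finite subalgebra $E\subseteq \bra C \ket$ with at most $k^2$ atoms produced by Lemma \ref{approximating P(a|C)}, and summing $|\mu(a^s\cap e_j)-\mu(b^s\cap e_j)|$ over the atoms $e_j$; since each summand is only bounded by ${d}_{\varphi_s}(p,q)$, the resulting estimate degrades with the number of atoms and yields only a uniform-continuity modulus (roughly ${d}_{Cb}(p,q)<\epsilon^3 \Rightarrow d(p,q)<2^{n+2}\epsilon$), which is all that is needed for the uniformities to agree. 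Your Hahn-decomposition step replaces the sum over many atoms by exactly two test sets $E^{\pm}=\{\pm f_s>0\}$, which lie in $\bra C \ket$ because $f_s$ is $\bra C \ket$-measurable, and the defining property of conditional expectation converts $\|f_s\|_1$ into two differences of the form $\mu(\cdot\cap E^{\pm})-\mu(\cdot\cap E^{\pm})$; since $C$ is a subalgebra (as Definition \ref{defn of {Cb}-distance} assumes) and hence $d$-dense in $\bra C \ket$, and $c\mapsto\mu(u\cap c)$ is $1$-Lipschitz, the supremum defining ${d}_{\varphi_s}$ over $C$ controls these terms, giving the genuine Lipschitz bound $d(p,q)\leq 2^{n}\,{d}_{Cb}(p,q)$. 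This is strictly stronger than what the paper proves (a quantitative Lipschitz equivalence of the two metrics rather than mere uniform equivalence), it makes it immediate that ${d}_{Cb}$ is a metric, and it avoids Lemma \ref{approximating P(a|C)} entirely; the only points worth making explicit in a write-up are that $E^{\pm}$ are $\bra C \ket$-measurable precisely because $f_s$ is, and that the passage from $\sup_{c\in C}$ to $\sup_{c\in\bra C\ket}$ (or the $\epsilon'$-approximation you use instead) rests on the density of $C=C^{\#}$ in $\bra C \ket$.
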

\begin{proof}
Let $C\subseteq U$ be small.  Together with Lemma \ref{containments}, it suffices to show that
$d$ is uniformly continuous with respect to ${d}_{Cb}$ on $S_n(C)$.  Fix $p,q \in S_n(C)$ and let
$a,b \in U^n$ satisfy $a \models p$ and $b \models q$.

Recall from Theorem \ref{distance}
and Lemma \ref{Lipschitz estimates} that
$$ \big( 2^{-n+1}\big) \cdot d(p,q) \leq d(\Pi_n(p),\Pi_n(q)) = \max_s \|\P(a^s|\bra C \ket)-\P(b^s |\bra C \ket)\|_1  \text{.}$$

Fix $s \in \{ -1,+1 \}^n$ and $\epsilon \in (0,1]$.  Let $k$ satisfy $k-1 \leq 1/\epsilon < k$, so $1/k < \epsilon$ and $k \leq \frac{1}{\epsilon}+1$.

Let $u_1,\dots,u_k$ be a partition of $1$ in $\bra C \ket$ obtained using Lemma \ref{approximating P(a|C)}
applied to $a^s$ and $k$.  This ensures that for any closed subalgebra $E$ of $\bra C \ket$ that contains $\{ u_1,\dots,u_k \}$
we have $\|\P(a^s|\bra C \ket)- \P(a^s |E) \|_1\leq 1/k$.

Similarly, let $v_1,\dots,v_k$ be a partition of $1$ in $\bra C \ket$ such that 
for any closed subalgebra $E$ of $\bra C \ket$ that contains $\{ v_1,\dots,v_k \}$
we have $\|\P(b^s|\bra C \ket)- \P(b^s |E) \|_1\leq 1/k$.

Then $E := \{u_1,\dots,u_k,v_1,\dots,v_k\}^\#$ is a finite subalgebra of $\bra C \ket$ with at most $k^2$ atoms (namely all the intersections $u_i \cap v_j$) such that both 
$\|\P(a^s|\bra C \ket)- \P(a^s |E) \|_1\leq 1/k$ and $\|\P(b^s|\bra C \ket) - \P(a^s |E)  \|_1\leq 1/k$.
Let the atoms of $E$ be $e_1,\dots,e_N$, so $N \leq k^2$.

Using the triangle inequality for $ \| \cdot \|_1$ we get
$$ \|\P(a^s|\bra C \ket)-\P(b^s |\bra C \ket)\|_1 \leq \|\P(a^s|E)-\P(b^s |E)\|_1 +2\epsilon \text{.}$$
The proof is completed by the following estimate:
\begin{align*}
\|\P(a^s|E)-\P(b^s |E)\|_1  = 
\big\| \sum_{j=1}^N & \frac{\mu(a^s \cap e_j)}{\mu(e_j)} \chi_{e_j} - \sum_{j=1}^N \frac{\mu(b^s \cap e_j)}{\mu(e_j)} \chi_{e_j} \big\|_1 \\
 \overset{\star}{=} \sum_{j=1}^N \|  \frac{\mu(a^s \cap e_j) - \mu(b^s \cap e_j)}{\mu(e_j)} &\chi_{e_j}\|_1
 = \sum_{j=1}^N | \mu(a^s \cap e_j) - \mu(b^s \cap e_j) | \\
= N \cdot {d}_{\vphi_s}(p,q) & \leq (\tfrac{1}{\epsilon} + 1)^2 \cdot {d}_{\vphi_s}(p,q) \text{.}
\end{align*}
(The equality ($\star$) holds because the different $\chi_{e_j}$ are disjointly supported.)
Therefore 
$$(2^{-n+1}) \cdot d(p,q) \leq  (\frac{1}{\epsilon} + 1)^2 \cdot {d}_{Cb}(p,q) + 2\epsilon \text{,}$$ 
so ${d}_{Cb}(p,q) < \epsilon^3$ implies $d(p,q) <  (2^{n+2}) \cdot \epsilon$, 
showing that $d$ is uniformly continuous relative to ${d}_{Cb}$.

When combined with Lemma \ref{containments}, this argument completes the 
proof that $d$ and ${d}_{Cb}$ induce the same uniform structure 
on $S_n(C)$ (namely $\V_{Cb}$).
It follows that the topologies $\tau_{Cb}$ and $\tau_{d}$ coincide, over any set $C$ of parameters.
\end{proof}

\bigskip
In the rest of this section we make a few connections with Shelah's classification program
for models of classical first order theories, and we offer some speculative suggestions about
how some aspects of the program might be carried into continuous model theory.

Shelah describes in \cite{Shelah} what one would hope for from a
\emph{structure theorem} versus \emph{non-structure theorem} that
\emph{allows one to} versus
\emph{prevents one from} completely classifying the models of a complete theory $T$.
This distinction is expressed in terms of invariants that determine the models of $T$ up to
isomorphism.  The invariants that come into the picture at the lowest level of complexity
(ordered by \emph{depth}, which is in general any ordinal number), are defined as follows.
(When we assign an invariant to a model $\M$, it should only depend on the isomorphism
type of $\M$.)  An \emph{invariant of depth $0$} for models of $T$ is an
assignment to each $M$ of a cardinal $\leq \lambda = \card(M)$.
An \emph{invariant of depth $1$} for models of $T$ is an assignment to
each $\M$ of a set $\K$ of cardinals $\leq \lambda = \card(M)$ together with a
family of $\leq 2^{\aleph_0}$ many
functions from the set $\K$ to the set of cardinals $\leq \lambda$. 
(Frequently one ignores models of cardinality $< \kappa$, for some infinite $\kappa$.)  
For example, if $T$ is uncountably categorical
and its language is countable, the models of $T$ have invariants of depth $0$, namely to the
uncountable model $\M$ is assigned $\card(M)$.
Shelah's thesis is that $T$ has a structure theory iff there is an
ordinal $\alpha$ and invariants (or sets
of invariants) of depth $\alpha$ that determine every model of $T$ up to
isomorphism.

The Maharam invariants for models of $APA$ fit into this framework, with
a twist that is not surprising, given that the
setting has changed from classical model theory to its continuous
counterpart.  Namely, invariants coming from the
interval $[0,1]$ come into the picture.  Consider $\M \models APA$ of
density $\lambda$.  The invariant $\K^\M$ is
a nonempty countable set of cardinal numbers that satisfies $\sup \K^\M
= \lambda$; the additional invariant
$\Phi^\M$ is a function from $\K^\M$ to $(0,1]$ such that $\sum \{
\Phi(\kappa) \mid \kappa \in \K^\M \}=1$.  This
feels analogous to Shelah's invariants of depth $1$, with the function
$\Phi^\M$ as an additional feature.

Recall that for every model $\M$ of $APA$ there exists a family
$(b_\kappa \mid \kappa \in \K^\M)$ that witnesses
the invariants $(\K^\M,\Phi^\M)$ in the sense that each $b_\kappa$ is
maximal homogeneous of density $\kappa$
and $\mu(b_\kappa) = \Phi^\M(\kappa)$ for each $\kappa$.  (This implies
that the elements $b_\kappa$ are
pairwise disjoint and their union is $1$.)  (See Section \ref{maharam}.)

Note that exactly as in Shelah's framework, the Maharam invariants can
be used to calculate $I(\lambda,APA)$, which here is
defined to be the number of models (up to isomorphism) of $APA$ having density character
$\lambda$.  We know $I(\aleph_0,APA)=1$.
If $1 \leq n < \omega$ and we are considering models of density
$\aleph_n$, there are $2^n$ many choices for $\K^\M$ (it must
contain $\aleph_n$) and $2^{\aleph_0}$ many choices of $\Phi^\M$ on each
choice of $\K^\M$,
except for the case $\K_\M = \{ \aleph_n \}$, where $\Phi(\aleph_n) = 1$
is required.  (This last choice is the invariant of the unique
homogeneous model of density $\aleph_n$.) Hence $I(\aleph_n,APA) =
2^{\aleph_0}$.   For an ordinal $\gamma \geq \omega$, a similar
calculation shows that $I(\aleph_\gamma,APA) = (\card(\gamma))^{\aleph_0}$.

This analogy makes it seem likely that $APA$ can be placed somewhere in
a Shelah-style classification framework for
$\omega$-stable continuous theories with a countable language.  To begin exploring
this possibility, we introduce possible definitions of notions like
\emph{unidimensional} and \emph{non-multidimensional} into the
continuous logic setting, and explore the extent to which they apply to
$APA$.  Mostly they are taken directly from the classical
first order discrete case, as presented in \cite{Bue}.  

\begin{nota}
\label{notation for non-forking extension}
Let $T$ be a stable theory and let $\V\models T$ be a $\kappa$-universal domain.
Let $B \subseteq A\subset V$ be small and let $p\in S_n(B)$. 
When $p$ is stationary, we let $p \lharp\! A$ denote the unique type in $S_n(A)$
that is a non-forking extension of $p$.
\end{nota}
\index{$p \lharp{A}$}

\begin{defi}
\label{def: orthogonal unidimensional, non-multidimensional}
Let $T$ be a stable theory and let $\V\models T$ be a $\kappa$-universal domain.
Let $A\subset V$ be small and let $p,q\in S(A) := \bigcup_n S_n(A)$. We say $p,q$ are 
\emph{almost orthogonal}\index{almost orthogonal}
 and write $p \perp^a q$ if for all $b\models p$ and all
$c\models q$ we have $b\indep_A c$. Given stationary types $p \in S(B),q \in S(C)$, 
with $B,C \subseteq V$ small, 
we say that $p$ and $q$ are \emph{orthogonal}\index{orthogonal}
and write $p\perp q$ if for all small sets $A$ with  $B \cup C \subseteq A \subseteq V$, 
we have $(p\lharp\! A) \perp^a (q\lharp\! A)$.

We say a theory $T$ is \emph{unidimensional}\index{unidimensional}
 if $p\not \perp q$ whenever $p,q$ are non-algebraic stationary types
over small sets of parameters $\subseteq V$.

Whenever $p$ is a stationary type, we write $p\perp \emptyset$ if $p\perp q$ for all $q\in S(\acl^{\meq}(\emptyset))$. 

For superstable $T$, we say $T$ is \emph{bounded} or 
\emph{non-multidimensional}\index{non-multidimensional}
 if every 
non-algebraic stationary type $p$ satisfies $p \not \perp \emptyset$.
\end{defi}

Now we return to $APA$. For types over $\emptyset$ the picture is very simple:

\begin{prop}
If $p \in S_m(\emptyset)$ and $q \in S_n(\emptyset)$ are non-algebraic, 
then $p$ and $q$ are not almost orthogonal.
\end{prop}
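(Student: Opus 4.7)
The plan is to prove this directly by using Theorem~\ref{pr:non-div} to translate almost orthogonality into probabilistic independence, and then to exhibit concrete realizations $b \models p$ and $c' \models q$ that are probabilistically dependent. Since $\acl(\emptyset) = \langle \emptyset \rangle = \{0,1\}$ by Lemma~\ref{dcl}, the hypothesis that $p$ and $q$ are non-algebraic means that for any $b = (b_1,\dots,b_m) \models p$ some coordinate $b_i$ satisfies $0 < \mu(b_i) < 1$, and similarly some coordinate $c_j$ of any $c = (c_1,\dots,c_n) \models q$ satisfies $0 < \mu(c_j) < 1$. After reindexing we may assume $i = j = 1$; set $s := \mu(b_1)$ and $t := \mu(c_1)$.

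Fix any such $b \models p$ and $c \models q$ in the universal domain $\U$. I would next use atomlessness of $\U$ to produce an element $c'_1 \in U$ with $\mu(c'_1) = t$ but whose intersection with $b_1$ is not probabilistically independent of $b_1$: if $t \leq s$, choose $c'_1 \leq b_1$ with $\mu(c'_1) = t$, so that $\mu(b_1 \cap c'_1) = t \neq st = \mu(b_1)\mu(c'_1)$; if $t > s$, choose $c'_1 = b_1 \cup e$ with $e \leq b_1^c$ and $\mu(e) = t - s$, giving $\mu(b_1 \cap c'_1) = s \neq st$. In either case $b_1 \npindep c'_1$.

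By quantifier elimination (Corollary~\ref{APA properties}), $\tp(c'_1/\emptyset) = \tp(c_1/\emptyset)$ since both elements have measure $t$. The strong $\kappa$-homogeneity of $\U$ then yields an automorphism $\sigma$ of $\U$ with $\sigma(c_1) = c'_1$; set $c' := \sigma(c)$, so $c' \models q$ and its first coordinate is $c'_1$. From $b_1 \npindep c'_1$ and the definition of $\pindep$ it follows that $b \npindep c'$, and then Theorem~\ref{pr:non-div} gives $b \nindep c'$. By Definition~\ref{def: orthogonal unidimensional, non-multidimensional} this witnesses $p \not\perp^a q$.

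The main subtlety, rather than an obstacle, lies in the transfer step from a single-coordinate dependence to a full dependence at the level of tuples: one must produce a realization $c' \models q$ whose first coordinate is exactly the specific element $c'_1$ that we control relative to $b_1$. This is where QE and the $\omega$-homogeneity of atomless probability algebras do the real work, since they let us move any single realization of the $1$-type of $c_1$ to $c'_1$ by an automorphism and then transport all the other coordinates of $c$ along for the ride, preserving $\tp(c/\emptyset) = q$.
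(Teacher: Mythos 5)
Your proof is correct. The arithmetic core is the same as the paper's: both arguments exhibit realizations of the two types whose distinguished coordinates are nested, so that $\mu(b_1\cap c_1')=\min(s,t)\neq st$ when $0<s,t<1$. Where you differ is in how the full-tuple realizations are produced. The paper first reduces to types of partitions of $1$ (via Notation \ref{tuples generate partitions}), then realizes both types concretely in the Lebesgue algebra by consecutive intervals anchored at $0$, so the overlap of the least nontrivial coordinates is automatic and no transport step is needed. You instead keep the raw tuples, build a single dependent coordinate $c_1'$ by hand using atomlessness of $\U$, and then invoke quantifier elimination plus strong $\kappa$-homogeneity to carry the remaining coordinates of $c$ along by an automorphism fixing $\emptyset$, so that $c'=\sigma(c)$ still realizes $q$. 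Your route is slightly heavier on model-theoretic machinery (homogeneity of the universal domain) but avoids choosing a concrete representing measure space; the paper's route is more hands-on but leans on the reduction to partitions and the fact that types of partitions over $\emptyset$ are determined by the measures of their blocks. Both are complete; your identification of the transport step as the real content of the argument is accurate.
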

\begin{proof}
It suffices to consider types of partitions of $1$, since any $n$-tuple is interdefinable with its
associated partition of $1$.

Consider two partitions of $1$, say $a = (a_1,\dots,a_m)$ and $b = (b_1,\dots,b_n)$ in $\U$, and suppose
that neither type $\tp(a),\tp(b)$ is algebraic (here this means that at least one $a_i$ and at least
one $b_j$ are distinct from $0$ and $1$).  

Let $\M$ be the probability algebra of the standard Lebesgue space $([0,1],\mathcal{B},\mu)$,
in which all $n$-types over $\emptyset$ can be realized.  
Realize $\tp(a)$ in $\M$ by a sequence of pairwise disjoint intervals $(I_1,\dots,I_m)$, where each interval
is of the form $[r,s)$ for $r<s$ in $[0,1]$ and $\sup I_k = \min I_{k+1}$ for every $k=1,\dots,m-1$.  
Then the union of the intervals $I_i$ is $[0,1)$.  Let $(J_1,\dots,J_n)$ be a similar sequence of intervals
that realizes $\tp(b)$.  

Choose $i,j$ least so that $0<\mu(a_i)<1$ and $0<\mu(b_j)<1$.
By the choice of $i,j$ we have $I_i = [0,\mu(a_i))$ and $J_j=[0,\mu(b_j))$.

Then $\mu(I_i \cap J_j) = \min(\mu(a_i),\mu(b_j)) > \mu(a_i)\mu(b_j) = \mu(I_i)\mu(J_j)$. 
Therefore we have non-independent realizations of the types $\tp(a),\tp(b)$. This shows that no pair
of non-algebraic types over $\emptyset$ is almost orthogonal.
\end{proof}

Once we allow parameters, elements may be supported over disjoint sets and we obtain more freedom:

\begin{prop}\label{non-orthogonality}
For any $e\in U$ with $e\notin \{0,1\}$ there are non-algebraic $p,q\in S_1(\{e\})$ which are orthogonal.
\end{prop}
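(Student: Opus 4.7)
The plan is to exhibit non-algebraic $p,q \in S_1(\{e\})$ whose realizations are concentrated under $e$ and $e^c$ respectively; since non-forking extensions preserve the conditional probability over $\bra\{e\}\ket$, any independent realizations of $p\lharp A$ and $q\lharp A$ will still lie below $e$ and $e^c$, hence be disjoint, hence be independent over $A$.

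First I would pick $a \leq e$ with $0 < \mu(a) < \mu(e)$ and $b \leq e^c$ with $0 < \mu(b) < \mu(e^c)$; these exist because $\U \models APA$ is atomless and $e \notin \{0,1\}$. Set $p := \tp(a/\{e\})$ and $q := \tp(b/\{e\})$. By Lemma \ref{dcl}, $\acl(\{e\}) = \bra\{e\}\ket = \{0,e,e^c,1\}$, and the strict inequalities on the measures rule out $a,b$ from this set, so both types are non-algebraic. By Lemma \ref{P(A|C) for finite C} applied to $\EE = \bra\{e\}\ket$ (whose atoms are $e$ and $e^c$), one computes $\P(a|\EE) = \frac{\mu(a)}{\mu(e)} \chi_e$ and $\P(b|\EE) = \frac{\mu(b)}{\mu(e^c)} \chi_{e^c}$.

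For orthogonality, I would take an arbitrary small $A \supseteq \{e\}$ and realizations $a' \models p\lharp A$, $b' \models q\lharp A$, which exist uniquely since all types in $APA$ are stationary (Theorem \ref{pr:non-div}). Combining Theorem \ref{pr:non-div} with Lemma \ref{characterization of independence}(ii), non-forking over $\{e\}$ yields $\P(a'|\bra A\ket) = \P(a|\bra\{e\}\ket) = \frac{\mu(a)}{\mu(e)} \chi_e$ and similarly $\P(b'|\bra A\ket) = \frac{\mu(b)}{\mu(e^c)} \chi_{e^c}$. Integrating the first against $\chi_{e^c}$ (which is $\bra A\ket$-measurable) gives $\mu(a' \cap e^c) = 0$, so $a' \leq e$ in $\U$; symmetrically $b' \leq e^c$. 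Hence $a' \cap b' = 0$, so both $\P(a'\cap b'|\bra A\ket)$ and the product $\P(a'|\bra A\ket) \cdot \P(b'|\bra A\ket)$ vanish (the two indicator-like functions are supported on disjoint sets), giving $a' \pindep_A b'$ and therefore $a' \indep_A b'$ by Theorem \ref{pr:non-div}. This yields $(p\lharp A) \perp^a (q\lharp A)$, establishing $p \perp q$.

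The argument is essentially a clean application of the identification $\indep \ = \ \pindep$ from Theorem \ref{pr:non-div}, combined with the observation that non-forking extensions preserve the supports of conditional probabilities. I do not expect a serious obstacle: the only point requiring a moment of care is that $p\lharp A$ and $q\lharp A$ are well-defined, which reduces to stationarity of types in $APA$, already handled inside Theorem \ref{pr:non-div}.
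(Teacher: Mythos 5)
Your proof is correct and follows essentially the same route as the paper: choose $a\leq e$ and $b\leq e^c$ of intermediate measure, observe that non-forking extensions preserve $\P(\cdot\,|\bra\{e\}\ket)$ and hence remain below $e$ and $e^c$ respectively, so any realizations of the two non-forking extensions are disjoint and therefore independent over the larger base. The only (harmless) elision is that $a'\cap b'=0$ gives $\pindep$ for the generating events, and the remaining boolean combinations $(a')^{\pm 1}\cap (b')^{\pm 1}$ follow by additivity of conditional probability, a step the paper carries out explicitly by also computing $\P((a_F)^c\,|\,(Fb_F)^\#)$.
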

\begin{proof}
Let $e\in U$ have $0<\mu(e)<1$ and let
$E=\{0,1,e,e^{c}\}$. Let $a,b \in U$ satisfy $a \leq e$, $\mu(a) = \mu(e)/2$, $b \leq e^c$, and $\mu(b) = \mu(e^{c})/2$. 
Consider $p=\tp(a/E)$ and $q=\tp(b/E)$. 

{Claim $1$.} $p\perp^a q$.

We work in the measure algebra $(\widehat{\mathcal{B}},\mu,d)$ 
associated to the standard Lebesgue space $([0,1],
 \mathcal{B},d)$, which is $\aleph_0$-saturated, and may assume that $(\widehat{\mathcal{B}},\mu,d)$ is an 
 elementary substructure of $\U$. Let $r=\mu(e)$. Since the type of an element is 
 determined by its measure,  we take $e$ to be (the equivalence class of) $[0,r)$ and $e^c$ to be (the equivalence class of) $[r,1]$. 
 Let $a \leq e$  have $\mu(a) = r/2$, so $a\models p$; similarly let $b\leq e^c$ have $\mu(b)=(1-r)/2$, so $b\models q$.

Note that $\P(a|E)=\frac{1}{2}\chi_e$ and $\P(a^{c}|E)=\frac{1}{2}\chi_e+\chi_{e^{c}}$.
Since $b \leq e^c$, we have $b\cap a=\emptyset$, $b^c\cap a=e\cap a$ and $\P(a|(Eb)^{\#})=\frac{1}{2}\chi_e$. 
Similarly,  $\P(a^{c}|(Eb)^{\#})=\frac{1}{2}\chi_e+\chi_{e^{c}}$.  
By Theorem \ref{pr:non-div}, we have $a \indep_E b$, and thus Claim $1$ is proved.

{Claim $2$.} $p\perp q$.

Now let a small closed subalgebra $F \subseteq U$ have $E\subseteq F$. 
Choose $a_F,b_F \in U$ with $a_F\models p\lharp\! F$, $b_F\models q\lharp\! F$; 
then as before $a_F \leq e$ and by Theorem \ref{pr:non-div}, we have 
$\P(a_F|F)=\frac{1}{2}\chi_e$, $\P(a_F^{c}|F)=\frac{1}{2}\chi_e+\chi_{e^{c}}$. 
Likewise $b_F \leq e^{c}$ and we get
$\P(a_F|(Fb_F)^\#)=\frac{1}{2}\chi_e=\P(a_F|F)$, $\P(a_F^c|(Fb_F)^\#)=\frac{1}{2}\chi_e+ \chi_{e^{c}}=\P(a_F^c|F)$.
Using Theorem \ref{pr:non-div} we conclude $a_F \indep_F b_F$, as desired.
\end{proof}

From the previous proposition we get:

\begin{coro}
The theory $APA$ is not unidimensional.
\end{coro}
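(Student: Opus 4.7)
The plan is to derive this as an immediate corollary of Proposition \ref{non-orthogonality}. By Definition \ref{def: orthogonal unidimensional, non-multidimensional}, to show that $APA$ is not unidimensional it suffices to exhibit a single pair of non-algebraic stationary types $p, q$ (over small parameter sets in $\U$) satisfying $p \perp q$.

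First I would fix any $e \in U$ with $e \notin \{0,1\}$; such an element exists because $\U$ is atomless, so elements of any measure in $(0,1)$ are available. Proposition \ref{non-orthogonality} then directly supplies non-algebraic orthogonal types $p, q \in S_1(\{e\})$. Two bookkeeping points need to be verified: (i) non-algebraicity, which follows from Lemma \ref{dcl}, giving $\acl(\{e\}) = \bra\{e\}\ket = \{0,1,e,e^c\}$, together with the fact that the realizations $a, b$ built in the proof of Proposition \ref{non-orthogonality} have measures $\mu(e)/2$ and $\mu(e^c)/2$, both strictly between $0$ and the measures of the four elements of $\acl(\{e\})$, so neither $a$ nor $b$ can lie in $\acl(\{e\})$; and (ii) stationarity, which is automatic since the proof of Theorem \ref{pr:non-div} verifies Stationarity of Types for $\indep$ over arbitrary parameter sets in $APA$. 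There is no real obstacle here — the substance of the argument already lives inside Proposition \ref{non-orthogonality}, and this corollary amounts to translating the conclusion of that proposition into the vocabulary of Shelah's classification framework.
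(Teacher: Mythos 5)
Your proposal is correct and follows exactly the paper's route: the paper derives the corollary immediately from Proposition \ref{non-orthogonality}, which supplies the required pair of orthogonal non-algebraic (automatically stationary, by Theorem \ref{pr:non-div}) types over $\{e\}$. Your additional checks of non-algebraicity via Lemma \ref{dcl} and of stationarity are accurate bookkeeping that the paper leaves implicit.
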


Maharam's Theorem (\ref{Maharam's Theorem for models of Pr}) provides a countable 
set of cardinals $\K^\M$ that helps classify a given
model of $APA$ up to isomorphism. In the classical first order setting, the existence of two or more distinct 
classifying cardinals is related to the existence of orthogonal types. In the example that follows, 
we illustrate this phenomenon in the setting of continuous model theory, for $APA$.

\begin{exam}
Let $(\mathcal{B},\mu,d)\models APA$ and assume $b_1,b_2\in \B$ are homogeneous
elements of different density.  Let $C$ be the algebra generated by $\{b_1, b_2\}$.
Let $a_1,a_2\in \mathcal{B}$ satisfy $0 < a_i < b_i$ for $i = 1,2$. Then $\tp(a_1/C)\perp \tp(a_2/C)$.\\
More generally, assume that $b_1,b_2\in \B$ are disjoint
elements and $C$ is an algebra containing these elements. Also assume we are given $a_1,a_2\in \B$
nonalgebraic over $C$ with $a_i < b_i$ for $i=1,2$. Then $\tp(a_1/C)\perp \tp(a_2/C)$.
\end{exam}

\begin{proof}
We start with the first statement. Since $b_1,b_2$ are homogeneous elements of 
different density we have $b_1\cap b_2=\emptyset$
and thus $C=\{0,1,b_1,b_2,(b_1\cup b_2)^c \}$. We first show $a_1\indep_C a_2$.

We see  $\P(a_1|C)=\frac{\mu(a_1)}{\mu(b_1)}\chi_{b_1}$ and
$\P(a_1^{c}|C)=\frac{\mu(b_1)-\mu(a_1)}{\mu(b_1)}\chi_{b_1}+\chi_{b_1^c}$.
On the other hand we have $a_2\leq b_2$ and $b_2$ is disjoint from $b_1$, so
$\P(a_1|(Ca_2)^\#)=\frac{\mu(a_1)}{\mu(b_1)}\chi_{b_1}=\P(a_1|Ca_2)$ and
$\P(a_1^{c}|(Cb_2)^\#)=\frac{\mu(b_1)-\mu(a_1)}{\mu(b_1)}\chi_{b_1}+\chi_{b_1^c}=\P(a_1^{c}|C)$.  
Therefore we have $a_1\indep_C a_2$.

{Claim.} $\tp(a_1/C)\perp \tp(a_2/C)$.

Let $F \subseteq U$ be a small closed subalgebra with $C\subseteq F$ and let
$p\lharp\! F$ and $q\lharp\! F$ be the non-forking extensions
to $F$ of $p=\tp(a_1/C)$ and $q=\tp(a_2/C)$ respectively. Choose $a_{1F},a_{2F}\in U$ 
with $a_{1F}\models p\lharp\! F$, $a_{2F}\models q\lharp\! F$. By Theorem \ref{pr:non-div}, we have 
$\P(a_{1F}|F)=\frac{\mu(a_1)}{\mu(b_1)}\chi_{b_1}$ and 
$\P(a_{1F}^{c}|C)=\frac{\mu(b_1)-\mu(a_1)}{\mu(b_1)}\chi_{b_1}+\chi_{b_1^c}$. 
Since $a_{2F}\leq b_2$ and $b_1$ and $b_2$ are disjoint, we get
$\P(a_{1F}|(Fa_{2F})^\#)=\frac{\mu(a_1)}{\mu(b_1)}\chi_{b_1}=\P(a_{1F}|F)$ and
 $\P(a_{1F}^c|(Fa_{2F})^\#)=\frac{\mu(b_1)-\mu(a_1)}{\mu(b_1)}\chi_{b_1}+\chi_{b_1^c}=\P(a_{1F}^c|F)$.

Using Theorem \ref{pr:non-div} we conclude $a_{1F} \indep_F a_{2F}$, as desired. 

The more general statement has a similar proof and we leave the details to the reader.
\end{proof}

We need the following easy result.

\begin{obse}\label{boundscondprob}
Let $0<\delta<\frac{1}{2}$ and let $\delta\leq r\leq 1-\delta$. Then $\min\{\frac{1}{2},r\}-\frac{1}{2}r \geq \frac{\delta}{2}$.
\end{obse}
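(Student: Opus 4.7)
The plan is to prove this by a direct case split on the position of $r$ relative to $\tfrac{1}{2}$, since that is exactly the threshold that determines which argument of the minimum is active. No measure-theoretic or model-theoretic input is needed; it is a one-variable elementary inequality.

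First I would consider the case $\delta \leq r \leq \tfrac{1}{2}$. Here $\min\{\tfrac{1}{2}, r\} = r$, so the expression collapses to $r - \tfrac{1}{2} r = \tfrac{r}{2}$, and the hypothesis $r \geq \delta$ immediately gives $\tfrac{r}{2} \geq \tfrac{\delta}{2}$.

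Next I would treat the case $\tfrac{1}{2} \leq r \leq 1 - \delta$. Here $\min\{\tfrac{1}{2}, r\} = \tfrac{1}{2}$, so the expression becomes $\tfrac{1}{2} - \tfrac{1}{2} r = \tfrac{1 - r}{2}$, and the hypothesis $r \leq 1 - \delta$ yields $\tfrac{1 - r}{2} \geq \tfrac{\delta}{2}$. Combining the two cases (which overlap harmlessly at $r = \tfrac{1}{2}$) gives the claim.

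There is no real obstacle here; the only thing to verify is that the endpoint $r = \tfrac{1}{2}$ is covered by both branches consistently, and that the constraints $0 < \delta < \tfrac{1}{2}$ guarantee the interval $[\delta, 1-\delta]$ is nonempty so the hypothesis is not vacuous in a misleading way. Hence the total write-up will be only a few lines.
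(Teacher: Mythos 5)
Your case split on whether $r\leq \tfrac12$ or $r\geq \tfrac12$ is exactly the argument the paper gives: in the first case the quantity is $\tfrac{r}{2}\geq\tfrac{\delta}{2}$ and in the second it is $\tfrac{1-r}{2}\geq\tfrac{\delta}{2}$. The proposal is correct and coincides with the paper's proof.
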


\begin{proof}
Assume first that $\min\{\frac{1}{2},r\}=\frac{1}{2}$. Then $\min\{\frac{1}{2},r\}-\frac{1}{2}r =\frac{1}{2}(1-r)\geq  \frac{\delta}{2}$. On the other hand, if $\min\{\frac{1}{2},r\}=r$, then $\min\{\frac{1}{2},r\}-\frac{1}{2}r =\frac{1}{2}r\geq  \frac{\delta}{2}$.
\end{proof}

\begin{prop}\label{nonmultidimensional}
The theory $APA$ is nonmultidimensional.
\end{prop}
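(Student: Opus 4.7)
The plan is, given any non-algebraic stationary type $p$ over a small parameter set $C \subseteq U$, to exhibit a type $q \in S_1(\emptyset)$ with $p \not\perp q$; since $q$ extends uniquely by stationarity to a type in $S(\acl^{\meq}(\emptyset))$, this is enough. Throughout I will work over $A := C$ itself and will produce realizations $a' \models p$ and $b' \models q \lharp\! C$ with $a' \nindep_C b'$, which by definition yields $p \lharp\! C \not\perp^a q \lharp\! C$, hence $p \not\perp q$.

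First I reduce to the case of a $1$-type. If $p = \tp(a_1,\dots,a_n /C)$ is non-algebraic then, using $\acl(C) = \bra C \ket$ from Lemma \ref{dcl}, some coordinate $a_i \notin \bra C \ket$ and its $1$-type $p_i = \tp(a_i/C)$ is non-algebraic. Non-orthogonality transfers along projections: given $A\supseteq C$, $b_i \models p_i \lharp\! A$ and $c \models q \lharp\! A$ with $b_i \nindep_A c$, extending $b_i$ to a tuple $b \models p \lharp\! A$ (via stationarity of $p_i \lharp\! A$ and strong homogeneity of $\U$) and invoking monotonicity of $\indep$ forces $b \nindep_A c$. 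So I assume $p = \tp(a/C)$ with $a$ a single element and $a \notin \bra C \ket$.

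I choose $q$ to be the (unique, by Lemma \ref{types over C - equality}) type over $\emptyset$ of an element of measure $\tfrac12$. Setting $g := \P(a|C)$, which is $\bra C\ket$-measurable, $[0,1]$-valued, and crucially \emph{not} almost surely in $\{0,1\}$, I apply Lemma \ref{types over C - description} to the four $\bra C \ket$-measurable nonnegative functions
\[
h^{++} := \min(g,\tfrac12),\quad h^{+-} := \max(0, g - \tfrac12),\quad h^{-+} := \max(0, \tfrac12 - g),\quad h^{--} := 1 - \max(g,\tfrac12),
\]
whose pointwise sum is $1$; the additive functionals $\lambda^{st}(c) := \int_c h^{st}\, d\mu$ then satisfy the compatibility condition of that Lemma and hence yield $(a',b') \in U^2$ with $\P(a'^s \cap b'^t\,|\,C) = h^{st}$ for each $s,t \in \{-1,+1\}$. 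A marginal computation gives $\P(a'|C) = g$ and $\P(b'|C) = \tfrac12$, so by Lemma \ref{types over C - equality} $\tp(a'/C) = p$, and by Lemma \ref{characterization of independence} $b' \pindep_\emptyset C$, so $\tp(b'/C) = q \lharp\! C$.

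The decisive verification is $a' \nindep_C b'$, which by Theorem \ref{pr:non-div} reduces to showing $\min(g,\tfrac12) \neq g/2$ on a set of positive $\mu$-measure. Observation \ref{boundscondprob} supplies the pointwise bound $\min(g,\tfrac12) - g/2 \geq \delta/2$ on $\{\delta \leq g \leq 1-\delta\}$; since $g$ is not a.e.\ in $\{0,1\}$, there is some $\delta > 0$ making this set of positive measure. The principal obstacle is really only the bookkeeping for the construction of the functionals — checking non-negativity, pointwise sum equal to $1$, and that the marginals force $\tp(a'/C)$ and $\tp(b'/C)$ to be what is wanted — but once the four functions are written down these checks are immediate; the conceptual move is the deliberate maximal coupling of $a'$ with $b'$ via $\min(g,\tfrac12)$, which precisely detects that $p$ is non-algebraic.
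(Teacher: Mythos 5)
Your proof is correct and follows essentially the same route as the paper's: the same choice of $q$ (the type of an event of measure $\tfrac12$), the same maximal coupling $\P(a'\cap b'\,|\,C)=\min(g,\tfrac12)$, and the same use of Observation \ref{boundscondprob} to detect forking on the positive-measure set where $g=\P(a|C)$ is bounded away from $\{0,1\}$. The only difference is cosmetic: the paper realizes the coupled pair concretely in the product space $X\times[0,1]$ (as in Lemma \ref{extension}), whereas you realize the same joint conditional distribution abstractly via Lemma \ref{types over C - description} and saturation; your reduction to a single non-algebraic coordinate via monotonicity and homogeneity is likewise a harmless variant of the paper's passage to a cell of the associated partition.
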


\begin{proof}
Let $a\in U$ with $\mu(a)=1/2$ and let $q=\tp(a/\emptyset)$. We will show any non-algebraic type over any set is non-orthogonal to $q$. Let $D\subset U$ be a small closed subalgebra of $\U$ and let $b=(b_1,\dots,b_k)$ be a partition of $1$ such that $p=\tp(b/D)$ is not algebraic. We may assume without loss of generality that $b_1\not \in D$ and we may work with $p_1=\tp(b_1/D)$ instead of $p$. Below we will show that $p_1\not \perp^a q \lharp \! D$.

Since $b_1\not \in D$, there is $u\in D$ of positive measure and  $\delta>0$ such that 
\begin{equation*}\tag{*}
 \delta\leq \P(b_1|D)(x) \leq 1-\delta \mbox{ for } \mu \mbox{-almost every } x \in u \text{.}
\end{equation*}

We may assume $\U$ is the probability algebra associated to a probability space $(X,\B,\mu)$. Let $([0,1],\mathcal{C},\lambda)$ be a standard atomless Lebesgue space and work in the probability algebra of the space 
$(X\times [0,1],\B \otimes \mathcal{C},\mu \otimes \lambda)$, identifying each $v\in D$ with $v'=v\times [0,1]$ as done in the proof of Lemma \ref{extension}.

Let $b_1'=\{(x, y)\in X\times [0, 1] : 0 \leq y \leq \P(b_1|D)(x)\}$. Just as in the proof of Lemma \ref{extension}, we have that $b_1'\models p_1$. Also let $a'=X\times [0,1/2]=\{(x, y)\in X\times [0, 1] : 0 \leq y \leq \frac{1}{2}\}$, which is a realization of $q\lharp \! D$ since it has measure $1/2$ and is independent from all elements of $U$. 

We will prove $b_{1}'\nindep_D a'$ using Theorem \ref{pr:non-div} and 
Definition \ref{conditional independence}.

Note that $\P(b_1'|D)\P(a'|D)=\frac{1}{2}\P(b_1'|D)$.
On the other hand, 
\[
b_1'\cap a'=\{(x, y)\in X\times [0, 1] : 0 \leq y \leq \min( \P(b_1|D)(x),\frac{1}{2}) \} \text{.}
\]
For almost every $x\in u$, we get
\begin{center}
$\P(b_1'\cap a'|D)(x)-\frac{1}{2}\P(b_1'|D)(x)\geq \delta/2$
\end{center}
using (*) and Observation \ref{boundscondprob}.

Since $u$ has positive measure, we get 
\[
\int \big|\P(b_1'\cap a'|D)-\P(a'|D)\P(b_1'|D) \big| \,d (\mu \otimes \lambda)\geq \mu(u)\delta/2>0
\] 
and thus $b_{1}'\nindep_D a'$ as desired.
\end{proof}

\section{Ranks obtained from entropy}
\label{entropy}

In this section we discuss the definition and main properties of entropy, 
following \cite[Chapter 4]{Wal} and \cite[Chapter 4]{Br}, to bring out 
the connection with model theoretic aspects of $APA$.  The results given
in Fact \ref{properties0} and Corollary \ref{quantitative entropy-forking}
show how entropy provides a rank that is closely connected to model
theoretic forking.

Let $(X,\B,\mu)$ be an atomless probability space.

\begin{defi}
Let $\A$ be a finite subalgebra of $\B$ with atoms
$\{A_1,\dots,A_k\}$. Let $\C$ be a $\sigma$-subalgebra of $\B$. Then
the \emph{entropy of $\A$ given $\C$}\index{entropy}
 is $$H(\A/\C)=-\int \Sum_{1 \leq i \leq k}\P(A_i|\C)\ln(\P(A_i|\C))d\mu$$
\end{defi}

We write $H(\A)$ for $H(\A/\{\emp,X\})$. If $\A$ and $\C$ are
$\s$-algebras, we denote by $\A \vee \C$ the $\s$-algebra
generated by $\A$ and $\C$.

\begin{defi}
A continuous real-valued function $F$ with domain $[a,b]$ is \emph{convex}
if $$F(tx_1 + (1-t)x_2)\leq t F(x_1) + (1-t)F(x_2)$$
for all choices of $x_1,x_2$ in $[a,b]$ and all $t \in [0,1]$.
\end{defi}

Note that if $F \colon [a,b]\to \R$ is continuous and is twice differentiable on
 $(a,b)$, and if $F''(x)>0$ for all $x$ in $(a,b)$, then $F$ is convex.

\begin{fact}[\cite{Br} Proposition 4.4]\label{convex}
Let $\EE$ be a $\sigma$-subalgebra of $\B$. Let $F \colon [a,b]\to
\R$ be a continuous convex function, where $0\leq a\leq b<\infty$.
Then $$F(\E_{\EE}(f))\leq \E_{\EE}(F(f))$$ for each $f\in L_1(X,\B,\mu)$ with
$f(X) \subseteq [a,b]$.
\end{fact}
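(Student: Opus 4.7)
The plan is to prove this via the standard reduction to affine minorants (Jensen's inequality for conditional expectation). The key analytic input is that a continuous convex function on a compact interval is the pointwise supremum of a countable family of affine functions that lie below it.

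First I would record this analytic fact precisely: there exist sequences $(\alpha_n), (\beta_n)$ of real numbers such that the affine functions $L_n(x) := \alpha_n x + \beta_n$ satisfy $L_n(x) \leq F(x)$ for all $x \in [a,b]$ and $F(x) = \sup_n L_n(x)$ for all $x \in [a,b]$. This follows from convexity: at each rational point $q \in [a,b] \cap \Q$, pick a supporting line for $F$ at $q$ (which exists by convexity on the interior, and is extended to the endpoints using continuity of $F$); countability of $\Q \cap [a,b]$ gives a countable family, and density together with continuity of $F$ yield the supremum representation.

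Next I would verify the necessary compatibility: since $a \leq f \leq b$ pointwise, monotonicity and linearity of $\E_\EE$ give $a \leq \E_\EE(f) \leq b$ almost everywhere, so $F(\E_\EE(f))$ is a well-defined $\EE$-measurable function and all the $L_n(\E_\EE(f))$ are defined a.e. For each fixed $n$, linearity of conditional expectation yields
\[
L_n(\E_\EE(f)) = \alpha_n \E_\EE(f) + \beta_n = \E_\EE(\alpha_n f + \beta_n) = \E_\EE(L_n(f))
\]
$\mu$-a.e. Because $L_n \leq F$ on $[a,b]$ and $f$ takes values there, $L_n(f) \leq F(f)$ pointwise, so monotonicity of $\E_\EE$ gives $\E_\EE(L_n(f)) \leq \E_\EE(F(f))$ a.e. Hence $L_n(\E_\EE(f)) \leq \E_\EE(F(f))$ a.e., for every $n$.

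Finally, taking the supremum over the countable family on the left-hand side (a countable supremum preserves the a.e.\ inequality), and using $F(\E_\EE(f)) = \sup_n L_n(\E_\EE(f))$ a.e., yields $F(\E_\EE(f)) \leq \E_\EE(F(f))$ $\mu$-a.e., as desired. The only subtle step is the existence of the countable family of affine minorants with supremum equal to $F$; I would handle the endpoints $a$ and $b$ separately (using supporting lines from the right at $a$ and from the left at $b$, which exist by convexity and continuity) to avoid issues if $F$ fails to be differentiable at the boundary. Everything else is a routine application of the linearity, monotonicity, and order-continuity properties of conditional expectation already recalled in Theorem \ref{Radon-Nikodym} and Fact \ref{droponnorm}.
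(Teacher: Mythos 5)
Your proof is correct and complete: the reduction of a continuous convex function on a compact interval to the pointwise supremum of countably many affine minorants, followed by linearity and monotonicity of $\E_\EE$ applied to each minorant and a countable supremum at the end, is the standard argument for conditional Jensen's inequality, and your handling of the endpoints and of the a.e.\ bound $a \leq \E_\EE(f) \leq b$ is careful and sound. The paper itself supplies no proof of this statement — it is stated as a Fact with a citation to \cite{Br}, Proposition 4.4 — so your argument simply makes self-contained what the paper delegates to the reference, by essentially the same supporting-line method used there.
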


\begin{fact}
\label{properties0}
Let $\A$, $\C$ be finite subalgebras of $\B$ and let $\D$, $\EE$ be
$\sigma$-subalgebras of $\B$ such that $\EE \subseteq \D$. 
Let $\{a_1,a_2,\dots,a_n\}$ be the atoms in $\A$.  
Then:
\begin{enumerate}
\item $H(\A\vee\C/\EE)=H(\A/\EE)+H(\C/\A\vee \EE)$.
\item $\A\subseteq \C \implies H(\A/\EE)\leq H(\C/\EE)$.
\item $H(\A/\EE)\geq H(\A/\D)$.
\item If $\tau$ is an automorphism of $(\B,\mu.d)$, then $H(\tau(\A)/\tau(\EE))=H(\A/\EE)$.
\item $H(\A/\EE) - H(\A/\D) \geq \frac12 \sum_{j=1}^n \big( \|\E_{\D}(\chi_{a_j})\|_2^2 - \|\E_{\EE}(\chi_{a_j})\|_2^2 \big) \geq 0$.
\newline  Moreover,
$H(\A/\EE) = H(\A/\D)$ iff  $\A$ is independent from $\D$ over $\EE$.
\end{enumerate}
\end{fact}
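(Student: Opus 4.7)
The plan is to handle (1)--(4) as standard manipulations of conditional expectation and convexity, then concentrate effort on the quantitative inequality and equality characterization in (5). Throughout, write $F(t):=t\ln t$ on $[0,1]$ (with $F(0)=0$), so $H(\A/\X)=-\sum_j \int F(\P(a_j|\X))\,d\mu$.

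For (1), I would work with the atoms $\{a_i\cap c_k\}$ of $\A\vee\C$ and use that $\chi_{a_i}$ is $\A\vee\EE$-measurable, giving $\P(a_i\cap c_k|\A\vee\EE)=\chi_{a_i}\P(c_k|\A\vee\EE)$; taking $\E_\EE$ and expanding $-x\ln x$ yields the chain rule. Item (2) then follows from (1) since $\A\subseteq\C$ gives $\A\vee\C=\C$, and the extra term $H(\C/\A\vee\EE)$ is pointwise nonnegative because $-p\ln p\geq 0$ for $p\in[0,1]$. For (3), I would apply Fact~\ref{convex} to $F$ on $[0,1]$ (noting $F''=1/x>0$) with $f=\P(a_j|\D)$; the tower property $\E_\EE\circ\E_\D=\E_\EE$ (using $\EE\subseteq\D$) gives $\E_\EE f=\P(a_j|\EE)$, and integrating plus summing over $j$ yields (3). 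Item (4) is routine: an automorphism $\tau$ preserves $\mu$, maps atoms bijectively to atoms, and satisfies $\tau\circ\E_\EE=\E_{\tau(\EE)}\circ\tau$, so the integrals defining $H$ transform covariantly.

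For (5), the non-negativity of the middle expression is Fact~\ref{droponnorm} applied atom-by-atom. For the main inequality I would strengthen Jensen using that $F''(x)=1/x\geq 1$ on $(0,1]$: Taylor's theorem yields the pointwise bound
\[
F(y)-F(x)-F'(x)(y-x)\ \geq\ \tfrac12(y-x)^2 \qquad\text{for all } x,y\in[0,1].
\]
Substituting $x=\E_\EE(f),\ y=f$ with $f=\P(a_j|\D)$, taking $\E_\EE$ of both sides, and using $\E_\EE(f-\E_\EE f)=0$ gives
\[
\E_\EE F(f)-F(\E_\EE f)\ \geq\ \tfrac12\bigl(\E_\EE(f^2)-(\E_\EE f)^2\bigr).
\]
Integrating over $X$, summing over the atoms $a_j$, and recognizing $\int(\E_\EE f)^2\,d\mu=\|\E_\EE(\chi_{a_j})\|_2^2$ and $\int\E_\EE(f^2)\,d\mu=\|\E_\D(\chi_{a_j})\|_2^2$ produces the stated quantitative bound.

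For the equality statement, if $\A\pindep_\EE\D$ then Lemma~\ref{characterization of independence}(ii) (with $\bra\EE\D\ket=\D$, since $\EE\subseteq\D$) gives $\P(a_j|\D)=\P(a_j|\EE)$ for each $j$, hence $H(\A/\D)=H(\A/\EE)$. Conversely, equality of the entropies forces $\|\E_\D(\chi_{a_j})\|_2=\|\E_\EE(\chi_{a_j})\|_2$ for every $j$ via the quantitative inequality, so Lemma~\ref{characterization of independence}(iv)$\Leftrightarrow$(i) (applied to the singleton $\{a_j\}$) yields $a_j\pindep_\EE\D$; disjoint additivity of conditional expectation then extends this to all of $\A^{\#}=\A$, giving $\A\pindep_\EE\D$. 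The main obstacle is the strong-convexity step in (5): one has to pass the pointwise Taylor bound through $\E_\EE$ cleanly so as to produce an $L^2$-variance gap (rather than a weaker $L^1$-type estimate), and then verify that the equality case lines up exactly with the $L^2$ criterion in Lemma~\ref{characterization of independence}(iv).
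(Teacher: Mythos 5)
Your proof is correct and follows essentially the same route as the paper: for the key inequality in (5) the paper applies Fact \ref{convex} (conditional Jensen) to the auxiliary convex function $2x\ln x - x^2$, which produces exactly the second-order bound $\E_{\EE}F(f)-F(\E_{\EE}f)\geq\tfrac12\bigl(\E_{\EE}(f^2)-(\E_{\EE}f)^2\bigr)$ that you obtain from Taylor's theorem with $F''\geq 1$, and the equality case is settled identically via the $L_2$ identity of Fact \ref{droponnorm} together with the characterization of independence. The only cosmetic differences are that the paper's Jensen formulation sidesteps the boundary behavior of $F'(x)=\ln x+1$ at $x=0$ (which your version must handle by noting that $\E_{\EE}f=0$ forces $f=0$ a.e.\ on that set), and that the paper simply cites Walters for items (1)--(4) where you sketch the standard arguments.
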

\begin{proof}
The first four properties are proved in \cite[Section 4.3]{Wal}.  Note that $(5)$ implies $(3)$, 
and $(4)$ is obvious from the definition.  Throughout the argument, we let $a$ be any element of $U$; 
we will apply the results for $a$ ranging over the set of atoms $\{ a_1,\dots,a_n\}$.

Fact \ref{droponnorm} shows that
\begin{equation*}
\tag{A} \|\E_{\D}(\chi_{a})\|_2^2 - \|\E_{\EE}(\chi_{a})\|_2^2 = \| \E_{\D}(\chi_{a}) - \E_{\EE}(\chi_{a})\|_2^2 \geq 0 \text{.}
\end{equation*}
Applying this for $a \in \{a_1,\dots,a_n\}$ proves the second inequality in (5).  

Next we prove the first inequality in $(5)$.  

Consider $F(x)=2x\ln(x)-x^2$ restricted to $[0,1]$.
We have $F''(x)=2/x-2>0$ for $x\in (0,1)$. Applying Fact \ref{convex} for
this $F$ and $f=\E_{\D}(\chi_a)$ we get
\begin{equation*}
2\E_\EE(\chi_a) \ln(\E_\EE(\chi_a)) -\E_\EE(\chi_a)^2\leq
2\E_\EE\big(\E_{\D}(\chi_a) \ln(\E_{\D}(\chi_a))\big)-\E_{\EE}(\E_{\D}(\chi_a)^2) \text{.}
\end{equation*}

Integrating and moving the terms in the preceding inequality yields
\begin{equation*}
\tag{B}   
\begin{aligned}
\Big(- \int \P(a|\EE)\ln(\P(a|\EE)) d\mu \Big) & - \Big(- \int \P(a|\D)\ln(\P(a|\D)) d\mu \Big) \\
\geq \frac12 \big( \|\E_{\D}(\chi_{a})\|_2^2 & - \|\E_{\EE}(\chi_{a})\|_2^2 \big) \text{.}
\end{aligned}
\end{equation*}

Summing the terms in (B) over  $a \in \{a_1,\dots,a_n\}$ yields the first inequality in (5).

Finally, we prove the ``Moreover'' statement.    
We know $\A$ is independent from $\D$ over $\EE$
iff $\E_{\D}(\chi_{a_j}) = \E_{\EE}(\chi_{a_j})$ for all $j=1\dots,n$ (by Theorem \ref{pr:non-div}),
and the latter implies $H(\A/\EE) = H(\A/\D)$, by definition of entropy.  On the other hand,
by the inequalities in $(5)$, we see $H(\A/\EE) = H(\A/\D)$ implies
$\|\E_{\D}(\chi_{a_j})\|_2^2 = \|\E_{\EE}(\chi_{a_j})\|_2^2$ for all $j=1,\dots,n$, which in turn
implies $\E_{\D}(\chi_{a_j}) = \E_{\EE}(\chi_{a_j})$ for all $j=1\dots,n$ by statement (A)
applied to $a = a_1,\dots,a_n$.
\end{proof}

Fact \ref{properties0}(5) provides a connection between
forking and  change of entropy. It has a quantitative aspect that
we record here.  Recall that for
$a = (a_1,\dots,a_n)$ from $\widehat{\B}$ and $D\subseteq C\subseteq\widehat \B$, we say $\tp(a/C)$
\emph{$\epsilon$-forks over} $D$ 
if $d(\tp(a/C),\tp(a/D)\lharp\! {C})>\epsilon$
where $\tp(a/D)\lharp\!{C}$ is the unique non-forking extension
of $\tp(a/D)$ to $C$.  (See Remark \ref{superstable}.)

\begin{coro}
\label{quantitative entropy-forking}
Let $\A$ be a finite subalgebra of $\B$ and let $\EE$, $\D$ be
$\sigma$-subalgebras of $\B$ such that $\EE \subseteq\D$. Let $\{a_1,a_2,\dots,a_n\}$ be
the events corresponding to the atoms in $\A$, $D$ the set of events associated to $\D$ and
$E$ the set of events associated to $\EE$. If $\tp((a_1,\dots,a_n)/D)$ $\epsilon$-forks over $E$, then
$H(\A/\EE) > H(\A/\D) + \epsilon^2/2$.  
\end{coro}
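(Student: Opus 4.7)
The plan is to combine the quantitative forking estimate from Theorem \ref{distance} with the entropy inequality in Fact \ref{properties0}(5). First I unwind what $\epsilon$-forking says in measure theoretic terms: let $b=(b_1,\dots,b_n)$ realize the non-forking extension $\tp(a/E)\lharp\! D$, so by Theorem \ref{pr:non-div} we have $\P(b_i|\D)=\P(b_i|\EE)$ for each $i$, and by Lemma \ref{types over C - equality} the equality $\tp(b/E)=\tp(a/E)$ gives $\P(b_i|\EE)=\P(a_i|\EE)$. Since $(a_1,\dots,a_n)$ and $(b_1,\dots,b_n)$ are partitions of $1$, Theorem \ref{distance} yields
\[
d(\tp(a/D),\tp(a/E)\lharp\! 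D) \;=\; \max_{1\leq i\leq n}\|\P(a_i|\D)-\P(a_i|\EE)\|_1 \text{.}
\]
The assumption that $\tp(a/D)$ $\epsilon$-forks over $E$ therefore gives an index $i$ with $\|\P(a_i|\D)-\P(a_i|\EE)\|_1>\epsilon$.

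Next I would pass to the $L_2$-norm using the Cauchy-Schwartz inequality recalled in Fact \ref{droponnorm}: over a probability space $\|f\|_2\geq\|f\|_1$, so
\[
\|\P(a_i|\D)-\P(a_i|\EE)\|_2^2 \;\geq\; \|\P(a_i|\D)-\P(a_i|\EE)\|_1^2 \;>\; \epsilon^2 \text{.}
\]
Combining this with the identity $\|\E_\D(\chi_{a_i})\|_2^2-\|\E_\EE(\chi_{a_i})\|_2^2=\|\E_\D(\chi_{a_i})-\E_\EE(\chi_{a_i})\|_2^2$ from Fact \ref{droponnorm} (used inside the proof of Fact \ref{properties0}(5)) shows that the $i$-th term in the sum appearing on the right side of Fact \ref{properties0}(5) strictly exceeds $\epsilon^2$.

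Finally, applying Fact \ref{properties0}(5) and discarding the other (nonnegative) terms gives
\[
H(\A/\EE)-H(\A/\D) \;\geq\; \tfrac12\sum_{j=1}^n \big(\|\E_\D(\chi_{a_j})\|_2^2-\|\E_\EE(\chi_{a_j})\|_2^2\big) \;\geq\; \tfrac12\|\P(a_i|\D)-\P(a_i|\EE)\|_2^2 \;>\; \epsilon^2/2 \text{,}
\]
which is exactly the desired conclusion. The proof is essentially a bookkeeping exercise threading together already-established tools, so there is no real obstacle; the only point that requires a moment's care is remembering that $\epsilon$-forking is defined in terms of the induced metric on $n$-types, so one has to invoke Theorem \ref{distance} (rather than just Corollary \ref{distance between 1-types over C}) in order to extract a coordinate $i$ where the conditional probabilities differ significantly.
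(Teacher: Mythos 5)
Your proof is correct and follows essentially the same route as the paper's: use Theorem \ref{distance} together with Theorem \ref{pr:non-div} to extract a coordinate where $\|\P(a_i|\D)-\P(a_i|\EE)\|_1>\epsilon$, pass to the $L_2$-norm via Fact \ref{droponnorm}, and conclude with the inequality in Fact \ref{properties0}(5). The only difference is that you spell out the intermediate steps (realizing the non-forking extension and invoking Lemma \ref{types over C - equality}) that the paper compresses into a single citation.
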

\begin{proof}
Assume that $\tp(a_1,\dots,a_n/D)$ $\epsilon$-forks over $E$. Then by Theorems
\ref{distance} and \ref{pr:non-div}, for some $j=1,\dots,n$ we have
$\|\E_{\D}(a_j)-\E_{\EE}(a_j)\|_1>\epsilon$.  
By Fact \ref{droponnorm} this implies
$\|\E_{\D}(a_j)-\E_{\EE}(a_j)\|_2>\epsilon$ and thus
$\|\E_{\D}(a_j)-\E_{\EE}(a_j)\|_2^2>\epsilon^2$.
Then we get $H(\A/\EE) > H(\A/\D) + \epsilon^2/2$ by the inequality in Fact \ref{properties0}(5).
\end{proof}

\section{Some problems}
\label{problems}

In this final section we briefly indicate a few problems that seem interesting and worth investigation.

(P1)  Give an explicit formula for the induced distance  between types in $S_n(C)$ for $APA$.  
$$d(p,q)=\inf\{\max_{1 \leq i \leq n}
d(a_i,b_i): (a_1,\dots,a_n)\models p, (b_1,\dots,b_n)\models q\} \text{.}$$

(P2) Provide a thorough analysis of the imaginary sorts for $APA$.  

(P3)  Complete the model theoretic background behind a generalization to continuous model theory
of Shelah's classification theory for superstable theories.  (Some first steps for this as applied to $APA$
were discussed at the end of Section \ref{stability of APA}.). In particular, study appropriate versions of properties
such as $DOP$ (dimensional order property) and $OTOP$ (omitting types order property) in the 
continuous setting and prove a dichotomy theorem relating a small bound for $I(\lambda,T)$ to when $T$ is superstable 
and has neither $DOP$ nor $OTOP$, along the lines of \cite[Theorem 2.3]{Shelah}.

There is also the possibility of proving the equivalence, for continuous theories, 
between uncountable categoricity and being both $\omega$-stable and unidimensional, 
as is true for classical first order theories.

(P4)  Consider two existentially closed actions of the free group $F_k$ on the unique separable model $\M$ of $APA$,
where $2 \leq k \in \N \cup \{ \omega \}$.  Are they approximately isomorphic?  

In the joint paper \cite{BHI} of the authors with Ibarluc\'ia, the class of  existentially closed actions by a family 
of automorphisms of $\M$ is axiomatized, and some concrete examples of such actions are given 
that are approximately isomorphic but not isomorphic.
The answer is known to be positive when $k=1$. (See \cite[Remark 18.9]{BBHU}).

\end{document}